\definecolor{darkblue}{cmyk}{1,0.3,0,0.1}  
\newtheorem{proposition}{Proposition}[section]
\newtheorem{theorem}[proposition]{Theorem}
\newtheorem{corollary}[proposition]{Corollary}
\newtheorem{lemma}[proposition]{Lemma}
\newtheorem{prop}[proposition]{Proposition}
\newtheorem{cor}[proposition]{Corollary}
\newtheorem{thm}[proposition]{Theorem}
\theoremstyle{definition}
\newtheorem{example}[proposition]{Example}
\theoremstyle{remark}
\newtheorem{remark}[proposition]{Remark}
\numberwithin{equation}{section}
\newcommand{\margincolor}{red}      
\definecolor{darkgreen}{rgb}{0,0.7,0}
\newcounter{margincounter}
\newcommand{\marginnum}{
\ifnum\value{margincounter}<10
\textcolor{\margincolor}{\begin{picture}(0,0)\put(2.2,2.4){\circle{9}}\end{picture}\footnotesize\arabic{margincounter}}
\else\ifnum\value{margincounter}<100
\textcolor{\margincolor}{\begin{picture}(0,0)\put(4.256,2.5){\circle{11}}\end{picture}\footnotesize\arabic{margincounter}}
\else
\textcolor{\margincolor}{\begin{picture}(0,0)\put(6.8,2.5){\circle{14}}\end{picture}\footnotesize\arabic{margincounter}}
\fi\fi
}
\newcommand{\switchmargin}{
\if@reversemargin
\normalmarginpar
\else
\reversemarginpar
\fi
}
\newcommand{\newword}[1]{\textbf{\emph{#1}}}
\newcommand{\integers}{\mathbb Z}
\newcommand{\rationals}{\mathbb Q}
\newcommand{\reals}{\mathbb R}
\newcommand{\covered}{\lessdot}
\newcommand{\set}[1]{{\left\lbrace #1 \right\rbrace}}
\newcommand{\br}[1]{{\langle #1 \rangle}}
\renewcommand{\P}{{\mathcal P}}
\newcommand{\Q}{{\mathcal Q}}
\newcommand{\T}{{\mathcal T}}
\newcommand{\e}{\mathbf{e}}
\newcommand{\join}{\vee}
\newcommand{\meet}{\wedge}
\renewcommand{\Join}{\bigvee}
\newcommand{\ck}{\spcheck}
\newcommand{\st}{^\mathrm{st}}
\newcommand{\aff}{\mathrm{aff}}
\newcommand{\fin}{\mathrm{fin}}
\newcommand{\inn}{\mathrm{inn}}
\newcommand{\out}{\mathrm{out}}
\newcommand{\tNC}{\widetilde{NC}}
\newcommand{\tNCAc}{\tNC\mathstrut^A_{\!c}}
\newcommand{\tNCAcircc}{\tNC\mathstrut^{A,\circ}_{\!c}}
\newcommand{\tNCCc}{\tNC\mathstrut^C_{\!c}}
\newcommand{\tNCAphic}{\tNC\mathstrut^{A,\phi}_{\!c}}
\newcommand{\curve}{\mathsf{curve}}
\newcommand{\Krew}{\mathsf{Krew}}
\newcommand{\Art}{\operatorname{Art}}
\renewcommand{\mod}[1]{\ (\mathrm{mod}\ #1)}
\newcommand{\cmod}[1]{(\mathrm{mod}\ #1)}  
\newcommand{\SZmodn}{S_\integers\cmod n}
\newcommand{\SZmod}{S_\integers\cmod}
\newcommand{\perm}{\mathsf{perm}}
\newcommand{\afftype}[1]{{\widetilde{\raisebox{0pt}[6pt][0pt]{#1}}}}
\newcommand{\Stilde}{\raisebox{0pt}[0pt][0pt]{$\,{\widetilde{\raisebox{0pt}[6.1pt][0pt]{\!$S$}}}$}}
\newcommand{\Stildes}[1][2n]{\Stilde^{\mathrm{s}}_{#1}}
\title{Noncrossing partitions of an annulus}
\author{Laura G. Brestensky}
\author{Nathan Reading}
\thanks{Nathan Reading was partially supported by the Simons Foundation under award number 581608 and by the National Science Foundation under award number DMS-2054489.  Laura Brestensky was partially supported by the National Science Foundation under award number DMS-2054489.}
\subjclass[2010]{Primary: 20F55, 05E16, 20F36}
\begin{document}

{\renewcommand{\afftype}[1]{{\widetilde{\raisebox{0pt}[5pt][0pt]{#1}}}}
\begin{abstract}
The noncrossing partition poset associated to a Coxeter group $W$ and Coxeter element $c$ is the interval $[1,c]_T$ in the absolute order on $W$.
We construct a new model of noncrossing partititions for $W$ of classical affine type, using planar diagrams (affine types $\afftype{A}$ and $\afftype{C}$ in this paper and affine types $\afftype{D}$ and $\afftype{B}$ in the sequel).
The model in type $\afftype{A}$ consists of noncrossing partitions of an annulus.
In type~$\afftype{C}$, the model consists of symmetric noncrossing partitions of an annulus or noncrossing partitions of a disk with two orbifold points.
Following the lead of McCammond and Sulway, we complete $[1,c]_T$ to a lattice by factoring the translations in $[1,c]_T$, but the combinatorics of the planar diagrams leads us to make different choices about how to factor.
\end{abstract}
\maketitle}


\setcounter{tocdepth}{2}
\tableofcontents


\section{Introduction}\label{introduction}
Noncrossing partitions associated to a Coxeter group are algebraic/combinatorial objects that figure prominently in the analysis of the associated Artin group.
In the classical finite types A, B, and D (and to some extent in other types---see~\cite{plane}), noncrossing partitions are best understood in terms of certain planar diagrams.
This paper and its sequel~\cite{affncD} extend planar diagrams for noncrossing partitions to the classical affine types $\afftype{A}$ and $\afftype{C}$ (in this paper) and $\afftype{D}$ and $\afftype{B}$ (in the sequel).
Furthermore, in~\cite{surfnc}, which can be thought of as a ``prequel'' to these papers, the combinatorics (but not the algebra) of these planar diagrams is generalized to the setting of marked surfaces (in the sense of the marked surfaces model for cluster algebras).
More specifically, planar diagrams for noncrossing partitions of types A and $\afftype{A}$ are generalized by noncrossing partitions of marked surfaces (without ``punctures'') and planar diagrams for noncrossing partitions of the other classical finite and affine types are generalized by symmetric noncrossing partitions of a marked surface with (or without) double points.

Noncrossing partitions of a cycle were introduced by Kreweras~\cite{Kreweras}.
Biane~\cite{Biane} connected noncrossing partitions of a cycle to finite Coxeter groups of type A (the symmetric groups) and showed that the lattice of noncrossing partitions is isomorphic to $[1,c]_T$, the interval between the identity and a Coxeter element $c$ in the absolute order.
The analogous interval in a finite Coxeter group of type B is modeled by centrally symmetric noncrossing partitions \cite{Ath-Rei,Reiner}, and there is an analogous planar model~\cite{Ath-Rei} for type~D.
These planar models can be understood uniformly in terms of the Coxeter plane, which was defined and studied in~\cite{RegPoly,Steinberg}.
Specifically, realizing group elements as permutations of an orbit, one projects to the Coxeter plane and interprets the cycle structure of the permutations as blocks in a partition~\cite{plane}.

The interval $[1,c]_T$ in a finite Coxeter group $W$ serves as a Garside structure for the corresponding spherical Artin group $\Art(W)$, leading to a ``dual presentation'' of $\Art(W)$~\cite{Bessis,Bra-Wa} and proving desirable properties of $\Art(W)$.
The fact that $[1,c]_T$ is a lattice is crucial to the dual presentation.

Outside of finite type, the interval $[1,c]_T$ need not be a lattice.  
The case of affine type is treated in a series of papers that begins by extending crucial results on rigid motions~\cite{BWOrth} to affine type~\cite{Br-McC} and continues with an analysis of the failure of the lattice property in Coxeter groups of affine type~\cite{McFailure}.
The series culminates in \cite{McSul}, in which McCammond and Sulway extend the affine Coxeter group $W$ to a larger group, thereby extending the interval $[1,c]_T$ to a lattice.
The larger lattice serves as a Garside structure for a supergroup of the Euclidean Artin group $\Art(W)$, which inherits desirable (previously conjectured) properties from the supergroup.

The purpose of this paper is to provide planar models, in the classical affine types, for the intervals $[1,c]_T$ and the larger lattices constructed by McCammond and Sulway.
We follow the idea of \cite{plane} and obtain models by projecting an orbit to the analog of the Coxeter plane.  
In type $\afftype{A}$, the resulting model consists of \newword{noncrossing partitions of an annulus}, defined in Section~\ref{nc ann sec} and exemplified in Figure~\ref{nc exs}.
\begin{figure}
\begin{tabular}{cccc}
\includegraphics{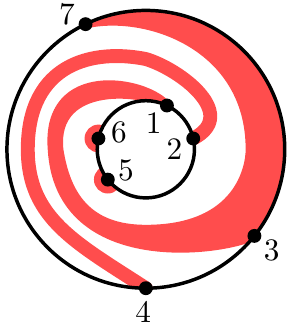}
&\includegraphics{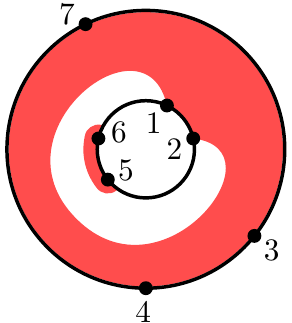}
&\includegraphics{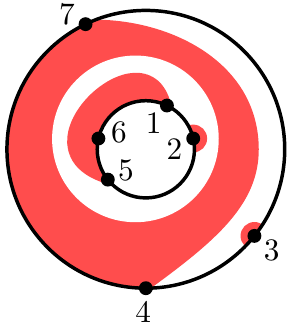}
\end{tabular}
\caption{Some noncrossing partitions of an annulus}
\label{nc exs}
\end{figure}

In type $\afftype{C}$, the model consists of \newword{symmetric noncrossing partitions of an annulus}, defined in Section~\ref{sym ann sec}.
Some examples are shown in the top row of Figure~\ref{nc C exs}.
\begin{figure}
\begin{tabular}{ccc}
\scalebox{0.9}{\includegraphics{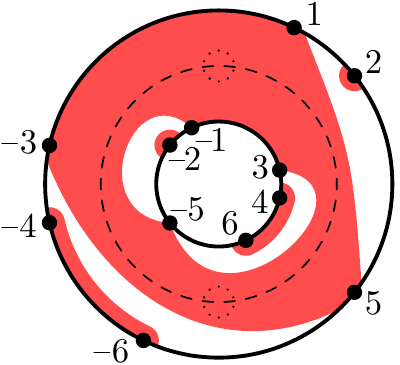}}
&\scalebox{0.9}{\includegraphics{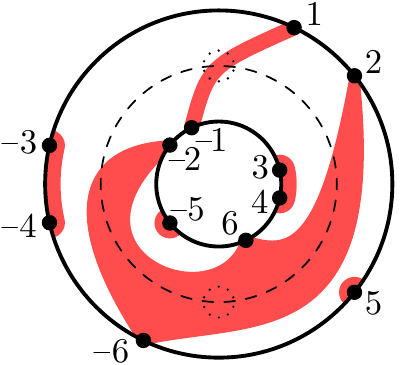}}
&\scalebox{0.9}{\includegraphics{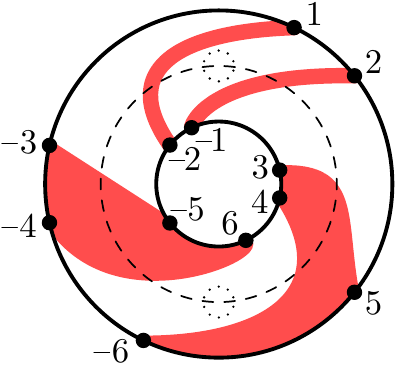}}\\
\scalebox{0.9}{\includegraphics{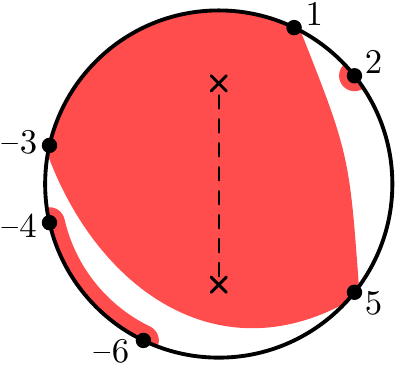}}
&\scalebox{0.9}{\includegraphics{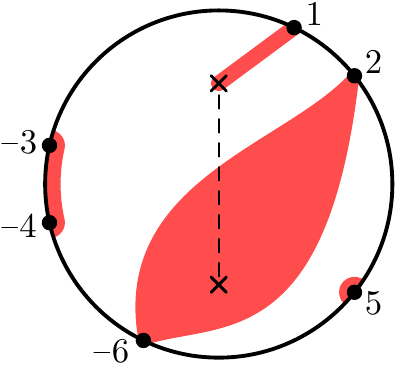}}
&\scalebox{0.9}{\includegraphics{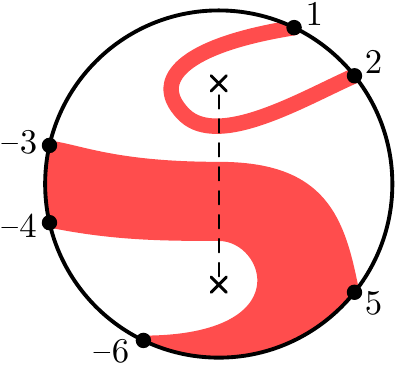}}
\end{tabular}
\caption{Some symmetric noncrossing partitions of an annulus and corresponding noncrossing partitions of the two-orbifold disk}
\label{nc C exs}
\end{figure}
To see the symmetry, think of the pictures as perspective drawings of a circular cylinder;
the symmetry is a rotation of the cylinder along a vertical axis.
The intersections of this axis with the cylinder are indicated in the pictures by small dotted circles.
(Section~\ref{sym ann sec} contains a formula for the symmetry in polar coordinates.)
Passing to the quotient modulo the symmetry, symmetric noncrossing partitions of an annulus become \newword{noncrossing partitions of a two-orbifold disk} (Section~\ref{nc 2 orb sec}), as shown in the bottom row of Figure~\ref{nc C exs}.
The fixed points of the symmetry in the annulus become order-$2$ orbifold points in the disk, marked with an $\times$ in the pictures.  
The dotted circle cuts the annulus into two pieces (a fundamental domain of the symmetry and its one translate).
The image of the circle under the quotient map is shown as a dotted line.

The key idea behind McCammond and Sulway's extension of $[1,c]_T$ to a lattice is the notion of ``factoring'' translations in an affine Coxeter group $W$.
Certain elements of $W$ act (in the affine representation) as translations, and the translations in $[1,c]_T$ were implicated (in~\cite{McFailure}) in the failure of the lattice property.
To overcome the failure, each translation in $[1,c]_T$ is factored into two or more translations not already contained in $W$.
The larger group containing $W$ is generated by $W$ and these new \newword{factored translations}.

One of the motivating questions for the current project was how factored translations would fit into planar models.
Would projecting the Coxeter plane lead to a useful model of $[1,c]_T$ only, or of the larger interval (incorporating factored translations) only, or of both?
In type $\afftype{A}$, the best possible thing happened:  The planar diagrams model the larger interval, and the elements of the smaller interval are distinguished by a simple criterion.
The elements of $[1,c]_T$ are the noncrossing partitions of the annulus with no \newword{dangling annular blocks}.
A dangling annular block is a block that is an annulus and has numbered points on only one component of its boundary.
Thus the leftmost and middle picture in Figure~\ref{nc exs} represent elements of $[1,c]_T$, while the rightmost picture represents an element in the larger interval, not in $[1,c]_T$.

\begin{remark}\label{thesis}
Most of the results reported here have appeared in Laura Brestensky's thesis~\cite{BThesis}.
However, for efficiency of exposition, some of the results that were proven directly in~\cite{BThesis} are here cited as special cases of results that now appear in the prequel~\cite{surfnc}.
Other parts of this paper are revisions of material from~\cite{BThesis}.
\end{remark}

\begin{remark}[Relationship to other work]
Several other papers consider constructions that are superficially or more deeply similar to the constructions in this paper.
\begin{itemize}
\item
McCammond and Sulway's work and the results of this paper were foreshadowed by \cite{Digne2}, in which Digne proved the lattice property for a certain choice of Coxeter element $c$ of an affine Coxeter group of type $\afftype{A}$.
The proof involved representing elements of $[1,c]_T$ as certain noncrossing paths in an annulus.
\item
The combinatorial model also recalls even earlier work of Allcock~\cite{Allcock}.
Each reflection or factored translation is associated to a block that connects two numbered points, connects a numbered point to itself, or connects a numbered point to an orbifold point.  
Such a block can be identified with a simple braid so that the interval group (see Section~\ref{McSul general sec}) realizes elements of the Artin group as elements of a braid group (of the plane minus~$1$ or $2$ points for~$\afftype A$ or~$\afftype C$), as in~\cite{Allcock}.
\item
The ``annular noncrossing partitions'' of Mingo and Nica~\cite{MingoNica} have some similarity to our noncrossing partitions of an annulus, but the construction in~\cite{MingoNica} is explicitly tied to finite symmetric groups, and thus does not allow annular blocks and has only finitely many noncrossing partitions for a fixed number of numbered points.
\item
There are some similarities between noncrossing partitions of an annulus and the chord diagrams and arc diagrams that appear in \cite{Maresca} in connection with representation-theory.
\end{itemize}
\end{remark}

\subsection*{Outline of the paper}
After some general background in Section~\ref{back sec}, we consider affine type $\afftype{A}$ in Section~\ref{aff type a}.
We construct the usual representation of the \mbox{type-$\afftype{A}$} Coxeter group as the group $\Stilde_n$ of affine permutations and consider the larger group $\SZmodn$ of periodic permutations (permutations with mod-$n$ symmetry, without the additional condition that defines affine permutations).
We construct the Coxeter plane and show that the projection of an orbit to the Coxeter plane yields an infinite strip with translational symmetry that leads naturally to a model on the annulus.
We define noncrossing partitions of this annulus and quote results of~\cite{surfnc} that show that these noncrossing partitions form a graded lattice and give a simple description of its cover relations and rank function.
We also describe Kreweras complementation in the lattice.
We show that the poset of noncrossing partitions with no dangling annular blocks is isomorphic to the interval $[1,c]_T$ and that the full lattice of noncrossing partitions is isomorphic to the analogous interval in the larger group $\SZmodn$.
The essence of the isomorphism is to read blocks of a noncrossing partition as cycles in a permutation.

In Section~\ref{aff type c}, we consider affine type $\afftype{C}$, using the standard notion of ``folding'' to reuse the work done in type $\afftype{A}$ and obtain the analogous results in type $\afftype{C}$.

In Section~\ref{McSul sec}, we connect our constructions to the work of McCammond and Sulway.
The combinatorial model of noncrossing partitions of an annulus suggests its own way of factoring translations in type $\afftype{A}$:
A translation in $[1,c]_T$ corresponds to a noncrossing partitions whose only nontrivial block is an annulus containing one numbered point on each component of its boundary.
A natural way to factor such an element is by breaking the annular block into two dangling annular blocks. 
Surprisingly, this factorization is \emph{not} the same as the factorization given in~\cite{McSul}.  
Instead, there is a continuous family of schemes for factorization, all giving isomorphic intervals and isomorphic supergroups of the Artin group.
Out of this continuous family, the combinatorics picks out what is perhaps the tidiest factorization scheme, where the generating set of the larger group is closed under conjugation and the larger group is $\SZmodn$, independent of the choice of Coxeter element~$c$.
In type~$\afftype{C}$, there is no need to factor translations.
We will see in~\cite{affncD} that in type $\afftype{D}$, the combinatorics again chooses a tidy scheme for factorization, but the relationship to the McCammond-Sulway factorization scheme is significantly more complicated.

\subsection*{Acknowledgments}
Thanks to Jon McCammond for suggesting how to find the Coxeter plane in $V^*$. 
Thanks also to Monica Vazirani for helpful information about representation-theoretic connections to the extended affine symmetric group.
Thanks to the anonymous referees for helpful suggestions.
An extended abstract of this paper~\cite{BR-FPSAC} was published in the proceedings of the FPSAC 2023 conference.

\section{Background}
\label{back sec}

In this section, we review some background on Coxeter groups and root systems.
We assume the most basic definitions and facts about Coxeter groups.

\subsection{Coxeter groups and root systems}\label{cox sec}

Let $(W,S)$ be a Coxeter system.
We recall the construction of a \newword{Cartan matrix} associated to $W$.
Write $S=\set{s_1,\ldots,s_n}$ and write $m(s_i,s_j)$ for the order of $s_is_j$ in $W$.  
A Cartan matrix $A$ is an $n\times n$ matrix $[a_{ij}]$ with $a_{ii}=2$ for $i=1,\ldots,n$, with non-positive off-diagonal entries such that $a_{ij}a_{ji}=4\cos\frac\pi{m(i,j)}$.
The Cartan matrix $A$ is assumed to be \newword{symmetrizable}, meaning that there exist positive real numbers $d_1,\ldots,d_n$ such that $d_i a_{ij}=d_j a_{ji}$ for all $i,j$.
In the cases of interest here, $A$ can be taken to have integer entries.

The choice of a symmetrizable Cartan matrix $A$ for $W$ specifies a reflection representation of $W$.
Let $V$ be a real vector space with basis $\alpha_1,\ldots,\alpha_n$.
For each~$i$, we define $\alpha_i\ck$ to be $d_i^{-1} \alpha_i$, where the $d_i$ are symmetrizing constants, as above.
We define a symmetric bilinear form $K$ on $V$ by $K(\alpha_i\ck,\alpha_j)=a_{ij}$.
We have $K(d_i^{-1}\alpha_i,\alpha_i)=2$, so ${d_i=\frac12K(\alpha_i,\alpha_i)}$.
In the reflection representation of~$W$ on $V$, the simple reflection $s_i$ acts on a vector $x$ by $s_i(x)=x-K(\alpha\ck_i,x)\alpha_i$.
In particular, $s_i(\alpha_j)=\alpha_j-a_{ij}\alpha_i$ and $s_i(\alpha\ck_j)=\alpha\ck_j-a_{ji}\alpha\ck_i$ for each $j$.

The set of \newword{real roots} associated to $A$ is $\set{w\alpha_i:w\in W,i=1,\ldots,n}$.
Each root $\beta=w\alpha_i$ has an associated co-root $\beta\ck=w\alpha_i\ck$.
The \newword{positive roots} are the roots in the nonnegative span of the simple roots.

The \newword{reflections} in $W$ are the elements $\set{wsw^{-1}:w\in W,s\in S}$.
These are precisely the elements of $W$ that act as reflections (i.e.\ have an $(n-1)$-dimensional fixed space and a $(-1)$-eigenvector) in the reflection representation of $W$.
Specifically, given a reflection $t=ws_iw^{-1}$, the root $\beta=w\alpha_i$ is its $(-1)$-eigenvector and the hyperplane $\set{v\in V:K(v,\beta)=0}$ is its fixed space.
The reflection $t$ sends $x\in V$ to $x-K(\beta\ck_i,x)\beta$.

When $W$ is finite, the real roots constitute the \newword{root system} $\Phi$ associated to~$A$.
When $W$ is infinite, the root system includes some additional vectors called \newword{imaginary roots}.
We will not explicitly deal with imaginary roots, but the vector $\delta$ appearing in Section~\ref{aff sec} is the positive imaginary root closest to zero.

The reflection representation of $W$ on $V$ has a dual representation as linear transformations of the dual space $V^*$, constructed in the usual way.
Let $\rho_1,\ldots,\rho_n$ be the basis of $V^*$ that is dual to the simple \textit{co-roots} basis $\alpha_1\ck,\ldots,\alpha_n\ck$.
The vectors $\rho_1,\ldots,\rho_n$ are the \newword{fundamental weights} associated to $A$.  
The reflection $s_i$ acts on $V^*$ by fixing $\rho_j$ for $j\neq i$ and sending $\rho_i$ to $\rho_i-\sum_{k=1}^na_{ki}\rho_k$.
More generally the reflection $t$ acts on $V^*$ by fixing the hyperplane $\set{x\in V^*:\br{x,\beta}=0}$ and negating the vector $K(\,\cdot\,,\beta)\in V^*$.

\begin{remark}\label{iconoclast}
We break with the Lie-theoretic tradition of putting roots and co-roots in spaces dual to each other and putting roots and weights in the same space.
The way of constructing roots and weights in this paper is essential to the construction introduced in Section~\ref{aff sec} and carried out in Section~\ref{plane A sec}, where we project an orbit in $V$ to the Coxeter plane in $V^*$.
\end{remark}

\subsection{Coxeter elements and noncrossing partitions}\label{background ncp}
A \newword{Coxeter element} is an element $c\in W$ that can be written as a product, in some chosen order, of the elements of $S$, each repeated exactly once in the product.
Coxeter elements of $W$ correspond to acyclic orientations of the Coxeter diagram:
An edge in the Coxeter diagram is oriented $s_i\to s_j$ if and only if $s_i$ precedes $s_j$ in every reduced word for~$c$.
A simple reflection $s_i$ is a source in the acyclic orientation associated to $c$ if and only if there is a reduced word for $c$ starting with~$s_i$.
Similarly, $s_i$ is a sink if and only if it can occur as the last letter of a reduced word for $c$.
Performing a \newword{source-sink move} on an acyclic orientation means choosing a source or sink and reversing all arrows on it.
If $s_i$ is a source or sink, then the source-sink move at $s_i$ corresponds to conjugating $c$ by $s_i$.
When the Coxeter diagram is a tree (and thus in finite type and all affine types except $\afftype{A}_{n-1}$), all Coxeter elements are conjugate by source-sink moves.
In type $\afftype{A}_{n-1}$, every acyclic orientation can be transformed by source-sink moves to an orientation where $s_n$ is the unique source and there is a unique sink~$s_k$.

The Coxeter element $c$ determines a skew-symmetric bilinear form $\omega_c$ on $V$ by
\begin{equation}\label{omega def}
\omega_c(\alpha_i\ck,\alpha_j)=
\left\lbrace\begin{array}{ll}
a_{ij}&\text{if }s_i\text{ follows }s_j\text{ in }c,\\
0&\text{if }i=j,\text{ or}\\
-a_{ij}&\text{if }s_i\text{ precedes }s_j\text{ in }c.
\end{array}\right.
\end{equation}
As a consequence of \cite[Lemma~3.8]{typefree}, this form satisfies $\omega_c(cx,cy)=\omega_c(x,y)$.

Since $S\subseteq T$, the set $T$ of reflections generates $W$.
The \newword{absolute length} or \newword{reflection length} $\ell_T(w)$ of an element $w\in W$ is the number of letters in a shortest expression for $w$ as a product of elements of $T$.
Such a shortest expression is called a \newword{reduced $T$-word} for $w$.
The \newword{absolute order} $\le_T$ is the partial order on $W$ defined by $u\le_Tw$ if and only if $\ell_T(w)=\ell_T(u)+\ell_T(u^{-1}w)$.
Thus $u\le_Tw$ if and only if there is a reduced $T$-word for $w$ that has a reduced $T$-word for~$u$ as a prefix.

The starting point for this paper is the interval $[1,c]_T$ in the absolute order between the identity and a Coxeter element $c$.
When $W$ is finite, this is a lattice~\cite{BWlattice}, often called the \newword{$W$-noncrossing partition lattice}, but when $W$ is infinite, $[1,c]_T$ can fail to be a lattice.  
The $W$-noncrossing partition lattice is named after the finite type-A case, where it is modeled by noncrossing partitions of a cycle~\cite{Biane,Kreweras}.
There are also planar models in types B and~D~\cite{Ath-Rei,Reiner}.

When $W$ is of finite type, there is a special plane, called the \newword{Coxeter plane}, on which $c$ acts as a rotation by $\frac{2\pi}h$.
Here, $h$ is the \newword{Coxeter number} (the order of $c$).
The Coxeter plane figures prominently in \cite{RegPoly}, and a uniform construction of the plane is found in~\cite{Steinberg}.

The Coxeter plane is closely related to the planar diagrams for noncrossing partitions in types A, B, and D.
The relationship, explored in \cite{plane}, is as follows:
Given a finite Coxeter group $W$, find a smallest orbit $o$. 
Any element $w$ of $W$ acts as a permutation of $o$, and in particular decomposes $o$ into cycles.
One projects $o$ orthogonally onto the Coxeter plane and considers the set partition defined on the projected orbit by this cycle decomposition.
In types A, B, and D (and also $H_3$ and $I_2(m)$), there are simple criteria to decide, from the resulting planar diagram, whether $w\in[1,c]_T$.
This construction recovers the diagrams in \cite{Ath-Rei,Biane,Kreweras,Reiner}.

In this paper and its sequel~\cite{affncD}, we implement the ``project a small orbit to the Coxeter plane'' construction to create planar diagrams for $[1,c]_T$ in the classical affine types.

\subsection{Folding the poset of noncrossing partitions}\label{folding sec}
In this section, we discuss how the poset $[1,c]_T$ of noncrossing partitions behaves under folding.
We do not discuss the general problem of folding, but rather take easy sufficient hypotheses on two Coxeter groups to relate the corresponding posets of noncrossing partitions.

Suppose $(W',S')$ is a Coxeter system.
A \newword{diagram automorphism} of $(W',S')$ is a permutation $\phi$ of $S'$ such that $m(\phi(r),\phi(s))=m(r,s)$ for all $r,s\in S'$.
A diagram automorphism $\phi$ extends uniquely to a group automorphism of $W'$, and we re-use the name $\phi$ for the group automorphism.
Suppose further that elements in each $\phi$-orbit in $S'$ commute pairwise, so that in particular the product of the elements of the orbit has order $2$.
Let $S$ be the set of such products.
Let $W$ be the subgroup of $W'$ consisting of elements fixed by $\phi$.
If $(W,S)$ is a Coxeter system, then we say that $(W,S)$ is a \newword{folding} of $(W',S')$.

Because the elements of $S$ are products over the $\phi$-orbits of $S'$, every Coxeter element $c$ of $W$ is also a Coxeter element of $W'$.
A Coxeter element of $W'$ is a Coxeter element of $W$ if and only if the corresponding orientation of the Coxeter diagram of $W'$ is preserved by the diagram automorphism~$\phi$.

For the rest of the section, we assume that $(W,S)$ is a folding of $(W',S')$ and that $c$ is a Coxeter element of $W$ (and thus also of $W'$).
Write $T'$ for the set of reflections in $W'$ and $T$ for the set of reflections in $W$.
We will prove the following proposition.

\begin{proposition}\label{fold subposet}
The interval $[1,c]_T$ in $W$ is the subposet of the interval $[1,c]_{T'}$ in $W'$ induced by $W\cap[1,c]_{T'}$.
In other words, $[1,c]_T$ is the subposet of $[1,c]_{T'}$ induced by the set of elements fixed by $\phi$.
\end{proposition}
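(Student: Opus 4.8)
The plan is to establish two things: (1) every element of $W\cap[1,c]_{T'}$ actually lies in $[1,c]_T$ (not merely in $[1,c]_{T'}$), and conversely (2) every element of $[1,c]_T$ lies in $[1,c]_{T'}$; together with the observation that the order relations agree, this gives the claimed identification of posets. The containment $[1,c]_T\subseteq[1,c]_{T'}$ is the easy direction: since each $s\in S$ is a product of commuting reflections from $S'$, we have $S\subseteq T'$ (each such product has order $2$ and is a reflection in $W'$... wait, it is a product of several reflections, so it has reflection length equal to the size of the orbit, not $1$), so more care is needed. The correct statement is that $T\subseteq T'$? No—an element of $T$ is a $W$-conjugate of an element of $S$, hence a $W'$-conjugate of a product of commuting elements of $S'$, so it need not be a single reflection of $W'$. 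The right framework is therefore to compare reflection lengths: I would show that for $w\in W$, the $W$-reflection length $\ell_T(w)$ equals the $W'$-reflection length $\ell_{T'}(w)$. This is plausible because a reduced $T$-word for $w$ expands (replacing each letter of $S$-conjugacy-type by its orbit product, which costs the orbit size in $T'$-length) but one must check no cancellation shortens it, and conversely a reduced $T'$-word for a $\phi$-fixed element can be "averaged" over $\phi$ to a $T$-word of no greater length. Granting $\ell_T = \ell_{T'}$ on $W$, the definition of the absolute order immediately gives that for $u,w\in W$, $u\le_T w$ iff $u\le_{T'}w$; specializing to $w=c$ and $u\in W$ gives $[1,c]_T = W\cap[1,c]_{T'}$ as sets, and the order relations agree by the same equivalence.

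The key steps, in order, are: First, prove $\ell_{T'}(w)\le \ell_T(w)$ for $w\in W$ by taking a reduced $T$-word $w = t_1\cdots t_k$ with $t_i\in T$, writing each $t_i = u_i \sigma_i u_i^{-1}$ with $\sigma_i\in S$ and $u_i\in W\subseteq W'$, and replacing $\sigma_i$ by the product of its $\phi$-orbit of commuting reflections in $S'$; this expresses $w$ as a product of $T'$-reflections, giving an upper bound on $\ell_{T'}(w)$ (though not obviously the bound $\ell_T(w)$—this needs the orbit-size bookkeeping, and in fact I expect the clean statement uses that reflection length in the folded group, measured correctly, matches). Second, prove the reverse inequality $\ell_T(w)\le \ell_{T'}(w)$ for $\phi$-fixed $w$: take a reduced $T'$-word, apply $\phi$ to it to get another reduced $T'$-word for the same (fixed) element, and use a standard "shifting/cancellation" argument (in the spirit of Hurwitz action or the exchange property for reflection length) to conclude the $T'$-reflections can be grouped into $\phi$-orbits, each orbit-product lying in $T$. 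Third, deduce the poset statement from the length equality via the definition $u\le_T w \iff \ell_T(w) = \ell_T(u) + \ell_T(u^{-1}w)$, noting $u^{-1}w\in W$ when $u,w\in W$.

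The main obstacle, and the step I would budget the most effort for, is the second inequality $\ell_T(w)\le\ell_{T'}(w)$ for $\phi$-fixed $w$—i.e. showing a $\phi$-invariant element of $W'$ admits a reduced $T'$-factorization that is itself $\phi$-invariant as a word up to the Hurwitz action, so that its letters organize into $\phi$-orbits whose products are the reflections of $W$. This is a genuine "folding of reduced words" statement; the hypothesis that the elements within each $\phi$-orbit of $S'$ commute pairwise is exactly what makes the orbit products have order $2$ and behave like reflections, but leveraging it at the level of arbitrary $T'$-reductions (not just $S'$-reductions) requires either a careful induction on $\ell_{T'}(w)$ using the cover relations of $[1,c]_{T'}$, or invoking known compatibility of reflection length with folding from the literature (e.g. results relating $[1,c]_T$ and $[1,c]_{T'}$ in the Garside-theoretic setting). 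I would first check whether the case analysis reduces cleanly to the situation where $w$ is a reflection in $T$, handling that base case directly from the commuting-orbit hypothesis, and then propagate upward. A secondary, more routine obstacle is verifying that $c$ being a Coxeter element of $W$ forces the relevant orientation to be $\phi$-stable (already noted in the excerpt), so that $c$ genuinely lives in both intervals and the comparison is between the "same" top element.
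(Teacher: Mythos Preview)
Your central claim---that $\ell_T(w)=\ell_{T'}(w)$ for all $w\in W$---is false, and this is the gap. Take any $s\in S$ whose $\phi$-orbit in $S'$ has size $k>1$: then $\ell_T(s)=1$, but $s$ is a product of $k$ pairwise-commuting reflections in $W'$ (with orthogonal roots), so its fixed space has codimension $k$ and $\ell_{T'}(s)=k$. You partially sense this (``orbit-size bookkeeping''), but you never arrive at a correct replacement statement, and your plan for the reverse inequality (``average a reduced $T'$-word over $\phi$ and group letters into orbits'') is not a mechanism---there is no reason a reduced $T'$-word for a $\phi$-fixed element should be Hurwitz-equivalent to a $\phi$-invariant one in general.

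The paper does not compare $\ell_T$ and $\ell_{T'}$ on all of $W$; it works entirely inside the interval below $c$. The key lemma (Lemma~\ref{fold red}) says: if $w\in[1,c]_T$ and you replace each letter of a reduced $T$-word for $w$ by the product of its $\phi$-orbit in $T'$, the resulting $T'$-word is \emph{reduced}. This is proved by first doing it for $c$ itself---where the expanded word has length $|S'|=\ell_{T'}(c)$, hence is reduced---and then observing that any $w\in[1,c]_T$ sits as a prefix. The fact that the expansion of a reduced $T$-word for $c$ always has length exactly $|S'|$ (Lemma~\ref{multiplicities}) is not obvious: it requires that the multiset of orbit-sizes of the letters in any reduced $T$-word for $c$ is constant, which the paper deduces from Hurwitz transitivity on reduced $T$-words for $c$ (Igusa--Schiffler). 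For the converse direction ($u\le_{T'}c\Rightarrow u\le_T c$ when $u\in W$), the paper does not try to reorganize a given $T'$-word; instead it applies Lemma~\ref{fold red} separately to $u$ and to $u^{-1}c$ (both in $W$, both with their own reduced $T$-words), concatenates the expansions, and uses the multiplicity constraint again to see the concatenation has length $|S'|$ and is therefore a reduced $T'$-word for $c$. The ingredient you are missing is precisely this Hurwitz-transitivity control on multiplicities, together with the realization that the argument must live inside $[1,c]$ rather than on all of $W$.
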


It is easy and well known that for any lattice $L$ and any automorphism $\phi$ of~$L$, the subposet $L^\phi$ induced by elements of $L$ fixed by $L$ is a sublattice of $L$.
(If $x,y\in L^\phi$, then $\phi(x\join y)=\phi(x)\join\phi(y)=x\join y$, and similarly for meets.)
Thus we have the following corollary of Proposition~\ref{fold subposet}.

\begin{corollary}\label{fold sublattice}
If $[1,c]_{T'}$ is a lattice, then $[1,c]_T$ is a sublattice of $[1,c]_{T'}$.
\end{corollary}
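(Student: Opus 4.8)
My plan is to deduce the corollary quickly from Proposition~\ref{fold subposet}, and --- since that proposition carries all the content --- also to sketch how I would prove the proposition. For the corollary: first I would check that $\phi$ restricts to a poset automorphism of $[1,c]_{T'}$. Since $\phi$ permutes $S'$, the induced group automorphism of $W'$ permutes the set $T'$ of reflections, so it preserves $\ell_{T'}$ and the absolute order $\le_{T'}$; and $\phi(c)=c$ because $c\in W=(W')^\phi$. Hence $\phi$ is a poset automorphism of $[1,c]_{T'}$ whose set of fixed points is exactly $W\cap[1,c]_{T'}$, and by Proposition~\ref{fold subposet} the subposet induced on this set is $[1,c]_T$. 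Then I would invoke the general fact recalled just above the corollary --- that the subposet of a lattice $L$ induced by the fixed points of an automorphism of $L$ is a sublattice of $L$ --- to conclude that if $[1,c]_{T'}$ is a lattice then $[1,c]_T$ is a sublattice of it. That is the corollary.

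For Proposition~\ref{fold subposet}, I would first establish the structural fact that each reflection $t\in T$ is the product of the pairwise-commuting reflections in a single $\phi$-orbit $O(t)\subseteq T'$, with $O(wtw^{-1})=w\,O(t)\,w^{-1}$ for all $w\in W$; this follows by writing $t=wsw^{-1}$ with $s\in S$, using the defining property of a folding for $s$, and conjugating, the equivariance coming from $\phi(w)=w$. "Unfolding" a reflection word $t_1\cdots t_k$ --- replacing each $t_i$ by the reflections of $O(t_i)$ in any internal order --- then produces a $T'$-word for the same element of $W$, so $\ell_{T'}(v)\le\sum_i|O(t_i)|$ when $v=t_1\cdots t_k$. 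With this in hand I would show that the underlying sets $[1,c]_T$ and $W\cap[1,c]_{T'}$ coincide, the orders agreeing by running the same argument on an arbitrary comparison inside the interval. For $[1,c]_T\subseteq W\cap[1,c]_{T'}$: given $w\in W$ with $w\le_T c$, take a reduced $T$-word for $c$ having a reduced $T$-word for $w$ as a prefix and unfold; if the unfolded word for $c$ is \emph{reduced}, its prefix is a reduced $T'$-word for $w$, so $w\le_{T'}c$. For the reverse inclusion: given $w\in W$ with $w\le_{T'}c$, note $w^{-1}c\in W$ and $\ell_{T'}(w)+\ell_{T'}(w^{-1}c)=\ell_{T'}(c)$; if unfolding always sends reduced $T$-words of elements of $W$ to reduced $T'$-words, then $\ell_{T'}$ equals $\sum_i|O(t_i)|$ along any reduced $T$-word, and combining these equalities (for $w$, $w^{-1}c$, $c$) with the subadditivity of $\ell_T$ forces $\ell_T(w)+\ell_T(w^{-1}c)=\ell_T(c)$, i.e.\ $w\le_T c$.

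The main obstacle is the claim used in both inclusions: that unfolding a reduced $T$-word produces a reduced $T'$-word, equivalently $\ell_{T'}(v)=\sum_i|O(t_i)|$ for $v\in W$ along a reduced $T$-word. This is precisely where the folding hypotheses (pairwise-commuting orbits and $W=(W')^\phi$) are needed, and I would argue geometrically, tracking the roots of the reflections in the unfolded word and showing they stay linearly independent --- the commuting-orbit hypothesis makes the roots inside a single $O(t_i)$ mutually orthogonal, so the count stays exact. Some care is needed in the affine setting, where reflection length is not globally the codimension of the fixed space, but the argument stays inside the interval below $c$, where it does behave as expected. (An alternative for the reverse inclusion would be to note that a $\phi$-fixed $w\le_{T'}c$ is a Coxeter element of a $\phi$-stable parabolic subgroup of $W'$, which folds to a parabolic subgroup of $W$ in which $w$ is a Coxeter element lying below $c$.)
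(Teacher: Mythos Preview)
Your deduction of the corollary from Proposition~\ref{fold subposet} is exactly the paper's argument: $\phi$ restricts to a poset automorphism of $[1,c]_{T'}$ (this is the paper's Lemma~\ref{phi auto general}), and the fixed-point subposet of a lattice under an automorphism is a sublattice.

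Your sketch of Proposition~\ref{fold subposet} itself, however, diverges from the paper at precisely the point you flag as the main obstacle. To show that unfolding a reduced $T$-word for $w\in[1,c]_T$ yields a reduced $T'$-word, the paper does not argue geometrically with roots. Instead it proves a purely combinatorial invariance: the multiset of orbit sizes $|O(t_i)|$ along a reduced $T$-word for $c$ is the same for every such word, because Hurwitz moves (which act transitively on reduced $T$-words for $c$ by the Igusa--Schiffler theorem) preserve these multiplicities. Since the defining reduced $T$-word with letters in $S$ unfolds to a $T'$-word of length $|S'|=\ell_{T'}(c)$, every reduced $T$-word for $c$ does, and hence every unfolded word for $c$ is $T'$-reduced; prefixes then handle general $w$. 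This argument is indifferent to whether $W$ is finite or affine and never touches the subtlety you identify about reflection length versus codimension. Your proposed route via linear independence of roots may be salvageable inside $[1,c]$, but the Hurwitz-move argument is both shorter and type-independent.
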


The proof of Proposition~\ref{fold subposet} will make use of several lemmas.

\begin{lemma}\label{phi auto general}
The map $\phi$ is an automorphism of the absolute order on $W'$.
\end{lemma}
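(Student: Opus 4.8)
The plan is to show that $\phi$, which we already know is a group automorphism of $W'$, preserves both the reflection length $\ell_{T'}$ and hence the absolute order $\leq_{T'}$. The key observation is that the set $T'$ of reflections in $W'$ is preserved by $\phi$: since $T' = \set{w s w^{-1} : w \in W', s \in S'}$ and $\phi$ permutes $S'$ while being a group automorphism, we have $\phi(w s w^{-1}) = \phi(w)\phi(s)\phi(w)^{-1}$, which is again a conjugate of an element of $S'$, so $\phi(T') = T'$.

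From this the rest is essentially formal. First I would observe that since $\phi$ is a bijection of $W'$ carrying $T'$ onto itself, it carries any $T'$-word $t_1 \cdots t_k$ for an element $w$ to the $T'$-word $\phi(t_1)\cdots\phi(t_k)$ for $\phi(w)$, of the same length; applying the same reasoning to $\phi^{-1}$ gives $\ell_{T'}(\phi(w)) = \ell_{T'}(w)$ for all $w \in W'$. In particular $\phi$ carries reduced $T'$-words to reduced $T'$-words. Then, recalling that $u \leq_{T'} w$ if and only if some reduced $T'$-word for $w$ has a reduced $T'$-word for $u$ as a prefix, I would conclude that $u \leq_{T'} w$ implies $\phi(u) \leq_{T'} \phi(w)$: take a reduced $T'$-word for $w$ with a reduced prefix spelling $u$, apply $\phi$ letterwise, and use that $\phi$ preserves reducedness of both the whole word and the prefix. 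Applying the same argument to $\phi^{-1}$ yields the converse implication, so $\phi$ is an order-automorphism.

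Alternatively, and perhaps more cleanly, I could phrase the last step using the characterization $u \leq_T w \iff \ell_T(w) = \ell_T(u) + \ell_T(u^{-1}w)$ directly: since $\phi$ is a group homomorphism, $\phi(u)^{-1}\phi(w) = \phi(u^{-1}w)$, and since $\phi$ preserves $\ell_{T'}$ on all three of $w$, $u$, $u^{-1}w$, the defining equation for $u \leq_{T'} w$ holds if and only if the one for $\phi(u) \leq_{T'} \phi(w)$ holds. Either route is routine.

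There is no real obstacle here; the only thing requiring a moment's care is the verification that $\phi(T') = T'$, and even that is immediate from $\phi$ being a group automorphism permuting the generating set $S'$. I would keep the write-up to a few lines, emphasizing $\phi(T')=T'$, the resulting preservation of $\ell_{T'}$, and then either the prefix characterization or the additive characterization of $\leq_{T'}$ to finish.
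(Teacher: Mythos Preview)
Your proposal is correct and essentially identical to the paper's proof: the paper also first shows $\phi$ permutes $T'$ via $\phi(wsw^{-1})=\phi(w)\phi(s)\phi(w)^{-1}$ (invoking $\phi^{-1}$ for the reverse inclusion), deduces that $\ell_{T'}$ is preserved, and then uses precisely your ``alternative'' additive characterization $\ell_{T'}(u)+\ell_{T'}(u^{-1}w)=\ell_{T'}(w)$ together with $\phi(u^{-1}w)=\phi(u)^{-1}\phi(w)$ to conclude.
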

\begin{proof}
Each $t\in T'$ is $wsw^{-1}$ for some $w\in W'$ and $s\in S'$ and since $\phi$ is a group automorphism that permutes $S'$, $\phi(t)=\phi(w)\phi(s)[\phi(w)]^{-1}$ is also a reflection.
Applying the same argument for $\phi^{-1}$, we see that $\phi$ permutes the set $T'$ of reflections in $W'$.
Thus $\phi$ also preserves the absolute length function $\ell_{T'}$ in $W'$.
Thus $u\le_{T'}w$ if and only if $\ell_{T'}(u)+\ell_{T'}(u^{-1}w)=\ell_{T'}(w)$, if and only if $\ell_{T'}(\phi(u))+\ell_{T'}(\phi(u^{-1}w))=\ell_{T'}(\phi(w))$ if and only if (because $\phi$ is a group automorphism) $\phi(u)\le_{T'}\phi(w)$.
\end{proof}

\begin{lemma}\label{ref orb}
Each reflection $t\in T$ is the product of a $\phi$-orbit in $T'$.
\end{lemma}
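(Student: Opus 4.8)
The plan is to take an arbitrary reflection $t\in T$, write it out using the definitions of $T$ and of folding, and then track how $\phi$ acts on the pieces. First I would choose $w\in W$ and $s\in S$ with $t=wsw^{-1}$; this is possible by the description of the reflections of $W$ as $\set{wsw^{-1}:w\in W, s\in S}$. By the definition of folding, $s$ is the product of the elements of a single $\phi$-orbit $\set{s_1',\dots,s_k'}\subseteq S'$, and these elements commute pairwise, so $s=s_1'\cdots s_k'$ independently of the order of the factors.

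Next I would set $t_i'=w s_i' w^{-1}$ for $i=1,\dots,k$. Each $t_i'$ is a reflection of $W'$, and because conjugation by $w$ is a group automorphism and the $s_i'$ commute pairwise and are pairwise distinct, the $t_i'$ also commute pairwise and are pairwise distinct. Multiplying out the factored expression for $s$ gives $t=wsw^{-1}=t_1't_2'\cdots t_k'$, an order-independent product of $k$ distinct commuting reflections in $T'$. It then remains only to identify $\set{t_1',\dots,t_k'}$ as a $\phi$-orbit in $T'$. Here the key observation is that $w\in W$ means $\phi(w)=w$, since $W$ is the subgroup of elements of $W'$ fixed by $\phi$; hence $\phi(t_i')=\phi(w)\,\phi(s_i')\,\phi(w)^{-1}=w\,\phi(s_i')\,w^{-1}$. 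So conjugation by $w$ intertwines the $\phi$-action on $\set{s_1',\dots,s_k'}$ with the $\phi$-action on $\set{t_1',\dots,t_k'}$; since $\phi$ acts transitively on the former (it is a $\phi$-orbit by hypothesis), it acts transitively on the latter, and therefore $\set{t_1',\dots,t_k'}$ is a single $\phi$-orbit in $T'$, whose product is $t$.

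I do not expect a real obstacle in this argument; it is essentially just bookkeeping with the definitions. The only points that need a little care are that the $t_i'$ are genuinely pairwise distinct (so that the product $t_1'\cdots t_k'$ has exactly $k$ factors, matching the size of the orbit, rather than being a product over a smaller multiset) and that this product is well defined regardless of the order of the factors — both of which follow from the corresponding properties of the $s_i'$ together with the fact that conjugation by $w$ is a bijection and a group automorphism commuting with $\phi$ (the latter because $\phi(w)=w$).
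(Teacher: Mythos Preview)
Your proof is correct and follows essentially the same approach as the paper's: write $t=wsw^{-1}$, factor $s$ as the product of its $\phi$-orbit $s_1'\cdots s_k'$ in $S'$, set $t_i'=ws_i'w^{-1}$, and use $\phi(w)=w$ to conclude that $\set{t_1',\dots,t_k'}$ is a $\phi$-orbit in $T'$ with product~$t$. You add a bit more detail than the paper (explicitly checking commutativity and distinctness of the $t_i'$), but the argument is the same.
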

\begin{proof}
Each $s\in S$ is the product of a $\phi$-orbit in $S'$ and each $w\in W$ is fixed by~$\phi$.
If $t=wsw^{-1}$, then write $s=a_1\cdots a_k$ with $a_{i+1}=\phi(a_i)$ for $i=1,\ldots,k$ with indices mod $k$.
Then $\phi(wa_iw^{-1})=wa_{i+1}w^{-1}$ and $t=(wa_1w^{-1})\cdots(wa_kw^{-1})$.
\end{proof}

The \newword{multiplicity} of a reflection $t\in T$ is the size of the $\phi$-orbit in $T'$ whose product is $t$.

\begin{lemma}\label{multiplicities}
Let $c$ be a Coxeter element of $W$.
If $a_1\cdots a_n$ is a reduced $T$-word for $c$, then the multiset of multiplicities of the letters $a_1,\ldots,a_n$ is the same as the multiset of $\phi$-orbit sizes in $S'$.
\end{lemma}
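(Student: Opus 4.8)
The plan is to show that the multiset of multiplicities of the letters is unchanged by the Hurwitz action on reduced $T$-words of $c$, and then to compute it on the simplest reduced $T$-word, namely a reduced $S$-word.

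First I would record that the multiplicity function on $T$ is invariant under conjugation by elements of $W$. If $t\in T$ equals $\prod O$ for a $\phi$-orbit $O\subseteq T'$ and $w\in W$, then $\phi(w)=w$, so conjugation by $w$ commutes with $\phi$; hence $wOw^{-1}$ is again a $\phi$-orbit in $T'$, of the same cardinality as $O$, and its product is $wtw^{-1}$. By Lemma~\ref{ref orb} (applied to $wtw^{-1}\in T$) together with well-definedness of multiplicity, $wtw^{-1}$ therefore has the same multiplicity as $t$.

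Next I would invoke transitivity of the Hurwitz action on the set of reduced $T$-words of the Coxeter element $c$ (classical in finite type, and known in affine type). A single Hurwitz move alters a reduced $T$-word only by replacing one letter with a $W$-conjugate of that letter (e.g.\ $(\dots,a_i,a_{i+1},\dots)\mapsto(\dots,a_ia_{i+1}a_i,a_i,\dots)$, using $a_i^{-1}=a_i$), so by the previous paragraph it leaves the multiset of multiplicities of the letters unchanged; thus that multiset is the same for all reduced $T$-words of $c$. To identify it, I would use a reduced $S$-word $c=r_1r_2\cdots r_n$ with $S=\{r_1,\dots,r_n\}$: since $c$ is a Coxeter element, $\ell_T(c)=n$, so this is in particular a reduced $T$-word. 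By definition the multiplicity of $r_i$ is the size of the $\phi$-orbit in $S'$ whose product is $r_i$, and as $i$ ranges over $1,\dots,n$ these orbits range exactly over the $\phi$-orbits in $S'$; hence the multiset of multiplicities of $r_1,\dots,r_n$ is the multiset of $\phi$-orbit sizes in $S'$, and by the preceding step the same holds for every reduced $T$-word of $c$.

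The main external input is the transitivity of the Hurwitz action in affine type; everything else is bookkeeping. If one preferred to avoid it, one could argue instead that, for an arbitrary reduced $T$-word of $c$, expanding each letter $a_i=\prod O_i$ produces a $T'$-word for $c$ of length $\sum_i|O_i|$, and that this $T'$-word is in fact reduced: were it not, one could delete two of its letters and still have a $T'$-word for $c$, but then cancellation would force either $\prod O_i=\prod(O_i\setminus\{\text{two elements}\})$ for some $i$, or a consecutive subword $a_i\cdots a_j$ of the reduced $T$-word (hence itself reduced) to equal the strictly shorter product $a_{i+1}\cdots a_{j-1}$ --- in either case a contradiction. This pins down $\sum_i|O_i|=\ell_{T'}(c)=|S'|$, which, together with the fact that there are $n$ blocks, equal to the number of $\phi$-orbits in $S'$, already forces the multiset equality whenever every $\phi$-orbit in $S'$ has size at most $2$ --- the only case needed in this paper and its sequel.
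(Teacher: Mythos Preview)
Your main argument is correct and is essentially the paper's own proof: both note that a reduced $S$-word for $c$ trivially has the right multiset of multiplicities, observe that a Hurwitz move replaces one letter by a $W$-conjugate and hence preserves its multiplicity (the paper phrases this as ``apparent from the proof of Lemma~\ref{ref orb}''), and then invoke transitivity of the Hurwitz action on reduced $T$-words of $c$ (the paper cites \cite{IgusaSchiffler}).

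The alternative route you sketch at the end, however, has a gap. First, the claim that a non-reduced $T'$-word for $c$ can always be shortened by deleting exactly two of its letters is not a general property of reflection length and would itself require justification. Second, even granting that, your dichotomy does not cover all cases: if the two deletable letters lie in distinct orbit-blocks $O_i$ and $O_j$ with $i<j$, the resulting identity involves the surviving partial products of $O_i$ and $O_j$, which need not be trivial, so one does not directly obtain $a_i\cdots a_j=a_{i+1}\cdots a_{j-1}$. Since your Hurwitz-based argument already does the job, this alternative is unnecessary.
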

\begin{proof}
There is at least one reduced $T$-word for $c$ that satisfies the lemma by definition, namely, any reduced $T$-word for $c$ whose letters are $S$.
Hurwitz moves on reduced $T$-words are moves that take two adjacent letters $tt'$ and replace them by either $t'(t'tt')$ or $(tt't)t$.
It is apparent from the proof of Lemma~\ref{ref orb} that Hurwitz moves preserve multiplicities of the letters of the $T$-word.
Since any two reduced $T$-words are related by a sequence of Hurwitz moves~\cite[Theorem~1.4]{IgusaSchiffler}, the lemma follows.
\end{proof}

\begin{lemma}\label{fold red}
Suppose $w\in[1,c]_T$ and suppose $a_1\cdots a_k$ is a reduced $T$-word for~$w$.
Let $b_1\cdots b_l$ be a word obtained from $a_1\cdots a_k$ by replacing each $a_i\in T$ by a formal product of the corresponding $\phi$-orbit in $T'$.
Then $b_1\cdots b_l$ is a reduced $T'$\nobreakdash-word for~$w$.
\end{lemma}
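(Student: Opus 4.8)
The plan is to exhibit $b_1\cdots b_l$ as an initial segment of a reduced $T'$-word for $c$, which makes its reducedness automatic. To begin, note that $b_1\cdots b_l$ is in any case a $T'$-word for $w$: by Lemma~\ref{ref orb}, each letter $a_i$ (which lies in $T$, since $a_1\cdots a_k$ is a reduced $T$-word) is the product of the corresponding $\phi$-orbit in $T'$, so multiplying the $b_j$ together recovers $a_1\cdots a_k=w$. Hence $\ell_{T'}(w)\le l$, and the whole content of the lemma is the reverse inequality, i.e.\ that $b_1\cdots b_l$ is reduced.

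Here is how I would get it. Since $w\in[1,c]_T$ we have $w\le_T c$, so the reduced $T$-word $a_1\cdots a_k$ extends to a reduced $T$-word $a_1\cdots a_n$ for $c$. Expanding every $a_i$ into the product of its $\phi$-orbit in $T'$ yields a $T'$-word for $c$ whose length is $\sum_{i=1}^n(\text{multiplicity of }a_i)$, and by Lemma~\ref{multiplicities} this equals the sum of the $\phi$-orbit sizes in $S'$, namely $|S'|$. Since $c$ is also a Coxeter element of $W'$, and the reflection length of a Coxeter element equals the number of simple reflections, we get $\ell_{T'}(c)=|S'|$; thus the expanded word is a \emph{reduced} $T'$-word for $c$. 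Now $b_1\cdots b_l$ is precisely the initial segment of this expanded word arising from the prefix $a_1\cdots a_k$, and its product is $w$. A prefix of a reduced $T'$-word is reduced (immediate from the additive characterization of $\le_{T'}$ recalled above), so $b_1\cdots b_l$ is a reduced $T'$-word for $w$, as desired.

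The one ingredient not already in hand is the equality $\ell_{T'}(c)=|S'|$. In finite type this is classical; for the affine Coxeter groups $W'$ that actually occur in this paper it is part of the Brady--McCammond and McCammond--Sulway analysis of $[1,c]_T$, so I would simply cite it, and I expect it to be the only real obstacle. The naive alternative — inducting on $\ell_T(w)$ and peeling off the $\phi$-orbit of the last letter $a_k$ — stalls because controlling the length after appending that orbit amounts to knowing $a_1\cdots a_{k-1}\le_{T'}a_1\cdots a_k$, which is essentially a special case of Proposition~\ref{fold subposet} and would make the argument circular. Working inside a fixed reduced $T'$-word for $c$ is exactly what sidesteps this, and all other non-cancellation bookkeeping.
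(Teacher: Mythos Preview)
Your proof is correct and follows essentially the same route as the paper's: extend $a_1\cdots a_k$ to a reduced $T$-word $a_1\cdots a_n$ for $c$, expand to a $T'$-word of length $|S'|$ via Lemma~\ref{multiplicities}, observe this must be a reduced $T'$-word for $c$, and conclude that the prefix $b_1\cdots b_l$ is reduced. The paper is terser about the step $\ell_{T'}(c)=|S'|$, simply asserting ``We see that $b_1\cdots b_m$ is a reduced $T'$-word for $c$,'' whereas you explicitly flag it as the one external ingredient needed.
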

\begin{proof}
Since $w\in[1,c]_T$, there exist letters $a_{k+1},\ldots,a_n$ in $T$ such that $a_1\cdots a_n$ is a reduced $T$-word for $c$.
Let $b_1\cdots b_m$ be the $T'$-word obtained from $a_1\cdots a_n$ as described in the lemma.
Lemma~\ref{multiplicities} implies that $m$ is the sum of the sizes of the $\phi$-orbits in $S'$ corresponding to the elements of $S$.
That is, $m=|S'|$.
We see that $b_1\cdots b_m$ is a reduced $T'$-word for $c$.
Since $b_1\cdots b_l$ is a prefix of $b_1\cdots b_m$, it is a reduced $T'$-word for $w$, as desired.
\end{proof}

\begin{proof}[Proof of Proposition~\ref{fold subposet}]
The proposition amounts to two assertions:
First, that an element $u\in W$ has $u\le_Tc$ if and only if $u\le_{T'}c$; and second that two elements $u,w\in[1,c]_T$ have $u\le_Tw$ if and only if $u\le_{T'}w$.

Suppose $u\le_Tc$ and let $a_1\cdots a_n$ be a reduced $T$-word for $c$ having a prefix $a_1\cdots a_k$ that is a $T$-word for $u$.
By Lemma~\ref{fold red}, there is a reduced $T'$-word for $c$ having a prefix that is a $T'$-word for $u$, so $u\le_{T'}c$.
Conversely, suppose $u\in W$ has $u\le_{T'}c$ and let $b_1\cdots b_m$ be a reduced $T'$-word for $c$ having a prefix $b_1\cdots b_l$ that is a reduced $T'$-word for $u$.
By Lemma~\ref{fold red}, we can take $b_1\cdots b_l$ to be the word obtained from a reduced $T$-word $a_1\cdots a_k$ for $u$.
Since also $u^{-1}c$ is in $W$, we can also appeal to Lemma~\ref{fold red} to take $b_{l+1}\cdots b_m$ to be the word obtained from a reduced $T$-word $a_{k+1}\cdots a_{n'}$ for $u$.
But then, the $T'$-word $b_1\cdots b_m$ for $c$ is obtained from $a_1\cdots a_{n'}$ as described in Lemma~\ref{fold red}, so Lemma~\ref{multiplicities} implies that $n'=n$ and thus $a_1\cdots a_n$ is a reduced $T$-word for $c$.
We see that $u\le_Tc$.

Now suppose $u,w\in[1,c]_T$.
If $u\le_Tw$ then there is a reduced $T$-word for $w$ having a reduced $T$-word for $u$ as a prefix.
Lemma~\ref{fold red} lets us construct a reduced $T'$-word for $w$ having a reduced $T'$-word for $u$ as a prefix, so $u\le_{T'}w$.
Conversely, if $u\le_{T'}w$, then there exists a reduced $T'$-word $b_1\cdots b_m$ for $c$ having a prefix $b_1\cdots b_k$ that is a word for $u$ and a prefix $b_1\cdots b_l$ that is a word for $w$, with $k\le l$.
Since $u$, $u^{-1}w$ and $w^{-1}c$ are all in $W$, arguing as in the previous paragraph, we can take $b_1\cdots b_m$ to be the word obtained from a reduced $T$-word $a_1\cdots a_n$ for $c$ having a prefix that is a $T$-word for $w$ and a shorter prefix that is a $T$-word for $u$.
Thus $u\le_Tw$.
\end{proof}

\subsection{Affine type}\label{aff sec}
The Coxeter groups of \newword{affine type} are precisely the Coxeter groups that admit a Cartan matrix $A$ with determinant $0$ such that every principal minor of $A$ has positive determinant.
Equivalently, $K$ is positive semi-definite but not positive definite, and for each $i=1,\ldots,n$, the restriction of $K$ to the span of ${\set{\alpha_1,\ldots,\alpha_n}\setminus\set{\alpha_i}}$ is positive definite.
There is a well known classification of Coxeter groups of affine type.
Some Coxeter groups of affine type admit multiple integer symmetrizable Cartan matrices, but we will make the standard choice (which comes from a uniform construction starting with a finite crystallographic Coxeter group.  
See, for example, \cite[Chapter~4]{Humphreys}.)
We will call this choice a \newword{standard affine Cartan matrix} associated to $W$.

We will not need the details of the construction of a standard affine Cartan matrix~$A$, but we will describe some of the properties of such an $A$.
There is a vector~$\delta\in V$ whose simple-root coordinates constitute a $0$-eigenvector of $A$ with strictly positive entries such that the gcd of the entries is~$1$.
There is an index ${\aff\in\set{1,\ldots,n}}$ that makes the rest of this paragraph true.
The matrix $A_\fin$ obtained from $A$ by deleting row $\aff$ and column $\aff$ is the Cartan matrix associated to an irreducible Coxeter group $W$ of finite type.
Let $V_\fin$ be the span of $\set{\alpha_1,\ldots,\alpha_n}\setminus\set{\alpha_\aff}$, let~$\Phi_\fin$ be the finite root system $\Phi\cap V_\fin$, and let $W_\fin$ be the associated Coxeter group (the standard parabolic subgroup of $W$ generated by $S\setminus\set{s_\aff}$).
The real roots associated to $A$ are precisely the vectors $\beta+k\delta$ for $\beta\in\Phi_\fin$ and $k\in\integers$.
(The imaginary roots associated to $A$ are the nonzero integer multiples of $\delta$.)


As mentioned in Section~\ref{background ncp}, in finite type, for each Coxeter element $c$ there is a plane called the Coxeter plane on which $c$ acts as a rotation.
In the affine case, there are two analogs of the Coxeter plane, one contained in $V$ and the other in~$V^*$.

In an affine Coxeter group, the action of a Coxeter element $c$ on $V$ has the eigenvalue $1$ with multiplicity $2$, but has only a $1$-dimensional fixed space.
The fixed space is spanned by $\delta$ (which indeed is fixed by every element of $W$).
There is a unique generalized $1$-eigenvector $\gamma_c$ associated to $\delta$ contained in the subspace~$V_\fin$.
(The fact that $\gamma_c$ is a generalized $1$-eigenvector associated to $\delta$ means that ${(c-1)\gamma_c=\delta}$.
See \cite[Proposition~3.1]{afforb}.)
The plane spanned by $\delta$ and $\gamma_c$ is fixed as a set by $c$, and will be called the \newword{Coxeter plane in $V$}.  

The action of the Coxeter element $c$ on $V^*$ also has the eigenvalue $1$ with multiplicity $2$ and a $1$-dimensional fixed space.
The vector $\omega_c(\delta,\,\cdot\,)\in V^*$ spans the fixed space \cite[Lemma~3.5]{afforb}.
\begin{lemma}\label{gamma*}
The vector $\omega_c(\gamma_c,\,\cdot\,)\in V^*$ is a generalized $1$-eigenvector for $c$, associated to the $1$-eigenvector $\omega_c(\delta,\,\cdot\,)$.
\end{lemma}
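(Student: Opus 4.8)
The plan is a direct computation with the contragredient action of $c$ on $V^*$, using only the $c$-invariance $\omega_c(cx,cy)=\omega_c(x,y)$ (a consequence of \cite[Lemma~3.8]{typefree}), the relation $c\delta=\delta$, and the defining relation $(c-1)\gamma_c=\delta$. Recall that $c$ acts on $V^*$ by $\br{c\cdot f,x}=\br{f,c^{-1}x}$ for $f\in V^*$ and $x\in V$.

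First I would reconfirm (this is \cite[Lemma~3.5]{afforb}) that $\omega_c(\delta,\,\cdot\,)$ is fixed by $c$: for every $x\in V$,
\[
\br{c\cdot\omega_c(\delta,\,\cdot\,),\,x}=\omega_c(\delta,c^{-1}x)=\omega_c(c\delta,x)=\omega_c(\delta,x),
\]
using $c$-invariance and then $c\delta=\delta$. Then I would run the same manipulation with $\gamma_c$ in place of $\delta$: for every $x\in V$,
\[
\br{c\cdot\omega_c(\gamma_c,\,\cdot\,),\,x}=\omega_c(\gamma_c,c^{-1}x)=\omega_c(c\gamma_c,x),
\]
and since $(c-1)\gamma_c=\delta$ gives $c\gamma_c=\gamma_c+\delta$, bilinearity yields $\omega_c(c\gamma_c,x)=\omega_c(\gamma_c,x)+\omega_c(\delta,x)$. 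Hence $c\cdot\omega_c(\gamma_c,\,\cdot\,)=\omega_c(\gamma_c,\,\cdot\,)+\omega_c(\delta,\,\cdot\,)$, that is, $(c-1)\,\omega_c(\gamma_c,\,\cdot\,)=\omega_c(\delta,\,\cdot\,)$, which is exactly the claim.

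There is no real obstacle here; the argument is two lines. The only points deserving a word of care are the convention for the dual action (taking $\br{c\cdot f,x}=\br{f,cx}$ instead would merely replace $\delta$ by $-\delta$ in the final display, leaving the statement unchanged), and the tacit assertion that one gets a genuine rank-$2$ Jordan block, which requires $\omega_c(\delta,\,\cdot\,)\neq0$; the latter is part of \cite[Lemma~3.5]{afforb} and already forces $\omega_c(\gamma_c,\,\cdot\,)$ to be independent of $\omega_c(\delta,\,\cdot\,)$, since $a\,\omega_c(\delta,\,\cdot\,)+b\,\omega_c(\gamma_c,\,\cdot\,)=0$ implies, on applying $c-1$, that $b\,\omega_c(\delta,\,\cdot\,)=0$, whence $b=0$ and then $a=0$. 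Conceptually the whole thing is just the observation that $v\mapsto\omega_c(v,\,\cdot\,)$ intertwines the $c$-action on $V$ with the $c$-action on $V^*$, so it transports the length-$2$ Jordan chain $\delta,\gamma_c$ of $V$ to one in $V^*$.
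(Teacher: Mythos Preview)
Your proof is correct and is essentially the same as the paper's: both use the contragredient action $\br{c\cdot f,x}=\br{f,c^{-1}x}$, the $c$-invariance of $\omega_c$, and $c\gamma_c=\gamma_c+\delta$ to obtain $c\cdot\omega_c(\gamma_c,\,\cdot\,)=\omega_c(c\gamma_c,\,\cdot\,)=\omega_c(\gamma_c,\,\cdot\,)+\omega_c(\delta,\,\cdot\,)$. Your extra remarks on the dual-action convention and on linear independence are correct but go beyond what the lemma requires.
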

\begin{proof}
We need to show that $c\cdot\omega_c(\gamma_c,\,\cdot\,)=\omega_c(\gamma_c,\,\cdot\,)+\omega_c(\delta,\,\cdot\,)$.
Since $\gamma_c$ is a generalized $1$-eigenvector associated to $\delta$, we know that $c\gamma_c=\gamma_c+\delta$.
Thus $\omega_c(c\gamma_c,\,\cdot\,)=\omega_c(\gamma_c,\,\cdot\,)+\omega_c(\delta,\,\cdot\,)$.
We will verify that $\omega_c(c\gamma_c,\,\cdot\,)=c\cdot\omega_c(\gamma_c,\,\cdot\,)$.
Writing $x$ for $\omega_c(\gamma_c,\,\cdot\,)$, for any $y\in V$, we have 
\[\br{cx,y}=\br{x,c^{-1}y}=\omega_c(\gamma_c,c^{-1}y)=\omega_c(c\gamma_c,y),\]
as desired.
\end{proof}

The plane in $V^*$ spanned by $\omega_c(\delta,\,\cdot\,)$ and $\omega_c(\gamma_c,\,\cdot\,)$ will be called the \newword{Coxeter plane in $V^*$}.

To implement the ``project a small orbit to the Coxeter plane'' construction in affine type, we need to decide whether to consider an orbit in $V$ or $V^*$ and whether to project to the Coxeter plane in $V$ or $V^*$.
We will see here and in~\cite{affncD} that taking an orbit in $V$ and projecting to the Coxeter plane in $V^*$ works well.
Other choices appear to work less well.

\section{Affine type A}\label{aff type a}
\subsection{Affine permutations and a larger group}
The Coxeter group $W$ of type $\afftype{A}_{n-1}$ has $S=\set{s_1,\ldots,s_n}$, $m(s_i,s_{i+1})=3$, and otherwise $m(s_i,s_j)=2$.
(Throughout this section, we take indices mod $n$ for simple reflections and simple roots.)
A Cartan matrix $A$ for $W$ has $a_{i\,(i\pm1)}=-1$ and other off-diagonal entries $0$.

We construct a root system for~$A$ in the vector space $\reals^{n+1}$, with standard basis $\e_1,\ldots,\e_{n+1}$.
We further define vectors $\e_i$ for all $i\in\integers$ by defining $\delta$ to be $\e_{n+1}-\e_1$ and setting $\e_{i+n}=\e_i+\delta$ for all $i$.
(Since the vectors $\e_i$ are indexed by $\integers$, indices on vectors $\e$ are \emph{not} taken mod~$n$.)
We define a symmetric bilinear form $K$ on $\reals^{n+1}$ by taking the usual inner product on the linear span of $\e_1,\ldots,\e_n$ but setting $K(\e_{n+1},x)=K(\e_1,x)$ for all $x\in\reals^{n+1}$.
In particular, $K(\delta,x)=0$ for all $x\in\reals^{n+1}$.

Define vectors $\alpha_i=\e_{i+1}-\e_i$ and $\alpha_i\ck=\alpha_i$ for $i=1,\ldots,n$.
These vectors are naturally indexed modulo $n$, because ${\e_{i+n+1}-\e_{i+n}}={(\e_{i+1}+\delta)-(\e_i+\delta)}={\e_{i+1}-\e_i}$ for all $i\in\integers$.
The linear span of $\alpha_1,\ldots,\alpha_n$ is the subspace $V$ of $\reals^{n+1}$ consisting of vectors whose coordinates sum to zero.
We see that $K(\alpha_i\ck,\alpha_j)=a_{ij}$ for all $i,j\in\set{1,\ldots,n}$.
Thus we have constructed simple roots and a form $K$ corresponding to $A$.  
The vector $\delta$ is the vector described in Section~\ref{aff sec}.
That is, its simple-roots coordinates (the all-ones vector) are a $0$-eigenvector of $A$ with positive entries and gcd~$1$.

The positive roots for $A$ are the vectors of the form $\e_j-\e_i$ for $i<j\in\integers$ with $i\not\equiv j\mod n$.
Any given positive root has infinitely many expressions as a difference $\e_j-\e_i$, because $\e_j-\e_i=(\e_{j+n}-\delta)-(\e_{i+n}-\delta)=\e_{j+n}-\e_{i+n}$ and so forth.
The reflection $t$ orthogonal to a root $\e_j-\e_i$ has 
\[t(\e_k)=\begin{cases}
\e_{k-j+i}&\text{if }k\equiv j\mod n,\\
\e_{k+j-i}&\text{if }k\equiv i\mod n,\text{ or}\\
\e_k&\text{otherwise,}
\end{cases}\]
and in particular 
\[s_i(\e_j)=\begin{cases}
\e_{j+1}&\text{if }j\equiv i\mod n,\\
\e_{j-1}&\text{if }j\equiv i+1\mod n,\text{ or}\\
\e_j&\text{otherwise.}
\end{cases}\]

The set $\Phi_\fin=\set{\pm(\e_j-\e_i):1\le i<j\le n}$ is a finite root system of type $A_{n-1}$ and the full set $\Phi$ of real roots is $\set{\beta+k\delta:\beta\in\Phi_\fin,k\in\integers}$.

We describe the affine Coxeter group $W$ in terms of its action on $\set{\e_i:i\in\integers}$.
The action on indices $i\in\integers$ is the usual description of~$W$ as  the group $\Stilde_n$ of \newword{affine permutations}.
(See, e.g.\ \mbox{\cite[Section~8.3]{Bj-Br}}.)
These are permutations $\pi$ of $\integers$ such that $\pi(i+n)=\pi(i)+n$ for all $i\in\integers$ and $\sum_{i=1}^n\pi(i)=\binom{n+1}2$.
We also consider the group $\SZmodn$ of \newword{periodic permutations}: permutations $\pi$ of $\integers$ such that ${\pi(i+n)=\pi(i)+n}$ for all $i\in\integers$.
The group of periodic permutations is sometimes called the extended affine symmetric group.
An affine permutation (or more generally a periodic permutation) $\pi$ acts on $\reals^{n+1}$ by sending each $\e_i$ to~$\e_{\pi(i)}$.

A permutation $\pi\in \SZmodn$ can be described in terms of its cycle structure.
A finite cycle in $\pi$ may not contain two entries that are equivalent modulo $n$, and for every finite cycle $(a_1\,\,\,a_2\,\cdots\,a_k)$ in $\pi$, the cycle $(a_1+\ell n\,\,\,a_2+\ell n\,\cdots\,a_k+\ell n)$ is also present in $\pi$ for every $\ell\in\integers$.
We write $(a_1\,\,\,a_2\,\cdots\,a_k)_n$ for the infinite product $\prod_{\ell\in\integers}(a_1+\ell n\,\,\,a_2+\ell n\,\cdots\,a_k+\ell n)$ of cycles.
An infinite cycle in $\pi$ necessarily has two entries that are equivalent modulo $n$, and, for any entry $a$ in the cycle, the cycle is completely determined by the sequence of entries from $a$ to the next entry that is equivalent to $a$ modulo $n$.
Thus we can represent infinite cycles in $\pi$ by writing $(\cdots\,a_1\,\,\,a_2\,\cdots\,a_{k+1}\,\cdots)$, where $a_{k+1}\equiv a_1\mod n$ and $a_i\not\equiv a_1\mod n$ for $i=1,\ldots,k$.

The simple reflections in $\Stilde_n$ are $s_i=(i\,\,\,i+1)_n$ for $i=1,\ldots,n$, and similarly the reflection orthogonal to a root $\e_j-\e_i$ is $(i\,\,\,j)_n$.
Thus the set of reflections is $T=\set{(i\,\,\,j)_n:i<j,i\not\equiv j\mod n}$.

A periodic permutation $\pi$ is completely determined by the sequence of entries $\pi(1),\pi(2),\ldots,\pi(n)$, sometimes called the ``window'' of $\pi$.
The affine permutations are the periodic permutations whose window sums to $\binom{n+1}2$.
\begin{lemma}\label{window mod n}
Suppose $\pi,\tau\in \SZmodn$.
Then  
\begin{enumerate}[\quad\rm\bf1.]
\item \label{one per class}
$\set{\pi(1),\ldots,\pi(n)}$ contains exactly one representative of each mod-$n$ class.
\item \label{mod n}
$\sum_{i=1}^n\pi(i)\equiv \binom{n+1}2 \mod n$.
\end{enumerate}
\end{lemma}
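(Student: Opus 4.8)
The plan is to prove both statements directly from the defining property $\pi(i+n) = \pi(i) + n$ of a periodic permutation, since neither requires the affine condition on the window sum.

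For part (1), first I would observe that the map $\integers \to \integers/n\integers$ sending $k$ to its residue class composes with $\pi$ to give a well-defined map $\bar\pi\colon \integers/n\integers \to \integers/n\integers$: indeed, if $i \equiv j \mod n$, say $j = i + \ell n$, then $\pi(j) = \pi(i) + \ell n \equiv \pi(i) \mod n$, so the residue class of $\pi(i)$ depends only on the residue class of $i$. Next I would check that $\bar\pi$ is a bijection of $\integers/n\integers$: it has an inverse induced by $\pi^{-1}$, which is also periodic (from $\pi(i+n) = \pi(i)+n$ one gets $\pi^{-1}(k+n) = \pi^{-1}(k) + n$ by applying $\pi^{-1}$ to both sides after the substitution $k = \pi(i)$). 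Since $1, 2, \ldots, n$ is a complete set of residue representatives and $\bar\pi$ is a bijection, $\pi(1), \ldots, \pi(n)$ hit each residue class exactly once, which is the claim.

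For part (2), I would deduce it as an immediate corollary of part (1): since $\{\pi(1), \ldots, \pi(n)\}$ contains exactly one element from each mod-$n$ class, we have $\sum_{i=1}^n \pi(i) \equiv \sum_{j=1}^n j = \binom{n+1}{2} \mod n$, because summing one representative from each class modulo $n$ gives the same residue regardless of which representatives are chosen (the sum of all residues $0, 1, \ldots, n-1$ is fixed mod $n$).

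I do not anticipate a genuine obstacle here; the only point requiring a little care is confirming that $\pi^{-1}$ inherits the periodicity property so that $\bar\pi$ is actually invertible (as opposed to merely well-defined), and that the residues $\{1, \ldots, n\}$ and $\{0, 1, \ldots, n-1\}$ give the same sum modulo $n$ in part (2). Both are routine. An alternative for part (1) that avoids passing to $\integers/n\integers$ explicitly: show directly that no two of $\pi(1), \ldots, \pi(n)$ are congruent mod $n$ (if $\pi(i) \equiv \pi(j)$ with $1 \le i, j \le n$, then $\pi(i) - \pi(j) = \ell n$ for some $\ell$, so $\pi(j + \ell n) = \pi(j) + \ell n = \pi(i)$, forcing $j + \ell n = i$ by injectivity of $\pi$, hence $i \equiv j \mod n$ and thus $i = j$), and then note that $n$ integers pairwise incongruent mod $n$ must represent all $n$ residue classes. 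I would likely present this second version as it is the most self-contained.
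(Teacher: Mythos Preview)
Your proposal is correct and essentially identical to the paper's proof: the ``second version'' you describe for part~(1) (showing directly that $\pi(i)\equiv\pi(j)\pmod n$ forces $i=j$ via injectivity and periodicity) is exactly the argument the paper gives, and your deduction of part~(2) from part~(1) matches the paper's, which just makes the same observation explicit by writing $\pi(i)=\mu(i)+\kappa(i)n$ with $1\le\mu(i)\le n$.
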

\begin{proof}
If $1\le i<j\le n$ and $\pi(i)\equiv\pi(j)\mod n$, then write $\pi(i)=\pi(j)-kn$ for some $k\in\integers$.
Then $\pi(i+kn)=\pi(j)$, and this contradiction proves Assertion~\ref{one per class}.

Since $\pi\in \SZmodn$, for each $i\in\set{1,\ldots,n}$, we can write $\pi(i)$ uniquely as $\mu(i)+\kappa(i)n$ for $\mu(i),\kappa(i)\in\integers$ with $1\le\mu(i)\le n$.
Then  $\sum_{i=1}^n\pi(i)$ is equal to $\sum_{i=1}^n(\mu(i)+\kappa(i)n)$, which by Assertion~\ref{one per class} equals $\binom{n+1}2+\sum_{i=1}^n\kappa(i)n$.
That proves Assertion~\ref{mod n}.
\end{proof}

\begin{remark}\label{normal}
One can show that the map $\pi\mapsto\frac1n\left(\sum_{i=1}^n\pi(i)-\binom{n+1}2\right)$ is a surjective homomorphism from $\SZmodn$ to $\integers$, whose kernel is the subgroup $\Stilde_n$.
\end{remark}

\begin{prop}\label{SZ mod n gen}
The group $\SZmodn$ is generated by the set $\set{s_1,\ldots,s_n}\cup\set{\ell_1}$, where $\ell_1$ is the permutation whose only nontrivial cycle is  $(\cdots\,1\,\,\,\,1+n\,\cdots)$.
\end{prop}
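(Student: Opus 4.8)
The plan is to show that $\Stilde_n$ together with the single extra generator $\ell_1$ generates all of $\SZmodn$. The key structural input is Remark~\ref{normal}: the map $\nu\colon\pi\mapsto\frac1n\bigl(\sum_{i=1}^n\pi(i)-\binom{n+1}2\bigr)$ is a surjective homomorphism $\SZmodn\to\integers$ with kernel $\Stilde_n$. First I would compute $\nu(\ell_1)$. The permutation $\ell_1$ has the single nontrivial infinite cycle $(\cdots\,1\,\,\,1+n\,\cdots)$, meaning $\ell_1(1)=1+n$ and $\ell_1(i)=i$ for $i=2,\ldots,n$; hence $\sum_{i=1}^n\ell_1(i)=\binom{n+1}2+n$, so $\nu(\ell_1)=1$. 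Thus $\langle\ell_1\rangle$ surjects onto $\integers$ under $\nu$.

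From here the argument is a standard extension/short-exact-sequence manipulation. Let $G=\langle s_1,\ldots,s_n,\ell_1\rangle\le\SZmodn$. Since $s_1,\ldots,s_n$ generate $\Stilde_n=\ker\nu$, we have $\ker\nu\subseteq G$. Given any $\pi\in\SZmodn$, set $k=\nu(\pi)\in\integers$; then $\nu(\pi\ell_1^{-k})=\nu(\pi)-k\,\nu(\ell_1)=0$, so $\pi\ell_1^{-k}\in\ker\nu\subseteq G$, and since $\ell_1\in G$ we get $\pi\in G$. Hence $G=\SZmodn$. The only facts I need beyond the excerpt are that $s_1,\ldots,s_n$ generate $\Stilde_n$ — which is exactly the statement that $\Stilde_n$ is the Coxeter group of type $\afftype{A}_{n-1}$ with those simple reflections, recalled in Section~\ref{aff type a} — and the homomorphism property and kernel computation of $\nu$ from Remark~\ref{normal}.

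There is essentially no hard obstacle here; the proof is a two-line diagram chase once $\nu(\ell_1)=1$ is recorded. The one point that deserves a sentence of care is making sure $\ell_1$ really lies in $\SZmodn$ and that its window has been read correctly from the cycle notation $(\cdots\,1\,\,\,1+n\,\cdots)_{\vphantom{n}}$ — i.e. that $\ell_1$ fixes $2,\ldots,n$ and sends $1\mapsto 1+n$ (and consequently, by periodicity, $1+n\mapsto 1+2n$, etc., so that $\ell_1$ is indeed a single infinite cycle through the class of $1$ and the identity elsewhere). If one wanted a proof avoiding Remark~\ref{normal}, an alternative is to exhibit, for each generator of $\SZmodn$ in some natural generating set (say the $s_i$ together with all the ``shift'' permutations $\ell_j$ whose only nontrivial cycle is $(\cdots\,j\,\,\,j+n\,\cdots)$), an expression in terms of $s_1,\ldots,s_n,\ell_1$; indeed $\ell_j = w\ell_1 w^{-1}$ for any $w\in\Stilde_n$ with $w(1)\equiv j$, so all the $\ell_j$ are conjugates of $\ell_1$ by elements already available. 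But the $\nu$-based argument is cleaner and is the one I would write.
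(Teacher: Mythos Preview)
Your proof is correct and follows essentially the same route as the paper: reduce an arbitrary $\pi\in\SZmodn$ to an element of $\Stilde_n$ by multiplying by the appropriate power of $\ell_1$, then use that $\set{s_1,\ldots,s_n}$ generates $\Stilde_n$. The only difference is cosmetic---the paper cites Lemma~\ref{window mod n}.\ref{mod n} and writes $\pi'=\ell_1^{-k}\pi$ directly, whereas you phrase the same step via the homomorphism $\nu$ of Remark~\ref{normal} and multiply on the right instead of the left.
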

\begin{proof}
Suppose $\pi\in \SZmodn$.
Lemma~\ref{window mod n}.\ref{mod n} says that $\sum_{i=1}^n\pi(i)=\binom{n+1}2+kn$ for some $k\in\integers$.
Writing $\pi'$ for $\ell_1^{-k}\circ\pi$, we have $\sum_{i=1}^n\pi'(i)=\binom{n+1}2$, so $\pi'\in\Stilde_n$.
Since~$\Stilde_n$ is generated by $\set{s_1,\ldots,s_n}$, we see that $\pi=\ell_1^k\pi'$ is in the group generated by $\set{s_1,\ldots,s_n}\cup\set{\ell_1}$.
\end{proof}

More generally, write $\ell_i$ for $(\cdots\,i\,\,\,\,i+n\,\cdots)$ and write $L$ for the set $\set{\ell_1,\ldots,\ell_n}\cup\set{\ell_1^{-1},\ldots,\ell_n^{-1}}$ of \newword{loops}.
The name looks forward to a map that associates $\ell_i$ or~$\ell_i^{-1}$ to a noncrossing partition of the annulus whose only nontrivial block resembles a loop at $i$.
The following is an immediate corollary of Proposition~\ref{SZ mod n gen}.
\begin{cor}\label{gen T L}
$\SZmodn$ is generated by $T\cup L$.
\end{cor}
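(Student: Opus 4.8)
The plan is to deduce this immediately from Proposition~\ref{SZ mod n gen}, which already exhibits an explicit generating set $\set{s_1,\ldots,s_n}\cup\set{\ell_1}$ for $\SZmodn$. The only thing to check is that this generating set is contained in $T\cup L$. First I would observe that each simple reflection $s_i=(i\,\,\,i+1)_n$ is by definition an element of the reflection set $T$, so $\set{s_1,\ldots,s_n}\subseteq T$. Next, $\ell_1$ is by definition one of the loops, so $\ell_1\in L$. Hence $\set{s_1,\ldots,s_n}\cup\set{\ell_1}\subseteq T\cup L$.

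From here the argument closes in one line: the subgroup generated by $T\cup L$ contains the smaller generating set $\set{s_1,\ldots,s_n}\cup\set{\ell_1}$, which already generates all of $\SZmodn$ by Proposition~\ref{SZ mod n gen}; and conversely every element of $T\cup L$ is a periodic permutation, hence lies in $\SZmodn$, so the subgroup generated by $T\cup L$ is contained in $\SZmodn$. Therefore the two groups coincide. There is essentially no obstacle here — the content is entirely in Proposition~\ref{SZ mod n gen}, and this corollary is just a repackaging that records the useful fact that the extra generator needed beyond the reflections can be taken to be a loop (indeed, any single $\ell_i^{\pm1}$ would do, since conjugating $\ell_1$ by elements of $\Stilde_n$ produces the other loops).
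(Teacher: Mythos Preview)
Your argument is correct and matches the paper's approach exactly: the paper simply states that this is an immediate corollary of Proposition~\ref{SZ mod n gen}, with no further proof given. Your unpacking of why it is immediate (that $\set{s_1,\ldots,s_n}\cup\set{\ell_1}\subseteq T\cup L$) is precisely the intended reasoning.
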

The set $T\cup L$ has an important property (analogous to the set $T$ of generators of $\Stilde_n$):  It is closed under conjugation in $\SZmodn$.

Just as we are interested in the interval $[1,c]_T$ in the absolute order on $\Stilde_n$, we will also be interested in the analogous interval in the group $\SZmodn$, with respect to the generating set $T\cup L$.
We write $\ell_{T\cup L}$ for the length function in $\SZmodn$ relative to the generating set $T\cup L$.
We also write $\le_{T\cup L}$ for the partial order on $\SZmodn$ defined by $u\le_{T\cup L}w$ if and only if $\ell_{T\cup L}(w)=\ell_{T\cup L}(u)+\ell_{T\cup L}(u^{-1}w)$, analogous to the absolute order on $\Stilde_n$.
We write $[1,c]_{T\cup L}$ for the interval between $1$ and $c$ in this order, where the subscript specifies not only the generating set, but indirectly also the group where the interval lives.

\subsection{Coxeter elements}\label{cox A sec}
In this section, we describe the choice of a Coxeter element in $\Stilde_n$ in terms of placements of integers on the boundary of an annulus.
In Section~\ref{plane A sec}, we show how the placement of numbers on the annulus arises from the construction described in Section~\ref{aff sec}, projecting an orbit in $V$ to the Coxeter plane in $V^*$.

In type $\afftype{A}_{n-1}$, the Coxeter diagram is a cycle with each $s_i$ adjacent to $s_{i-1}$ and~$s_{i+1}$ (with indices modulo $n$ as usual).
Thus the choice of a Coxeter element $c$ is exactly the choice of $s_{i-1}\to s_i$ or $s_i\to s_{i-1}$ for each $i=1,\ldots,n$ to make an acyclic orientation.
We record the choice of $c$ as follows:
The numbers $1,\ldots,n$ are placed in clockwise order over one full turn about the center of the annulus.
The number $i$ is placed on the outer boundary if and only if $s_{i-1}\to s_i$ (i.e.\ $s_{i-1}$ precedes $s_i$ in~$c$) or on the inner boundary if and only if $s_i\to s_{i-1}$ (i.e.\ $s_i$ precedes $s_{i-1}$ in~$c$).
We identify the numbers $1,\ldots,n$ with their positions on the boundary of the annulus, and call them \newword{outer points} or \newword{inner points}.
This construction creates a bijection from the set of Coxeter elements to the set of all partitions of $\set{1,\ldots,n}$ into a nonempty set of outer points and a nonempty set of inner points.
If there are no outer points or no inner points, the corresponding orientation of the diagram is a directed cycle, and thus does not specify a Coxeter element.

\begin{example}\label{labeled annulus ex}  
Figure~\ref{labeled annulus} shows the case where $W=\Stilde_7$ and the Coxeter element is $c= s_6s_5s_2s_1s_3s_4s_7$, so that the outer points are $3,4,7$ and the inner points are $1,2,5,6$.
In cycle notation, $c=(\cdots\,3\,\,\,4\,\,\,7\,\,\,10\,\cdots)(\cdots\,6\,\,\,5\,\,\,2\,\,\,1\,\,-1\,\cdots)$.
The significance of the gray vertical line segment in the picture will be explained later.
\end{example}
\begin{figure}
\scalebox{1}{\includegraphics{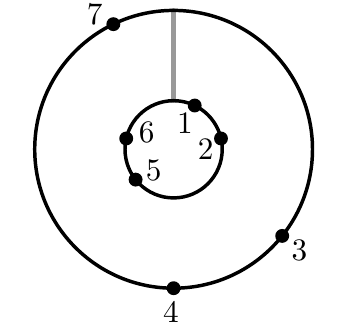}}
\caption{Inner points and outer points for $c= s_6s_5s_2s_1s_3s_4s_7$}
    \label{labeled annulus}
\end{figure}


\begin{lemma}\label{c annulus}
Let $c$ be a Coxeter element of $\Stilde_n$, represented as a partition of $\set{1,\ldots,n}$ into inner points and outer points.
If $a_1,\ldots, a_k$ are the outer points in increasing order and $b_1,\ldots,b_{n-k}$ are the inner points in decreasing order, then
\[c=(\cdots\,a_1\,\,\,a_2\,\cdots\,a_k\,\,\,a_1+n\,\cdots)(\cdots\,b_1\,\,\,b_2\,\cdots\,b_{n-k}\,\,\,b_1-n\,\cdots).\]
\end{lemma}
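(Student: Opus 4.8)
The plan is to verify directly from the definition of the Coxeter element as a product of simple reflections that the stated cycle notation is correct. Recall that the simple reflection $s_i = (i\,\,\,i+1)_n$ acts on the indices $i \in \integers$ by swapping each $j \equiv i \pmod n$ with the corresponding $j \equiv i+1 \pmod n$. I would fix the expression for $c$ as a product of the $s_i$ in the order dictated by the acyclic orientation: writing $c = s_{j_1} s_{j_2} \cdots s_{j_n}$, where the order is chosen so that $s_i$ precedes $s_{i-1}$ exactly when $i$ is an inner point (and $s_{i-1}$ precedes $s_i$ exactly when $i$ is an outer point), consistent with Example~\ref{labeled annulus ex}.

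First I would establish the following local picture: the key is that since $c$ is a Coxeter element, each $s_i$ appears exactly once, so computing $c$ as a permutation amounts to tracking, for a fixed starting index, which transposition is the first to move it, then the next, and so on. I would argue that applying $c$ to an outer point $a_j$ sends it to the next outer point $a_{j+1}$ (cyclically, with $a_{k+1} = a_1 + n$): the transpositions $s_{a_j}, s_{a_j+1}, \ldots, s_{a_{j+1}-1}$ are precisely those among $s_1,\ldots,s_n$ whose indices lie strictly between $a_j$ and $a_{j+1}$, these are exactly the ones that can move $a_j$ upward, and because $a_j$ is outer and $a_{j+1}$ is outer while everything strictly between is inner, the relevant reflections occur in the correct relative order in $c$ to carry $a_j$ step by step up to $a_{j+1}$, after which no further reflection touches it. Symmetrically, applying $c$ to an inner point $b_j$ sends it down to the next inner point $b_{j+1}$ (with $b_{n-k+1} = b_1 - n$), by tracking the reflections with indices between $b_{j+1}$ and $b_j$ that push $b_j$ downward. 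The mod-$n$ periodicity ($\pi(i+n) = \pi(i) + n$) then propagates this to all of $\integers$, giving the two claimed infinite cycles.

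The cleanest way to organize this is probably induction on $n$, or better, a direct bookkeeping argument: I would consider an arbitrary index $m \in \set{1,\ldots,n}$, say with $m$ an outer point equal to $a_j$, and show by examining the sequence $s_{j_1},\ldots,s_{j_n}$ left to right that the image of $a_j$ under the partial products is monotone, moving only when a reflection $s_{a_j}, s_{a_j + 1}, \ldots$ is encountered in order, and halts at $a_{j+1}$. The monotonicity is guaranteed because once $a_j$ has been pushed to position $p$ with $a_j < p \le a_{j+1}$, the only reflection among the remaining letters that involves $p$ is $s_p$ (if $p < a_{j+1}$) or none (if $p = a_{j+1}$, since $s_{a_{j+1}}$ and $s_{a_{j+1}-1}$ relative order is such that $s_{a_{j+1}-1}$ comes before $s_{a_{j+1}}$ only when... ) — here I need to be careful and use precisely the defining condition that $a_{j+1}$ outer means $s_{a_{j+1}-1}$ precedes $s_{a_{j+1}}$ in $c$, which is exactly what stops the index at $a_{j+1}$ rather than continuing. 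The inner-point case is the mirror image.

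The main obstacle I anticipate is the careful handling of the ``wrap-around,'' i.e.\ making the cyclic indexing of outer points $a_1 < a_2 < \cdots < a_k < a_1 + n$ (and inner points $b_1 > b_2 > \cdots > b_{n-k} > b_1 - n$) interact correctly with the fact that the simple reflections are indexed mod $n$ while the $\e_i$ are indexed by $\integers$. Concretely, the transition from $a_k$ to $a_1 + n$ involves reflections $s_{a_k}, s_{a_k + 1}, \ldots, s_n$ followed by $s_1, \ldots, s_{a_1 - 1}$ (indices mod $n$), and I must check that these still occur in $c$ in an order compatible with pushing $a_k$ up past $n$ to $a_1 + n$; this follows from the same outer/inner orientation conditions but requires stating the argument so that it does not secretly assume $a_j < a_{j+1}$ as integers. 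A clean alternative that sidesteps some of this is to first reduce to a conjugate Coxeter element (via source-sink moves, legitimate by the discussion in Section~\ref{background ncp}) where $1$ is, say, an inner point, and then induct; but I would likely just do the direct computation, as the bookkeeping, while fiddly, is elementary. An even slicker route: verify that the permutation $\sigma$ defined by the right-hand side satisfies $\sigma = s_{j_1}\cdots s_{j_n}$ by checking $s_{j_1}^{-1}\sigma = s_{j_2}\cdots s_{j_n}$ recursively — peeling off the source reflection $s_{j_1}$ (a source of the orientation) and recognizing the result as the Coxeter element of $\Stilde_n$ restricted to the orientation with $s_{j_1}$'s arrows reversed, i.e.\ moving one point from outer to inner or vice versa — but verifying that this recursion closes up correctly is essentially the same work.
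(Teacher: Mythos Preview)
Your proposal is correct, and the direct-tracking argument you outline as your primary route does go through: the key relative-order facts you need are exactly the ones encoded in the outer/inner labeling (namely, $s_{a_j-1}$ precedes $s_{a_j}$ since $a_j$ is outer; $s_{a_j+1},\ldots,s_{a_{j+1}-1}$ each precedes its right neighbor since the intermediate points are inner; and $s_{a_{j+1}-1}$ precedes $s_{a_{j+1}}$ since $a_{j+1}$ is outer), and together these force the rightmost-to-leftmost application of the simple reflections to carry $a_j$ monotonically up to $a_{j+1}$ and then leave it there. The wrap-around case you flag as an obstacle is genuine bookkeeping but involves no new idea.

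The paper, however, takes precisely the route you mention as your ``clean alternative'': it verifies the formula by hand for the single Coxeter element $c=s_ns_{n-1}\cdots s_{k+1}s_1s_2\cdots s_k$ (outer points $1,\ldots,k$, inner points $k+1,\ldots,n$), and then checks that the conclusion is stable under source-sink moves, since conjugating by $s_i=(i\,\,\,i{+}1)_n$ simply swaps $i$ and $i+1$ in the cycle notation while the source-sink move swaps their inner/outer status. This buys a much shorter argument with essentially no case analysis and no wrap-around headache; your direct approach, on the other hand, is self-contained (it does not rely on transitivity of source-sink moves on acyclic orientations of the cycle) and makes the mechanism of ``outer points cycle forward, inner points cycle backward'' visible at the level of individual simple reflections.
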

\begin{proof}
Since every orientation of the Coxeter diagram for $\Stilde_n$ is conjugate, by source-sink moves, to an orientation such that $s_n$ is the unique source and there is a unique sink $s_k$, we can prove the lemma in two steps.
The first step, whose details are omitted, is to compute that the lemma holds in the case where $c=s_ns_{n-1}\cdots s_{k+1}s_1s_2\cdots s_k$, 
so that the outer points are $1,\ldots,k$ and the inner points are $k+1,\ldots,n$.

The second step is to show that the conclusion of the lemma is preserved under source-sink moves.
Suppose $c$ has cycles as described in the lemma, and suppose~$s_i$ is a source.
That means that $i$ is inner and $i+1$ is outer.
After the source-sink move, $i$ is outer and $i+1$ is inner.
Conjugating by $s_i=(i\,\,\,i+1)_n$ serves to swap $i$ and $i+1$ in the cycle notation for $c$, thus preserving the conclusion of the lemma.
If $s_i$ is a sink, the proof is the same, with inner and outer reversed.
\end{proof}

\subsection{Projecting to the Coxeter plane}\label{plane A sec}
Recall that the Coxeter plane in $V^*$ is spanned by the vectors $\omega_c(\delta,\,\cdot\,)$ and $\omega_c(\gamma_c,\,\cdot\,)$.
We next explain how to project an orbit in $V$ to the Coxeter plane in $V^*$ and recover the annulus from Section~\ref{cox A sec}.

\begin{prop}\label{om del rho}
Let $c$ be a Coxeter element of $\Stilde_n$, represented as a partition of $\set{1,\ldots,n}$ into inner points and outer points.
Then 
\[\frac12\omega_c(\delta,\,\cdot\,)=\sum_{\substack{1\le i\le n\\i\text{ outer}}}(\rho_i-\rho_{i-1})=-\sum_{\substack{1\le i\le n\\i\text{ inner}}}(\rho_i-\rho_{i-1})=\sum_{\substack{1\le i\le n\\i\text{ outer}\\i+1\text{ inner}}}\rho_i-\sum_{\substack{1\le i\le n\\i\text{ inner}\\i+1\text{ outer}}}\rho_i.\]
\end{prop}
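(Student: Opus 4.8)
The plan is to compute $\omega_c(\delta,\,\cdot\,)$ directly from its values on the basis of simple coroots, using the definition \eqref{omega def} of $\omega_c$ together with the description of $c$ as a partition of $\set{1,\ldots,n}$ into inner and outer points. Since $\rho_1,\ldots,\rho_n$ is the basis of $V^*$ dual to $\alpha_1\ck,\ldots,\alpha_n\ck$, the coefficient of $\rho_j$ in $\omega_c(\delta,\,\cdot\,)$ is $\omega_c(\delta,\alpha_j)=\br{\omega_c(\delta,\,\cdot\,),\alpha_j\ck}$ evaluated correctly, i.e. $\omega_c(\delta,\,\cdot\,)=\sum_j \omega_c(\delta,\alpha_j)\,\rho_j$. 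Wait—more carefully, if $x=\sum_j c_j\rho_j\in V^*$ then $c_j=\br{x,\alpha_j\ck}$, so I need $\omega_c(\delta,\,\cdot\,)$ as a functional: its value on $\alpha_j\ck$ is the coefficient $c_j$. So first I would write $\delta=\sum_{i=1}^n\alpha_i\ck$ (recall $\delta$ has all-ones simple-root coordinates and $\alpha_i\ck=\alpha_i$ in this type) and compute, for each $j$,
\[
\br{\omega_c(\delta,\,\cdot\,),\alpha_j\ck}=\omega_c(\delta,\alpha_j)=\sum_{i=1}^n\omega_c(\alpha_i\ck,\alpha_j).
\]
By \eqref{omega def}, $\omega_c(\alpha_i\ck,\alpha_j)$ is nonzero only when $|i-j|\equiv 1\pmod n$, in which case it is $\pm a_{ij}=\mp 1$ depending on whether $s_i$ precedes or follows $s_j$ in $c$.

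The key computation, then, is the following: for a fixed $j$, only $i=j-1$ and $i=j+1$ contribute, giving
\[
\omega_c(\delta,\alpha_j)=\omega_c(\alpha_{j-1}\ck,\alpha_j)+\omega_c(\alpha_{j+1}\ck,\alpha_j),
\]
and each term is $\pm 1$ according to the orientation of the corresponding edge of the Coxeter diagram. Here I would use the dictionary from Section~\ref{cox A sec}: $j$ is an outer point iff $s_{j-1}\to s_j$ (i.e. $s_{j-1}$ precedes $s_j$, so $s_j$ follows $s_{j-1}$), and an inner point iff $s_j\to s_{j-1}$. Chasing the four cases for the pair (status of $j$, status of $j\pm 1$) shows that $\tfrac12\omega_c(\delta,\alpha_j)$ equals $+1$ if $j$ is outer and $j+1$ is inner, $-1$ if $j$ is inner and $j+1$ is outer, and $0$ if $j$ and $j+1$ have the same status. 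This is exactly the coefficient of $\rho_j$ in the rightmost expression of the proposition, establishing the last equality with the first (after dividing by $2$). The telescoping identities $\sum_{i\text{ outer}}(\rho_i-\rho_{i-1})=-\sum_{i\text{ inner}}(\rho_i-\rho_{i-1})$ and their equality with the rightmost sum are then purely formal: the full sum $\sum_{i=1}^n(\rho_i-\rho_{i-1})$ telescopes to $0$ (indices mod $n$), so the outer part is the negative of the inner part; and regrouping $\sum_{i\text{ outer}}\rho_i-\sum_{i\text{ outer}}\rho_{i-1}$ by shifting the index in the second sum collects, for each $j$, a net $+\rho_j$ exactly when $j$ is outer but $j+1$ is not (so $\rho_j$ is not cancelled by a $-\rho_{(j+1)-1}$ term coming from $j+1$ being outer), and a net $-\rho_j$ exactly when $j+1$ is outer but $j$ is not.

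I expect the only real subtlety—hardly an obstacle—to be bookkeeping the sign conventions: keeping straight which of "$s_i$ precedes $s_j$" versus "$s_i$ follows $s_j$" corresponds to the $+a_{ij}$ versus $-a_{ij}$ branch of \eqref{omega def}, and which of those corresponds to $j$ being an inner versus outer point under the convention of Section~\ref{cox A sec}. Everything else is a one-line telescoping argument. A clean way to present it is to verify the single identity "coefficient of $\rho_j$ in $\tfrac12\omega_c(\delta,\,\cdot\,)$ equals $[j\text{ outer}]-[j+1\text{ outer}]$" and then observe that all four displayed expressions have this same $\rho_j$-coefficient, so they are equal.
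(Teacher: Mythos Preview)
Your proposal is correct and follows essentially the same route as the paper: compute the coefficient of $\rho_j$ in $\omega_c(\delta,\,\cdot\,)$ by pairing with $\alpha_j\ck$, reduce via \eqref{omega def} to the two neighbouring terms, and read off the sign from the inner/outer dictionary of Section~\ref{cox A sec}. The only cosmetic difference is that the paper uses skew-symmetry to write $\omega_c(\delta,\alpha_i\ck)=-\omega_c(\alpha_i\ck,\alpha_{i-1})-\omega_c(\alpha_i\ck,\alpha_{i+1})$ (expanding $\delta$ in the second slot), whereas you expand $\delta=\sum_i\alpha_i\ck$ in the first slot; and you spell out the telescoping for the remaining equalities where the paper simply says ``the other two expressions follow.''
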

The quantities $i-1$ and $i+1$ occurring in the proposition are interpreted mod~$n$.
\begin{proof}
The coefficient of $\rho_i$ in $\omega_c(\delta,\,\cdot\,)$ is $\omega_c(\delta,\alpha_i\ck)$.
Since $\delta=\alpha_1+\cdots+\alpha_n$, Equation \eqref{omega def} and the skew-symmetry of $\omega_c$ imply that
\[\omega_c(\delta,\alpha_i\ck)=-\omega_c(\alpha_i\ck,\alpha_{i-1})-\omega_c(\alpha_i\ck,\alpha_{i+1})=\pm 1\pm1,\]
taking the ``$+$'' in the first $\pm$ if and only if $i$ is an \emph{outer} point, and taking ``$+$'' in the second $\pm$ if and only if $i+1$ an \emph{inner} point.
Thus the coefficient of $\rho_i$ in $\omega_c(\delta,\,\cdot\,)$ is $2$ if $i$ is outer and $i+1$ is inner, is $-2$ if $i$ is inner and $i+1$ is outer, or is $0$ otherwise.
We have verified the rightmost expression for $\frac12\omega_c(\delta,\,\cdot\,)$, and the other two expressions follow.
\end{proof}

The following result is not needed in the arguments that follow, but is useful in computing specific examples.
Let $\#\inn$ be the number of inner points associated to~$c$ and let $\#\out$ be the number of outer points.
Further, let ${(\#\inn_{\le k})}$ be the number of inner points less than or equal to $k$, etc.

\begin{lemma}\label{gammac}
Let $c$ be a Coxeter element of $\Stilde_n$, represented as a partition of $\set{1,\ldots,n}$ into inner points and outer points.
Then 
\[\gamma_c=\frac1n\left((\#\inn)\sum_{i\text{ outer}}\e_i-(\#\out)\sum_{i\text{ inner}}\e_i\right)=\sum_{k=1}^nb_k\alpha_k,\]
where $b_k=\frac1n[(\#\inn_{\le k})(\#\out_{>k})-(\#\inn_{>k})(\#\out_{\le k})]$.
\end{lemma}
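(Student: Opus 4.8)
The plan is to identify the vector on the right-hand side as the unique vector satisfying the two defining properties of $\gamma_c$ recalled in Section~\ref{aff sec}: that $(c-1)\gamma_c=\delta$ and that $\gamma_c\in V_\fin$. Uniqueness here holds because any two generalized $1$-eigenvectors associated to $\delta$ differ by an element of $\ker(c-1)=\reals\delta$, and $\delta\notin V_\fin$, so the coset $\gamma_c+\reals\delta$ meets $V_\fin$ in exactly one point. Write $v$ for the vector $\frac1n\bigl((\#\inn)\sum_{i\text{ outer}}\e_i-(\#\out)\sum_{i\text{ inner}}\e_i\bigr)$, with the outer and inner points taken as their representatives in $\{1,\dots,n\}$, and recall that in this section $V_\fin$ is the coordinate-sum-zero subspace of $\Span(\e_1,\dots,\e_n)$.

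First I would compute the action of $c$ on the two sums. By Lemma~\ref{c annulus}, if $a_1<\cdots<a_k$ are the outer points then $c(a_j)=a_{j+1}$ for $j<k$ and $c(a_k)=a_1+n$; since $c$ acts by $\e_i\mapsto\e_{c(i)}$ and $\e_{a_1+n}=\e_{a_1}+\delta$, this gives $c\cdot\sum_{i\text{ outer}}\e_i=\sum_{i\text{ outer}}\e_i+\delta$. The analogous computation for the inner cycle, where $c(b_{n-k})=b_1-n$ and $\e_{b_1-n}=\e_{b_1}-\delta$, gives $c\cdot\sum_{i\text{ inner}}\e_i=\sum_{i\text{ inner}}\e_i-\delta$. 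Hence $(c-1)v=\frac1n\bigl((\#\inn)\delta+(\#\out)\delta\bigr)=\delta$. Moreover $v\in\Span(\e_1,\dots,\e_n)$ and its coordinates sum to $\frac1n\bigl((\#\inn)(\#\out)-(\#\out)(\#\inn)\bigr)=0$, so $v\in V_\fin$. By the uniqueness above, $v=\gamma_c$, which is the first claimed equality.

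For the second equality I would record the elementary fact that any vector $\sum_{i=1}^n c_i\e_i$ with $\sum_i c_i=0$ equals $\sum_{k=1}^n b_k\alpha_k$ with $b_k=-(c_1+\cdots+c_k)$; note $b_n=0$ automatically, and this choice is the only one among all representations $\sum_k b_k\alpha_k=\gamma_c$ with $b_n=0$, since the ambiguity is precisely adding a multiple of $\delta=\sum_k\alpha_k$. Applying this with $c_i=\frac1n\#\inn$ for $i$ outer and $c_i=-\frac1n\#\out$ for $i$ inner gives $c_1+\cdots+c_k=\frac1n\bigl((\#\inn)(\#\out_{\le k})-(\#\out)(\#\inn_{\le k})\bigr)$, hence $b_k=\frac1n\bigl((\#\out)(\#\inn_{\le k})-(\#\inn)(\#\out_{\le k})\bigr)$. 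Substituting $\#\inn=\#\inn_{\le k}+\#\inn_{>k}$ and $\#\out=\#\out_{\le k}+\#\out_{>k}$ and cancelling the $(\#\inn_{\le k})(\#\out_{\le k})$ terms yields exactly $b_k=\frac1n\bigl[(\#\inn_{\le k})(\#\out_{>k})-(\#\inn_{>k})(\#\out_{\le k})\bigr]$.

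There is no deep obstacle here. The only points requiring care are the index conventions (the inner and outer points are named by their representatives in $\{1,\dots,n\}$, while subscripts on the $\e_i$ are not reduced mod $n$, so that the ``$+n$'' and ``$-n$'' appearing in the cycle notation of Lemma~\ref{c annulus} genuinely contribute the factors $\pm\delta$ that make $v$ a generalized eigenvector), and invoking the characterization of $\gamma_c$ rather than attempting to compute it from the Jordan form of $c$; the passage between $\e$-coordinates and $\alpha$-coordinates is routine linear algebra, needing only the observation $b_n=0$ to pin down the one-parameter ambiguity coming from $\delta=\sum_k\alpha_k$.
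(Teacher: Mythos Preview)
Your proof is correct and follows essentially the same approach as the paper: both verify that the proposed vector lies in $V_\fin$ and satisfies $(c-1)v=\delta$ using Lemma~\ref{c annulus}, then invoke the uniqueness of $\gamma_c$. The only difference is in the bookkeeping for the second equality: the paper rewrites $\gamma_c=\frac1n\sum_{i\text{ inner},\,j\text{ outer}}(\e_j-\e_i)$ and expands each $\e_j-\e_i$ as a signed sum of consecutive simple roots before collecting coefficients, whereas you use the partial-sum formula $b_k=-(c_1+\cdots+c_k)$ directly; your route avoids the case split on $i<j$ versus $j<i$ and is marginally cleaner.
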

\begin{proof}
Write $\gamma$ for $\frac1n\left((\#\inn)\sum_{i\text{ outer}}\e_i-(\#\out)\sum_{i\text{ inner}}\e_i\right)$.
The subspace $V_\fin$ of~$V$ is the span of the simple roots $\alpha_1,\ldots,\alpha_{n-1}$ (i.e.\ all simple roots except~$\alpha_n$).
This is the subspace of $\reals^{n+1}$ with $(n+1)\st$ coordinate $0$ and with coordinates summing to $0$.
Thus $\gamma$ is indeed in $V_\fin$.
We verify that it is a generalized $1$-eigenvector associated to $\delta$.
Lemma~\ref{c annulus} implies that $c\Bigl(\sum_{i\text{ outer}}\e_i\Bigr)={\Bigl(\sum_{i\text{ outer}}\e_i\Bigr)+\delta}$ and ${c\Bigl(\sum_{i\text{ inner}}\e_i\Bigr)=\Bigl(\sum_{i\text{ inner}}\e_i\Bigr)-\delta}$.
Thus $c(\gamma)=\gamma+\frac1n[(\#\inn)+(\#\out)]\delta=\gamma+\delta$, as desired.
Furthermore, 
\begin{align*}
\gamma&=\frac1n\sum_{\substack{i\text{ inner}\\j\text{ outer}}}(\e_j-\e_i)\\
&=\frac1n\sum_{\substack{i\text{ inner}\\j\text{ outer}\\i<j}}\sum_{k=i}^{j-1}\alpha_k-\sum_{\substack{i\text{ inner}\\j\text{ outer}\\j<i}}\sum_{k=j}^{i-1}\alpha_k\\
&=\frac1n\sum_{k=1}^n\alpha_k\left[(\#\inn_{\le k})(\#\out_{>k})-(\#\inn_{>k})(\#\out_{\le k})\right].\qedhere
\end{align*}
\end{proof}

Our next step is greatly simplified by working in $\reals^{n+1}$ rather than in the subspace~$V$.
Specifically, we think of $V$ as the quotient of $\reals^{n+1}$ modulo the line spanned by $\e_1+\cdots+\e_n$, so that we can name a vector in $V$ more conveniently by naming a vector in $\reals^{n+1}$.
(By checking each $s_i$, we verify that the action of $W$ fixes $\e_1+\cdots+\e_n$.
The subscript $n$ is not an error:  $\e_1+\cdots+\e_{n+1}$ is not fixed by~$W$.)
In particular, we will write expressions of the form $\br{x,y}$ for $x\in V^*$ and $y\in\reals^{n+1}$, interpreting $y$ as a vector in $V$ (or equivalently, extending the pairing $\br{\,\cdot\,,\,\cdot\,}$ to a bilinear map $V^*\times\reals^{n+1}$ by declaring $\br{x,\e_1+\cdots+\e_n}=0$ for all $x\in V^*$).

The most natural $W$-orbit in $\reals^{n+1}$ is $\set{\e_j:j\in\integers}$, on which $W$ acts by affine permutations of indices.
The following proposition will help us project this orbit to the Coxeter plane.

\begin{prop}\label{rho e}
Let $i\in\set{1,\ldots,n}$ and $j\in\integers$.
Then \[\br{\rho_i,\e_j}=\frac{i}{n}+\left\lfloor\frac{j-i-1}n\right\rfloor.\]
\end{prop}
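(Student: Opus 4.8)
The plan is to compute $\br{\rho_i,\e_j}$ by a two-step reduction: first pin down $\br{\rho_i,\e_j}$ for $j\in\set{1,\ldots,n}$ using the definition of the fundamental weights, then propagate to all $j\in\integers$ using the relation $\e_{j+n}=\e_j+\delta$ together with the value of $\br{\rho_i,\delta}$. Recall that $\rho_1,\ldots,\rho_n$ are defined as the basis of $V^*$ dual to the simple coroots $\alpha_1\ck,\ldots,\alpha_n\ck$, and that here $\alpha_k\ck=\alpha_k=\e_{k+1}-\e_k$. Since we are working in the quotient of $\reals^{n+1}$ modulo the line spanned by $\e_1+\cdots+\e_n$, with the pairing extended by $\br{x,\e_1+\cdots+\e_n}=0$, the pairing $\br{\rho_i,\e_j}$ is well-defined for $j\in\set{1,\ldots,n}$ once we impose the $n$ duality constraints $\br{\rho_i,\e_{k+1}-\e_k}=\delta_{ik}$ and the normalization $\sum_{j=1}^n\br{\rho_i,\e_j}=0$.

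First I would solve this linear system. The duality constraints say the sequence $\br{\rho_i,\e_1},\br{\rho_i,\e_2},\ldots,\br{\rho_i,\e_n}$ has successive differences all zero except for a jump of $1$ between positions $i$ and $i+1$; that is, $\br{\rho_i,\e_j}$ equals a constant $C$ for $j\le i$ and $C+1$ for $j>i$. The normalization $\sum_{j=1}^n\br{\rho_i,\e_j}=nC+(n-i)=0$ forces $C=-\frac{n-i}{n}=\frac{i}{n}-1$. Hence for $j\in\set{1,\ldots,n}$ we get $\br{\rho_i,\e_j}=\frac{i}{n}-1$ if $j\le i$ and $\br{\rho_i,\e_j}=\frac{i}{n}$ if $j>i$; both cases are captured by $\frac{i}{n}+\lfloor\frac{j-i-1}{n}\rfloor$, since $\lfloor\frac{j-i-1}{n}\rfloor=-1$ exactly when $1\le j\le i$ and $=0$ exactly when $i<j\le n$ (as $-n\le j-i-1\le n-2$ in this range).

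Next I would extend to all $j\in\integers$. Since $\delta=\alpha_1+\cdots+\alpha_n$ and $\rho_i$ is dual to the simple coroots (which equal the simple roots here), $\br{\rho_i,\delta}=\sum_{k=1}^n\br{\rho_i,\alpha_k}=1$. Writing an arbitrary $j\in\integers$ as $j=j_0+mn$ with $j_0\in\set{1,\ldots,n}$ and $m\in\integers$, the relation $\e_{j}=\e_{j_0}+m\delta$ gives $\br{\rho_i,\e_j}=\br{\rho_i,\e_{j_0}}+m=\frac{i}{n}+\lfloor\frac{j_0-i-1}{n}\rfloor+m$. It remains to check the floor identity $\lfloor\frac{j-i-1}{n}\rfloor=\lfloor\frac{j_0-i-1}{n}\rfloor+m$, which is immediate since $j-i-1=(j_0-i-1)+mn$ and the floor function absorbs integer multiples of $n$ in exactly this way. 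This completes the proof.

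I do not anticipate a serious obstacle: the argument is essentially bookkeeping. The only point requiring care is getting the normalization constant right (equivalently, being consistent about which representative of $V$ in $\reals^{n+1}$ one uses) and verifying that the two-case answer for $j\in\set{1,\ldots,n}$ genuinely coincides with the single floor formula; both are routine once the ranges of the floor argument are tracked carefully.
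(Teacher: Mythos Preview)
Your argument is correct. It differs from the paper's proof in organization, though not in depth. The paper establishes a single anchor value $\br{\rho_i,\e_{n+1}}=\frac{i}{n}$ by writing $\e_{n+1}$ (modulo $\e_1+\cdots+\e_n$) explicitly in the simple-root basis, and then propagates one step at a time in both directions using $\e_{j+1}=\e_j+\alpha_j$, checking that the proposed floor formula satisfies the same recursion. You instead solve directly for all of $\br{\rho_i,\e_1},\ldots,\br{\rho_i,\e_n}$ as an affine sequence determined by its $n-1$ successive differences plus the normalization $\sum_{j=1}^n\br{\rho_i,\e_j}=0$, and then shift by multiples of $n$ using $\br{\rho_i,\delta}=1$. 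Your route is slightly more self-contained (no need to verify the basis-change identity for $\e_{n+1}$), while the paper's one-step induction makes the match with the floor formula a bit more mechanical. One small wording point: your phrase ``a jump of $1$ between positions $i$ and $i+1$'' tacitly assumes $i<n$; for $i=n$ there is no jump inside $\{1,\ldots,n\}$, the sequence is constant, and your normalization formula $nC+(n-i)=0$ still gives $C=0=\tfrac{n}{n}-1$, so the argument goes through unchanged.
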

\begin{proof}
It is straightforward to verify that $\sum_{i=1}^n\frac{i}{n}\alpha_i=\e_{n+1}-\frac1{n}(\e_1+\cdots+\e_n)$.
Thus $\br{\rho_i,\e_{n+1}}=\frac in$.
Since the $\rho_i$ are dual basis to the $\alpha_i\ck$, and since (in type $\afftype{A}$) $\alpha_i\ck=\alpha_i$ for all $i=1,\ldots,n$, we conclude that the proposition holds in the case $j=n+1$.
To complete the proof, we show that the proposition holds for some $j\in\integers$ if and only if it holds for $j+1$.
Since $\e_{j+1}=\e_j+\alpha_j$ (with the index on $\alpha_j$ interpreted modulo $n$), we see that 
\[\br{\rho_i,\e_{j+1}}=\begin{cases}
\br{\rho_i,\e_j}+1&\text{if }i\equiv j\mod n,\text{ or}\\
\br{\rho_i,\e_j}&\text{otherwise}.\\
\end{cases}\]
The proposed formula for $\br{\rho_i,\e_j}$ exhibits the same behavior as $j$ is replaced by $j+1$.
\end{proof}

\begin{prop}\label{om del e}
Let $c$ be a Coxeter element of $\Stilde_n$, represented as a partition of $\set{1,\ldots,n}$ into inner points and outer points.
Then $\frac12\omega_c(\delta,\e_j)$ takes two distinct values (differing by $1$ and having strictly opposite signs), as $j$ ranges over $\integers$, taking the positive value if and only if $j\mod n$ is an inner point.
\end{prop}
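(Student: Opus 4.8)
The plan is to compute $\frac12\omega_c(\delta,\e_j)$ directly from Propositions~\ref{om del rho} and~\ref{rho e}. By the leftmost expression in Proposition~\ref{om del rho},
\[
\frac12\omega_c(\delta,\e_j)=\sum_{\substack{1\le i\le n\\ i\text{ outer}}}\bigl(\br{\rho_i,\e_j}-\br{\rho_{i-1},\e_j}\bigr),
\]
where, as in that proposition, the index $i-1$ is taken modulo $n$, so the term with $i=1$ involves $\br{\rho_n,\e_j}$. The first step is to evaluate a single summand. For $2\le i\le n$, Proposition~\ref{rho e} gives
\[
\br{\rho_i,\e_j}-\br{\rho_{i-1},\e_j}=\frac1n+\left\lfloor\frac{j-i-1}n\right\rfloor-\left\lfloor\frac{j-i}n\right\rfloor,
\]
and since $\lfloor\frac{m-1}n\rfloor-\lfloor\frac mn\rfloor$ equals $-1$ if $n\mid m$ and $0$ otherwise, this summand equals $\frac1n$ unless $j\equiv i\mod n$, in which case it equals $\frac1n-1$. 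For $i=1$ one simplifies the floor in Proposition~\ref{rho e} to get $\br{\rho_n,\e_j}=\left\lfloor\frac{j-1}n\right\rfloor$, and the same conclusion holds.

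The second step is to sum these contributions over the outer points $i\in\set{1,\ldots,n}$. Exactly one element of $\set{1,\ldots,n}$ is congruent to $j$ modulo $n$, and its contribution is diminished by $1$ precisely when that element is an outer point. Hence, using $\#\inn+\#\out=n$,
\[
\frac12\omega_c(\delta,\e_j)=
\begin{cases}
\dfrac{\#\out}n&\text{if }j\bmod n\text{ is an inner point},\\[2mm]
\dfrac{\#\out}n-1=-\dfrac{\#\inn}n&\text{if }j\bmod n\text{ is an outer point}.
\end{cases}
\]
These two values differ by $\frac{\#\inn+\#\out}n=1$, have strictly opposite signs because a Coxeter element of $\Stilde_n$ has at least one inner and at least one outer point, and the positive value is attained exactly when $j\bmod n$ is an inner point. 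This is the assertion of the proposition.

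I do not expect a genuine obstacle; the only delicate point is the modular bookkeeping, in particular reading $\rho_0$ as $\rho_n$ in the $i=1$ term and keeping the inner/outer sign convention straight. As a sanity check, one may instead start from the middle expression $-\sum_{i\text{ inner}}(\rho_i-\rho_{i-1})$ of Proposition~\ref{om del rho}; the same computation then yields $-\frac{\#\inn}n$ at outer points and $-\frac{\#\inn}n+1=\frac{\#\out}n$ at inner points, confirming the formula above.
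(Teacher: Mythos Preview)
Your proof is correct and in fact cleaner than the paper's. Both proofs start from Propositions~\ref{om del rho} and~\ref{rho e}, but they use different expressions from Proposition~\ref{om del rho} and proceed quite differently. The paper uses the \emph{rightmost} expression
\[
\tfrac12\omega_c(\delta,\,\cdot\,)=\sum_{\substack{i\text{ outer}\\i+1\text{ inner}}}\rho_i-\sum_{\substack{i\text{ inner}\\i+1\text{ outer}}}\rho_i,
\]
argues that as $j$ increases the value changes by $\pm1$ at inner/outer transitions, and then separately estimates the single value $\tfrac12\omega_c(\delta,\e_{n+1})$ to pin down that it lies strictly between $-1$ and $1$. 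You instead use the \emph{leftmost} expression $\sum_{i\text{ outer}}(\rho_i-\rho_{i-1})$ and compute each telescoping summand directly; this yields the explicit closed form $\tfrac12\omega_c(\delta,\e_j)=\#\out/n$ or $-\#\inn/n$. Your route avoids the somewhat delicate inequality argument in the paper and, as a bonus, identifies the two values exactly (consistent with Example~\ref{proj ex}). The only care needed, which you handle correctly, is reading $\rho_0$ as $\rho_n$ in the $i=1$ term.
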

\begin{proof}
Propositions~\ref{om del rho} and~\ref{rho e} imply that 
\[\frac12\omega_c(\delta,\e_j)=\sum_{\substack{i\text{ outer}\\i+1\text{ inner}}}\left(\frac{i}n+\left\lfloor\frac{j-i-1}n\right\rfloor\right)-\sum_{\substack{i\text{ inner}\\i+1\text{ outer}}}\left(\frac{i}n+\left\lfloor\frac{j-i-1}n\right\rfloor\right).\]
This is $\sum_{\substack{i\text{ outer}\\i+1\text{ inner}}}\left\lfloor\frac{j-i-1}n\right\rfloor-\sum_{\substack{i\text{ inner}\\i+1\text{ outer}}}\left\lfloor\frac{j-i-1}n\right\rfloor$ plus a quantity that is independent of $j$.  
Thus as $j$ runs through $\integers$ in increasing order, $\frac12\omega_c(\delta,\e_j)$ increases by $1$ every time $j\mod n$ becomes an inner point, and decreases by $1$ every time $j\mod n$ becomes an outer point.

To complete the proof, it remains to show that $0<\frac12\omega_c(\delta,\e_j)<1$ for some $j$ such that $j\mod n$ is inner or that $-1<\frac12\omega_c(\delta,\e_j)<0$ for some $j$ such that $j\mod n$ is outer.
We will estimate $\frac12\omega_c(\delta,\e_{n+1})$.
We have 
\[\frac12\omega_c(\delta,\e_{n+1})=\sum_{\substack{i\text{ outer}\\i+1\text{ inner}}}\left(\frac{i}n+\left\lfloor\frac{n-i}n\right\rfloor\right)-\sum_{\substack{i\text{ inner}\\i+1\text{ outer}}}\left(\frac{i}n+\left\lfloor\frac{n-i}n\right\rfloor\right).\]
Since these sums are over $1\le i\le n$, all of the floors are zero, so 
\[\frac12\omega_c(\delta,\e_{n+1})=\sum_{\substack{i\text{ outer}\\i+1\text{ inner}}}\frac{i}n-\sum_{\substack{i\text{ inner}\\i+1\text{ outer}}}\frac{i}n.\]
Both sums in the above expression have the same number of terms, because the elements of the cycle $(1\,\,2\,\cdots\,n)$ switch from outer to inner the same number of times they switch from inner to outer.
If $1$ is outer, then each term in the first sum is smaller than the corresponding term in the second sum, so that $\frac12\omega_c(\delta,\e_{n+1})<0$.
But also, the first term in the second sum is smaller than the second term in the first sum, and so forth.
Thus $\frac12\omega_c(\delta,\e_{n+1})$ is greater than the first term in the first sum minus the last term in the second sum, which is at least $\frac{1-n}n>-1$.
On the other hand, if $1$ is inner, then each term in the first sum is larger than the corresponding term in the second sum, so that $\frac12\omega_c(\delta,\e_{n+1})>0$.
Also, the first term in the first sum is smaller than the second term in the first sum, etc., so $\frac12\omega_c(\delta,\e_{n+1})$ is less than the last term in the first sum minus the first term in the second sum, which is at most $\frac{n-1}n<1$.
\end{proof}

The Coxeter plane in $V^*$ is spanned by the $1$-eigenvector $\omega_c(\delta,\,\cdot\,)$ and the generalized $1$-eigenvector $\omega_c(\gamma_c,\,\cdot\,)$.
Thus the projection of a point $x\in V$ to the Coxeter plane in $V^*$ is the point $\omega_c(\gamma_c,x)\omega_c(\gamma_c,\,\cdot\,)+\omega_c(\delta,x)\omega_c(\delta,\,\cdot\,)\in V^*$.

\begin{theorem}\label{a orb proj}
Let $c$ be a Coxeter element of $\Stilde_n$, represented as a partition of $\set{1,\ldots,n}$ into inner points and outer points, and consider the  projection of the orbit $\set{\e_j:j\in\integers}$ to the Coxeter plane in $V^*$.
\begin{enumerate}[\quad\rm\bf1.]
\item \label{2 vert}
The projection takes $\set{\e_j:j\in\integers}$ into two parallel lines, each defined by a constant $\omega_c(\delta,\,\cdot\,)$-coordinate.
\item \label{pos neg vert}
One of the two lines has negative $\omega_c(\delta,\,\cdot\,)$-coordinate and contains the image of $\set{\e_j:j\mod n\text{ is outer}}$, while the other line has positive $\omega_c(\delta,\,\cdot\,)$-coordinate and contains the image of $\set{\e_j:j\mod n\text{ is inner}}$.
\item \label{bigger vert}
If $j<j'$ and $j$ and $j'$ are either both outer or both inner, then the projection of $\e_{j'}$ has strictly larger $\omega_c(\gamma_c,\,\cdot\,)$-coordinate than the projection of $\e_j$.
\item \label{evenly}
On the line containing outer points, the space between adjacent points is $-\omega_c(\delta,\e_j)$ for any $j$ outer.
On the line containing inner points, the space between adjacent points is $\omega_c(\delta,\e_j)$ for any $j$ inner.
\item \label{const dist}
The difference between the $\omega_c(\gamma_c,\,\cdot\,)$-coordinates of the projections of $\e_j$ and $\e_{j+n}$ is $\omega_c(\gamma_c,\delta)$, independent of $j$.
\end{enumerate}
\end{theorem}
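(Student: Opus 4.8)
The plan is to compute the projection coordinates $\omega_c(\delta,\e_j)$ and $\omega_c(\gamma_c,\e_j)$ as explicit functions of $j$, then read off all five assertions as immediate consequences. Assertions~\ref{2 vert} and~\ref{pos neg vert} are essentially a restatement of Proposition~\ref{om del e}: the $\omega_c(\delta,\,\cdot\,)$-coordinate of the projection of $\e_j$ is (by definition of the projection given just before the theorem) exactly $\omega_c(\delta,\e_j)$, and Proposition~\ref{om del e} says this takes only two values, a negative one on outer indices and a positive one on inner indices. So the image of the whole orbit lies in the union of two lines of constant $\omega_c(\delta,\,\cdot\,)$-coordinate, with the sign dichotomy exactly as claimed.

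For the remaining three assertions, everything comes down to understanding the $\omega_c(\gamma_c,\,\cdot\,)$-coordinate, which is the function $j\mapsto\omega_c(\gamma_c,\e_j)$. First I would establish the ``step'' behavior of this function as $j$ increases by $1$: since $\e_{j+1}=\e_j+\alpha_j$ (indices on $\alpha$ mod $n$), we get $\omega_c(\gamma_c,\e_{j+1})-\omega_c(\gamma_c,\e_j)=\omega_c(\gamma_c,\alpha_j)$, a quantity depending only on $j\bmod n$. The key point is that each such increment is \emph{strictly positive}; this should follow from the explicit formula for $\gamma_c$ in Lemma~\ref{gammac} together with the sign rule in Equation~\eqref{omega def}, or alternatively from the fact that $\gamma_c$ is the generalized eigenvector pulling everything forward under $c$. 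Granting positivity of the increments, Assertion~\ref{bigger vert} is immediate: if $j<j'$ then $\omega_c(\gamma_c,\e_{j'})-\omega_c(\gamma_c,\e_j)$ is a sum of positive increments. (The hypothesis that $j,j'$ are both inner or both outer is not even needed for strict monotonicity, but it is the form in which the statement will be used.)

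Next, Assertion~\ref{const dist}: since $\e_{j+n}=\e_j+\delta$ and $\omega_c$ is bilinear, $\omega_c(\gamma_c,\e_{j+n})=\omega_c(\gamma_c,\e_j)+\omega_c(\gamma_c,\delta)$, so the difference is the constant $\omega_c(\gamma_c,\delta)$, independent of $j$. (One should note $\omega_c(\gamma_c,\delta)>0$ since it is a sum of $n$ consecutive positive increments, consistent with Assertion~\ref{bigger vert}.) Finally, Assertion~\ref{evenly}: on the line of outer indices, consecutive outer points are $\e_j$ and $\e_{j'}$ where $j'$ is the next index after $j$ with $j'\bmod n$ outer; the gap in the $\omega_c(\gamma_c,\,\cdot\,)$-coordinate is $\sum_{k=j}^{j'-1}\omega_c(\gamma_c,\alpha_k)$. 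Here I would compare this with the companion computation in Proposition~\ref{om del e}, which showed that $\tfrac12\omega_c(\delta,\e_j)$ changes by exactly $+1$ at each inner index and $-1$ at each outer index as $j$ increases; pairing the two telescoping sums shows the $\omega_c(\gamma_c,\,\cdot\,)$-gap between consecutive outer points equals $-\omega_c(\delta,\e_j)$ for $j$ outer, and symmetrically for inner points. The main obstacle is the positivity of the increments $\omega_c(\gamma_c,\alpha_j)$ and the bookkeeping identifying the inter-point gap with $\mp\omega_c(\delta,\e_j)$; both are finite computations driven by Lemma~\ref{gammac}, Proposition~\ref{om del rho}, and the sign conventions in~\eqref{omega def}, so I expect them to be routine once set up carefully.
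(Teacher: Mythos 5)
There is a genuine gap in your treatment of Assertions~\ref{bigger vert} and~\ref{evenly}. Your argument rests on the claim that every unit increment $\omega_c(\gamma_c,\e_{j+1})-\omega_c(\gamma_c,\e_j)=\omega_c(\gamma_c,\alpha_j)$ is strictly positive, and you add that the hypothesis that $j$ and $j'$ are both inner or both outer ``is not even needed.'' This is false: the function $j\mapsto\omega_c(\gamma_c,\e_j)$ is not monotone on all of $\integers$, only on the outer sub-orbit and on the inner sub-orbit separately. The paper's own Example~\ref{proj ex} is a counterexample: for $c=s_6s_5s_2s_1s_3s_4s_7$ in $\Stilde_7$ the quantities $7\omega_c(\gamma_c,\e_j)-\frac17$ take the values $-8,-2,-7,1,4,10,9$ for $j=1,\ldots,7$, so the increment from $j=2$ to $j=3$ (and from $j=6$ to $j=7$) is negative. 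This is also visible in Figure~\ref{proj fig}, where the horizontal ordering of the projected points interleaves the two lines. Consequently your proof of Assertion~\ref{bigger vert} collapses, and the ``pairing of telescoping sums'' you invoke for Assertion~\ref{evenly} has no correct foundation (and as stated it is not an argument so much as a hope that the bookkeeping works out).

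The correct key idea, which your proposal is missing, is to compare $\e_j$ not with $\e_{j+1}$ but with $c^{-1}\e_j$: since $c\gamma_c=\gamma_c+\delta$ and $\omega_c$ is $c$-invariant, one gets $\omega_c(\gamma_c,c^{-1}\e_j)-\omega_c(\gamma_c,\e_j)=\omega_c(\delta,\e_j)$. By Lemma~\ref{c annulus}, $c^{-1}$ sends an outer index to the next smaller outer index and an inner index to the next larger inner index, and by Proposition~\ref{om del e} the quantity $\omega_c(\delta,\e_j)$ is negative in the outer case and positive in the inner case. This single computation simultaneously yields the strict monotonicity along each sub-orbit (Assertion~\ref{bigger vert}) and the exact spacing $\mp\omega_c(\delta,\e_j)$ between adjacent points on each line (Assertion~\ref{evenly}). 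Your handling of Assertions~\ref{2 vert}, \ref{pos neg vert}, and~\ref{const dist} is correct and matches the paper.
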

\begin{proof}
Proposition~\ref{om del e} implies immediately that the inner and outer points project into two lines as in Assertions~\ref{2 vert} and~\ref{pos neg vert}.
The difference described in Assertion~\ref{const dist} is $\omega_c(\gamma_c,\e_j+\delta)-\omega_c(\gamma_c,\e_j)=\omega_c(\gamma_c,\delta)$.

To prove Assertions~\ref{bigger vert} and~\ref{evenly}, we first evaluate $\omega_c(\gamma_c,c^{-1}\e_j)$.
This is equal to $\omega_c(c\gamma_c,\e_j)$, and since $c\gamma_c=\gamma_c+\delta$, it further equals $\omega_c(\gamma_c+\delta,\e_j)$.
Thus $\omega_c(\gamma_c,c^{-1}\e_j)-\omega_c(\gamma_c,\e_j)$ is $\omega_c(\delta,\e_j)$.
If $j\mod n$ is outer, then Lemma~\ref{c annulus} says that $c^{-1}\e_j$ is the next smallest integer that is outer modulo $n$.
Also Proposition~\ref{om del e} says that $\omega_c(\delta,\e_j)$ is negative, and we conclude that $\e_j$ projects to a strictly larger $\omega_c(\gamma_c,\,\cdot\,)$-coordinate than the next smallest outer integer.
Similarly, if $j\mod n$ is inner, then Lemma~\ref{c annulus} says that $c^{-1}\e_j$ is the next largest integer that is inner (mod~$n$).
In this case, Proposition~\ref{om del e} says that $\omega_c(\delta,\e_j)$ is positive, so $\e_j$ projects to a strictly smaller $\omega_c(\gamma_c,\,\cdot\,)$-coordinate than the next largest inner integer.
\end{proof}

\begin{example}\label{proj ex}
Continuing Example~\ref{labeled annulus ex}, when $W=\Stilde_7$ and $c= s_6 s_5 s_2 s_1 s_3 s_4s_7$, we compute that $7\omega_c(\delta,\e_j)$ is $-4$ or $3$, taking the positive value if and only if $j\mod 7$ is inner, that $7\omega_c(\gamma_c,\e_j)-\frac17$ takes values $-8,-2,-7,1,4,10,9$ for $j=1,2,3,4,5,6,7$, and that $\omega_c(\gamma_c,\e_{j+7})=\omega_c(\gamma_c,\e_j)+\frac{24}7$ for all $j$.
The projection of part of the orbit $\set{\e_j:j\in\integers}$ to the Coxeter plane in $V^*$ is illustrated in Figure~\ref{proj fig}, with the $\omega_c(\gamma_c,\,\cdot\,)$-direction on the horizontal axis and the $\omega_c(\delta,\,\cdot\,)$-direction on the vertical axis.
\end{example}

\begin{figure}

\includegraphics{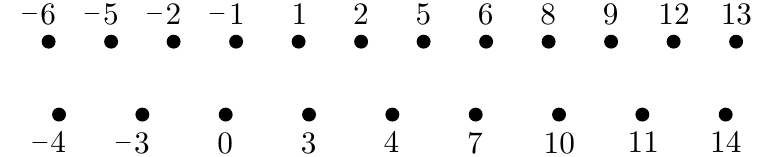}
\caption{Projecting an orbit to the Coxeter plane in $V^*$}
\label{proj fig}
\end{figure}

%
%

Theorem~\ref{a orb proj} suggests that, after projecting the orbit $\set{\e_j:j\in\integers}$ to the Coxeter plane in $V^*$, we can profitably consider the $\omega_c(\gamma_c,\,\cdot\,)$-direction in the Coxeter plane modulo the difference described in Theorem~\ref{a orb proj}.\ref{const dist}.
When we do so, instead of mapping the orbit into the two boundary lines of an infinite strip, we map the orbit into the two boundary circles of a cylinder.
We flatten the cylinder to an annulus in such a way that the line of projected points with negative $\omega_c(\delta,\,\cdot\,)$-coordinate becomes the outer boundary of the annulus, while the other line becomes the inner boundary.
Each mod-$n$ class $\set{\e_{i+kn}:k\in\integers}$ in the orbit maps to a single point on the boundary of the annulus.
We recover (up to shifting the points along the boundary without moving points through each other) the annulus with outer and inner points that was described in Section~\ref{cox A sec}.
In particular, the projection in Figure~\ref{proj fig} becomes the annulus in Figure~\ref{labeled annulus}.

\subsection{Noncrossing partitions of an annulus}\label{nc ann sec}
Fix a Coxeter element $c$ in $\Stilde_n$, encoded by a choice of inner and outer points on an annulus as described in Section~\ref{cox A sec}.
We refer to this annulus as $A$.
The inner and outer points are collectively called \newword{numbered points}.

The pair consisting of $A$ and the set of numbered points is an example of a marked surface in the sense of \cite{cats1,cats2} (where the numbered points were called marked points).
We now quote from \cite{surfnc} some definitions and results on noncrossing partitions of marked surfaces, in the special case of noncrossing partitions of the annulus $A$.
We emphasize, however, that these results appeared in \cite{BThesis} in the annulus case and were generalized to marked surfaces in~\cite{surfnc}.

A \newword{boundary segment} of $A$ is the portion of the boundary of $A$ between two adjacent numbered points.
A boundary segment may have both endpoints at the same numbered point if that is the only numbered point on one component of the boundary of $A$.

Two subsets of $A$ are related by \newword{ambient isotopy} (or simply \newword{isotopy}) if they are related by a homeomorphism from $A$ to itself, fixing the boundary $\partial A$ pointwise and homotopic to the identity by a homotopy that fixes $\partial A$ pointwise at every step.

An \newword{arc} in $A$ is a non-oriented curve in $A$, with endpoints at numbered points (possibly coinciding), that does not intersect itself except possibly at its endpoints and, except for its endpoints, does intersect the boundary of $A$.
We exclude the possibility that an arc bounds a monogon in $A$ (and thus is contractible to its endpoint) and the possibility that an arc combines with a boundary segment to bound a digon in $A$ (and thus can be deformed to coincide with the boundary segment).
We consider arcs up to ambient isotopy.  
Thus an arc is uniquely identified by which numbered points it connects and which direction and how many times the arc wraps around the hole of the annulus.

An \newword{embedded block} in $A$ is one of the following: 
\begin{itemize}
\item
a \newword{trivial block}, meaning a singleton consisting of a numbered point in $A$;
\item
an arc or boundary segment in $A$; 
\item
a \newword{disk block}, meaning a closed disk in $A$ whose boundary is a union of arcs and/or boundary segments of $A$;
\item
a \newword{dangling annular block}, meaning a closed annulus in $A$ with one component of its boundary a union of arcs and/or boundary segments of $A$ and the other component of its boundary a circle in the interior of $A$ that can't be contracted in $A$ to a point; or
\item
a \newword{nondangling annular block}, meaning a closed annulus in $A$ with each component of its boundary a union of arcs and/or boundary segments of~$A$.
\end{itemize}
The first two types of embedded blocks are \newword{degenerate disk blocks}.  The last two types of embedded blocks are referred to less specifically as \newword{annular blocks}.
Embedded blocks are considered up to ambient isotopy.

A \newword{noncrossing partition} of $A$ is a collection $\P=\set{E_1,\ldots,E_k}$ of disjoint embedded blocks (the \newword{blocks of~$\P$}) such that every numbered point is contained in some $E_i$ and at most one $E_i$ is an annular block.
Noncrossing partitions are considered up to ambient isotopy.
Thus a noncrossing partition of $A$ is a partition of the set of numbered points together with the additional data of how each block is embedded as a point, curve, disk, or annulus.
We refer to different, but isotopic, choices of the embedded blocks $E_1,\ldots,E_k$ as different \newword{embeddings} of the same noncrossing partition. 

Given that the blocks are pairwise disjoint, restricting to at most one annular block serves only to rule out the case of two dangling annular blocks, one containing numbered points on its inner boundary and the other containing numbered points on its outer boundary.

\begin{example}\label{some ncs}  
We can now give more context to Figure~\ref{nc exs}.
These are noncrossing partitions of the annulus from Example~\ref{labeled annulus ex}.
(Compare Example~\ref{proj ex}.)
Degenerate blocks are shown with some thickness, to make them visible.
\end{example}

We define a partial order on noncrossing partitions of $A$, called (in light of Theorem~\ref{A tilde main} below) the \newword{noncrossing partition lattice}:
Noncrossing partitions $\P$ and~$\Q$ have $\P\le\Q$ if and only if there exist embeddings of $\P$ and $\Q$ such that every block of $\P$ is contained in some block of $\Q$.
We write $\tNCAc$ for the set of noncrossing partitions of $A$ with this partial order.
The superscript $A$ refers to the annulus $A$ and also foreshadows the connection to affine Coxeter groups of type $\afftype{A}$.

\begin{example}\label{le ex}
Continuing Example~\ref{some ncs}, we see that the left noncrossing partition shown in Figure~\ref{nc exs} is less than the middle noncrossing partition, but that is the only order relation among the three noncrossing partitions shown.
\end{example}

The following is a result of \cite{BThesis} that we quote here as \cite[Theorem~4.3]{surfnc}, which is in turn a specialization of \cite[Theorems 2.11,~2.12]{surfnc}.

\begin{theorem}\label{A tilde main}
The poset $\tNCAc$ of noncrossing partitions of an annulus with marked points on both boundaries, $n$ marked points in all, is a graded lattice, with rank function given by $n$ minus the number of non-annular blocks.
\end{theorem}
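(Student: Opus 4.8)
The plan is to establish, in order: a bottom element and a top element with the claimed ranks, gradedness with the claimed rank function, and then the lattice property. \emph{Bottom and top.} The partition $\hat 0$ all of whose blocks are trivial blocks lies weakly below every noncrossing partition of $A$ and has $n$ non-annular blocks, so it should have rank $n-n=0$. For the top, I would first observe that a block containing all $n$ numbered points cannot be a (degenerate) disk block: a closed disk in $A$ meeting both boundary components would separate them, which is impossible in an annulus. Hence that block is a nondangling annular block (it carries numbered points on both boundary components), so there is a unique such noncrossing partition $\hat 1$; it has $0$ non-annular blocks, so it should have rank $n$, and it clearly lies weakly above every element.

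\emph{Gradedness.} Put $r(\P)=n-\#\{\text{non-annular blocks of }\P\}$. I would prove (i) $r$ is strictly order-preserving and (ii) every cover $\P\lessdot\Q$ satisfies $r(\Q)=r(\P)+1$; together with $r(\hat 0)=0$ and $r(\hat 1)=n$ this gives that $\tNCAc$ is graded of height $n$ with rank function $r$. Both rest on a classification of covers: I expect every strict relation $\P<\Q$ to factor into elementary ``merge'' moves, each of which adds one arc joining two distinct non-annular blocks, adds one self-arc (possibly wrapping around the hole) to a single non-annular block and thereby turns it into an annular block, or joins a non-annular block to the unique annular block — and in each such move the non-annular block count drops by exactly $1$. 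The work is to show such factorizations exist (so that covers are precisely these moves) and to check case-by-case, tracking how arcs may wrap the hole, that each move changes $\#\{\text{non-annular blocks}\}$ by exactly $1$. (For instance, joining two disk blocks by a wrapping arc merges two non-annular blocks into one annular block, a net change of $2$; this is correctly \emph{not} a cover but factors through an intermediate partition.) A delicate point in (i) is ruling out $\P<\Q$ with equal non-annular counts: that situation would force every non-annular block of $\Q$ to be literally a block of $\P$ and force the restriction of $\P$ to the annular block of $\Q$ to again be that annular block, which one controls using $\pi_1$-injectivity of a nondangling annular block in $A$.

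\emph{Lattice property.} This reduces to the existence of all binary meets, via a descending-chain argument that uses only that $r$ is bounded below and that there is a top element. Given $\P,\Q$, let $U$ be the set of common upper bounds; it is nonempty (contains $\hat 1$) and closed under binary meets. Starting from any $\mathcal W_0\in U$: if $\mathcal W_0$ is not the least element of $U$, choose $\mathcal W\in U$ with $\mathcal W_0\not\le\mathcal W$; then $\mathcal W\meet\mathcal W_0\in U$ and $r(\mathcal W\meet\mathcal W_0)<r(\mathcal W_0)$. Since $r\ge 0$, iterating terminates at the least element of $U$, which is then $\P\join\Q$. (Note $\tNCAc$ is itself infinite, since arcs may wrap arbitrarily many times, so an argument of this shape is genuinely needed rather than finiteness.) To construct $\P\meet\Q$: choose embeddings of $\P$ and $\Q$ in minimal position, minimizing arc–arc intersections — e.g.\ via geodesic representatives in a complete hyperbolic metric on $\mathrm{int}\,A$ — and let the blocks of $\P\meet\Q$ be regular neighborhoods of the connected components of the sets $E\cap F$, as $E$ ranges over blocks of $\P$ and $F$ over blocks of $\Q$.

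The main obstacle is precisely this last construction. One must show that it is well-defined up to ambient isotopy (independent of the chosen minimal-position embeddings), that the output is an honest noncrossing partition — most delicately, that it has \emph{at most one annular block}, which is the crux exactly when $\P$ and $\Q$ both carry annular blocks whose arcs wrap the hole different numbers of times, so that the naive intersection could a priori split into two annular pieces — and that it is in fact the greatest lower bound. These winding-number phenomena are what make both the cover analysis and the meet construction substantially harder than the disk case, and executing the argument cleanly (equivalently, in the marked-surface generality of \cite{surfnc}, of which this statement is the specialization) is where the real content lies.
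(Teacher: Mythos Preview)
Your plan differs from the paper's in the crucial step, and the difference is exactly what dissolves your ``main obstacle.'' The paper does not construct meets by intersecting blocks in minimal position. Instead it introduces the \emph{curve set} $\curve(\P)$---the set of all arcs and boundary segments that (up to isotopy) lie in some block of $\P$---and proves (Proposition~\ref{curve set}) that a noncrossing partition is determined by its curve set and that $\P\le\Q$ if and only if $\curve(\P)\subseteq\curve(\Q)$. The meet $\P\meet\Q$ is then the unique noncrossing partition with curve set $\curve(\P)\cap\curve(\Q)$. This is an intersection of sets of isotopy classes of curves, not a geometric intersection of regions, so the winding-number and well-definedness issues you flag simply do not arise; reconstructing a noncrossing partition from its curve set is easy (numbered points admitting a self-arc in the set form the annular block, and the remaining finitely many curves determine the disk blocks). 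Your descending-chain argument for joins from meets is fine and is essentially what is used.

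Your geometric-intersection construction has a concrete gap beyond those you acknowledge: components of $E\cap F$ may contain no numbered points at all (for instance, if $\P$ and $\Q$ each have a dangling annular block on the outer side carrying disjoint sets of outer points, their intersection in minimal position is an annulus meeting no numbered point), so ``regular neighborhoods of the components'' does not produce a noncrossing partition without further ad hoc rules for discarding or absorbing such pieces. Separately, the example in your gradedness discussion is off: joining two \emph{distinct} disk blocks by any arc yields a contractible union, hence again a disk block, so the non-annular count drops by exactly~$1$; an annular block is created only by a self-connector on a single block, and there too the drop is exactly~$1$. The cover description you are reaching for is the paper's Proposition~\ref{conn edge cov} (augmentation along a simple connector), and once one has it together with the curve-set order-embedding, gradedness with the stated rank function follows directly.
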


The proof of Theorem~\ref{A tilde main} uses the \newword{curve set} of a noncrossing partition.
The curve set of an embedded block is the set of all arcs and boundary segments that (up to isotopy) are contained in the block.
The curve set $\curve(\P)$ of a noncrossing partition $\P$ is the union of the curve sets of its blocks.
The proof of Theorem~\ref{A tilde main} in \cite{surfnc} or~\cite{BThesis} constructs $\P\meet\Q$ as the unique noncrossing partition whose curve set is $\curve(\P)\cap\curve(Q)$ and appeals to the following proposition, which is a specialization of \cite[Propositions 2.16,~2.17]{surfnc}.

\begin{prop}\label{curve set}
A noncrossing partition of $A$ is determined uniquely (up to isotopy) by its curve set.
Two noncrossing partitions $\P$ and $\Q$ have $\P\le\Q$ if and only if $\curve(\P)\subseteq\curve(\Q)$.
\end{prop}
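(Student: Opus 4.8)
The plan is to prove the two assertions in sequence, using the classification of embedded blocks. For the uniqueness statement, I would recover a noncrossing partition $\P$ from the abstract data $C=\curve(\P)$ in stages. First, define a relation on numbered points by declaring $p\sim q$ when $p=q$ or $C$ contains an arc joining $p$ and $q$. Two numbered points lie in a common arc of $C$ exactly when they lie in a common block: any two numbered points on the boundary of a single disk or annular block are joined by an essential arc inside it, while distinct blocks are disjoint, so no arc can meet two of them. Hence $\sim$ is an equivalence relation whose classes are the vertex sets of the blocks of $\P$. Second, for each class $P$ I would read its topological type off $C$: if $|P|=1$ the block is trivial unless $C$ contains a loop based at that point encircling the hole, in which case it is a dangling annular block; if $|P|=2$ the block is the unique arc of $C$ joining the two points; and if $|P|\ge 3$ the block is a disk block when only finitely many arcs of $C$ have both endpoints in $P$, and is an annular block otherwise, the winding numbers occurring among those arcs recording how the block sits around the hole and (in the nondangling case) how its two vertex groups are distributed. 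Third, once each block's type and its position relative to the hole are known, its embedding is reconstructed canonically: a disk block is the closure of the region cut off by the arcs of $C$ on its boundary together with the boundary segments of $A$ lying in $C$, and an annular block is pinned down similarly from its two boundary circles and the winding data.

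The remaining point for uniqueness is to assemble these individual embeddings into a single embedding of $\P$; this is possible because the blocks are pairwise noncrossing, so one can peel them off one at a time — cutting $A$ along an arc on the boundary of one block, which produces a disk or an annulus with fewer numbered points — and induct on the number of blocks to realize all of them disjointly at once. For the order statement, the forward direction is immediate: if $\P\le\Q$ then in compatible embeddings every arc or boundary segment isotopic into a block of $\P$ is isotopic into the containing block of $\Q$, so $\curve(\P)\subseteq\curve(\Q)$. For the converse, suppose $\curve(\P)\subseteq\curve(\Q)$. Applying the equivalence-relation observation to both partitions shows the vertex-set partition of $\P$ refines that of $\Q$. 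Then for each block $B$ of $\P$, with $B'$ the block of $\Q$ whose vertex set contains that of $B$, we have $\curve(B)\subseteq\curve(B')$, and a single-block version of the reconstruction argument produces an isotopy carrying $B$ into $B'$ (this is where the finiteness/winding bookkeeping of the uniqueness proof is reused, now to build an isotopy rather than merely identify a block abstractly). Finally one performs all of these isotopies simultaneously: inside a fixed embedding of a block $B'$ of $\Q$, the blocks of $\P$ supported on $B'$ form a noncrossing partition of the smaller surface $B'$, so the same peeling induction realizes them all at once, giving an embedding of $\Q$ inside which every block of $\P$ is contained in a block of $\Q$.

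The step I expect to be the main obstacle is the interface between annular blocks and the simultaneity of embeddings: verifying that the winding numbers of the arcs present in $C$ genuinely determine the embedding of an annular block up to isotopy, and then that the various single-block isotopies can be carried out together without blocks colliding. I anticipate organizing this by cutting the annulus along a carefully chosen arc, reducing to a disk or to an annulus with fewer numbered points, and inducting, so that the genuinely new content is confined to the base cases and to one peeling lemma. The noncrossing condition, together with the finiteness of the set of numbered points (and hence of the set of blocks), should keep this induction well-founded, while Theorem~\ref{A tilde main} — which is not logically needed here — offers an independent sanity check on the combinatorial size of $\curve(\P)$.
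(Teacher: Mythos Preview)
Your overall strategy---reconstruct the block structure from the curve set by first partitioning numbered points via arcs and then reading off the block type---is sound and close in spirit to the paper's sketch. However, there is a genuine gap in your case analysis. When $|P|=2$ you assert that ``the block is the unique arc of $C$ joining the two points,'' but this is false: a two-point block can be annular. A dangling annular block may have exactly two numbered points on one boundary component, and a nondangling annular block may have exactly one numbered point on each boundary; in either case $\curve(\P)$ contains infinitely many arcs between the two points (with different winding numbers) together with loops at each point. Your criterion for detecting annular blocks---infinitely many arcs with endpoints in $P$---applies perfectly well here, but you only invoke it for $|P|\ge 3$. A smaller issue: your equivalence relation should be generated by arcs \emph{and boundary segments}, since two adjacent points on the same boundary component of $A$ that form a degenerate disk block are joined in $\curve(\P)$ only by the boundary segment between them.

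The paper avoids this pitfall by organizing the reconstruction differently. Rather than casing on $|P|$, it first identifies the (at most one) annular block as the set of all numbered points $i$ such that $\curve(\P)$ contains a curve from $i$ to itself. A point lies in an annular block if and only if such a self-curve exists, and since a noncrossing partition has at most one annular block, all such points belong to the same block. The remaining points are distributed among disk blocks, and only finitely many curves of $\curve(\P)$ involve them, so those blocks are determined by the usual finite picture. This ``annular block first'' organization is shorter and sidesteps any case analysis on $|P|$. (For the record, the paper does not prove the proposition in full; it is quoted as a specialization of results in the prequel~\cite{surfnc}, and only this brief reconstruction sketch is given for the first assertion.)
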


It is easy to reconstruct $\P$ from its curve set.
If any numbered point $i$ has a curve in $\curve(\P)$ connecting $i$ to itself, there is an annular block in $\P$, containing every such $i$.
There are only finitely many curves involving numbered points not in the annular block.  
These define the (possibly degenerate) disk blocks of $\P$.

Any arc or boundary segment can be thought of as a noncrossing partition, namely the noncrossing partition whose only nontrivial block is that curve or the dangling annular block one of whose boundary components is that curve.
The following theorem is \cite[Corollary~2.34]{surfnc}, where we abuse notation by thinking of $\curve(\P)$ as a set of noncrossing partitions.
\begin{theorem}\label{join curve}
Suppose $\P\in\tNCAc$.
Then $\P=\Join\curve(\P)$.
\end{theorem}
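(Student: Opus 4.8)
The plan is to verify directly that $\P$ has the universal property of the join $\Join\curve(\P)$, working throughout with the curve-set description of $\tNCAc$ from Proposition~\ref{curve set}: a noncrossing partition is determined by its curve set, and $\P'\le\Q'$ if and only if $\curve(\P')\subseteq\curve(\Q')$. The one elementary fact I will use repeatedly is that for any arc or boundary segment $\gamma$, the noncrossing partition $\P_\gamma$ associated to $\gamma$ satisfies $\gamma\in\curve(\P_\gamma)$, since $\gamma$ lies in the unique nontrivial block of $\P_\gamma$ (whether that block is $\gamma$ itself or a dangling annular block having $\gamma$ on its boundary).

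Granting for the moment the claim that $\curve(\P_\gamma)\subseteq\curve(\P)$ for every $\gamma\in\curve(\P)$, the theorem follows quickly. By Proposition~\ref{curve set} the claim says exactly that $\P$ is an upper bound for the set $\curve(\P)$ of noncrossing partitions. It is also the \emph{least} upper bound: if $\Q$ is any upper bound of $\curve(\P)$, then $\P_\gamma\le\Q$ for every $\gamma\in\curve(\P)$, so $\gamma\in\curve(\P_\gamma)\subseteq\curve(\Q)$ by Proposition~\ref{curve set}; hence $\curve(\P)\subseteq\curve(\Q)$ and therefore $\P\le\Q$. Thus $\Join\curve(\P)$ exists and equals $\P$. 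Note that this half of the argument does not use the claim.

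It remains to prove the claim, which is the real content: for each $\gamma\in\curve(\P)$, $\curve(\P_\gamma)\subseteq\curve(\P)$. Fix a block $E$ of $\P$ into which $\gamma$ isotopes. If $\P_\gamma$ has the curve $\gamma$ itself as its only nontrivial block, then $\curve(\P_\gamma)=\set{\gamma}\subseteq\curve(\P)$ and there is nothing more to prove. Otherwise $\gamma$ is an essential arc of $A$ and $\P_\gamma$ has a dangling annular block $B$ with $\gamma$ on its boundary. Here I would argue first that $E$ must be an annular block (an essential arc isotopes into no disk block, and a trivial block or a boundary segment contains no arcs), and then that $B$ can be isotoped so as to lie inside $E$; this is a matter of analyzing how the essential curve obtained from $\gamma$ separates $A$ and which boundary component of $A$ and which numbered points lie on the annular side of $\gamma$. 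Once $B\subseteq E$, every arc or boundary segment isotopic into $B$ is isotopic into $E$, so $\curve(\P_\gamma)=\curve(B)\subseteq\curve(E)\subseteq\curve(\P)$, as needed.

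The main obstacle is precisely this last topological step: showing that the dangling annular block $B$ built from an essential curve $\gamma\in\curve(\P)$ embeds in the block $E$ of $\P$ that contains $\gamma$. Equivalently, one is proving the minimality property that $\P_\gamma$ is the smallest noncrossing partition having $\gamma$ in its curve set, and given such a lemma---which is also available in the marked-surface setting of \cite{surfnc}---the claim is immediate from $\gamma\in\curve(\P)$. Everything else is formal bookkeeping with Proposition~\ref{curve set}.
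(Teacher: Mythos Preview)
Your argument is correct. The paper does not actually prove this theorem; it simply quotes it as \cite[Corollary~2.34]{surfnc}. So there is no ``paper's own proof'' to compare against, and your direct verification via Proposition~\ref{curve set} is more than the paper itself supplies.

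A few remarks on the one step you flagged. When $\gamma$ is an arc with both endpoints at the same numbered point $p$ (necessarily wrapping once around the hole, since arcs are embedded), the block $E$ of $\P$ containing $\gamma$ must be annular, as you say: a loop that is essential in $A$ cannot isotope into a disk. The dangling annular block $B$ associated to $\gamma$ lies between $\gamma$ and a core circle, on the side of $\gamma$ away from the boundary component of $A$ carrying~$p$. If $E$ is dangling, its free boundary is a core circle on that same side, and the region of $E$ between $\gamma$ and that core circle realizes $B$ inside $E$. If $E$ is non-dangling, choose a core circle inside $E$ on the far side of $\gamma$; the annulus between $\gamma$ and that core circle is again contained in $E$, because any component of $A\setminus E$ is a disk meeting $\partial A$ in more than the single point $p$, hence cannot lie between $\gamma$ and the core circle. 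Either way $B\subseteq E$ up to isotopy, and your claim $\curve(\P_\gamma)=\curve(B)\subseteq\curve(E)\subseteq\curve(\P)$ follows. This is indeed the minimality statement that $\P_\gamma$ is the smallest noncrossing partition whose curve set contains $\gamma$, which is what underlies the cited result in~\cite{surfnc}.
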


We also quote a result that describes cover relations in $\tNCAc$.  
Given $\P\in\tNCAc$, a \newword{simple connector for $\P$} is an arc or boundary segment $\kappa$ in the annulus that is not in $\curve(\P)$ and that starts in some block $E$ of $\P$, leaves $E$, and is disjoint from all other blocks until it enters some block of $E'$ of $\P$, which it does not leave again.  (Possibly $E=E'$.)

Given a simple connector $\kappa$ for $\P$, we construct the \newword{augmentation of $\P$ along~$\kappa$}, written $\P\cup\kappa$.
We first replace $E$ and $E'$ by the smallest embedded block containing $E$, $E'$ and $\kappa$.
This is constructed as follows:
If $E$ and $E'$ are distinct points, then this smallest block is $\kappa$.
If $E=E'$ is a point, then the smallest block is an annulus bounded by $\kappa$ and the noncontractable circle in the interior of~$A$.
Otherwise, we replace $E$ and $E'$ by the union of $E$ and $E'$ with a ``thickened'' version of $\kappa$ (a suitably chosen digon containing $\kappa$).
If $E$ and/or $E'$ is an curve, they might also need to be ``thickened''.
This ``thickened union'' may fail to be an embedded block because one or more components of its boundary might fail to be a union of arcs/boundary segments of $A$.
Specifically, if some component is composed of arcs and boundary segments and some other curves connecting marked points, such that each of these other curves, together with a boundary segment, bounds a digon, then we attach all such digons.
Once we have created this block containing $E$, $E'$ and $\kappa$, the result may fail to be a noncrossing partition because it may contain two annular blocks. 
In this case, both annular blocks are dangling, with one containing numbered points on the inner boundary of $A$ and one containing numbered points on the outer boundary.
We replace the two dangling annular blocks with a single non-dangling annular blocks containing these inner and outer numbered points and no others.

We use the construction of augmentation to describe cover relations in $\tNCAc$.
(The symbol ``$\covered$'' denotes a cover relation.)
The following is \cite[Proposition~2.24]{surfnc}.

\begin{prop}\label{conn edge cov}
If $\P,\Q\in\tNCAc$, then $\P\covered \Q$ if and only if there exists a simple connector $\kappa$ for $\P$ such that $\Q=\P\cup\kappa$.
\end{prop}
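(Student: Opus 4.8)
The plan is to prove the two implications separately, in each case translating between the order on $\tNCAc$ and curve sets via Proposition~\ref{curve set}, and between a single-step jump in rank and a cover relation via Theorem~\ref{A tilde main} (gradedness, with rank equal to $n$ minus the number of non-annular blocks). Throughout I would fix compatible embeddings, so that ``block of $\P$ contained in a block of $\Q$'' and ``$\kappa$ disjoint from the blocks of $\P$ except as prescribed'' can be used literally.

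For the ``if'' direction, suppose $\kappa$ is a simple connector for $\P$ and set $\Q=\P\cup\kappa$; one first checks that $\Q$ is a genuine noncrossing partition (the augmentation construction takes care of the at-most-one-annular-block condition). Next I would show $\P<\Q$: every arc or boundary segment lying in a block of $\P$ still lies in the possibly larger block of $\Q$ that absorbs it, so $\curve(\P)\subseteq\curve(\Q)$, while $\kappa\in\curve(\Q)\setminus\curve(\P)$; apply Proposition~\ref{curve set}. The substantive step is to show $\operatorname{rank}(\Q)=\operatorname{rank}(\P)+1$, after which the fact that a graded poset has no element strictly between $\P$ and $\Q$ (their ranks differing by one) gives $\P\covered\Q$. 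By Theorem~\ref{A tilde main} this reduces to checking that augmentation along $\kappa$ decreases the number of non-annular blocks by exactly one, which I would do by running through the cases in the definition of $\P\cup\kappa$: two distinct non-annular blocks merging to a non-annular block; a single non-annular block becoming an (annular) block because $\kappa$ is a noncontractible loop; and merging a non-annular block with the unique annular block of $\P$. The cases involving annular blocks — in particular the one where the construction temporarily produces two dangling annular blocks (on opposite boundaries) and replaces them by a single non-dangling one — require the most care, and one must also note that attaching the ``digons'' only rearranges curves within a block and changes neither the underlying set partition nor the block count.

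For the ``only if'' direction, suppose $\P\covered\Q$; then $\curve(\P)\subsetneq\curve(\Q)$ by Proposition~\ref{curve set}. The step I expect to be the main obstacle is to produce, from some curve in $\curve(\Q)\setminus\curve(\P)$, an \emph{honest} simple connector $\kappa$ for $\P$: I would choose a curve in $\curve(\Q)\setminus\curve(\P)$ that, drawn inside the blocks of $\Q$, meets the blocks of $\P$ as few times as possible, and then argue by an innermost-subarc/surgery argument that a minimizer must enter at most two blocks of $\P$ and, once it has entered the second (possibly the same as the first), never leave it again — i.e.\ that it is a simple connector. Granting this, $\P\le\P\cup\kappa$ by the curve-set containment above, and in fact $\P\cup\kappa\le\Q$: every curve of $\P\cup\kappa$ either already lies in $\curve(\P)$, or is $\kappa$, or is one of the finitely many curves created by merging $E$, $E'$ and $\kappa$ (routed alongside $\kappa$), together with absorbed digons and the curves introduced when two dangling annular blocks are combined, and each of these is homotopic into the single block of $\Q$ that contains $E$, $E'$ and $\kappa$, hence lies in $\curve(\Q)$. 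Since $\kappa\in\curve(\P\cup\kappa)\setminus\curve(\P)$ we have $\P<\P\cup\kappa\le\Q$, and because $\P\covered\Q$ there is nothing strictly between, forcing $\P\cup\kappa=\Q$. (Alternatively, once $\kappa$ is known to be a simple connector, the ``if'' direction already gives $\P\covered\P\cup\kappa$, and $\P\cup\kappa\le\Q$ then yields $\P\cup\kappa=\Q$.)
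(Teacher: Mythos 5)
First, a point of reference: this paper contains no proof of Proposition~\ref{conn edge cov} to compare yours against --- it is quoted verbatim as Proposition~2.24 of \cite{surfnc}, and all the work happens in that prequel. Judged on its own, your outline is structurally sensible: Proposition~\ref{curve set} does give $\P<\P\cup\kappa$ from $\curve(\P)\subsetneq\curve(\P\cup\kappa)$, gradedness (Theorem~\ref{A tilde main}) does convert a rank-one jump into a cover, and the final deduction in the ``only if'' direction ($\P<\P\cup\kappa\le\Q$ plus $\P\covered\Q$ forces $\P\cup\kappa=\Q$) is correct and even spares you from having to verify that the extracted $\kappa$ behaves well. But the two steps you defer --- the rank computation and the surgery argument --- are the entire content of the proposition, and the first one conceals a case that your enumeration does not anticipate and that, with the definitions as literally stated in this paper, actually fails.

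Concretely: take outer points $1,2,3,4$ in clockwise order and inner point $5$, let $\P$ consist of the boundary segment joining $4$ and $1$ together with the trivial blocks $\set{2},\set{3},\set{5}$, and let $\kappa$ be the arc from $1$ to $4$ that passes around $2$ and $3$ (on the side away from the hole). This $\kappa$ satisfies the definition of simple connector exactly as stated here: it leaves $E$ (the segment), is disjoint from the singleton blocks, and re-enters $E$ at $4$. It is not one of your three cases (two blocks merging; a noncontractible loop creating an annular block; merging with the annular block): it joins a block to itself \emph{without} encircling the hole of $A$. The thickened union $E\cup\kappa$ is a \emph{contractible} annulus; if such a thing counts as an annular block then $\perm$ no longer reads it as a pair of infinite cycles, and if it does not, the smallest genuine embedded block containing $E\cup\kappa$ is a disk that must absorb $2$ and $3$, so the number of non-annular blocks drops by three and $\P\cup\kappa$ is not a cover of $\P$. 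Either way your claimed verification that ``augmentation decreases the number of non-annular blocks by exactly one'' cannot be carried out from the definitions given in this paper; the resolution lies in the sharper definitions of \cite{surfnc} (compare the explicit exclusions built into the type-$\afftype{C}$ notions of simple symmetric connector later in the paper), and any honest proof must either import those or supply an argument ruling such $\kappa$ out. A secondary caveat: invoking Theorem~\ref{A tilde main} is legitimate only if its proof in \cite{surfnc} does not itself rest on the cover characterization; since this paper quotes both as black boxes, that dependence should at least be acknowledged.
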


\begin{example}\label{cov ex}
Figure~\ref{cov fig} shows noncrossing partitions $\Q_1$, $\Q_2$, $\Q_3$, and $\Q_4$ with $\Q_1\covered\Q_2\covered\Q_3\covered\Q_4$.
(The noncrossing partitions $\Q_1$ and $\Q_4$ were also pictured in Figure~\ref{nc exs}.)
In each $\Q_i$ for $i=1,2,3$, a simple connector $\kappa_i$ is also shown (as a dashed curve) such that $\Q_{i+1}=\Q_i\cup\kappa_i$.
\end{example}

\begin{figure}
\begin{tabular}{cccc}
\scalebox{0.95}{\includegraphics{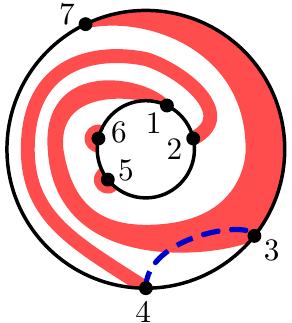}}
&\scalebox{0.95}{\includegraphics{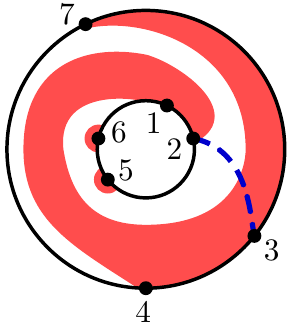}}
&\scalebox{0.95}{\includegraphics{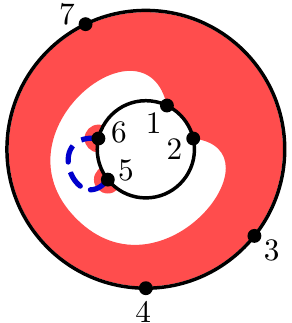}}
&\scalebox{0.95}{\includegraphics{ex1.pdf}}\\
$\Q_1$&$\Q_2$&$\Q_3$&$\Q_4$
\end{tabular}
\caption{$\Q_1\covered\Q_2\covered\Q_3\covered\Q_4$, with simple connectors}
\label{cov fig}
\end{figure}

Suppose $\P\in\tNCAc$ and $\lambda$ is a non-self-intersecting curve contained in some block $E$ of $\P$, with each endpoint at a non-numbered point on the boundary of $E$.
Then $\lambda$ is a \newword{cutting curve} for $\P$ if it either (when $E$ is annular or non-annular) cuts $E$ into two pieces, each having at least one numbered point, or (when $E$ is annular), cuts $E$ into one non-annular piece.
We allow the degenerate case where $E$ is an arc or boundary segment and $\lambda$ is a non-numbered point in $E$.
Write $\P-\lambda$ for the noncrossing partition obtained from $\P$ by cutting $E$ in this way.
More precisely, each component of the complement of $\lambda$ in $E$ contains a unique largest embedded block, and we replace $E$ with the largest block(s) in this (these) component(s).
The following proposition is immediate from Proposition~\ref{conn edge cov}.

\begin{prop}\label{cut curve cov}
If $\P,\Q\in\tNCAc$, then $\P\covered \Q$ if and only if there exists a cutting curve $\lambda$ for $\Q$ such that $\P=\Q-\lambda$.
\end{prop}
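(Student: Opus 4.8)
The plan is to exploit the fact, already implicit in the surrounding development, that ``cutting $\Q$ along a curve $\lambda$'' is the reverse of ``augmenting $\P$ along a simple connector $\kappa$'', where $\kappa$ and $\lambda$ are arcs meeting transversally inside the block where the merge/split happens. Given this duality, the proposition follows from Proposition~\ref{conn edge cov}: each cover relation supplied by Proposition~\ref{conn edge cov} carries a simple connector, from which we read off a cutting curve, and conversely every cutting curve produces a simple connector to which Proposition~\ref{conn edge cov} applies. So the work is to set up and check the $\kappa\leftrightarrow\lambda$ correspondence, case by case, following the branches of the augmentation construction.

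For the forward implication, suppose $\P\covered\Q$. By Proposition~\ref{conn edge cov} there is a simple connector $\kappa$ for $\P$, joining blocks $E$ and $E'$ of $\P$, with $\Q=\P\cup\kappa$; let $E''$ be the block of $\Q$ produced by the augmentation. I would construct a cutting curve $\lambda$ for $\Q$ inside $E''$ following the same case division used to define $\P\cup\kappa$: if $E,E'$ are distinct numbered points, so $E''$ is the arc $\kappa$, take $\lambda$ to be a non-numbered point in the interior of $\kappa$ (the allowed degenerate case of a cutting curve); if $E=E'$ is a numbered point, so $E''$ is the dangling annular block bounded by $\kappa$ and a noncontractible circle, take $\lambda$ to be an arc isotopic to that circle, cutting $E''$ into a disk piece whose largest block is the point and an annular piece with no numbered points; in the generic case, take $\lambda$ to cross the ``thickened'' copy of $\kappa$ exactly once, separating $E''$ into a component isotopic to $E$ and one isotopic to $E'$ (the digons attached during augmentation are absorbed into these largest blocks), or, when $E=E'$ is a curve or disk block so that $E''$ is annular, into a single non-annular piece isotopic to $E$; and in the remaining case, where the augmentation turned two dangling annular blocks (the unique annular block of $\P$, together with a newly created one) into one nondangling annular block $E''$, take $\lambda$ to be an essential arc of $E''$ that splits it back into those two dangling annular blocks (one of which may have degenerated to a single numbered point). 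In each case one checks $\lambda$ satisfies the definition of a cutting curve for $\Q$ and, by inspecting how $\Q-\lambda$ is assembled from the largest blocks of the components, that $\Q-\lambda=\P$. Conversely, given a cutting curve $\lambda$ for $\Q$ lying in a block $E$ of $\Q$, with $\P=\Q-\lambda$, I would take a short arc $\kappa$ meeting $\lambda$ transversally in one point, with endpoints just inside the one or two pieces $\lambda$ carves out of $E$ and running toward numbered points of those pieces (in the degenerate forms of $\kappa$ when a piece is itself a point); then $\kappa\notin\curve(\P)$, and $\kappa$ starts in a block of $\P$, leaves it, stays disjoint from the other blocks, and enters a (possibly equal) block of $\P$ which it does not leave, so $\kappa$ is a simple connector for $\P$. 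Re-running the augmentation construction gives $\P\cup\kappa=\Q$, so $\P\covered\Q$ by Proposition~\ref{conn edge cov}.

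The main obstacle is entirely bookkeeping: verifying that the ``transverse arc / cut'' pair really is inverse up to ambient isotopy in every branch of the augmentation construction. The delicate points are (i) that thickening $\kappa$ and attaching digons reverse cleanly, so that cutting along a transverse $\lambda$ returns exactly $E$ and $E'$ rather than strictly smaller blocks, and (ii) the annular subcase, where one must choose $\lambda$ on the correct side of $E''$ so that cutting the nondangling annular block yields the dangling annular block of $\P$ on one side and a disk (hence only a numbered point) on the other, thereby staying inside the class of genuine noncrossing partitions of $A$. Both are routine given the explicit description of augmentation recalled before Proposition~\ref{conn edge cov} and the fact (Proposition~\ref{curve set}) that a noncrossing partition of $A$ is determined by its curve set, so checking $\Q-\lambda=\P$ and $\P\cup\kappa=\Q$ reduces to comparing curve sets. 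This is why the statement can be asserted as immediate from Proposition~\ref{conn edge cov}.
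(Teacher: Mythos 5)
Your proposal is correct and matches the paper's approach exactly: the paper simply declares the proposition ``immediate from Proposition~\ref{conn edge cov},'' relying on precisely the duality between augmenting along a simple connector and cutting along a cutting curve that you spell out case by case. Your elaboration of the $\kappa\leftrightarrow\lambda$ correspondence supplies the details the paper leaves implicit, so there is nothing to add.
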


\begin{example}\label{cov ex 2}
Continuing Example~\ref{le ex}, Figure~\ref{cov fig 2} shows the same noncrossing partitions $\Q_1\covered\Q_2\covered\Q_3\covered\Q_4$ pictured in Figure~\ref{cov fig}.
In each $\Q_i$ for $i=2,3,4$, a cutting curve $\lambda_i$ is shown (as a dotted curve) such that $\Q_{i-1}=\Q_i-\lambda_i$.
\end{example}

\begin{figure}
\begin{tabular}{cccc}
\scalebox{0.95}{\includegraphics{ex3.pdf}}
&\scalebox{0.95}{\includegraphics{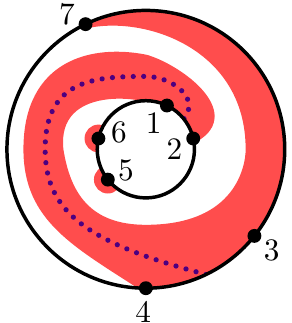}}
&\scalebox{0.95}{\includegraphics{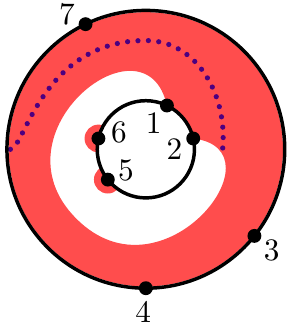}}
&\scalebox{0.95}{\includegraphics{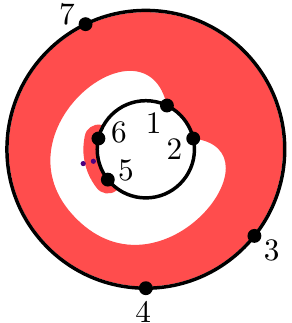}}\\
$\Q_1$&$\Q_2$&$\Q_3$&$\Q_4$
\end{tabular}
\caption{$\Q_1\covered\Q_2\covered\Q_3\covered\Q_4$, with cutting curves}
\label{cov fig 2}
\end{figure}

Write $\tNCAcircc$ for the subposet of $\tNCAc$ consisting of noncrossing partitions of~$A$ with no dangling annular blocks.

\begin{prop}\label{cov circ}
Suppose $\P,\Q\in\tNCAcircc$.
Then $\Q$ covers $\P$ in $\tNCAcircc$ if and only if $\Q$ covers $\P$ in $\tNCAc$.
If $\P\covered\Q$ and $\kappa$ is a simple connector such that $\Q=\P\cup\kappa$ and if $\kappa$ connects a block $E$ of $\P$ to itself, then $E$ is a disk or arc containing numbered points on both boundaries of $A$.
\end{prop}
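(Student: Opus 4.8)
The ``if'' direction of the first sentence is immediate, since $\tNCAcircc$ is a full subposet of $\tNCAc$: if $\P\covered\Q$ in $\tNCAc$ and $\P,\Q\in\tNCAcircc$, then nothing of $\tNCAcircc$ can lie strictly between. Everything else rests on one local fact, which I would establish first: \emph{when $\P\in\tNCAcircc$ and $\R=\P\cup\kappa$ for a simple connector $\kappa$, the partition $\R$ has a dangling annular block if and only if $\kappa$ connects a single block $E$ of $\P$ to itself and $E$ is either a trivial block or a disk/arc/boundary-segment having numbered points on only one boundary circle of $A$.} Augmentation changes only the block(s) $E,E'$ that $\kappa$ meets, fusing them (with $\kappa$ and finitely many digons) into one new block and leaving the rest of $\P$ alone; since $\P$ has no dangling annular block, a dangling annular block of $\R$ must be this new block. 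If $E\ne E'$ the new block is a disk, or a non-dangling annulus when one of $E,E'$ is already a non-dangling annular block; so $E=E'$, and $E$ cannot itself be annular, because a simple connector from an annular block to itself would either be isotopic into that block (impossible, as $\kappa\notin\curve(\P)$) or would make $E\cup\kappa$ fail to be an embedded disk or annulus, contradicting that $\P\cup\kappa$ is a noncrossing partition. So $E$ is non-annular and $\kappa$ wraps the hole, whence $E\cup\kappa$ is an annular block whose numbered points are exactly those of $E$; since $\kappa$ meets $\partial A$ only at numbered points of $E$, the block $E\cup\kappa$ meets the two boundary circles of $A$ exactly where $E$ does, and it is dangling precisely when it misses one of them, that is, precisely when $E$ has numbered points on only one boundary circle. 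The hard part here is the topological bookkeeping in the augmentation construction (the ``$E=E'$ is a point'' clause, now applied also to disks and arcs): confirming that $E\cup\kappa$ really is an annular block and locating its boundary circles relative to $\partial A$.

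The second sentence of the proposition is the contrapositive of this local fact: if $\Q=\P\cup\kappa\in\tNCAcircc$ and $\kappa$ connects a block $E$ to itself, then $E$ is not a trivial block, not an annular block, and not a disk/arc/boundary-segment with numbered points on a single boundary circle, so $E$ is a disk or arc with numbered points on both boundaries of $A$. For the ``only if'' direction of the first sentence, suppose $\P\covered\Q$ in $\tNCAcircc$; it is enough to exhibit $\R\in\tNCAcircc$ with $\P\covered\R$ in $\tNCAc$ and $\R\le\Q$, since then $\P<\R\le\Q$ with $\R\in\tNCAcircc$ forces $\R=\Q$, giving $\P\covered\Q$ in $\tNCAc$. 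Using that $\tNCAc$ is graded (Theorem~\ref{A tilde main}), I would pick some $\R_0$ with $\P\covered\R_0\le\Q$ in $\tNCAc$. If $\R_0\in\tNCAcircc$, take $\R=\R_0$. Otherwise, by the local fact, $\R_0=\P\cup\kappa_0$ where $\kappa_0$ connects a non-annular block $E$ of $\P$ to itself, $E$ has its numbered points on a single boundary circle (say the outer one), and the dangling annular block $D=E\cup\kappa_0$ of $\R_0$ has exactly the numbered points of $E$.

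Since $D\le\Q$ and $D$ wraps the hole, the block $F$ of $\Q$ containing $D$ is annular, hence --- as $\Q\in\tNCAcircc$ --- a non-dangling annular block, so it has numbered points on both boundaries of $A$; thus $F$ contains some inner point $w$ in addition to the outer points of $D$. I would then choose a numbered point $x$ of $E$ and an arc $\eta$ in $F$ from $x$ to $w$ that meets $E$ only in an initial segment (possible because $E$ is a disk attached to the outer boundary component of $F$ and so does not separate the annulus $F$), and let $\kappa$ be the simple connector obtained by following $\eta$ out of $E$ until it first meets another block $E_1$ of $\P$ --- necessarily $E_1\ne E$, and $E_1\subseteq F$ because $\eta\subseteq F$ and the blocks of $\Q$ are disjoint --- then continuing inside $E_1$ to a numbered point of $E_1$. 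Since $\kappa$ joins two distinct blocks, $\kappa\in\curve(F)\subseteq\curve(\Q)$ but $\kappa\notin\curve(\P)$, so $\R:=\P\cup\kappa$ covers $\P$ in $\tNCAc$, equals $\P\join\kappa$ and hence is $\le\Q$, and (by the local fact again, since $\kappa$ does not connect a block to itself) has no dangling annular block, i.e.\ $\R\in\tNCAcircc$. The remaining care needed in this step is the routing of $\eta$ so that the resulting $\kappa$ is a genuine simple connector for $\P$ and does not re-enter $E$.
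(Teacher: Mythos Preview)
Your argument is correct and follows the same core idea as the paper's proof: both locate a dangling annular block in an intermediate element, observe that it must sit inside a non-dangling annular block of $\Q$, and then use a curve reaching the opposite boundary circle to produce an intermediate element that lies in $\tNCAcircc$.

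The paper's version is more streamlined in two ways. First, rather than picking a cover $\R_0$ of $\P$ and repairing it, the paper argues directly by contradiction: assume some $\P'\in\tNCAc$ with $\P<\P'<\Q$, take its dangling annular block $E'$, and let $\kappa$ be any arc from a numbered point of $E'$ to a numbered point of the enclosing non-dangling block of $\Q$ on the opposite boundary. Second --- and this is the real shortcut --- the paper does not require $\kappa$ to be a simple connector for $\P$; it explicitly notes that ``$\kappa$ might not be a simple connector for $\P$, but we can carry out the construction of $\P\cup\kappa$''. The point is that $\P$ has no annular block and $\kappa$ connects a disk block to a disk block, so $\P\cup\kappa$ has no annular block and hence lies in $\tNCAcircc$ strictly between $\P$ and $\Q$. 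This bypasses all of your routing work for $\eta$. Your careful local fact and the explicit construction of a genuine simple connector buy you a sharper picture of exactly which augmentations create dangling blocks, and they make the second assertion a clean corollary; the paper instead dispatches the second assertion in one line (``immediate from the fact that $\Q$ is in $\tNCAcircc$''), which is really your local fact used contrapositively without stating it.
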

\begin{proof}
If $\P\covered\Q$ in $\tNCAc$, then $\P\covered\Q$ in $\tNCAcircc$, because $\tNCAcircc$ is an induced subposet.
Conversely, suppose $\P\covered\Q$ in $\tNCAcircc$, and suppose for the sake of contradiction that there exists $\P'\in\tNCAc$ such that $\P<\P'<\Q$.
Then $\P'\not\in\tNCAcircc$, so it has a dangling annular block $E'$.
Since $\Q\in\tNCAcircc$, $E'$ is contained in some non-dangling annular block $E$ of $\Q$.
Let $\kappa$ be some arc or boundary segment connecting a numbered point $x$ in $E'$ to a numbered point $y$ in $E$ on the opposite boundary of the annulus.
Certainly $\kappa\not\in\curve(\P)$, because the block of $\P$ containing $x$ is contained in $E'$, which does not contain $y$.
On the other hand, $\kappa\in\curve(\Q)$ because it is contained in $E$.
The curve $\kappa$ might not be a simple connector for $\P$, but we can carry out the construction of $\P\cup\kappa$, which has no annular block because $\P$ has no annular block and $\kappa$ connects disk blocks to disk blocks.
In particular, $\P\cup\kappa$ is in $\tNCAcircc$ because is has no annular block.
Also, $\P<\P\cup\kappa$ because $\kappa\not\in\curve(\P)$, and $\P\cup\kappa<\Q$ because $\P\cup\kappa$ has no annular block.
This contradiction to the supposition that $\P\covered\Q$ in $\tNCAcircc$ shows that $\P\covered\Q$ in $\tNCAc$.

The second assertion of the proposition is immediate from the fact that $\Q$ is in $\tNCAcircc$.
\end{proof}

The poset $\tNCAcircc$ (noncrossing partitions of the annulus with no dangling annular blocks) can fail to be a lattice, as illustrated in the following example.

\begin{example}\label{circ fail}
Take $n=4$ and $c=s_4s_3s_1s_2$.
\begin{figure}
\begin{tabular}{ccccc}
\includegraphics{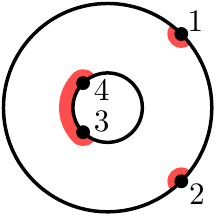}
&\includegraphics{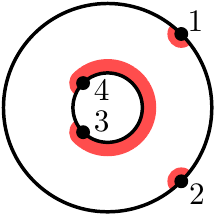}
&\includegraphics{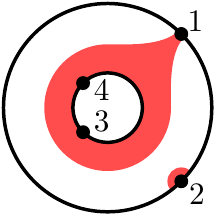}
&\includegraphics{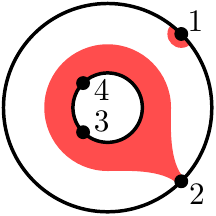}
&\includegraphics{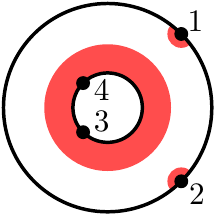}\\
$\P$&$\Q$&&&$\P\join\Q$
\end{tabular}
\caption{Joins in $\protect\tNCAc$ may need dangling annular blocks}
\label{circ fail fig}
\end{figure}
The left two pictures in Figure~\ref{circ fail fig} show noncrossing partitions $\P,\Q\in\tNCAcircc$.
The next two pictures show minimal upper bounds for $\P$ and $\Q$ in $\tNCAcircc$.
The rightmost picture shows the join of $\P$ and $\Q$ in $\tNCAc$, which is not in $\tNCAcircc$ because it has a dangling annular block.  
\end{example}

\subsection{Isomorphisms}\label{isom sec}
We now define a map $\perm:\tNCAc\to \SZmodn$ and prove that it is an isomorphism to $[1,c]_{T\cup L}$.
Given $\P\in\tNCAc$, we define $\perm(\P)$ by reading the cycle notation of a permutation from the embedding of the partition in the annulus.  
Each component of the boundary of each block is read as a cycle by following the boundary, keeping the interior of the block on the right.
(Each degenerate disk block is first thickened to a disk with one or two numbered points on its boundary, and is thus read as a singleton cycle or transposition.) 
The entries in the cycle are the numbered points encountered along the boundary, but adding a multiple of $n$ to each, with the multiple of $n$ increasing or decreasing according to how the block wraps around the annulus.
Specifically, the multiple of $n$ changes when we cross the \newword{date line}, a radial line segment separating $n$ and~$1$, shown as the gray segment in Figure~\ref{labeled annulus}.
When a numbered point is reached, we record its value (between $1$ and $n$) plus $wn$, where $w$ is the number of times we have crossed the date line clockwise minus the number of times we have crossed counterclockwise, since starting to read the cycle.
When we return to the numbered point where we started, if we have added or subtracted a nonzero multiple of $n$, then we continue around the boundary again, obtaining an infinite cycle.
Otherwise, if we have recorded the values $a_1,a_2,\ldots,a_k$ around the boundary, we obtain the infinite product $(a_1\,\,\,a_2\,\cdots\,a_k)_n$ of cycles.
More specifically, a disk block becomes such an infinite product of finite cycles, a dangling annular block becomes a single infinite cycle (either increasing or decreasing), and a non-dangling annular block becomes a pair of infinite cycles, one increasing and one decreasing.

\begin{example}\label{perm ex}
Labeling the noncrossing partitions shown in Figure~\ref{nc exs} (Example~\ref{some ncs}) as $\P_1,\P_2,\P_3$ from left to right, we apply $\perm$ to obtain the following permutations in $\SZmod7$:
\begin{align*}
\perm(\P_1)&=(1\,\,-7\,\,-4)_7\,(2\,\,-3)_7\,(5)_7\,(6)_7\\
\perm(\P_2)&=(\cdots\,1\,\,-5\,\,-6\,\cdots)\,(\cdots\,3\,\,\,4\,\,\,7\,\,\,10\,\cdots)\,(5\,\,\,6)_7\\
\perm(\P_3)&=(1\,\,-1\,\,-2)_7\,(2)_7\,(3)_7\,(\cdots\,4\,\,\,7\,\,\,11\,\cdots)
\end{align*} 
\end{example}

We will prove the following theorem.

\begin{theorem}\label{isom}
The map $\perm:\tNCAc\rightarrow \SZmodn$ is an isomorphism from $\tNCAc$ to the interval $[1,c]_{T\cup L}$ in $\SZmodn$.   
It restricts to an isomorphism from $\tNCAcircc$ to the interval $[1,c]_T$ in $\Stilde_n$.
\end{theorem}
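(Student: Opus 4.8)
The plan is to prove both statements at once: treat $\perm$ first as a map into $\SZmodn$ with image in $[1,c]_{T\cup L}$, and then cut down to $\Stilde_n$ and $[1,c]_T$. I would begin with the easy structural facts. The reading procedure manifestly defines a map into $\SZmodn$, and it is injective because one recovers $\P$ from the cycle notation of $\perm(\P)$: each finite cycle $(a_1\,\cdots\,a_k)_n$ determines a (possibly degenerate) disk block on the corresponding numbered points, embedded via the actual integers $a_i$ (a disk is simply connected, so the winding is pinned down), an increasing or decreasing infinite cycle determines a dangling annular block, and a balanced pair of infinite cycles determines the unique non-dangling annular block. This is the permutation-side analogue of Proposition~\ref{curve set}. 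Next I would identify images of atoms: every curve, viewed as a noncrossing partition, is an atom of $\tNCAc$, and $\perm$ sends boundary segments to $s_1,\dots,s_n$, arcs between distinct numbered points to the non-simple reflections $(i\,\,j)_n$, and arcs from a numbered point to itself (equivalently, dangling annular blocks carrying one numbered point) to the loops $\ell_i^{\pm1}$; thus $\perm$ restricts to a bijection from the atoms of $\tNCAc$ onto $T\cup L$. I would also check $\perm(\hat0)=1$ (all singletons) and $\perm(\hat1)=c$, where $\hat1$ is the non-dangling annular block on all $n$ points (which exists because a Coxeter element has at least one inner and one outer point) and the identification uses Lemma~\ref{c annulus}.

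The core computation is a cover-relation analysis via simple connectors (Proposition~\ref{conn edge cov}): if $\P\covered\Q$ in $\tNCAc$, then $\perm(\Q)=\perm(\P)\,t$ for a unique $t\in T\cup L$. Augmenting along a simple connector either merges two cycles of $\perm(\P)$ (right multiplication by a transposition $(a\,\,b)$, a reflection since $a$ and $b$ lie in distinct mod-$n$ classes), or turns a singleton cycle into an infinite cycle (right multiplication by a loop), or turns a disk block with numbered points on both boundaries into a non-dangling annular block (again right multiplication by a single reflection, coming from the wrap-around connector). I would pair this with the lower bound $\ell_{T\cup L}(\pi)\ge n-f(\pi)$, where $f(\pi)$ counts the finite cycles of $\pi$ per period: since $f(1)=n$ and right multiplication by any element of $T\cup L$ drops $f$ by at most $1$, this bound follows, and together with the upper bound from a saturated chain it gives $\ell_{T\cup L}(\perm(\P))=\operatorname{rank}(\P)$ for all $\P$, hence in particular $\ell_{T\cup L}(c)=n$. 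Consequently $\perm$ sends covers to covers and preserves rank, so it is an order-preserving injection whose image lies in $[1,c]_{T\cup L}$ (because $\P\le\hat1$).

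The remaining step, which I expect to be the main obstacle, is to show $\perm$ is onto $[1,c]_{T\cup L}$ and order-reflecting: this is exactly where one must argue that the geometric model produces the full interval rather than a proper subposet, and where the interaction between the loops and the disk/annular structure must be controlled. I would prove surjectivity and order-reflection together by induction on $\ell_{T\cup L}$. Given $1\ne w\le_{T\cup L}c$, each atom $t$ of $[1,c]_{T\cup L}$ below $w$ equals $\perm$ of a curve-atom $\kappa_t$; the delicate point is to show that for a suitable such $t$ the curve $\kappa_t$ is a simple connector for $\perm^{-1}(wt^{-1})$, so that $w=\perm\!\bigl(\perm^{-1}(wt^{-1})\cup\kappa_t\bigr)$. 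Applying the inductive hypothesis to $wt^{-1}$ then puts $w$ in the image and transfers the covering relation, so that $\perm$ matches cover relations in both directions; with grading already established on both sides, this yields the isomorphism $\tNCAc\cong[1,c]_{T\cup L}$.

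For the second statement I would use the homomorphism $\SZmodn\to\integers$ of Remark~\ref{normal}: its kernel is $\Stilde_n$, loops map to $\pm1$ and reflections to $0$, so $\perm(\P)\in\Stilde_n$ precisely when $\P$ has no dangling annular block, i.e.\ $\perm(\tNCAcircc)=[1,c]_{T\cup L}\cap\Stilde_n$. To see this equals $[1,c]_T$, note $\ell_T(c)=\ell_{T\cup L}(c)=n$ (the first equality from $\ell_{T\cup L}(c)\le\ell_T(c)\le n\le\ell_{T\cup L}(c)$), and recall from Proposition~\ref{cov circ} that covers inside $\tNCAcircc$ are all right multiplications by reflections; hence a saturated chain $\hat0\covered\cdots\covered\P\covered\cdots\covered\hat1$ in $\tNCAcircc$ gives a reduced $T$-word for $c$ having a reduced $T$-word for $\perm(\P)$ as a prefix, so $\perm(\tNCAcircc)\subseteq[1,c]_T$. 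Conversely, if $u\le_Tc$ then a reduced $T$-word for $c$ through $u$ has length $n=\ell_{T\cup L}(c)$, so it is reduced over $T\cup L$ and its prefixes are reduced, giving $u\le_{T\cup L}c$ and $u\in\Stilde_n$; thus $[1,c]_T\subseteq[1,c]_{T\cup L}\cap\Stilde_n$. Equality of the two subposets, and the fact that $\perm$ restricts to an isomorphism between them, then follows from the isomorphism already proved.
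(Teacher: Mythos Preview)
Your outline is close to the paper's, and most pieces are correct: injectivity, the rank formula via counting mod-$n$ classes of finite cycles (your lower bound $\ell_{T\cup L}(\pi)\ge n-f(\pi)$ is exactly the paper's Lemma~\ref{only one}), and the forward cover analysis all match the paper's Propositions~\ref{one to one} and~\ref{fragments}. Your treatment of the second assertion via the homomorphism of Remark~\ref{normal} is a genuine alternative to the paper's approach: the paper instead re-runs the whole argument restricted to $\tNCAcircc$ (Propositions~\ref{fragments T}--\ref{isom T}), whereas your observation that $h(\perm(\P))$ counts increasing minus decreasing infinite cycles, together with Proposition~\ref{cov circ}, gives the same conclusion more cheaply. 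That shortcut is sound.

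The gap is in your surjectivity step. You propose to pick an atom $t\le w$, set $u=wt^{-1}$, invoke induction to get $u=\perm(\P)$, and then augment $\P$ along the curve $\kappa_t=\perm^{-1}(t)$. But the element $\tau$ produced by augmenting $\P$ along a simple connector $\kappa$ is determined by \emph{how $\kappa$ meets the blocks of $\P$}, not by $\perm$ of $\kappa$ as an atom: in the paper's proof of Proposition~\ref{fragments}, $\tau$ is read off from the numbered points $q,q'$ adjacent to where $\kappa$ exits $E$ and enters $E'$, and these need not be the endpoints of $\kappa$. So even if $\kappa_t$ happens to be a simple connector for $\P$, there is no reason $\perm(\P\cup\kappa_t)=\perm(\P)\cdot t$. (When $t$ is a loop, $\kappa_t$ is not even a curve but a dangling annulus, so the statement does not parse.) The paper avoids this by going top-down: given $\sigma\covered\pi=\perm(\Q)$, it uses Lemma~\ref{transp act} to classify which $\tau=\sigma\pi^{-1}$ can occur, and for each case exhibits an explicit \emph{cutting curve} $\lambda$ in $\Q$ with $\perm(\Q-\lambda)=\sigma$ (Proposition~\ref{perm inv cov}, via Proposition~\ref{cut curve cov}). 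This is the missing ingredient in your sketch; a bottom-up version would need the mirror case analysis, showing that for every $\tau\in T\cup L$ with $\perm(\P)\covered\perm(\P)\tau$ there is a simple connector for $\P$ realizing $\tau$.
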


As a first step in the proof, we observe that the cycle structure of $\perm(\P)$ uniquely determines $\P$:
A single infinite cycle uniquely specifies a dangling annular block, two infinite cycles uniquely specify a non-dangling annular block, and a family of mod-$n$ translates of a finite cycle uniquely specifies a disk.
Thus we have one piece of the proof of Theorem~\ref{isom}:

\begin{prop}\label{one to one}
The map $\perm$ is one-to-one.
\end{prop}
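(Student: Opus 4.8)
The plan is to recover $\P$, block by block, from the cycle decomposition of $\perm(\P)$. The starting observation is that, because $\P$ partitions the numbered points, two distinct blocks of $\P$ contribute cycles supported on disjoint sets of residues mod $n$; hence the cycle decomposition of $\perm(\P)$ is the disjoint union, over the blocks $E$ of $\P$, of the cycles read off from $E$. Recalling the last sentence of the definition of $\perm$, a (possibly degenerate) disk block contributes the mod-$n$-translation orbit of a single finite cycle, a dangling annular block contributes a single infinite cycle (increasing or decreasing according to whether its numbered points lie on the inner or outer boundary of $A$), and a non-dangling annular block contributes one increasing and one decreasing infinite cycle. I would first make these statements precise and note that, conversely, the residues appearing in a given cycle of $\perm(\P)$ tell us which numbered points lie in one common block.

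The essential point --- and the only place the definition of a noncrossing partition is really used --- is that $\P$ has at most one annular block. Therefore $\perm(\P)$ has at most two infinite cycles: if it has none, $\P$ has no annular block; if it has exactly one, $\P$ has a dangling annular block and that cycle is read off from it; and if it has exactly one increasing and one decreasing infinite cycle, $\P$ has a non-dangling annular block read off from the two of them together. (It is precisely here that the hypothesis matters: without it, two infinite cycles could equally come from two separate dangling annular blocks, and $\perm$ would not be injective.) This lets me group the cycles of $\perm(\P)$ canonically --- each finite-cycle orbit by itself, all infinite cycles together --- and thereby recover the partition of the numbered points into blocks of $\P$, together with which block, if any, is annular and whether it dangles.

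It then remains to recover the embedding of each block from its group of cycles, and this is the step I expect to be the main (though routine) obstacle, being a matter of careful case analysis rather than of any new idea. For a disk block, choose any finite cycle $(a_1\ \cdots\ a_k)$ in its orbit; the residues of $a_1,\dots,a_k$ list, in cyclic order, the numbered points on the block's boundary, and each difference $a_{i+1}-a_i$ records the net number of date-line crossings of the corresponding boundary arc, which by the description of arcs up to isotopy in Section~\ref{nc ann sec} determines that arc uniquely; since a disk in $A$ is determined up to isotopy by its boundary, the block is recovered. The cases $k=1$ and $k=2$ recover a trivial block and a single arc or boundary segment respectively, once one checks the small ambiguities (e.g.\ that the transposition read from a boundary segment between adjacent points cannot also be read from an admissible arc). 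Exactly the same reading applied to the infinite cycle(s) in the annular group reconstructs the boundary component(s) of the annular block, and an annular block in $A$ is likewise determined up to isotopy by its boundary. Hence $\perm(\P)$ determines $\P$, so $\perm$ is one-to-one.
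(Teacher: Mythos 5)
Your proposal is correct and takes essentially the same route as the paper, which disposes of the proposition in one sentence by observing that the cycle structure of $\perm(\P)$ (finite-cycle orbits for disks, one infinite cycle for a dangling annular block, two for a non-dangling one) uniquely determines $\P$. You have simply spelled out in detail the reconstruction that the paper leaves implicit, including the correct use of the ``at most one annular block'' condition to group the infinite cycles.
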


To continue the proof of Theorem~\ref{isom}, we need a lemma about multiplying permutations in $\SZmodn$ by reflections and loops.

\begin{lemma}\label{only one}
If $\pi\in\SZmodn$ and $\tau$ is a reflection or loop, then $\tau\pi$ has at most one more \textup{mod-}$n$ class of finite cycles than $\pi$ has.
\end{lemma}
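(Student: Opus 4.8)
The statement is about how multiplication by a single reflection $\tau = (i\ \ j)_n$ or loop $\tau = (\cdots\,i\,\,\,i+n\,\cdots)^{\pm1}$ affects the finite-cycle structure of a periodic permutation $\pi$. The plan is to analyze, mod-$n$ class by mod-$n$ class, how the cycles of $\pi$ are rearranged when we pass to $\tau\pi$. The key structural fact is that in $\SZmodn$, a finite cycle comes in an infinite mod-$n$ orbit of translates $(a_1{+}\ell n\ \cdots\ a_k{+}\ell n)$, while an infinite cycle is a single object containing two $\equiv$-equivalent entries; and by Lemma~\ref{window mod n}.\ref{one per class}, each of the $n$ residue classes is ``used'' exactly once per window. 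Left-multiplication by $\tau$ only moves the two entries (mod $n$) involved in $\tau$, namely the class of $i$ and the class of $j$ (for a reflection), or just the class of $i$ (for a loop).

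**Key steps.** First I would reduce to the reflection case and the loop case separately. For a reflection $\tau = (i\ \ j)_n$ with $i \not\equiv j \pmod n$: trace where $i$ and $j$ (and all their translates) sit among the cycles of $\pi$. There are essentially two cases. (a) Some translate of $i$ and some translate of $j$ lie in the same cycle of $\pi$: then multiplying by $\tau$ either splits that cycle into two, or (if they were the ``same'' in the periodic sense) merges translates — a careful bookkeeping shows the number of mod-$n$ classes of finite cycles goes up by at most one. (b) No translate of $i$ shares a cycle with a translate of $j$: then $\tau$ splices cycles together, which can only decrease (or keep constant) the count of finite-cycle classes — here the ``at most one more'' is automatic since the count does not increase. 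The cleanest way to organize this is the standard ``multiplying a permutation of a set by a transposition either splits one cycle into two or merges two cycles into one,'' adapted to the periodic setting: here ``splitting'' is the only way to gain a finite-cycle class, and a single transposition-orbit $\tau$ can trigger at most one such split per mod-$n$ period. For a loop $\tau = \ell_i = (\cdots\,i\,\,\,i+n\,\cdots)$: this is (in the periodic picture) the product over $\ell$ of the transpositions $(i+\ell n\ \ i+(\ell+1)n)$, i.e.\ it identifies consecutive translates within the class of $i$. Composing with $\pi$, the analysis again shows the mod-$n$ class of cycles containing $i$ can produce at most one new finite-cycle class (or fewer): if the infinite cycle of $\ell_i$ meets a single mod-$n$ class of finite cycles of $\pi$ it can ``close them up'' into one new finite cycle; if it meets an infinite cycle of $\pi$ the net effect can only reduce finite cycles. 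The same holds for $\ell_i^{-1}$.

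**Main obstacle.** The delicate point is the periodic bookkeeping in case (a) for reflections and the analogous loop case: one must be careful that a single ``abstract'' move on $\integers$ that looks like infinitely many transpositions acts, modulo the mod-$n$ symmetry, like a \emph{single} cycle-splitting move, so that at most one new finite-cycle \emph{class} is born rather than infinitely many (which of course cannot happen) or two. Concretely, I would fix a fundamental window, write $\pi$'s relevant cycles explicitly with their translates, compute $\tau\pi$ on the window, and observe that the entries $i, j$ (resp.\ $i$) each appear once per period, so the local surgery happens once per period. I expect writing out this one explicit local computation — showing that the ``bad'' case produces exactly one extra finite cycle per mod-$n$ period — to be the crux; everything else (the merging cases, where the bound is free because the count cannot increase) is routine.
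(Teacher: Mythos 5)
Your overall strategy --- a case analysis on where the entries moved by $\tau$ sit among the cycles of $\pi$ --- is the same as the paper's, but two of your case conclusions are false, and they are false precisely at the cases that make this lemma (and its refinement, Lemma~\ref{transp act}) nontrivial. First, in your reflection case (b) you assert that when no translate of $i$ shares a cycle with a translate of $j$, the multiplication only ``splices cycles together'' and so the count of finite-cycle classes cannot increase. This fails when $i$ and $j$ lie in two different \emph{infinite} cycles wrapping in opposite directions: e.g.\ for $n=2$, $\pi=(\cdots\,1\;\,3\,\cdots)(\cdots\,2\;\,0\,\cdots)$ has no finite cycles, yet $(1\;\,2)_2\,\pi=(1\;\,4)_2$ has one mod-$2$ class of finite cycles. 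Merging two oppositely-wrapping infinite cycles \emph{creates} a finite-cycle class; your guiding principle that ``splitting is the only way to gain a finite-cycle class'' is simply wrong in the periodic setting. Second, your loop case is backwards: a loop $\ell_i^{\pm1}$ applied to a \emph{finite} cycle class containing $i$ destroys that class (producing an infinite cycle), whereas applied to an \emph{infinite} cycle it can close it up into a new finite-cycle class (when the wrapping amounts cancel). You state the opposite in both directions.

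Because each false claim asserts ``increase by at most $0$'' where the truth is ``increase by at most $1$,'' the lemma's conclusion happens to survive, but the proof does not: the assertions are not true, and verifying what actually happens in these cases --- tracking the wrapping integers $q$, $r$ attached to infinite cycles and checking when $q+r=0$ or $q=\mp1$ --- is exactly the content that is missing. Your case (a) (``careful bookkeeping shows the count goes up by at most one'') likewise defers the crux rather than carrying it out: when $i$ and $j$ lie in the same infinite cycle one must distinguish the representatives $b=a_{i'}+rqn$ by the value of $r$, since only $r=0$ or $r=-1$ yields a new finite class. The paper's proof is nothing but this explicit bookkeeping in each of the roughly seven configurations, and there does not seem to be a shortcut around it.
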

\begin{proof}
We run through all the possibilities for how $\tau$ relates to $\pi$.
Recall that the set of reflections is $\set{(a\,\,\,b)_n:a\not\equiv b\mod n}$ and the set of loops is 
\[\set{(\cdots\,a\,\,\,\,a+n\,\cdots):a=1,\ldots,n}\cup\set{(\cdots\,a\,\,\,\,a-n\,\cdots):a=1,\ldots,n}.\]

If $\tau=(a\,\,\,b)_n$ with $a$ and $b$ in the same finite cycle of $\pi$, then we can write the class of cycles as $(a_1 \,\,\, a_2\,\cdots\,a_k)_n$ and write $\tau$ as $(a_1 \,\,\, a_i)_n$ with $1<i\le k$.
Then $\tau\pi$ has cycles $(a_1 \,\,\, a_2\,\cdots\,a_{i-1})_n$ and $(a_i\,\,\,a_{i+1}\cdots\,a_k)_n$.

If $\tau=(a\,\,\,b)_n$ with $a$ and $b$ in the same infinite cycle of $\pi$, then we can write the cycle as $(\cdots\,a_1\,\cdots\,a_k\,\,\,a_1+qn\,\cdots)$ for some $q\neq0$ and take $a=a_1$ and $b=a_i+rqn$ with $1<i\le k$ and $r\in\integers$.
If $r=0$, then $\tau\pi$ has a new finite cycles $(a_1\,\cdots\,a_{i-1})_n$ and an infinite cycle ${(\cdots\,a_i\,\cdots\,a_k\,\,\,a_i+qn\,\cdots)}$.
If $r=-1$, then $\tau\pi$ has an infinite cycle $(a_1\,\cdots\,a_{i-1}\,\,\,a_1+qn\,\cdots)$ and a new finite cycle $(a_i\,\cdots\,a_k)$.
Otherwise, $\tau\pi$ has an infinite cycle $(a_1\,\cdots\,a_{i-1}\,\,\,a_1-rqn\,\cdots)$ and an infinite cycle $(\cdots\,a_i\,\,\,a_{i+1}\cdots\,a_k\,\,\,a_i+(r+1)qn\,\cdots)$, but no new finite cycles are created.

If $\tau=(a\,\,\,b)_n$ with $a$ and $b$ in different finite cycles of $\pi$, then either the two cycles are in the same class or not.
If they are in the same class, then we can write the class of cycles as $(a_1 \,\,\, a_2 \,\cdots\, a_k)_n$ and write $\tau=(a_1\,\,\,a_i+qn)_n$ with $1<i\le k$ and $q\neq0$.
In $\tau \pi$, the infinite class of finite cycles becomes two finite classes of infinite cycles including $(\cdots\,a_1\,\cdots\,a_{i-1}\,\,\,a_1-qn\,\cdots)$ and $(\cdots\,a_i\,\,\,a_{i+1}\,\cdots\,\,a_k\,\,\,a_i+qn\,\cdots)$.
If the two cycles are in different classes, then we can write the classes as $(a_1 \,\,\, a_2 \,\cdots\, a_k)_n$ and $(b_1 \,\,\, b_2\,\cdots\,b_m)_n$ and write $\tau=(a_1 \,\,\, b_1)_n$.
In $\tau \pi$, those two classes of finite cycles combine into one class $(a_1 \,\,\, a_2\,\cdots\,a_k \,\,\, b_1 \,\,\, b_2\,\cdots\,b_m)_n$. 

If $\tau=(a\,\,\,b)_n$ with $a$ and $b$ in different infinite cycles of $\pi$, then either the two cycles are in the same class or not.
If they are in the same class, then we can write the cycle containing $a$ as $(\cdots\,a_1\,\cdots\,a_k\,\,\,a_1+qn\,\cdots)$ and write $\tau=(a_1\,\,\,a_i+rn)_n$ with $1<i\le k$ and $r\not\equiv0\mod q$.
Then $\tau\pi$ has cycles $(\cdots\,a_1\,\cdots\,a_{i-1}\,\,\,a_1-rn\,\cdots)$ and $(\cdots\,a_i\,\cdots\,a_k\,\,\,a_i+(q+r)n\,\cdots)$, which are both infinite.
If they are not in the same class, then we can write the cycles as $(\cdots\,a_1\,\cdots\,a_k\,\,\,a_1+qn\,\cdots)$ and $(\cdots\,b_1\,\cdots\,b_m\,\,\,b_1+rn\,\cdots)$ and write $\tau=(a_1 \,\,\, b_1)_n$.
In $\tau\pi$, there is a cycle containing the sequence $a_1\,\cdots\,a_k\,\,\,b_1+qn\,\cdots\,b_m+qn\,\,\,a_1+(q+r)n$.
If $q+r=0$, then this is a finite cycle.
If $q+r\neq0$, then this is a infinite cycle.

If $\tau=(a\,\,\,b)_n$ with $a$ in a finite cycle of $\pi$ and $b$ in an infinite cycle of $\pi$, then we can write the class of the finite cycle as $(a_1 \,\,\, a_2\,\cdots\,a_k)_n$, write the infinite cycle as $(\cdots\,b_1\,\cdots\,b_m\,\,\,b_1+qn\,\cdots)$ and write $\tau=(a_1\,\,\,b_1)_n$.
In $\tau\pi$, these cycles combine into one infinite cycle $(\cdots\,a_1\,\cdots\,a_k\,\,\,b_1\,\cdots b_m \,\,\, a_1+qn\,\cdots)$.

Suppose $\tau=(\cdots\,a\,\,\,a\pm n\,\cdots)$.
If $a$ is in a finite cycle of $\pi$, then write the class of finite cycles as $(a_1\,\cdots\,a_k)_n$ and take $a=a_1$.
Then $\tau\pi$ has, in place of that class of finite cycles, an infinite cycle ${(\cdots\,a_1\,\cdots\,a_k\,\,\,a_1\pm n\,\cdots)}$.
If $a$ is in an infinite cycle, we write it as $(\cdots\,a_1\,\,\,a_2\,\cdots\,a_k\,\,\,a_1+qn\,\cdots)$ with $a=a_1$.
Then $\tau\pi$ has a cycle with a sequence $a_1\,\,\,a_2\,\cdots\,a_k\,\,\,a_1+(q\pm1)n$.
This is a new class of finite cycles if and only if $q=\mp1$, and otherwise it is a class of infinite cycles.
\end{proof}

We now assemble some additional fragments of Theorem~\ref{isom}.

\begin{prop}\label{fragments}\quad
\begin{enumerate}[\quad\rm\bf1.]
\item\label{into}
The map $\perm$ maps $\tNCAc$ into $[1,c]_{T\cup L}$.
\item \label{length perm P}
If $\P\in\tNCAc$, then $\ell_{T\cup L}(\perm(\P))$ equals the rank of $\P$ in $\tNCAc$, which is $n$ minus the number of non-annular blocks of $\P$ and which also equals $n$ minus the number of mod-$n$ classes of finite cycles in $\perm(\P)$.
\item \label{perm cov}
If $\P\covered\Q$ in $\tNCAc$, then $\perm(\P)\covered_{T\cup L}\perm(\Q)$.
\end{enumerate}
\end{prop}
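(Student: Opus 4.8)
The plan is to prove the three assertions in the order: a single-cover version of $(3)$, then $(2)$, then $(1)$, then the full statement of $(3)$. Two preliminary observations carry the bookkeeping. First, reading the definition of $\perm$: a (possibly degenerate) disk block with $k$ numbered points contributes one mod-$n$ class of finite $k$-cycles, a dangling annular block contributes one infinite cycle and no finite cycle, and a nondangling annular block contributes two infinite cycles and no finite cycle; hence the number of mod-$n$ classes of finite cycles of $\perm(\P)$ equals the number of non-annular blocks of $\P$, which by Theorem~\ref{A tilde main} equals $n$ minus the rank of $\P$ in $\tNCAc$. Second, $\perm$ sends the minimum $\hat 0$ of $\tNCAc$ (all blocks trivial, empty curve set) to the identity and the maximum $\hat 1$ (the single nondangling annular block equal to all of $A$, whose curve set is all of $\curve$, so that $\P\le\hat 1$ for every $\P$) to $c$, by Lemma~\ref{c annulus}.

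The crux is the claim that \emph{if $\P\covered\Q$ in $\tNCAc$, then $\perm(\Q)=\tau\,\perm(\P)$ for a single $\tau\in T\cup L$, necessarily nontrivial} (the side of the multiplication is immaterial below). By Proposition~\ref{conn edge cov}, $\Q=\P\cup\kappa$ for a simple connector $\kappa$; let $p$ and $q$ be its endpoints among the numbered points. I would prove the claim by tracing boundary cycles through the augmentation construction, case by case. If $p\neq q$, then forming the smallest block containing $\kappa$ and the blocks through $p$ and $q$ modifies the boundary cycle(s) through $p$ and $q$ exactly as multiplication by the reflection $(p\,\,q')_n$ does, where $q'$ is $q$ shifted by the net number of clockwise-minus-counterclockwise date-line crossings along $\kappa$. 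If $p=q$, then $\kappa$ winds exactly once around the hole---closing up $\kappa$ gives a simple closed curve in the annulus, which is either contractible (forbidden, as $\kappa$ would bound a monogon) or core-parallel---and the augmentation modifies the cycle through $p$ exactly as multiplication by the loop $\ell_p^{\pm1}$ does, as in the last paragraph of the proof of Lemma~\ref{only one}. Finally, the clean-up steps of the augmentation---attaching a digon bounded by an arc and a boundary segment, and replacing two dangling annular blocks by a single nondangling one---leave $\perm$ unchanged, since each only re-routes a boundary along curves through the same numbered points in the same cyclic order and with the same windings. I expect this case analysis, especially the date-line bookkeeping, to be the main obstacle; everything after it is formal.

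Granting the claim, let $r$ denote the rank of $\P$. For the lower bound in $(2)$: $T\cup L$ is closed under inversion (reflections are involutions and $L$ visibly is), so applying Lemma~\ref{only one} to $\tau$ and to $\tau^{-1}$ gives $\bigl|f(\tau\pi)-f(\pi)\bigr|\le1$ for all $\tau\in T\cup L$ and $\pi\in\SZmodn$, where $f(\pi)$ is the number of mod-$n$ classes of finite cycles of $\pi$. Since $f$ of the identity is $n$, any expression of $w$ as a product of $k$ elements of $T\cup L$ forces $f(w)\ge n-k$, so $\ell_{T\cup L}(w)\ge n-f(w)$; the first observation then gives $\ell_{T\cup L}(\perm(\P))\ge r$. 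For the upper bound, Theorem~\ref{A tilde main} provides a saturated chain $\hat 0=\P_0\covered\cdots\covered\P_r=\P$, and the claim (with $\perm(\hat 0)=1$) turns it into an expression $\perm(\P)=\tau_r\cdots\tau_1$ with each $\tau_i\in T\cup L$, so $\ell_{T\cup L}(\perm(\P))\le r$. Together with the first observation and Theorem~\ref{A tilde main}, this is $(2)$.

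For $(1)$, extend the chain through $\P$ to a saturated chain from $\hat 0$ to $\hat 1$; applying $\perm$ and the claim yields an expression of length $n$ for $c=\perm(\hat 1)$ whose length-$r$ prefix is $\perm(\P)$, so $\ell_{T\cup L}(\perm(\P)^{-1}c)\le n-r$, which, by the triangle inequality and $\ell_{T\cup L}(c)=n$ (from $(2)$ applied to $\hat 1$), must equal $n-r$; hence $\ell_{T\cup L}(c)=\ell_{T\cup L}(\perm(\P))+\ell_{T\cup L}(\perm(\P)^{-1}c)$, i.e.\ $\perm(\P)\le_{T\cup L}c$, and so $\perm(\P)\in[1,c]_{T\cup L}$. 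Finally, to obtain $(3)$: if $\P\covered\Q$ then $(2)$ gives $\ell_{T\cup L}(\perm(\Q))=\ell_{T\cup L}(\perm(\P))+1$, while $\perm(\P)^{-1}\perm(\Q)=\perm(\P)^{-1}\tau\,\perm(\P)$ is a conjugate of $\tau$, hence lies in $T\cup L$ (which is closed under conjugation) and has length $1$; thus $\perm(\P)<_{T\cup L}\perm(\Q)$, and this relation is a cover because any element lying strictly between would have $\ell_{T\cup L}$ strictly between the consecutive integers $r$ and $r+1$.
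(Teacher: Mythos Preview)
Your overall strategy and the formal steps after the central claim match the paper's and are correct.  The gap is in your case analysis for the claim itself: splitting on whether the endpoints $p,q$ of $\kappa$ coincide is the wrong dichotomy, and the assertion that $p\neq q$ forces $\tau$ to be a reflection is false.  Suppose $E'$ is a \emph{dangling} annular block and $\kappa$ enters $E'$ by crossing the circular component of $\partial E'$ (the one carrying no numbered points).  Here $p$ lies in $E$ and $q$ lies in $E'\neq E$, so $p\neq q$; but the augmentation attaches $E$ to $E'$ along that circle, turning $E'$ into a nondangling annular block whose new boundary component contributes a new infinite cycle through the numbered points of~$E$.  That is precisely the effect of multiplying by a loop, not a reflection.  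Concretely, with $E=\{i\}$ a trivial outer block and $E'$ dangling on the inner boundary, one has $\perm(\Q)\,\perm(\P)^{-1}=\ell_i$, and no integer shift of $q$ makes $(i\,\,q')_n$ equal to $\ell_i$.  Your ``two dangling blocks become one nondangling block'' clean-up remark is a different phenomenon and does not cover this case.

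The paper avoids this by casing not on the endpoints of $\kappa$ but on how $\kappa$ crosses $\partial E$ and $\partial E'$: either both crossings occur through boundary arcs lying between numbered points, in which case $\tau$ is read off from the numbered points $\kappa$ passes \emph{between} on each side (and may be a reflection or a loop according as those two points differ or coincide), or $\kappa$ enters a dangling annular block through its circular boundary, which always yields a loop.  With this corrected case split in place of yours, the rest of your argument goes through unchanged.
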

\begin{proof}
To begin, we suppose $\P \covered \Q$ and construct a reflection or loop $\tau$ such that $\perm(\Q)=\tau \cdot\perm(\P)$.

Proposition~\ref{conn edge cov} says that there exists a simple connector $\kappa$ for $\P$ such that $\Q=\P\cup\kappa$.
The curve $\kappa$ connects two blocks $E$ and $E'$ of $\P$, possibly with $E'=E$.
In particular, $\kappa$ leaves $E$ by crossing some part of the boundary of $E$ and then enters $E'$ by crossing some part of the boundary of $E'$.  
There are two possibilities:  
Either $\kappa$ leaves $E$ through a segment of the boundary that connects two numbered points and enters $E'$ in the same way, or (up to swapping $E$ and $E'$) $\kappa$ leaves $E$ between two numbered points and enters $E'$ through a circular component of the boundary of $E'$ that is disjoint from the boundary of $A$.
(The latter can happen only when $E'$ is a dangling annular block.)


First, assume that $\kappa$ leaves $E$ between numbered points and enters $E'$ between numbered points.
Let $p$ and $q$ be the two numbered points that $\kappa$ passes between on its way out of $E$, with $p$ on the left as $\kappa$ leaves $E$ and $q$ on the right.
Similarly, let $p'$ be on the right of $\kappa$ as it enters $E'$, and let $q'$ be on the left (so that $p'$ is on the left as $\kappa$ leaves $E'$ in the opposite direction).

Consider the curve in $\Q$ that starts at $q$, follows the boundary of $E$ to $\kappa$, follows~$\kappa$ to the boundary of $E'$, and then follows the boundary of $E'$ to $q'$.  
This curve determines an element $\tau$ that is a reflection (if $q\neq q'$) or a loop (if $q=q'$), as follows:
Let $\T$ be the noncrossing partition whose only nontrivial block is the curve (if $q\neq q'$) or the unique dangling annular block having the curve as a component of its boundary, and set $\tau=\perm(\T)$.
Then $\perm(\Q)=\tau\cdot\perm(\P)$.  

On the other hand, suppose $E'$ is a dangling annular block and $\kappa$ leaves $E$ between numbered points ($p$ on the right and $q$ on the left) and enters $E'$ through the circular component of the boundary of $E'$ that is disjoint from the boundary of $A$.
Let $\tau$ be the loop $\ell_p$ if $E$ is connected to the outer boundary of $A$ or the loop $\ell^{-1}_p$ if $E$ is connected to the inner boundary.
Then multiplying by $\tau$ on the left turns the class of finite cycles associated to $E$ to an infinite cycle.
This infinite cycle is encoded by a component of the boundary of a new non-dangling annular block in $\Q$ containing $E$ and $E'$. 
We conclude that $\perm(\Q)=\tau\cdot\perm(\P)$.

We see that in every case, there exists $\tau\in T\cup L$ such that $\perm(\Q)=\tau\cdot\perm(\P)$.
A maximal chain in $\tNCAc$ has the form $\P_0\covered\cdots\covered\P_n$, where $\P_0$ consists of trivial blocks and $\P_n$ has a single block (the entire annulus $A$).
Since $\perm(\P_0)$ is the identity and $\perm(\P_n)=c$, we can find $\tau_i\in T\cup L$ for each $\P_{i-1}\covered\P_i$ as in the paragraphs above and write a word $\tau_1\cdots\tau_n$ for $c$.
But every word for $c$ in the alphabet $T\cup L$ has at least $n$ letters:
This is because $c$ has no finite cycles, because the identity has $n$ classes of finite cycles, and because Lemma~\ref{only one} says that multiplying on the left by $\tau\in T\cup L$ can increase the number of classes of finite cycles by at most $1$. 
Thus $\tau_1\cdots\tau_n$ is a reduced word for $c$ in the alphabet $T\cup L$.
Since every $\P\in\tNCAc$ is on some maximal chain, we have proved Assertion~\ref{length perm P}.

Now, returning to the earlier assumption that $\P\covered\Q$, since we showed that $\perm(\Q)=\tau\cdot\perm(\P)$, and by Assertion~\ref{length perm P}, we conclude that $\perm(\P)\covered\perm(\Q)$.
This is Assertion~\ref{perm cov}.

Finally, since the unique maximal element of $\tNCAc$ maps to $c$, Assertion~\ref{perm cov} and an easy inductive argument complete the proof of Assertion~\ref{into}.
\end{proof}

For a moment, it will be convenient to restrict our attention to \newword{annular permutations}.
These are permutations with at most 2 infinite cycles, with each infinite cycle either monotone increasing or monotone decreasing.
Furthermore, if there are two infinite cycles, then one is increasing and the other is decreasing.
Note that in an annular permutation, an increasing cycle must be $(\cdots\,a_1\,\cdots\,a_k\,\,\,a_1+n\,\cdots)$ for some $a_1,\ldots,a_k$, because if $a_1+n$ is not in the cycle, then the permutation has another increasing infinite cycle containing $a_1+n$. 
The analogous fact is true for a decreasing cycle.

\begin{lemma}\label{transp act}
Suppose $\pi$ is an annular permutation in $\SZmodn$ and suppose $\tau$ is a reflection or loop.
Then $\tau\pi$ has more mod-$n$ classes of finite cycles than $\pi$ has if and only if $\tau$ is one of the following:
\begin{itemize}
\item\label{same fin}
$\tau=(a\,\,\,b)_n$ with $a$ and $b$ in the same finite cycle of $\pi$.
\item\label{same inf}
$\tau=(a\,\,\,b)_n$ with $a$ and $b$ in the same infinite cycle of $\pi$ and $|a-b|<n$.
\item\label{diff inf}
$\tau=(a\,\,\,b)_n$ with $a$ and $b$ in different infinite cycles of $\pi$.
\item
$\tau=(\cdots\,a\,\,\,a+n\,\cdots)$ and $a$ is in a decreasing infinite cycle of $\pi$.
\item
$\tau=(\cdots\,a\,\,\,a-n\,\cdots)$ and $a$ is in an increasing infinite cycle of $\pi$.
\end{itemize}
When $\tau\pi$ has more mod-$n$ classes of finite cycles than $\pi$ has, then $\tau\pi$ is an annular permutation and has exactly one more class of finite cycles than $\pi$ has.
\end{lemma}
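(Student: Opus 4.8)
The plan is to refine the case analysis already carried out in the proof of Lemma~\ref{only one}. That proof enumerates every way a reflection or loop $\tau$ can interact with the cycles of a permutation $\pi\in\SZmodn$ and, in each case, writes down the cycles of $\tau\pi$ explicitly. I would reuse that enumeration verbatim, but now exploit two extra features of an \emph{annular} permutation, recorded first as a preliminary observation (essentially the content of the paragraph introducing annular permutations): every infinite cycle of an annular permutation has step exactly $\pm n$ -- so an increasing infinite cycle is literally $(\cdots\,a_1\,\cdots\,a_k\,\,\,a_1+n\,\cdots)$ with $a_1<\cdots<a_k<a_1+n$, and similarly for a decreasing one -- and consequently each infinite cycle is invariant under the shift by $n$, hence is its own mod-$n$ class; and if $\pi$ has two infinite cycles then one is increasing and one is decreasing. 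The first fact is the key rigidity.

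First I would dispose of the cases that do \emph{not} raise the number of mod-$n$ classes of finite cycles. From the list in the proof of Lemma~\ref{only one}: if $\tau=(a\,\,\,b)_n$ joins two finite cycles of $\pi$, or joins a finite cycle to an infinite one, the count drops by one; if $\tau=(a\,\,\,b)_n$ has $a$ and $b$ in the same infinite cycle with $|a-b|>n$, then the parameter $r$ appearing in that proof is neither $0$ nor $-1$, and $\tau\pi$ again has two infinite cycles and no new finite cycle; if $\tau=\ell_a^{\pm1}$ with $a$ in a finite cycle, that finite cycle becomes infinite; and if $\tau=\ell_a$ (resp.\ $\ell_a^{-1}$) with $a$ in an increasing (resp.\ decreasing) infinite cycle, the step merely changes from $\pm n$ to $\pm2n$ and no finite cycle appears. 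None of these $\tau$ lie on the list. The rigidity observation is also used here to delete a sub-case that the general lemma treats but that cannot occur for annular $\pi$, namely two infinite cycles of $\pi$ in the same mod-$n$ class.

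Then I would treat the five cases on the list and verify in each that the count goes up, that $\tau\pi$ is again annular, and that the count goes up by exactly one. For $\tau=(a\,\,\,b)_n$ with $a,b$ in the same finite cycle, that cycle splits into two finite cycles and the infinite cycles are untouched. For $\tau=(a\,\,\,b)_n$ with $a,b$ in the same infinite cycle and $|a-b|<n$, the window condition forces $r\in\set{0,-1}$; in either subcase $\tau\pi$ acquires one new finite class and the infinite cycle that survives is still monotone of step $\pm n$ in the same direction. For $\tau=(a\,\,\,b)_n$ joining the two infinite cycles of $\pi$, the ``one increasing, one decreasing'' fact gives $q+r=0$, so those two infinite cycles fuse into a single finite class and $\tau\pi$ has no infinite cycle at all. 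For $\tau=\ell_a$ with $a$ in a decreasing infinite cycle (resp.\ $\tau=\ell_a^{-1}$ with $a$ in an increasing one), that infinite cycle closes up into a finite cycle and any other infinite cycle is untouched. In all five, the number of mod-$n$ classes of finite cycles rises by exactly one and $\tau\pi$ still has at most two infinite cycles, of opposite orientations when there are two; so $\tau\pi$ is annular.

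The routine but genuinely necessary work -- and the only real obstacle -- is the bookkeeping in the same-infinite-cycle case: one must translate $|a-b|<n$ into $r\in\set{0,-1}$ using that $a_1,\ldots,a_k$ lie in a window of length $<n$, and then check that the infinite cycle produced in those two subcases is still a legitimate monotone step-$n$ cycle, so that being annular is preserved. Everything else is a direct transcription of the formulas displayed in the proof of Lemma~\ref{only one}, read through the rigidity observation.
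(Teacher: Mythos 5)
Your proposal is correct and follows essentially the same route as the paper: the paper's proof likewise re-runs the case analysis from the proof of Lemma~\ref{only one} under the annularity hypothesis, using exactly the rigidity facts you isolate (infinite cycles have step $\pm n$, hence $q=\pm1$ and, for two infinite cycles, $q=-r$) to translate the general conditions into the five bullet points and to check that annularity is preserved. Your identification of $|a-b|<n$ with $r\in\set{0,-1}$ is the same bookkeeping step the paper performs.
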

\begin{proof}
We follow the proof of Lemma~\ref{only one} through all of the possibilities for $\tau$, with the additional assumption that $\pi$ is an annular permutation.
We will see that when one of the five conditions in the lemma holds, then $\tau\pi$ is an annular permutation with exactly one more class of finite cycles than $\pi$.
We will also see that, in all other cases, $\tau\pi$ does not have more classes of finite cycles than $\pi$.

If $\tau=(a\,\,\,b)_n$ with $a$ and $b$ in the same finite cycle of $\pi$, then $\tau\pi$ has exactly one more mod-$n$ class of finite cycles.
Since no infinite cycles are created when $\tau$ is applied, $\tau\pi$ is an annular permutation.

If $\tau=(a\,\,\,b)_n$ with $a$ and $b$ in the same infinite cycle of $\pi$, then $\tau\pi$ either has the same number of classes of finite cycles as~$\pi$ or one more.
Since $\pi$ is annular, the infinite cycle is either increasing or decreasing and the value $q$ in the proof of Lemma~\ref{only one} is accordingly either $+1$ or $-1$.
The condition given there for $\tau\pi$ to have an additional class of finite cycles thus becomes the condition that $|a-b|<n$.
When that condition holds, the new infinite cycle in $\tau\pi$ is monotone in the same direction as the infinite cycle in $\pi$ that it was formed from, so $\tau\pi$ is still an annular permutation.

If $\tau=(a\,\,\,b)_n$ with $a$ and $b$ in different finite cycles of $\pi$, then $\tau\pi$ has one fewer class of finite cycles than $\pi$.

If $\tau=(a\,\,\,b)_n$ with $a$ and $b$ in different infinite cycles of $\pi$, then since $\pi$ is annular, the two infinite cycles are in different classes, and furthermore the quantities $q$ and $r$ in the proof of Lemma~\ref{only one} have $|q|=|r|=1$ and $q=-r$.
Thus, two infinite cycles of $\pi$ are replaced by a single class of finite cycles in $\tau\pi$.
Since $\tau\pi$ then has no infinite cycles, it is annular.

If $\tau=(a\,\,\,b)_n$ with $a$ in a finite cycle of $\pi$ and $b$ in an infinite cycle of $\pi$, then $\tau\pi$ has one fewer finite cycle than $\pi$.

Suppose $\tau=(\cdots\,a\,\,\,a\pm n\,\cdots)$.
If $a$ is in a finite cycle of $\pi$, then $\tau\pi$ has one less class of finite cycles than $\pi$.
If $a$ is in an infinite cycle, then $\tau\pi$ has either one more class of finite cycles than $\pi$ or the same number.
Since $\pi$ is an annular permutation, the quantity $q$ in the proof of Lemma~\ref{only one} is either $+1$ or $-1$.
The condition in the proof of Lemma~\ref{only one} for $\tau\pi$ to have an additional class of finite cycles is that either $\tau=(\cdots\,a\,\,\,a+n\,\cdots)$ and $a$ is in a decreasing infinite cycle of $\pi$ or $\tau=(\cdots\,a\,\,\,a-n\,\cdots)$ and $a$ is in an increasing infinite cycle of $\pi$.
\end{proof}

The following proposition is the last piece needed to prove the first assertion of Theorem~\ref{isom}.

\begin{prop}\label{perm inv cov}
Suppose $\sigma\covered\pi$ in $[1,c]_{T\cup L}$ and $\pi=\perm(\Q)$ for some $\Q\in\tNCAc$.
Then there exists $\P\in\tNCAc$ such that $\sigma=\perm(\P)$ and $\P\covered\Q$.
\end{prop}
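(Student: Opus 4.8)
The plan is to reverse-engineer the cover construction from Proposition~\ref{fragments}. We are given $\sigma \covered \pi$ in $[1,c]_{T\cup L}$, so there is $\tau\in T\cup L$ with $\pi = \tau\sigma$, and by the definition of the covering relation in the absolute-type order $\ell_{T\cup L}(\sigma) = \ell_{T\cup L}(\pi)-1$. Since $\pi\le_{T\cup L} c$ and $c$ has no finite cycles, Proposition~\ref{fragments}.\ref{length perm P} tells us $\ell_{T\cup L}(\pi)$ equals $n$ minus the number of mod-$n$ classes of finite cycles of $\pi$; the same count applies to $\sigma$ (once we know $\sigma\in[1,c]_{T\cup L}$, which is given). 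Hence $\sigma$ has exactly one more mod-$n$ class of finite cycles than $\pi$. First I would observe that $\sigma = \tau^{-1}\pi = \tau\pi$ when $\tau$ is a reflection, and $\sigma=\tau^{-1}\pi$ with $\tau^{-1}$ again a loop when $\tau$ is a loop; in all cases $\sigma$ is obtained from $\pi$ by multiplying on the left by an element of $T\cup L$ and the number of classes of finite cycles goes \emph{up} by one. This is precisely the situation analyzed in Lemma~\ref{transp act} — but that lemma is stated for \emph{annular} permutations, so the first task is to check $\pi=\perm(\Q)$ is annular. That is immediate: $\Q$ has at most one annular block, which contributes at most two infinite cycles (one increasing, one decreasing, from a non-dangling annular block) or exactly one monotone infinite cycle (from a dangling annular block), and all other blocks are disks contributing only finite cycles.

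Next I would enumerate, using Lemma~\ref{transp act}, the five possible forms of the element of $T\cup L$ that raises the finite-cycle count, and in each case produce the geometric block-splitting of $\Q$ that realizes it. For $\tau = (a\ \ b)_n$ with $a,b$ in the same finite cycle of $\pi$: the corresponding disk block $E$ of $\Q$ has $a,b$ (reduced mod $n$) on its boundary, and a cutting curve $\lambda$ joining the two boundary arcs adjacent to $a$ and $b$ on the appropriate sides splits $E$ into two disks; set $\P = \Q - \lambda$. For $\tau=(a\ \ b)_n$ with $a,b$ in the same infinite cycle and $|a-b|<n$: the unique annular block $E$ of $\Q$ (dangling, since only one infinite cycle) has a cutting curve reducing it to a disk, and again $\P=\Q-\lambda$. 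For $\tau=(a\ \ b)_n$ with $a,b$ in different infinite cycles: $E$ is the non-dangling annular block, and a cutting curve through $E$ separates its inner-boundary numbered points from its outer ones, yielding two disk blocks; $\P=\Q-\lambda$. For $\tau = (\cdots\,a\ \ a\pm n\,\cdots)$ with $a$ in a monotone infinite cycle of the opposite orientation: here $E$ is a non-dangling annular block, and the cutting curve turns $E$ into a single dangling annular block together with a disk block (this is the degenerate mode of cutting an annular block into one non-annular piece allowed in the definition preceding Proposition~\ref{cut curve cov}), so $\P = \Q-\lambda$ again. In every case $\P\in\tNCAc$, and by Proposition~\ref{cut curve cov}, $\P\covered\Q$.

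It then remains to verify $\perm(\P)=\sigma$. In each case the effect of the cutting curve on cycle notation is exactly the inverse of the augmentation computation in the proof of Proposition~\ref{fragments}: reading the boundaries of the new block(s) of $\P$ produces precisely the cycle structure described in the corresponding case of the proof of Lemma~\ref{only one}/Lemma~\ref{transp act} for $\tau\pi$. Since $\perm$ is one-to-one (Proposition~\ref{one to one}) and $\tau\cdot\perm(\P)$ and $\pi=\perm(\Q)$ must agree by this cycle bookkeeping, we get $\perm(\P)=\tau^{-1}\pi = \sigma$. The main obstacle I anticipate is not any single case but the careful matching of the \emph{geometry} (which boundary arc of the disk or annular block the cutting curve must enter and exit, and on which side of each numbered point) with the \emph{combinatorics} (the exact relabeling $a\mapsto a+qn$ forced by date-line crossings), particularly in the annular-to-dangling-annular case where one must confirm the orientation of the surviving infinite cycle matches the orientation prescribed by Lemma~\ref{transp act}. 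A secondary subtlety is that $\tau$ is determined by $\sigma$ and $\pi$ only up to the ambiguity in which element of $T\cup L$ was used; but Lemma~\ref{transp act} shows that \emph{any} $\tau\in T\cup L$ with $\tau\pi$ having one more finite-cycle class must take one of the five listed forms, so the case analysis is exhaustive and no such ambiguity causes trouble.
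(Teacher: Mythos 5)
Your top-level skeleton is exactly the paper's: write $\sigma=\tau\pi$ with $\tau\in T\cup L$, use Proposition~\ref{fragments}.\ref{length perm P} together with Lemma~\ref{only one} to conclude that $\sigma$ has exactly one more mod-$n$ class of finite cycles than $\pi$, invoke Lemma~\ref{transp act} to enumerate the possible forms of $\tau$, and in each case produce a cutting curve $\lambda$ with $\P=\Q-\lambda$, concluding $\P\covered\Q$ by Proposition~\ref{cut curve cov}. The observation that $\perm(\Q)$ is annular (so that Lemma~\ref{transp act} applies) is a correct and worthwhile check. However, the case-by-case geometry --- which is the real content of the proof --- is wrong in most of the cases, so as written the construction does not produce a $\P$ with $\perm(\P)=\sigma$.

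Concretely: (1) For $\tau=(a\,\,\,b)_n$ with $a,b$ in the same infinite cycle and $|a-b|<n$, you assert the annular block is dangling ``since only one infinite cycle''; this is a non sequitur ($a$ and $b$ lying in the same infinite cycle does not preclude a second infinite cycle --- take $\pi=c$ itself), and your claim that the cut ``reduces it to a disk'' contradicts the cycle computation in Lemma~\ref{only one}, which leaves the infinite cycle $(\cdots\,a_i\,\cdots\,a_k\,\,\,a_i+n\,\cdots)$ intact; the correct cut slices off a disk containing $a_1,\ldots,a_{i-1}$ and leaves an annular block. (2) For $\tau=(a\,\,\,b)_n$ with $a,b$ in different infinite cycles, a single arc cannot cut an annulus into two disks, and in any case the cycle computation yields \emph{one} new finite class containing all numbered points of both boundary components (since $q+r=0$), so the correct result is a single disk, obtained by a cut from one boundary component of the block to the other that wraps so as to realize $\tau$; your ``two disk blocks'' would change the finite-cycle count by $2$ and give the wrong permutation. (3) In the loop case you treat only the non-dangling sub-case; when the annular block is dangling the cut must end on the boundary circle with no numbered points, turning the block into a disk. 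Only the finite-cycle case and the non-dangling loop sub-case are described correctly, so the proof has a genuine gap: the verification that $\perm(\P)=\sigma$, which you defer to ``cycle bookkeeping,'' fails for the $\P$ you actually construct in the other cases.
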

\begin{proof}
Since $\sigma\covered\pi$ in $[1,c]_{T\cup L}$, we can write $\sigma=\tau\pi$ for some $\tau\in T\cup L$.
Furthermore, $\ell_{T\cup L}(\sigma)=\ell_{T\cup L}(\pi)-1$.
Proposition~\ref{fragments}.\ref{length perm P} says that $\ell_{T\cup L}(\pi)$ is $n$ minus the number of non-annular blocks of $\Q$ and also $n$ minus the number of mod-$n$ classes of finite cycles in $\pi$.
We conclude that $\ell_{T\cup L}(\sigma)$ is $n$ minus the number of mod-$n$ classes of finite cycles in $\sigma$, by the same argument as in the proof of Proposition~\ref{fragments}.
(By Lemma~\ref{only one}, multiplying on the left by $\tau\in T\cup L$ can increase the number of classes of finite cycles by at most $1$, and the identity element has $n$ classes of finite cycles.)
Thus $\sigma$ has one more class of finite cycles than $\pi$, so~$\tau$ is described by one of the bullet points in Lemma~\ref{transp act}.
Looking at the proof of Lemma~\ref{only one} in each case, we now find a cutting curve $\lambda$ for $\Q$ (in the sense of Proposition~\ref{cut curve cov}) that creates a noncrossing partition $\P$ with $\perm(\P)=\sigma$.

If $\tau=(a_1 \,\,\, a_i)_n$ for some class $(a_1 \,\,\, a_2\,\cdots\,a_k)_n$ of $\pi$ and $1<i\le k$, then $\lambda$ cuts the corresponding disk in $\Q$, beginning between $a_k\mod n$ and $a_1\mod n$ and ending between $a_{i-1}\mod n$ and $a_i\mod n$.

If $\tau=(a_1 \,\,\, a_i)_n$ for some increasing cycle $(\cdots\,a_1\,\cdots\,a_k\,\,\,a_1+n\,\cdots)$ of $\pi$ and ${1<i\le k}$, then $\lambda$ cuts the corresponding annular block in $\Q$, beginning between $a_k\mod n$ and $a_1\mod n$, cutting off a disk with vertices $a_1,\ldots,a_{i-1}$, and ending between $a_{i-1}\mod n$ and $a_i\mod n$.
If $\tau=(a\,\,\,b)_n$ for $a$ and $b$ in the same decreasing infinite cycle of $\pi$ and $|a-b|<n$, then the construction of $\lambda$ is similar.

Suppose $\tau=(a_1\,\,\,b_1)$ for an increasing cycle $(\cdots\,a_1\,\cdots\,a_k\,\,\,a_1+n\,\cdots)$ of $\pi$ and a decreasing cycle $(\cdots\,b_1\,\cdots\,b_m\,\,\,b_1-n\,\cdots)$.
Then $\lambda$ cuts the corresponding annular block into a disk, starting between $a_k\mod n$ and $a_1\mod n$ and ending between $b_m\mod n$ and $b_1\mod n$, but wrapping around the annulus so that the curve from $a_1\mod n$ to $b_1\mod n$ that follows $\lambda$ closely corresponds to $\tau$.

If $\tau=(\cdots\,a_1\,\,\,a_1\pm n\,\cdots)$ and $(\cdots\,a_1\,\,\,a_2\,\cdots\,a_k\,\,\,a_1 \mp n\,\cdots)$ is an increasing or decreasing cycle of $\pi$, then there are two possibilities.
If the corresponding annular block $E$ is dangling, then $\lambda$ is a curve starting between $a_k\mod n$ and $a_1\mod n$ and ending on the component of $E$ with no numbered points.
If $E$ is non-dangling, then $\lambda$ starts between $a_k\mod n$ and $a_1\mod n$, goes around the annulus once, and ends between $a_k\mod n$ and $a_1\mod n$.
\end{proof}

We now prove the first assertion of Theorem~\ref{isom}.

\begin{prop}\label{isom TL}
The map $\perm:\tNCAc\rightarrow \SZmodn$ is an isomorphism from $\tNCAc$ to the interval $[1,c]_{T\cup L}$ in $\SZmodn$.   
\end{prop}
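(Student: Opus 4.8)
The plan is to glue together the pieces already in place. By Proposition~\ref{one to one} the map $\perm$ is injective, by Proposition~\ref{fragments}.\ref{into} it lands inside $[1,c]_{T\cup L}$, by Proposition~\ref{fragments}.\ref{perm cov} it carries cover relations to cover relations, and by Proposition~\ref{perm inv cov} a cover in $[1,c]_{T\cup L}$ below an element of the image can be lifted to a cover in $\tNCAc$. On the source side, Theorem~\ref{A tilde main} says $\tNCAc$ is a graded lattice. So the two things that remain are (i) surjectivity of $\perm$ onto $[1,c]_{T\cup L}$, which upgrades the injection to a bijection, and (ii) showing that this cover-preserving bijection is in fact an order isomorphism; both are obtained by climbing saturated chains.

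First I would record that $[1,c]_{T\cup L}$ has enough saturated chains: for any $\pi\le_{T\cup L}\rho\le_{T\cup L}c$ there is a saturated chain from $\pi$ to $\rho$ lying inside $[1,c]_{T\cup L}$. Indeed, $\pi\le_{T\cup L}\rho$ means some reduced $T\cup L$-word for $\rho$ has a reduced word for $\pi$ as a prefix, and likewise a reduced word for $c$ (of length $n=\ell_{T\cup L}(c)$, as computed in the proof of Proposition~\ref{fragments}) is obtained by appending a reduced word for $\rho^{-1}c$; concatenating, a reduced word $w_1\cdots w_n$ for $c$ has a reduced word for $\rho$ as a prefix, which in turn has a reduced word for $\pi$ as a prefix. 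Subadditivity of $\ell_{T\cup L}$ forces $\ell_{T\cup L}(w_1\cdots w_i)=i$ for every $i$, so each partial product satisfies $w_1\cdots w_i\le_{T\cup L}c$ and $w_1\cdots w_i\covered w_1\cdots w_{i+1}$; the partial products between $\pi$ and $\rho$ form the desired chain. (Concatenating reduced factorizations in the same way shows $\le_{T\cup L}$ is a genuine partial order, so chains of covers do witness comparability; this rests on $T\cup L$ being closed under conjugation, as in Corollary~\ref{gen T L} and the observation following it.)

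For surjectivity, note that the maximal element $\hat1$ of $\tNCAc$---the noncrossing partition whose only block is all of $A$---has $\perm(\hat1)=c$. Given $\pi\in[1,c]_{T\cup L}$, take a saturated chain $\pi=\pi_0\covered\pi_1\covered\cdots\covered\pi_m=c$ in $[1,c]_{T\cup L}$ as above. Beginning at $\pi_m=\perm(\hat1)$ and applying Proposition~\ref{perm inv cov} repeatedly as we descend the chain, we produce $\P_i\in\tNCAc$ with $\perm(\P_i)=\pi_i$ and $\P_i\covered\P_{i+1}$; in particular $\pi=\perm(\P_0)$. So $\perm$ maps $\tNCAc$ onto $[1,c]_{T\cup L}$, and with Proposition~\ref{one to one} it is a bijection.

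It remains to show that $\P\le\Q$ in $\tNCAc$ if and only if $\perm(\P)\le_{T\cup L}\perm(\Q)$. If $\P\le\Q$, choose a saturated chain from $\P$ to $\Q$ (available since $\tNCAc$ is graded) and apply Proposition~\ref{fragments}.\ref{perm cov} to each step to obtain a chain of covers from $\perm(\P)$ to $\perm(\Q)$, whence $\perm(\P)\le_{T\cup L}\perm(\Q)$. Conversely, if $\perm(\P)\le_{T\cup L}\perm(\Q)$, take a saturated chain from $\perm(\P)$ to $\perm(\Q)$ inside $[1,c]_{T\cup L}$ and, using Proposition~\ref{perm inv cov} together with the injectivity of $\perm$ at each step, pull it back to a saturated chain from $\P$ to $\Q$, so $\P\le\Q$. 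This completes the proof. The real content has already been spent in Lemmas~\ref{only one} and~\ref{transp act} and in Propositions~\ref{fragments} and~\ref{perm inv cov}; what is left is the standard observation that a cover-preserving bijection between graded posets whose inverse also preserves covers is an isomorphism, and the only point needing a little care is confirming that $[1,c]_{T\cup L}$ is graded by $\ell_{T\cup L}$ with saturated chains between comparable elements.
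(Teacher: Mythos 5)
Your proposal is correct and follows essentially the same route as the paper's proof: injectivity from Proposition~\ref{one to one}, order-preservation from Proposition~\ref{fragments}, surjectivity by descending saturated chains from $c$ via Proposition~\ref{perm inv cov}, and order-preservation of the inverse by lifting chains of covers. You merely spell out the ``easy inductive argument'' and the gradedness of $[1,c]_{T\cup L}$ that the paper leaves implicit (your parenthetical attributing the partial-order property to conjugation-closure is unnecessary---the triangle inequality for $\ell_{T\cup L}$ suffices---but this is harmless).
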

\begin{proof}
Propositions~\ref{one to one} and~\ref{fragments} show that $\perm$ is a one-to-one, order-preserving map from $\tNCAc$ to $[1,c]_{T\cup L}$.
Since $c$ is the image, under $\perm$, of the maximal element of $\tNCAc$, Proposition~\ref{perm inv cov} and an easy inductive argument show that $\perm$ is onto $[1,c]_{T\cup L}$.
Then Proposition~\ref{perm inv cov} also implies that the inverse map to $\perm$ is order-preserving.
\end{proof}

To prove the second assertion of Theorem~\ref{isom}, we prove analogous facts about the restriction of $\perm$ to $\tNCAcircc$, by the analogous proof, reusing some arguments.

\begin{prop}\label{fragments T}\quad
\begin{enumerate}[\quad\rm\bf1.]
\item\label{into T}
The restriction of $\perm$ maps $\tNCAcircc$ into $[1,c]_T$.
\item \label{length perm P T}
If $\P\in\tNCAcircc$, then $\ell_T(\perm(\P))$ equals the rank of $\P$ in $\tNCAcircc$, which is $n$ minus the number of non-annular blocks of $\P$ and which also equals $n$ minus the number of mod-$n$ classes of finite cycles in $\perm(\P)$.
\item \label{perm cov T}
If $\P\covered\Q$ in $\tNCAcircc$, then $\perm(\P)\covered_T\perm(\Q)$.
\end{enumerate}
\end{prop}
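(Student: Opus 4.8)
The plan is to re-run the proof of Proposition~\ref{fragments} essentially verbatim, with Proposition~\ref{cov circ} doing the work of keeping everything inside $\tNCAcircc$ and of excluding loops. As a preliminary I would first record that $\perm$ carries $\tNCAcircc$ into the affine symmetric group $\Stilde_n$: by the construction of $\perm$, every block of a $\P\in\tNCAcircc$ is a (possibly degenerate) disk block, contributing one mod-$n$ class of finite cycles, or the unique nondangling annular block, contributing one increasing and one decreasing infinite cycle. Under the homomorphism $\SZmodn\to\integers$ of Remark~\ref{normal}, a mod-$n$ class of finite cycles has image $0$, an increasing infinite cycle has image $+1$, and a decreasing one has image $-1$; since these contributions cancel, $\perm(\P)\in\Stilde_n$. (In particular no loop $\ell_i^{\pm1}$ is in the image, since loops are not sent to $0$.)

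Next, given a cover $\P\covered\Q$ in $\tNCAcircc$, Proposition~\ref{cov circ} says it is also a cover in $\tNCAc$, so there is a simple connector $\kappa$ with $\Q=\P\cup\kappa$, and moreover if $\kappa$ joins a block $E$ of $\P$ to itself then $E$ is a disk or arc meeting both boundaries of $A$. I would then step through the case analysis in the proof of Proposition~\ref{fragments}: the case in which $\kappa$ enters a dangling annular block through its interior boundary circle does not occur (neither $\P$ nor $\Q$ has a dangling annular block), and in every remaining case that construction yields $\tau$ with $\perm(\Q)=\tau\cdot\perm(\P)$. A priori $\tau$ is a reflection or a loop, but $\tau=\perm(\Q)\,\perm(\P)^{-1}\in\Stilde_n$ by the preliminary, and loops are not in $\Stilde_n$; hence $\tau\in T$.

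Now the rank/length bookkeeping goes through as before. The minimum element of $\tNCAc$ (all trivial blocks) and the maximum element (the whole annulus as one nondangling annular block) both lie in $\tNCAcircc$, and by Proposition~\ref{cov circ} a maximal chain of $\tNCAcircc$ joining them is a maximal chain of $\tNCAc$, hence of length $n$ by Theorem~\ref{A tilde main}; applying the previous paragraph to its covers writes $c$ as a product of $n$ reflections, a $T$-word that is reduced because every $T$-word for $c$ has at least $n$ letters (Lemma~\ref{only one}: $c$ has no finite cycles, the identity has $n$ mod-$n$ classes of finite cycles, and left multiplication by a reflection changes the number of such classes by at most one, in either direction). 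More generally, if $\P\in\tNCAcircc$ has $r$ non-annular blocks, then $\perm(\P)$ has exactly $r$ mod-$n$ classes of finite cycles, so $\ell_T(\perm(\P))\ge n-r$ by the same count, while descending from $\P$ to the minimum inside $\tNCAcircc$ (first cut the nondangling annular block, if present, into a disk, then repeatedly cut disk blocks, which never creates a dangling annular block) writes $\perm(\P)$ as a product of $n-r$ reflections, giving $\ell_T(\perm(\P))\le n-r$. Since $n-r$ is the rank of $\P$, this is Assertion~2. Assertions~3 and~1 then follow exactly as in the proof of Proposition~\ref{fragments}: if $\P\covered\Q$ then $\perm(\Q)=\tau\cdot\perm(\P)$ with $\tau\in T$ of reflection length $1$ and, by Assertion~2, $\ell_T(\perm(\Q))=\ell_T(\perm(\P))+1$, so $\perm(\P)\covered_T\perm(\Q)$; and every element of $\tNCAcircc$ maps into $[1,c]_T$ by downward induction from the maximum element, whose image is $c$.

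I expect the only real obstacle to be the second paragraph, i.e.\ confirming that the simple-connector case analysis borrowed from Proposition~\ref{fragments} never outputs a loop in the $\tNCAcircc$ setting. That is exactly where the second assertion of Proposition~\ref{cov circ} (a block is joined to itself only if it is a disk or arc meeting both boundaries) and the preliminary $\perm(\tNCAcircc)\subseteq\Stilde_n$ are needed; with those in hand, the rest is a faithful copy of the arguments already given for $\tNCAc$, with ``graded lattice of rank~$n$'' weakened to ``rank-$n$ poset'' and $T\cup L$ replaced by~$T$.
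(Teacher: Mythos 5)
Your proposal is correct and follows the same overall skeleton as the paper's proof: use Proposition~\ref{cov circ} to transport covers from $\tNCAcircc$ to $\tNCAc$, rerun the case analysis of Proposition~\ref{fragments}, and redo the chain/length count with $T$ in place of $T\cup L$. The one step where you genuinely diverge is the exclusion of loops: the paper rules out the loop case ($q=q'$) geometrically, via the second assertion of Proposition~\ref{cov circ}, whereas you first establish that $\perm$ carries $\tNCAcircc$ into $\Stilde_n$ (using the homomorphism $\SZmodn\to\integers$ of Remark~\ref{normal}, under which finite cycle classes contribute $0$ and the two infinite cycles of a nondangling annular block contribute $+1$ and $-1$) and then observe that $\tau=\perm(\Q)\perm(\P)^{-1}$ lies in $\Stilde_n$ while loops do not, forcing $\tau\in T$. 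This algebraic route is valid and slightly more self-contained, since it does not lean on the geometric half of Proposition~\ref{cov circ}; the paper's route avoids invoking the homomorphism but requires the geometric fact about self-connecting blocks. Your split of the length computation into a lower bound (counting mod-$n$ classes of finite cycles via Lemma~\ref{only one}) and an explicit descending chain inside $\tNCAcircc$ for the upper bound is a harmless repackaging of the paper's maximal-chain argument.
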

\begin{proof}
Suppose $\P\covered\Q$ in $\tNCAcircc$.
Then Proposition~\ref{cov circ} says that $\P\covered\Q$ in $\tNCAc$, so we can follow the same argument as in the proof of Proposition~\ref{fragments}.
Since $\P\in\tNCAcircc$, we ignore the case where the block $E'$ is a dangling annular block.
In the other case, recall that we constructed a loop $\tau$ if and only if $q=q'$.
However, Proposition~\ref{cov circ} rules out that possibility.  
Thus we construct a reflection $\tau$ such that $\perm(\Q)=\tau\cdot\perm(\P)$.
We construct a reduced word for $c$ in the alphabet $T$ and complete the proof just as in the proof of Proposition~\ref{fragments}.
\end{proof}

\begin{prop}\label{perm inv cov T}
Suppose $\sigma\covered\pi$ in $[1,c]_T$ and $\pi=\perm(\Q)$ for some $\Q\in\tNCAcircc$.
Then there exists $\P\in\tNCAcircc$ such that $\sigma=\perm(\P)$ and $\P\covered\Q$.
\end{prop}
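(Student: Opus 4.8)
The plan is to mirror the proof of Proposition~\ref{perm inv cov} step for step, using the parallel facts proved for $\tNCAcircc$ in Propositions~\ref{fragments T} and~\ref{cov circ}, and to observe that the two differences between the settings --- that $\Q$ has no dangling annular block and that the cover now lives in $[1,c]_T$ rather than $[1,c]_{T\cup L}$ --- force us into exactly the cases that do not reintroduce a dangling annular block.

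First I would write $\sigma=\tau\pi$ for some $\tau\in T$ (here $\tau$ is a reflection, not a loop: we are inside $\Stilde_n$, and since $\ell_T$ is conjugation-invariant a $\le_T$-cover can be realized by left multiplication by a reflection), and note $\ell_T(\sigma)=\ell_T(\pi)-1$. Then, applying Proposition~\ref{fragments T}.\ref{length perm P T} to $\pi=\perm(\Q)$ to evaluate $\ell_T(\pi)$, and rerunning the counting argument from the proof of Proposition~\ref{fragments T} (Lemma~\ref{only one}, together with the fact that the identity has $n$ mod-$n$ classes of finite cycles) to evaluate $\ell_T(\sigma)$, I would conclude that $\sigma$ has exactly one more mod-$n$ class of finite cycles than $\pi$. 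This is the same bookkeeping step as in Proposition~\ref{perm inv cov}.

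The key structural point is that, because $\Q\in\tNCAcircc$ has no dangling annular block, $\pi=\perm(\Q)$ has either no infinite cycles (if it has only disk blocks) or exactly two, one increasing and one decreasing (arising from the unique non-dangling annular block); in particular $\pi$ is an annular permutation. Since $\tau$ is a reflection and $\tau\pi$ has one more mod-$n$ class of finite cycles than $\pi$, Lemma~\ref{transp act} applies: $\tau$ must fall under one of its first three bullet points (the last two describe loops), and $\sigma=\tau\pi$ is again an annular permutation. I would then carry out, in each of these three cases, the corresponding cutting-curve construction from the proof of Proposition~\ref{perm inv cov}, producing a cutting curve $\lambda$ for $\Q$ with $\perm(\Q-\lambda)=\sigma$, and set $\P=\Q-\lambda$. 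When $\tau=(a_1\,\,\,a_i)_n$ with $a_1,a_i$ in a finite cycle of $\pi$, the curve $\lambda$ splits a disk block of $\Q$ into two disk blocks. When $\tau=(a_1\,\,\,a_i)_n$ with $a_1,a_i$ in the same infinite cycle of $\pi$ and $|a_1-a_i|<n$, it cuts a disk off the annular block of $\Q$; one checks from the cycle data that the leftover annular piece still carries numbered points on both boundary components, so it is again a non-dangling annular block. When $\tau=(a\,\,\,b)_n$ with $a,b$ in different infinite cycles of $\pi$, it cuts the annular block of $\Q$ into a single disk block. Thus in every case $\P$ has no dangling annular block, i.e.\ $\P\in\tNCAcircc$; Proposition~\ref{cut curve cov} gives $\P\covered\Q$ in $\tNCAc$, and Proposition~\ref{cov circ} upgrades this to $\P\covered\Q$ in $\tNCAcircc$.

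I do not anticipate a serious obstacle, since all the geometric constructions and the cycle calculations are already done in Proposition~\ref{perm inv cov} and Lemma~\ref{transp act}; the only new content is the observation that excluding loops and the single-infinite-cycle shape of $\pi$ leaves exactly the three cases above, each of which keeps $\P$ inside $\tNCAcircc$. The one place worth being careful is precisely that case check --- verifying that no reflection acting on an annular permutation with $0$ or $2$ infinite cycles can create a dangling annular block --- which is why it seems preferable to spell the three cases out rather than merely cite Proposition~\ref{perm inv cov}.
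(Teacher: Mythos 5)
Your proposal is correct and follows essentially the same route as the paper's proof: write $\sigma=\tau\pi$ with $\tau\in T$, use Proposition~\ref{fragments T} in place of Proposition~\ref{fragments} to see that $\sigma$ gains exactly one mod-$n$ class of finite cycles, observe that $\tau$ then falls under one of the first three bullet points of Lemma~\ref{transp act}, build the cutting curve as in Proposition~\ref{perm inv cov}, check that none of these cases creates a dangling annular block, and finish with Proposition~\ref{cov circ}. The only difference is that you spell out the three-case check (including why the leftover annular piece stays non-dangling) that the paper states in one sentence, which is a reasonable amount of added detail rather than a different argument.
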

\begin{proof}
Since $\sigma\covered\pi$ in $[1,c]_T$, we can write $\sigma=\tau\pi$ for some $\tau\in T$.
As in the proof of Proposition~\ref{perm inv cov}, but using Proposition~\ref{fragments T} instead of Proposition~\ref{fragments}, we see that $\ell_T(\sigma)$ is $n$ minus the number of mod-$n$ classes of finite cycles in $\pi$.
The reflection $\tau$ is described by one of the first three bullet points in Lemma~\ref{transp act}.
We construct a cutting curve as in the proof of Proposition~\ref{perm inv cov}, using only the cases where $\tau$ is a reflection.
In those cases, cutting $\Q$ to make $\P$ does not create a dangling annular block, so $\P\in\tNCAcircc$.
Proposition~\ref{cov circ} says that $\P\covered\Q$ in $\tNCAcircc$ as well.
\end{proof}

The following proposition completes the proof of Theorem~\ref{isom}.
\begin{prop}\label{isom T}
The map $\perm$ restricts to an isomorphism from $\tNCAcircc$ to the interval $[1,c]_T$ in $\Stilde_n$.
\end{prop}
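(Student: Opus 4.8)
The plan is to repeat the proof of Proposition~\ref{isom TL} essentially verbatim, with $\tNCAcircc$, $\Stilde_n$, and $T$ in place of $\tNCAc$, $\SZmodn$, and $T\cup L$, and with Propositions~\ref{fragments T} and~\ref{perm inv cov T} in place of Propositions~\ref{fragments} and~\ref{perm inv cov}. Since $\perm$ is one-to-one on all of $\tNCAc$ by Proposition~\ref{one to one}, its restriction to $\tNCAcircc$ is one-to-one. Proposition~\ref{fragments T}.\ref{into T} says this restriction lands in $[1,c]_T$, and Proposition~\ref{fragments T}.\ref{perm cov T} says it sends cover relations to cover relations.

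Next I would check surjectivity onto $[1,c]_T$. The maximum element of $\tNCAcircc$ is the noncrossing partition whose only block is the whole annulus $A$; this block is a non-dangling annular block, so it does lie in $\tNCAcircc$, it is the top of $\tNCAcircc$, and $\perm$ sends it to $c$. Starting from this element and applying Proposition~\ref{perm inv cov T} repeatedly, a downward induction on $\ell_T$ shows that every element of $[1,c]_T$ equals $\perm(\P)$ for some $\P\in\tNCAcircc$; together with injectivity this makes the restriction of $\perm$ a bijection onto $[1,c]_T$.

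Finally, to upgrade the cover-to-cover statements to order-preservation of the bijection and of its inverse, I would use that $\tNCAcircc$ is a graded poset: by Proposition~\ref{cov circ} a cover in $\tNCAcircc$ is a cover in the graded lattice $\tNCAc$ of Theorem~\ref{A tilde main}, so the rank function of $\tNCAc$ restricts to a rank function on $\tNCAcircc$, while $[1,c]_T$ is graded by $\ell_T$. Hence every comparability in either poset is realized by a saturated chain; pushing such a chain forward via Proposition~\ref{fragments T}.\ref{perm cov T}, and pulling one back (from the top downward) via Proposition~\ref{perm inv cov T}, shows that $\perm$ and $\perm^{-1}$ are both order-preserving. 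This is exactly the step where, in the proof of Proposition~\ref{isom TL}, gradedness of $\tNCAc$ was used. I expect no genuinely new obstacle here: all of the geometry — extracting a reflection from a simple connector, and a cutting curve from a reflection, while staying inside $\tNCAcircc$ — was already handled in Propositions~\ref{cov circ}, \ref{fragments T}, and~\ref{perm inv cov T}. The only points that require a moment's care are identifying the top element of $\tNCAcircc$ so that it maps to $c$, and observing (via Proposition~\ref{cov circ}) that $\tNCAcircc$ inherits gradedness from $\tNCAc$ even though it was not introduced as a graded poset.
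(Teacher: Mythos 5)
Your proposal is correct and follows essentially the same route as the paper, which simply says to argue as in the proof of Proposition~\ref{isom TL} with Propositions~\ref{fragments T} and~\ref{perm inv cov T} substituted for Propositions~\ref{fragments} and~\ref{perm inv cov}. The extra care you take in identifying the top element of $\tNCAcircc$ and in noting (via Proposition~\ref{cov circ} and Proposition~\ref{fragments T}.\ref{length perm P T}) that $\tNCAcircc$ is graded is consistent with, and slightly more explicit than, what the paper leaves implicit.
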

\begin{proof}
We argue as in the proof of Proposition~\ref{isom TL}, using Propositions~\ref{fragments T} and~\ref{perm inv cov T} in place of Propositions~\ref{fragments} and~\ref{perm inv cov}.
\end{proof}

\subsection{Complements and intervals}\label{comp int sec}
In this section, we define the Kreweras complement on $\tNCAc$ on $\tNCAcircc$ and discuss product decompositions of intervals.

The \newword{Kreweras complement} is an anti-automorphism $\Krew:\tNCAc\to\tNCAc$, which restricts to an anti-automorphism $\tNCAcircc\to\tNCAcircc$.
This name refers to the original complementation map on noncrossing partitions in~\cite{Kreweras}.
Analogous constructions exist in types B and~D.
(See~\cite{Ath-Rei,Reiner}.)

For each numbered point $i$, choose a point $i'$ on the boundary of the annulus~$A$ between $i$ and $c(i)$.
(Recall from Lemma~\ref{c annulus} that $c$ maps an outer point $i$ to the next outer point clockwise from $i$ and maps an inner point $i$ to the next inner point counterclockwise.)
Given $\P\in\tNCAc$, first construct a noncrossing partition~$\P'$ on the annulus with numbered points $1',\ldots,n'$ whose blocks are the maximal embedded blocks that are disjoint from the nontrivial blocks of $\P$, together with trivial blocks $\set{i'}$ for every $i'$ contained in a block of $\P$.
Then $\Krew(\P)$ is obtained from~$\P'$ by applying a homeomorphism of $A$ that rotates each~$i'$ back to $i$ while moving interior points of $A$ as little as possible.
(That is, the homemorphism maps a radial segment from the inner boundary to the outer boundary to a segment that does not wrap around the annulus.)

\begin{example}\label{Krew ex}
Figure~\ref{Krew exs} shows the Kreweras complements of the noncrossing partitions shown in Figure~\ref{nc exs}.
\end{example}

\begin{figure}
\begin{tabular}{cccc}
\includegraphics{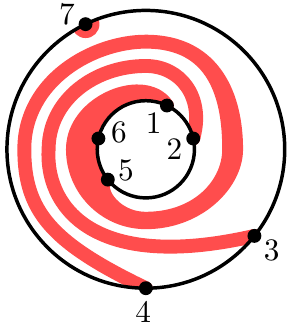}
&\includegraphics{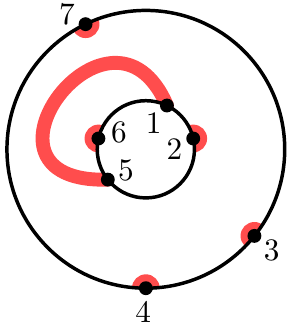}
&\includegraphics{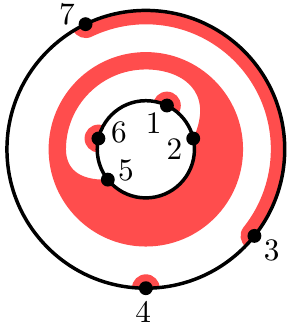}
\end{tabular}
\caption{Kreweras complements of the noncrossing partitions from Figure~\ref{nc exs}}
\label{Krew exs}
\end{figure}

The map $w\mapsto w^{-1}c$ is an anti-automorphism of $[1,c]_T$ also often called the \newword{Kreweras complement}.  
The same map on the larger interval $[1,c]_{T\cup L}$ is an anti-automorphism for the same easy reasons.

\begin{theorem}\label{Krew thm}
If $\P\in\tNCAc$, then $\perm(\Krew(\P))=\perm(\P)^{-1}c$.
\end{theorem}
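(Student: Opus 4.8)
The plan is to reduce the identity to the case where $\P$ is an atom of $\tNCAc$ and then verify it by a direct computation with pictures. First I would note that both sides of the asserted equality are anti-isomorphisms onto $[1,c]_{T\cup L}$: the left side $\P\mapsto\perm(\Krew(\P))$ is the composite of the anti-automorphism $\Krew$ of $\tNCAc$ with the isomorphism $\perm$ of Theorem~\ref{isom}, while the right side $\P\mapsto\perm(\P)^{-1}c$ is the composite of $\perm$ with the anti-automorphism $w\mapsto w^{-1}c$ of $[1,c]_{T\cup L}$. Since $\tNCAc$ is a lattice (Theorem~\ref{A tilde main}), so is $[1,c]_{T\cup L}$, and an anti-isomorphism of lattices carries any join that exists to the corresponding meet. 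By Theorem~\ref{join curve}, every $\P\in\tNCAc$ equals $\Join\curve(\P)$, where each arc or boundary segment $\kappa\in\curve(\P)$ is regarded as the noncrossing partition whose only nontrivial block is $\kappa$ --- a degenerate disk block if $\kappa$ joins two distinct numbered points, or the dangling annular block bounded by $\kappa$ if $\kappa$ joins a numbered point to itself. Such a noncrossing partition has a single nontrivial block and rank $1$, so it is an atom. Writing $f$ and $g$ for the two sides, we then have $f(\P)=\bigwedge_{\kappa\in\curve(\P)}f(\kappa)$ and $g(\P)=\bigwedge_{\kappa\in\curve(\P)}g(\kappa)$, so it suffices to prove $\perm(\Krew(\kappa))=\perm(\kappa)^{-1}c$ when $\kappa$ is a single curve, viewed as a noncrossing partition.

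For such a $\kappa$, applying Proposition~\ref{fragments} to the cover $\hat 0\covered\kappa$ (with $\hat 0$ the partition into trivial blocks) identifies $\perm(\kappa)$: it is a reflection $(a\ \, b)_n$ if the nontrivial block of $\kappa$ is a degenerate disk block, and a loop $\ell_i^{\pm1}$ if that block is a dangling annular block at the numbered point $i$, the sign recording whether the block hangs from the outer or the inner boundary of $A$. Dually, $\Krew(\kappa)$ is a coatom of $\tNCAc$, so $\perm(\Krew(\kappa))$ is covered by $c$ (the image under $\perm$ of the top element of $\tNCAc$) in $[1,c]_{T\cup L}$; hence $\perm(\Krew(\kappa))=\rho^{-1}c$ for the unique $\rho\in T\cup L$ with $c=\rho\cdot\perm(\Krew(\kappa))$. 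What must be shown is that $\rho=\perm(\kappa)$.

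The hard part will be this last step. I would compute $\Krew(\kappa)$ directly from its definition: choose each intermediate point $i'$ between $i$ and $c(i)$ (so that the rotation $i'\mapsto i$ in the definition of $\Krew$ makes sense), let $\kappa'$ be the collection of maximal embedded blocks on the points $i'$ that are disjoint from the thickened nontrivial block of $\kappa$, and then rotate the $i'$ back to the $i$. A thin arc or boundary segment cuts $A$ either into a disk together with an annulus or into a single disk, so $\kappa'$ has at most two nontrivial blocks, and one checks case by case --- according to the type of the nontrivial block of $\kappa$, which boundary component(s) of $A$ it meets, and how it crosses the date line --- that reading off the boundaries of the blocks of $\Krew(\kappa)$ in the manner defining $\perm$ produces exactly $\perm(\kappa)^{-1}c$, i.e.\ that $\rho$ equals $\perm(\kappa)$. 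Lemma~\ref{c annulus}, giving the cycle form of $c$, is the tool that makes each of these comparisons routine; the only genuine difficulty is the bookkeeping of the multiples of $n$ contributed by date-line crossings in each configuration. An alternative route is a downward induction on rank: if $\P\covered\Q$ and the identity holds for $\Q$, then since $\Krew(\Q)\covered\Krew(\P)$ one is reduced to showing that the reflection or loop realizing the latter cover equals $\perm(\P)^{-1}\tau\,\perm(\P)$, where $\tau$ realizes the cover $\P\covered\Q$; but establishing this conjugation relation still requires understanding how the complement construction transforms a simple connector, so it is no simpler than the atom computation.
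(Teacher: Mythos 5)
Your proposal is correct and follows essentially the same route as the paper: both reduce to the case of a single arc or boundary segment using Theorem~\ref{join curve} together with the fact that both sides are anti-isomorphisms, and both then leave the atom case to a routine case-by-case verification (which the paper organizes via the five cases of Lemma~\ref{transp act} applied to $\pi=c$, $\tau=\perm(\P)^{-1}$, and likewise omits the details of).
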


\begin{proof}
Since the map $\Krew$ is an anti-automorphism, by Theorem~\ref{join curve} it is completely determined by its action on (the noncrossing partitions associated to) arcs and boundary segments.
Thus since $w\mapsto w^{-1}c$ is also an anti-automorphism, it is enough to show that $\perm(\Krew(\P))=\perm(\P)^{-1}c$ when $\P$ is a noncrossing partition associated to an arc or boundary segment.

Write 
\[c=(\cdots\,a_1\,\,\,a_2\,\cdots\,a_k\,\,\,a_1+n\,\cdots)(\cdots\,b_1\,\,\,b_2\,\cdots\,b_{n-k}\,\,\,b_1-n\,\cdots)\]
as in Lemma~\ref{c annulus}.
Taking $\pi=c$ and $\tau=\perm(\P)^{-1}$ in Lemma~\ref{transp act}, we see that $\tau$ satisfies one of the five bullet points (not the first, because $c$ has no finite cycles).
In each of these cases, looking back at the proof of Lemma~\ref{transp act}, we easily verify that $\tau\pi$ is $\perm(\Krew(\P))$.
We omit the details.
\end{proof}

By \cite[Proposition~2.13]{surfnc}, the lower interval below some $\P\in\tNCAc$ is isomorphic to a product of noncrossing partition lattices, one noncrossing partition lattice for every block $E$ of $\P$.
The factor for $E$ is a lattice of ``noncrossing partitions of a marked surface'' in the sense of~\cite{surfnc}, where the ``marked surface'' is the block $E$ itself, ``marked'' with the numbered points it contains.  
Thus disk blocks (including degenerate disks) in $\P$ give rise to factors that are finite type\nobreakdash-A noncrossing partition lattices.
Non-dangling annular blocks give rise to lattices of noncrossing partitions of the smaller annuli.
Perhaps surprisingly, dangling annular blocks give rise to lattices of noncrossing partitions of finite type~B.
(This is an observation due to Laura Brestensky in connection with~\cite{BThesis} and appears as \cite[Theorem~4.2]{surfnc}.
It was also observed in \cite{ALPU}.)

By Theorem~\ref{isom}, we can apply these considerations to intervals in $\Stilde_n$ and $\SZmodn$.
Furthermore, since all the intervals in question are self-dual (as discussed above for affine type $\afftype{A}$ and as is well known in finite type), the results for lower intervals apply more broadly to arbitrary intervals.
We summarize in the following proposition.

\begin{prop}\label{int prop}
Each interval in $[1,c]_{T\cup L}$ is isomorphic to a product, each of whose factors is of one of the following forms:
\begin{itemize}
\item $[1,c']_{T'\cup L'}$ for $c'$, $T'$, and $L'$ associated to a Coxeter group of affine type~\textup{$\afftype{A}$}.
\item $[1,c']_{T'}$ for $c'$ and $T'$ in a Coxeter group of finite type \textup{A} or \textup{B}.
\end{itemize}
\end{prop}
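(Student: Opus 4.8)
The plan is to transport the statement, through the isomorphism $\perm$ of Theorem~\ref{isom}, into the combinatorial lattice $\tNCAc$, and there to combine the block-decomposition of lower intervals with the self-duality furnished by the Kreweras complement. So the first step is to observe that, by Theorem~\ref{isom}, it suffices to show that every interval $[\P,\Q]$ in $\tNCAc$ is isomorphic to a product of lattices of the two listed forms.

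The second step is to record the decomposition of a lower interval. By \cite[Proposition~2.13]{surfnc}, $[\hat 0,\Q]$ is isomorphic to a product $\prod_E\tNC(E)$ over the blocks $E$ of $\Q$, where $\tNC(E)$ denotes the noncrossing partition lattice of the marked surface $E$ (namely $E$ itself, marked with the numbered points it contains). By the identifications recalled just before the proposition, each factor $\tNC(E)$ is: a finite type-A noncrossing partition lattice when $E$ is a (possibly degenerate) disk; a finite type-B noncrossing partition lattice when $E$ is a dangling annular block, by \cite[Theorem~4.2]{surfnc}; and the noncrossing partition lattice of a smaller annulus with numbered points on both of its boundary components when $E$ is a non-dangling annular block, in which case Theorem~\ref{isom} applied to that smaller annulus realizes $\tNC(E)$ as an interval $[1,c']_{T'\cup L'}$ of affine type $\afftype{A}$. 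Thus a lower interval in $\tNCAc$ is already a product of lattices of the two listed forms. The same holds for a lower interval taken inside any such factor $\tNC(E)$: for $E$ a disk or a non-dangling annulus, \cite[Proposition~2.13]{surfnc} applied in that surface again produces a product of noncrossing partition lattices of disks, of smaller annuli, and of dangling annuli, i.e.\ of lattices of the two listed forms; and for $E$ a dangling annular block one instead invokes the classical decomposition of finite type-B intervals into products of type-A and type-B noncrossing partition lattices \cite{Ath-Rei,Reiner}.

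The third step is to treat an arbitrary interval by self-duality. Under the isomorphism $[\hat 0,\Q]\cong\prod_E\tNC(E)$, an element $\P\le\Q$ corresponds to a tuple $(\P_E)_E$, so $[\P,\Q]\cong\prod_E[\P_E,\hat 1_{\tNC(E)}]$, a product of upper intervals. Each lattice $\tNC(E)$ is self-dual: in finite types A and B the Kreweras complement is an anti-automorphism, and in affine type $\afftype{A}$ this is the content of Theorem~\ref{Krew thm} (equivalently, $w\mapsto w^{-1}c'$ is an anti-automorphism of $[1,c']_{T'\cup L'}$, as noted after that theorem). An anti-automorphism $\kappa_E$ of $\tNC(E)$ carries the upper interval $[\P_E,\hat 1_{\tNC(E)}]$ anti-isomorphically onto the lower interval $[\hat 0_{\tNC(E)},\kappa_E(\P_E)]$; by the second step that lower interval is a product of lattices of the two listed forms, and since each such lattice is self-dual, so is any product of them, whence $[\P_E,\hat 1_{\tNC(E)}]$ is again such a product. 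Forming the product over the blocks $E$ of $\Q$ and transporting back through $\perm$ then completes the proof.

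The step I expect to demand the most care is the bookkeeping in the second and third steps: one must check that a non-dangling annular block $E$ of $\Q$ really is an annulus carrying numbered points on both of its boundary components, so that Theorem~\ref{isom} applies to it and presents $\tNC(E)$ as a genuine interval of affine type $\afftype{A}$ rather than some degenerate object; and one must confirm that the class of finite products of lattices of the two listed forms is closed both under products and under passage to the opposite poset, the latter reducing to the self-dualities cited above. Everything else is an unwinding of the product isomorphism of \cite[Proposition~2.13]{surfnc}.
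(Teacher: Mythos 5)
Your proposal is correct and follows essentially the same route as the paper: the block-by-block product decomposition of lower intervals from \cite[Proposition~2.13]{surfnc} (with disk, dangling annular, and non-dangling annular blocks contributing finite type A, finite type B, and affine type $\afftype{A}$ factors respectively), combined with self-duality via Kreweras complementation to pass from lower intervals to arbitrary intervals. You merely spell out more explicitly the recursion into each factor and the closure of the class of factors under products and opposites, which the paper leaves implicit.
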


Extending the argument for \cite[Proposition~2.13]{surfnc} in a straightforward way, one can see that lower intervals in $\tNCAcircc$ are products in the analogous way.
We recover the following proposition.

\begin{prop}\label{int prop circ}
Each interval in $[1,c]_{T\cup L}$ is isomorphic to a product, each of whose factors is of the form $[1,c']_{T'}$ for $c'$ and $T'$ in a Coxeter group of affine type \textup{$\afftype{A}$} or finite type \textup{A}.
\end{prop}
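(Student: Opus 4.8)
The plan is to deduce this from the block decomposition of lower intervals together with self-duality, paralleling the passage to arbitrary intervals that precedes Proposition~\ref{int prop}. By Theorem~\ref{isom}, $\perm$ identifies $[1,c]_T$ with $\tNCAcircc$, so it is enough to prove the statement for intervals in $\tNCAcircc$, and I would begin with the lower-interval case. For $\Q\in\tNCAcircc$, the interval below $\Q$ in $\tNCAcircc$ is isomorphic to a product, over the blocks $E$ of $\Q$, of posets $L_E$, where $L_E$ is the set of noncrossing partitions of $E$ --- viewed as a marked surface, marked by the numbered points it contains --- that have no dangling annular block. This is the extension of \cite[Proposition~2.13]{surfnc} alluded to just above the proposition; the one point requiring care is that the ``no dangling annular block'' condition on a noncrossing partition $\R\le\Q$ is detected factor by factor, i.e.\ $\R$ has a dangling annular block if and only if the component of $\R$ in the unique annular factor (if there is one) does. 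This holds because any block of $\R$ contained in a disk block $E$ of $\Q$ is non-annular: an annular block has a boundary circle that is not contractible in $A$, whereas a circle inside the disk $E\subseteq A$ is contractible in $A$.

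Next I would identify the factors. Since $\Q\in\tNCAcircc$, every block $E$ of $\Q$ is either a (possibly degenerate) disk block or the unique non-dangling annular block of $\Q$. If $E$ is a disk block with $m$ numbered points, then $L_E$ is the classical type-A noncrossing partition lattice on those $m$ points (the trivial lattice when $m=1$), which is $[1,c']_{T'}$ for $c'$ an $m$-cycle in the symmetric group $S_m$, a Coxeter group of finite type~A. If $E$ is the non-dangling annular block, then $E$ carries at least one numbered point on each of its two boundary components, so it is an annulus with numbered points on both boundaries in the sense of Section~\ref{cox A sec}; applying Theorem~\ref{isom} to $E$ and the Coxeter element it encodes shows $L_E\cong[1,c']_{T'}$ for a Coxeter element $c'$ of the relevant group of affine permutations, i.e.\ in a Coxeter group of affine type~$\afftype{A}$. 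This establishes the proposition for lower intervals.

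Finally, for an arbitrary interval $[\P,\Q]$ in $\tNCAcircc$, embedding the interval below $\Q$ as $\prod_E L_E$ and writing $(\P_E)_E$ for the image of $\P$ and $\hat{1}_E$ for the top of $L_E$ gives $[\P,\Q]\cong\prod_E[\P_E,\hat{1}_E]$, a product of upper intervals in the factors. Each $L_E$ is self-dual --- the finite type-A lattices by the classical Kreweras complement, the $\tNCAcircc$-type lattices by the Kreweras complement $\Krew$ of Theorem~\ref{Krew thm} --- so each $[\P_E,\hat{1}_E]$ is anti-isomorphic to a lower interval in $L_E$. By the lower-interval case (and \cite[Proposition~2.13]{surfnc} for a disk, in the finite type-A factors), that lower interval is a product of finite type-A and affine type-$\afftype{A}$ noncrossing partition lattices; such a product is self-dual, since each factor is, so $[\P_E,\hat{1}_E]$ is in fact isomorphic to it, and assembling over the blocks $E$ of $\Q$ gives the claim. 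The main obstacle is the careful extension of \cite[Proposition~2.13]{surfnc} to $\tNCAcircc$ in the first step, and in particular the verification that the annular factor is the no-dangling-annular version of the smaller annulus rather than the full $\tNCAc$-type lattice --- precisely the point that removes finite type~B from the list of possible factors; everything after that is routine bookkeeping with products and self-duality.
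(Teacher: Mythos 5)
Your proof is correct and follows the route the paper intends: extend \cite[Proposition~2.13]{surfnc} to $\tNCAcircc$ (the paper dismisses this as ``straightforward''; you supply the actual content, namely that the dangling-annular condition is detected block-by-block and forces the annular factor to be the $\tNCAcircc$-type lattice of the smaller annulus rather than a type-B lattice), then pass from lower intervals to arbitrary intervals via self-duality of each factor, exactly as in the remark preceding Proposition~\ref{int prop}. Note that you have (correctly) read the statement as being about $[1,c]_T\cong\tNCAcircc$; the ``$[1,c]_{T\cup L}$'' in the proposition as printed is evidently a typo, since the surrounding discussion concerns $\tNCAcircc$ and the conclusion omits the type-B and $T'\cup L'$ factors of Proposition~\ref{int prop}.
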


\section{Affine type C}\label{aff type c}
In this section, we use our results about affine type $\afftype{A}$ to produce the analogous results about affine type $\afftype{C}$, by the folding technique described in Section~\ref{folding sec}.
The same results can be obtained directly from the ``projecting to the Coxeter plane'' construction, but folding serves as a shortcut.
Our treatment of affine type C owes a debt to \cite[Section~8.4]{Bj-Br}, but we adopt different conventions.

\subsection{Symmetric noncrossing partitions of an annulus}\label{sym ann sec}
We realize the Coxeter group of type $\afftype{C}_{n-1}$ as the group $\Stildes$ of \newword{affine signed permutations}.   
These are the permutations $\pi:\integers\to\integers$ with $\pi(i+2n)=\pi(i)+2n$ for all $i\in\integers$ and also $\pi(-i)=-\pi(i)$ for all $i\in\integers$.
In particular, $\pi(0)=0$ and also $\pi(n)=-\pi(-n)=-(\pi(n)-2n)$, so $\pi(n)=n$.
Thus $\pi$ fixes all multiples of $n$.


The conditions $\pi(i+2n)=\pi(i)+2n$ and $\pi(-i)=-\pi(i)$ imply that $\sum_{i=1}^{2n}\pi(i)=\binom{2n+1}2$, so $\Stildes$ is a subgroup of the group $\Stilde_{2n}$ of affine permutations, a Coxeter group of type $\afftype{A}_{2n-1}$.
However, it is more convenient to consider it as a subgroup of a different Coxeter group.
Let $W'$ be the subgroup of $\Stilde_{2n}$ consisting of elements that fix all multiples of $n$.
We continue the subscript $_{2n}$ notation for mod-$2n$ families of cycles and define elements $s'_0,\ldots,s'_{2n-3}$ of $W'$ as 
\begin{align*}
s'_0&=(-1\,\,\,1)_{2n},\\
s'_i&=(i\,\,\,i+1)_{2n}\text{ for }i=1,\ldots,n-2,\\
s'_{n-1}&=(n-1\,\,\,n+1)_{2n},\text{ and}\\
s'_i&=(i+1\,\,\,i+2)_{2n}\text{ for }i=n,\ldots,2n-3.
\end{align*}
By the obvious isomorphism with $\Stilde_{2n-2}$ (ignoring all multiples of $n$), we see that~$W'$ is a Coxeter group of type $\afftype{A}_{2n-3}$, with simple reflections $s'_0,\ldots,s'_{2n-3}$.

The Coxeter group $\Stildes$ of type $\afftype{C}_{n-1}$ is the subgroup of $W'$ whose elements satisfy $\pi(-i)=-\pi(i)$ for all $i\in\integers$.
To write cycles in $\Stildes$, we imitate~\cite{Ath-Rei} by introducing notation $(\!(a_1\,\cdots\,a_k)\!)_{2n}$ for the family of cycles consisting of cycles $(a_1\,\cdots\,a_k)_{2n}$ and $(-a_1\,\cdots\,-a_k)_{2n}$.
Similarly, $(\!(\cdots\,a_1\,\,\,a_2\,\cdots\,a_\ell\,\,\,a_1+2qn\,\cdots)\!)$ means 
\[(\cdots\,a_1\,\,\,a_2\,\cdots\,a_\ell\,\,\,a_1+2qn\,\cdots)(\cdots\,-a_1\,\,\,-a_2\,\cdots\,-a_\ell\,\,\,-a_1-2qn\,\cdots).\]
In this notation, the simple reflections in $\Stildes$ are 
\begin{align*}
s_0=s'_0&=(-1\,\,\,1)_{2n},\\
s_i=s'_is'_{2n-2-i}&=(\!(i\,\,\,i+1)\!)_{2n}\text{ for }i=1,\ldots,n-2,\text{ and}\\
s_{n-1}=s'_{n-1}&=(n-1\,\,\,n+1)_{2n}.
\end{align*}
The reflections in $\Stildes$ are of two types:
For each $i,j\in\integers\setminus\set{\ldots,-n,0,n,\ldots}$ with neither $j$ nor $-j$ equivalent to $i\mod {2n}$, there is a reflection $(\!(i\,\,\,j)\!)_{2n}$.
For each $i,j\in\integers\setminus\set{\ldots,-n,0,n,\ldots}$ with $-i$ equivalent to $j\mod {2n}$, there is a reflection $(i\,\,\,j)_{2n}$.
A given reflection is named by many different choices of $i$ and~$j$.

Let $\phi$ be the automorphism of $W'$ sending a permutation $\pi$ in $W'$ to a permutation $\phi(\pi)$ given by $\phi(\pi)(i)=-\pi(-i)$.
Thus $\Stildes$ is the subgroup of $W'$ consisting of permutations fixed by $\phi$.
We see that $\Stildes$ is a folding of $W'$ in the sense of Section~\ref{folding sec}.
In particular, each Coxeter element $c$ of $\Stildes$ is also a Coxeter element of~$W'$, because each simple reflection of $\Stildes$ is a product of one or two simple reflections of~$W'$.

We describe the choice of Coxeter element for $\Stildes$ in terms of choosing a sign on each of the integers $1,\ldots,n-1$.
The Coxeter diagram for $\Stildes$ is a path with vertices $s_0,\ldots,s_{n-1}$, so the choice of a Coxeter element $c$ amounts to choosing, for each $i=1,\ldots,n-1$, whether $s_{i-1}$ is before $s_i$ or after $s_i$ is every reduced word for $c$.
We record this choice by choosing a positive sign on $i$ if $s_{i-1}\to s_i$ or choosing a negative sign on $i$ if $s_i\to s_{i-1}$.
This choice determines a subset of $\set{\pm1,\ldots,\pm (n-1)}$ consisting of $n-1$ elements with pairwise distinct absolute values.
We will refer to this subset as a \newword{signing} of $\set{1,\ldots,n-1}$.

Comparing the conventions of inner and outer points in affine type A with the convention here about signings of $\set{1,\ldots,n-1}$, we see that each $i\in\set{1,\ldots,n-1}$ is in the signing if and only if $i$ is outer and $2n-i$ is inner, and $-i$ is in the signing if and only if $i$ is inner and $2n-i$ is outer.
(Recall that, in this construction of the group $\Stildes$ of type $\afftype{C}_{n-1}$ we omit multiples of $n$ entirely, so neither $n$ nor $2n$ appears among the inner and outer elements.)

Applying Lemma~\ref{c annulus}, we can write $c$ in cycle notation, but it is more convenient to write our choice of inner and outer elements differently first.
Rather than listing the elements of $\set{1,\ldots,2n}\cup\set{n+1,\ldots,2n-1}$, increasing in the clockwise direction, on the inner and outer boundaries of the annulus, we instead write the elements of $\set{\pm1,\ldots,\pm(n-1)}$, increasing in the clockwise direction, on the boundaries of the annulus.
More specifically, we place the numbers $-(n-1),\ldots,-1$ on the left of the annulus and the numbers $1,\ldots,n-1$ on the right, with the clockwise angle from the bottom of the annulus to a negative number $-i$ equal to the counterclockwise angle from the bottom to the positive number $i$.
If $i\in\set{1,\ldots,n-1}$ is in the signing, then $i$ appears on the outer boundary and $-i$ appears on the inner boundary.
If $i\in\set{1,\ldots,n-1}$ is has $-i$ in the signing, then $-i$ appears on the outer boundary and $i$ appears on the inner boundary.  
Thus the signing of $\set{\pm1,\ldots,\pm(n-1)}$ records the outer points, so Lemma~\ref{c annulus} becomes the following lemma:

\begin{lemma}\label{c annulus C}
Let $c$ be a Coxeter element of $\Stildes$, represented as a signing of $\set{1,\ldots,n-1}$.
If $a_1,\ldots,a_{n-1}$ are the elements of the signing in increasing order, then $c=(\!(\cdots\,a_1\,\,\,a_2\,\cdots\,a_{n-1}\,\,\,a_1+2n\,\cdots)\!)$.
\end{lemma}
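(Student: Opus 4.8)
The plan is to read this off from Lemma~\ref{c annulus} using the folding. Since $c$ is a Coxeter element of $\Stildes$, it is also a Coxeter element of $W'$, and transporting Lemma~\ref{c annulus} along the isomorphism $W'\cong\Stilde_{2n-2}$ that ignores multiples of $n$, we learn that $c$ acts on $\integers\setminus n\integers$ as a product of exactly two infinite cycles and no finite cycles, one monotone increasing (call it $C_+$) and one monotone decreasing (call it $C_-$); moreover $c$ fixes every multiple of $n$ because $c\in\Stildes\subseteq W'$. So $c$ is determined by the pair $C_+,C_-$, and it remains to identify them.

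By the dictionary established just before the statement, a point of the annulus labeled by an element of $\{\pm1,\ldots,\pm(n-1)\}$ is outer for $c$ (as a Coxeter element of $W'$) exactly when that element belongs to the signing, so the outer points are $a_1,\ldots,a_{n-1}$ and the inner points are $-a_1,\ldots,-a_{n-1}$. Feeding this into Lemma~\ref{c annulus} and converting the ``$+(2n-2)$'' wraparound of $\Stilde_{2n-2}$ into the ``$+2n$'' wraparound appropriate to $W'\subseteq\Stilde_{2n}$ describes $C_+$ as an infinite cycle through $a_1,\ldots,a_{n-1}$. The one thing to watch is that Lemma~\ref{c annulus} lists the outer points in the order met clockwise from the date line, which is a cyclic rotation of increasing numerical order on $\{\pm1,\ldots,\pm(n-1)\}$ (the positive signing elements, then the negative ones) rather than increasing order itself; but $C_+$ is a $2n$-periodic subset of $\integers$, since $c$ commutes with translation by $2n$, so it may be written starting from any of its elements, and a cyclic rotation together with a translation by $2n$ puts the description into the form $(\cdots\,a_1\,\,\,a_2\,\cdots\,a_{n-1}\,\,\,a_1+2n\,\cdots)$ with $a_1<\cdots<a_{n-1}$. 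Finally, $c$ is fixed by the folding automorphism $\phi$, so $c(-i)=-c(i)$ for all $i$, whence $i\mapsto-i$ permutes the cycles of $c$; it converts an increasing infinite cycle into a decreasing one, so it cannot fix $C_+$ and must swap $C_+$ with $C_-$, giving $C_-=-C_+$ and therefore $c=(\!(C_+)\!)$.

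I expect the only real difficulty to be bookkeeping: three labelings are in play — the $\Stilde_{2n-2}$-labels $1,\ldots,2n-2$, the residues modulo $2n$ on which $W'$ acts, and the symmetric labels $\pm1,\ldots,\pm(n-1)$ — and one must carry points and wraparounds correctly through the isomorphism and the relabeling and check that the cyclic rotation plus translation by $2n$ truly yields the increasing-order form. One could instead bypass the dictionary and imitate the proof of Lemma~\ref{c annulus}: verify the formula for a single signing, say $\{1,\ldots,n-1\}$, and check that it is preserved under a source-sink move in the Coxeter diagram of $\Stildes$, which flips one sign of the signing and conjugates $c$ by a simple reflection.
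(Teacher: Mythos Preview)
Your approach is essentially the same as the paper's: the paper simply notes, in the paragraph immediately preceding the lemma, that after relabeling the annulus by $\{\pm1,\ldots,\pm(n-1)\}$ the signing is exactly the set of outer points, and then declares that Lemma~\ref{c annulus} ``becomes'' the stated lemma. Your argument carries out this deduction in detail. One small difference: you invoke $\phi$-invariance of $c$ to deduce $C_-=-C_+$, whereas the paper gets this for free from the relabeling, since the inner points are by construction the negatives $-a_1,\ldots,-a_{n-1}$ of the outer points, and listing them in decreasing order gives precisely $-a_1>-a_2>\cdots>-a_{n-1}$, so the inner cycle from Lemma~\ref{c annulus} is already $(\cdots\,-a_1\,\,-a_2\,\cdots\,-a_{n-1}\,\,-a_1-2n\,\cdots)$. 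Your worry about cyclic rotation is also slightly misplaced: in the paper's convention the date line sits at the bottom and the labels $-(n-1),\ldots,-1,1,\ldots,n-1$ increase clockwise from it, so ``clockwise from the date line'' and ``increasing numerical order'' already agree, and no rotation is needed.
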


\begin{example}\label{C ann ex}
When $n=7$ and $c=s_6s_4s_3s_0s_1s_2s_5$, the corresponding signing is shown in Figure~\ref{C ann fig} as a placement of the numbers $\pm1,\ldots,\pm6$ on the boundary of the annulus.
(The dotted circles and gray segment are explained below.)
The cycle notation for~$c$~is $(\!(\cdots\,-6\,\,-4\,\,-3\,\,\,1\,\,\,2\,\,\,5\,\,\,8\,\cdots)\!)$.
\end{example}

\begin{figure}
\includegraphics{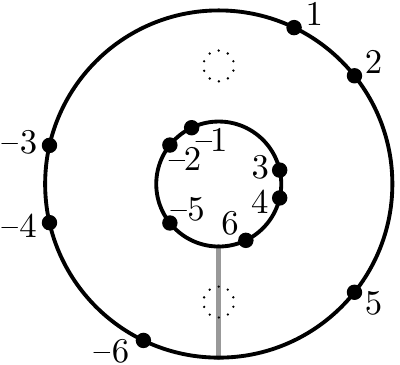}
\caption{The symmetric annulus with numbered points corresponding to $c=s_6s_4s_3s_0s_1s_2s_5$.}
\label{C ann fig}
\end{figure}

Given a Coxeter element $c$ of $\Stildes$, we write $[1,c]_T^C$ for the interval between~$1$ and~$c$ in the absolute order on the group $\Stildes$ and write $[1,c]_T^A$ for the interval between~$1$ and~$c$ in the absolute order on the group $W'$ of type $\afftype{A}_{2n-3}$ described above.
Proposition~\ref{fold subposet} says that $[1,c]_T^C$ is the subposet of $[1,c]_T^A$ induced by elements $w\in[1,c]_T^A$ such that $\phi(w)=w$.

In Section~\ref{nc ann sec}, we defined a map $\perm$ from $\tNCAc$ to $\SZmodn$.
The same map can be defined from the set of noncrossing partitions of the annulus with numbered points $\pm1,\ldots,\pm n$ to the group $S_\integers\cmod{2n}$.  
However, since the numbered points are placed in increasing order clockwise starting from the bottom of the annulus, we take the \emph{date line} to be the vertical radial segment at the bottom of the annulus, marked in gray in Figure~\ref{C ann fig}.
We reuse the name $\perm$ for this map.

Because of the placement of the numbered points $\pm1,\ldots,\pm(n-1)$ on the annulus, the involution $\phi$ on $[1,c]_T^A$ corresponds (via the map $\perm$) to a symmetry of the annulus.
For the moment, write $\phi_g$ (suggesting a ``geometric'' version of $\phi$) for this symmetry, which was described in the introduction by viewing the annulus as a cylinder.
Alternatively, viewing the annulus in polar coordinates $(r,\theta)$ as the region defined by $1\le r\le2$, the map $\phi_g$ sends $(r,\theta)$ to $(2-r,\pi-\theta)$.
In Figure~\ref{C ann fig}, the two fixed points of $\phi_g$ are marked with small dotted circles.
Is is apparent that ${\phi=\perm\circ\phi_g\circ\perm^{-1}}$, and thus we abuse notation by writing simply $\phi$ in place of~$\phi_g$.
Write $\tNCAphic$ for the subposet of $\tNCAc$ induced by \newword{symmetric noncrossing partitions of the annulus} (noncrossing partitions that are fixed by $\phi$).
By the same simple argument that proves Corollary~\ref{fold sublattice} from Proposition~\ref{fold subposet}, $\tNCAphic$ is a sublattice of $\tNCAc$.
The fact that $\tNCAphic$ is a lattice also appears as \cite[Theorem~4.4]{surfnc}, which also shows that $\tNCAphic$ is graded and gives the rank function, as a special case of \cite[Theorem~3.18]{surfnc}:

\begin{theorem}\label{C tilde main}
$\tNCAphic$ is a graded lattice, with rank function given by $n-1$ minus the number of symmetric pairs of distinct disk blocks plus the number of annular~blocks.
\end{theorem}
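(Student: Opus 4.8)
The plan has two parts: the lattice property, which follows formally from the type-$\afftype{A}$ results, and the rank function, which requires a hands-on analysis of cover relations.

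For the lattice property, I would identify $\tNCAphic$ with the fixed-point subposet $(\tNCAc)^\phi$, where $\phi$ denotes the order automorphism of $\tNCAc$ induced by the homeomorphism $\phi_g$ of the annulus $A$ described in Section~\ref{sym ann sec}, the one exchanging the two boundary circles. One checks that $\phi_g$, being a homeomorphism that permutes the numbered points, carries isotopy classes of embedded blocks to isotopy classes, preserves disjointness and containment of blocks, and sends the at most one annular block of a noncrossing partition to an annular block; hence it does define an automorphism of the poset $\tNCAc$, and $\perm$ intertwines $\phi_g$ with the diagram automorphism $\phi$ of Section~\ref{folding sec}. Since $\tNCAc$ is a lattice by Theorem~\ref{A tilde main}, the general fact recorded just before Corollary~\ref{fold sublattice} shows that $\tNCAphic=(\tNCAc)^\phi$ is a sublattice of $\tNCAc$, with meets and joins computed in $\tNCAc$; in particular it is a lattice.

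For gradedness and the rank function, I would first record that a symmetric noncrossing partition has no dangling annular block: since $\phi_g$ exchanges the two boundary circles of $A$, it would carry such a block (whose numbered points lie on one boundary) to a distinct dangling annular block (whose numbered points lie on the other), contradicting the rule that at most one block is annular. Thus $\tNCAphic\subseteq\tNCAcircc$, and the blocks of $\P\in\tNCAphic$ consist of a $\phi$-invariant non-dangling annular block in $a(\P)\in\set{0,1}$ of the cases, together with $\phi$-invariant disk blocks and $d(\P)$ two-element $\phi$-orbits of disk blocks. Put $\mathrm{rk}(\P)=(n-1)-d(\P)+a(\P)$. Then $\mathrm{rk}(\hat0)=0$ for the bottom element (it has $2(n-1)$ trivial disk blocks and no $\phi$-fixed numbered point, so $d=n-1$ and $a=0$) and $\mathrm{rk}(\hat1)=n$ for the top (a single symmetric annular block). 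It therefore suffices to prove that $\mathrm{rk}$ increases by exactly $1$ along every cover of $\tNCAphic$; this gives that $\tNCAphic$ is graded with rank function $\mathrm{rk}$ and common maximal-chain length $n$.

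The heart of the matter, and the main obstacle, is this last step, since a cover in the sublattice $\tNCAphic$ need not be a cover in $\tNCAc$. Given $\P\covered\Q$ in $\tNCAphic$, I would choose a maximal chain from $\P$ to $\Q$ in $\tNCAc$ and describe its first step using the simple-connector description of covers (Proposition~\ref{conn edge cov}), selecting the connector $\phi$-equivariantly; Lemmas~\ref{multiplicities} and~\ref{fold red} (the type-$\afftype{C}$-over-type-$\afftype{A}$ multiplicities are $1$ and $2$) guarantee that the resulting $\tNCAc$-chain has length one or two: either a single $\phi$-invariant simple connector $\kappa$ with $\Q=\P\cup\kappa$, or a simple connector $\kappa$ with $\kappa$ and $\phi(\kappa)$ disjoint and $\Q=(\P\cup\kappa)\cup\phi(\kappa)$. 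One then runs through the block types joined — point to point, point to disk, disk to disk, in both the $\phi$-invariant and the swapped configuration, as well as the self-joins that create the symmetric annular block — and checks case by case that the net change in $(d(\P),a(\P))$ makes $(n-1)-d+a$ increase by exactly one: an invariant move either fuses a mirror pair of disk blocks into a single symmetric disk block ($d\mapsto d-1$) or promotes a symmetric block to the symmetric annular block ($a\mapsto a+1$), while a symmetric pair of moves fuses two mirror pairs of disk blocks into one ($d\mapsto d-1$) or, via a self-join, produces the symmetric annular block ($a\mapsto a+1$). The bookkeeping is routine once the cases are enumerated; the only delicate points are the moves that create the annular block and the repeated use of the ``no dangling annular block'' observation to keep the intermediate and final partitions in the right posets. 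As the statement indicates, all of this is a specialization of the marked-surface results of \cite{surfnc}; alternatively one could transport the question across the isomorphism $\perm\colon\tNCAphic\to[1,c]_T^C$ and argue with reflection length in $\Stildes$ directly, but the cover-counting above is the most self-contained route.
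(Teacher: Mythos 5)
Your argument for the lattice property is exactly the paper's: the paper also obtains $\tNCAphic$ as the fixed-point subposet of the automorphism of $\tNCAc$ induced by the annulus symmetry and invokes the general fact (recorded just before Corollary~\ref{fold sublattice}) that the fixed points of a lattice automorphism form a sublattice. Where you diverge is on gradedness and the rank function: the paper proves nothing here, simply citing the prequel (\cite[Theorems 3.18, 4.4]{surfnc}), whereas you sketch a self-contained cover-counting argument. Your sketch is sound in outline --- the reduction via Lemmas~\ref{multiplicities} and~\ref{fold red} (transported through the isomorphisms of Theorems~\ref{isom} and~\ref{isom annular C}, neither of which depends on Theorem~\ref{C tilde main}, so there is no circularity) correctly bounds the $\tNCAc$-rank jump across a cover of $\tNCAphic$ by $1$ or $2$, and your normalizations at $\hat0$ and $\hat1$ are right. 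One caution on the ``routine bookkeeping'': your enumeration of cases omits covers in which existing blocks are absorbed into a pre-existing annular block. The dangerous-looking subcase --- a $\phi$-invariant disk block merging alone into the $\phi$-invariant annular block, which would leave $(n-1)-d+a$ unchanged --- cannot occur, but ruling it out needs a geometric observation you don't state: a $\phi$-invariant non-dangling annular block separates $A$ into an inner and an outer region that $\phi_g$ swaps, so no $\phi$-invariant disk block (which must meet both boundaries of $A$) can be disjoint from it; hence absorbed disk blocks always come in swapped pairs and the rank still rises by exactly $1$. With that case added, your direct argument goes through and effectively reproves the special case of \cite{surfnc} that the paper takes as a black box.
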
 

We also quote from~\cite{surfnc} a characterization of cover relations in $\tNCAphic$.
Given a symmetric noncrossing partition~$\P$ of the annulus, a \newword{simple symmetric connector} for $\P$ is a symmetric arc $\alpha$ that is a simple connector for $\P$.
A \newword{simple symmetric pair of connectors} is symmetric pair $\alpha,\phi(\alpha)$ of disjoint arcs, each of which is a simple connector for $\P$, but ruling out the possibility that $\alpha$ and $\phi(\alpha)$ combine with blocks of $\P$ to bound an annulus.
The \newword{augmentation of $\P$} along a symmetric simple connector $\alpha$ is defined exactly as before, but with all ``thickenings'' of curves chosen symmetrically.
The \newword{augmentation of $\P$} along a simple symmetric pair of connectors $\alpha$ and $\phi(\alpha)$ is obtained as the augmentation along~$\alpha$, further augmented along $\phi(\alpha)$, with the thickenings in the curve unions chosen so as to make $\P\cup\alpha\cup\phi(\alpha)$ symmetric, and possibly adjoining an additional disk.
Specifically, when $\alpha$ and $\phi(\alpha)$ combine with blocks of $\P$ to bound a disk (necessarily containing a fixed point of $\phi$), we adjoin that disk.
The augmentation, in either case, is denoted $\P\cup\alpha\cup\phi(\alpha)$, since $\alpha=\phi(\alpha)$ for a symmetric simple connector.


The definition of simple symmetric (pairs of) connectors is a special case of \cite[Definition~3.23]{surfnc}, and the following proposition is a special case of \cite[Proposition~3.27]{surfnc}.

\begin{prop}\label{sym covers}
Two noncrossing partitions $\P,\Q\in\tNCAcircc$ have $\P\covered\Q$ if and only if there exists a simple symmetric connector $\alpha$ or simple symmetric pair of connectors $\alpha,\phi(\alpha)$ for $\P$ such that $\Q=\P\cup\alpha\cup\phi(\alpha)$.
\end{prop}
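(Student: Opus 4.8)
The quickest argument is to recognize this as the specialization to the symmetric annulus $(A,\phi)$ of the general description of cover relations for symmetric noncrossing partitions of a marked surface: it is \cite[Proposition~3.27]{surfnc} applied to our $A$ and $\phi$, once one checks that the notions of simple symmetric connector, simple symmetric pair of connectors, and symmetric augmentation defined above agree in this case with \cite[Definition~3.23]{surfnc} and the augmentation construction of \cite{surfnc}. The plan for a self-contained proof, using only the results of this section, runs as follows. Recall that $\tNCAphic$ is the induced subposet of the graded lattice $\tNCAc$ on the $\phi$-fixed elements, that it is itself a graded lattice with rank function given by Theorem~\ref{C tilde main}, that cover relations in $\tNCAc$ are described by simple connectors (Proposition~\ref{conn edge cov}), and that order is detected by curve sets (Proposition~\ref{curve set}). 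The bookkeeping device throughout is: if $\P<\Q$ in $\tNCAc$ with $\P,\Q$ both $\phi$-fixed, then $\curve(\Q)\setminus\curve(\P)$ is a nonempty $\phi$-invariant set of arcs, and any $\tNCAc$-cover $\P\covered\P'$ with $\P'\le\Q$ is realized by a simple connector $\alpha$ for $\P$ lying in this difference.

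For the ``if'' direction, suppose $\Q=\P\cup\alpha\cup\phi(\alpha)$ for a simple symmetric connector (so $\alpha=\phi(\alpha)$) or a simple symmetric pair $\alpha,\phi(\alpha)$. By construction $\Q$ is $\phi$-fixed, hence $\Q\in\tNCAphic$, and $\P<\Q$ since $\alpha\notin\curve(\P)$. It then suffices to check, via the rank formula of Theorem~\ref{C tilde main}, that $\Q$ has rank one more than $\P$ in $\tNCAphic$; gradedness then forces $\P\covered\Q$. This is a finite case check on how $\alpha$ (and $\phi(\alpha)$) meets the two fixed points of $\phi$: in each case exactly one of the three quantities ``$n-1$'', ``number of symmetric pairs of distinct disk blocks'', ``number of annular blocks'' changes, and by exactly one. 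In the symmetric-pair case, the possible extra disk adjoined when $\alpha,\phi(\alpha)$ bound a disk around a fixed point contributes nothing to the rank, which is precisely why adjoining it is compatible with $\Q$ covering $\P$.

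For the ``only if'' direction, suppose $\P\covered\Q$ in $\tNCAphic$ and pick a simple connector $\alpha\in\curve(\Q)\setminus\curve(\P)$ for $\P$ as above. If $\alpha=\phi(\alpha)$, then $\P\cup\alpha$ is $\phi$-fixed with $\P<\P\cup\alpha\le\Q$, so $\Q=\P\cup\alpha$ and $\alpha$ is a simple symmetric connector. If $\alpha\neq\phi(\alpha)$, then $\phi(\alpha)$ is a second simple connector for $\P$ in the difference; one shows $\alpha$ and $\phi(\alpha)$ are disjoint and do not combine with blocks of $\P$ to bound an annulus, since otherwise $\P\cup\alpha\cup\phi(\alpha)$ would gain an annular block and hence, by Theorem~\ref{C tilde main}, sit at least two ranks above $\P$, which is incompatible with $\P\cup\alpha\cup\phi(\alpha)\le\Q$ and $\P\covered\Q$. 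Thus $\alpha,\phi(\alpha)$ is a simple symmetric pair of connectors, $\P\cup\alpha\cup\phi(\alpha)$ is $\phi$-fixed and lies between $\P$ (strictly above) and $\Q$, and so equals $\Q$.

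I expect the main obstacle to be the ``only if'' direction, specifically verifying that a non-fixed $\alpha$ together with its mirror $\phi(\alpha)$ is disjoint and non-annulus-bounding --- that is, that the two listed moves are the \emph{only} rank-one steps in $\tNCAphic$ --- and matching the ``extra disk'' clause in the definition of augmentation along a symmetric pair to the rank bookkeeping of Theorem~\ref{C tilde main}. Both reduce to finite case analyses on how arcs meet the fixed locus of $\phi$, but getting every exclusion exactly right is the delicate point; this is exactly the content that is handled once and for all, in the surface generality, by \cite[Proposition~3.27]{surfnc}.
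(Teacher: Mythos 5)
Your proposal is correct and takes the same route as the paper: the result is justified there purely by citing it as the specialization of \cite[Proposition~3.27]{surfnc} to the symmetric annulus, with the definitions matching \cite[Definition~3.23]{surfnc}, and no independent proof is given. Your supplementary sketch of a self-contained argument via the rank function of Theorem~\ref{C tilde main} and curve sets is a reasonable outline of what that general proof handles, but it is extra to what the paper itself does.
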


A noncrossing partition of the annulus that is fixed by $\phi$ cannot have a dangling annular block, because it can only have one annular block.
Thus $\tNCAphic$ is also the subposet of $\tNCAcircc$ induced by noncrossing partitions of the annulus that are fixed by $\phi$.
Since $[1,c]_T^C$ is the subposet of $[1,c]_T^A$ induced by elements fixed by $\phi$, the following theorem is an immediate consequence of Theorem~\ref{isom}.

\begin{theorem}\label{isom annular C}
The map $\perm:\tNCAc\to[1,c]_T^A$ restricts to an isomorphism from $\tNCAphic$ to $[1,c]_T^C$.
\end{theorem}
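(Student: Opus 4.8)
The plan is to derive Theorem~\ref{isom annular C} from Theorem~\ref{isom} (applied with $W'$ playing the role of the affine type-$\afftype A$ Coxeter group) together with Proposition~\ref{fold subposet}, the only real content being the verification that $\perm$ converts the geometric symmetry $\phi_g$ of the annulus into the diagram automorphism $\phi$ of $W'$.

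First I would invoke Theorem~\ref{isom} for $W'$: since $W'$ is a Coxeter group of affine type $\afftype A$, the map $\perm$ restricts to a poset isomorphism from $\tNCAcircc$ onto the interval $[1,c]_T^A$ (and, on all of $\tNCAc$, onto $[1,c]_{T\cup L}$ in the corresponding extended group). As already noted in this section, $\tNCAphic$ is exactly the subposet of $\tNCAcircc$ induced by the noncrossing partitions fixed by $\phi_g$ — a symmetric partition has at most one annular block, hence no dangling one — and Proposition~\ref{fold subposet} identifies $[1,c]_T^C$ with the subposet of $[1,c]_T^A$ induced by the elements fixed by $\phi$. Since a poset isomorphism restricted to an induced subposet that it carries onto another induced subposet is again an isomorphism, it remains only to show that $\perm$ sends the $\phi_g$-fixed subset of $\tNCAcircc$ bijectively onto the $\phi$-fixed subset of $[1,c]_T^A$.

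The substantive step is the intertwining identity $\perm\circ\phi_g=\phi\circ\perm$. Here $\phi_g$ is induced by a homeomorphism of the annulus that permutes the numbered points by $i\mapsto -i$ and exchanges the two boundary circles, hence is a poset automorphism of $\tNCAc$; and $\phi$ extends to a group automorphism of the ambient extended affine symmetric group permuting $T\cup L$, hence (by the argument of Lemma~\ref{phi auto general}, together with $\phi(c)=c$) an order automorphism of $[1,c]_{T\cup L}$. Both $\phi_g$ and the conjugate $\perm^{-1}\circ\phi\circ\perm$ are therefore join-endomorphisms of the lattice $\tNCAc$, so by Theorem~\ref{join curve} it suffices to check that they agree on arcs and boundary segments, i.e.\ to verify $\perm(\phi_g(\kappa))=\phi(\perm(\kappa))$ for a single arc or boundary segment $\kappa$. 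For such a $\kappa$, $\phi_g(\kappa)$ is obtained by negating the endpoint labels and exchanging inner with outer, while $\perm(\kappa)$ is a reflection (or loop) read off the boundary of the associated block by the recipe of Section~\ref{nc ann sec}; tracing that recipe with the date line taken to be the bottom radial segment, which $\phi_g$ preserves, one sees that the induced operation on cycles is precisely $\pi\mapsto\bigl(i\mapsto -\pi(-i)\bigr)$. One runs through the cases of a disk block (a family of finite cycles), a non-dangling annular block (a symmetric pair of infinite cycles), and a dangling annular block — the last does not occur in a symmetric partition but must still be handled, since an individual curve in $\curve(\P)$ need not be $\phi_g$-symmetric.

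Granting the intertwining, the rest is formal: injectivity of $\perm$ gives, for $\P\in\tNCAcircc$, the equivalences $\phi_g(\P)=\P\iff\perm(\phi_g(\P))=\perm(\P)\iff\phi(\perm(\P))=\perm(\P)$, so $\perm$ restricts to a bijection from $\tNCAphic$ onto $\{w\in[1,c]_T^A:\phi(w)=w\}=[1,c]_T^C$, and hence to a poset isomorphism by the remark above. I expect the only delicate part to be the cycle bookkeeping in the proof of $\perm\circ\phi_g=\phi\circ\perm$ — keeping straight the date-line convention and the inner/outer exchange (and whether $\phi_g$ reverses orientation, which would reverse the cyclic order of the entries read off a block) — whereas the reduction to single arcs via Theorem~\ref{join curve} and the final fixed-point argument are routine.
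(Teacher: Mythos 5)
Your proposal follows essentially the same route as the paper: the paper also deduces the theorem immediately from Theorem~\ref{isom} and Proposition~\ref{fold subposet}, after noting that a $\phi$-fixed noncrossing partition cannot have a dangling annular block and that $\phi=\perm\circ\phi_g\circ\perm^{-1}$. The only difference is that the paper declares this last intertwining identity ``apparent,'' whereas you (reasonably) spell out a verification via Theorem~\ref{join curve} and a reduction to single arcs and boundary segments; your argument is correct.
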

Combining Theorems~\ref{C tilde main} and~\ref{isom annular C}, we recover the theorem of \cite{Digne2} that the interval $[1,c]_T^C$ is a lattice for any choice of $c$.

\subsection{Noncrossing partitions of a disk with two orbifold points}\label{nc 2 orb sec}
We now use the results of Section~\ref{sym ann sec} to give a model for $[1,c]_T^C$ in the orbifold obtained as the quotient of the annulus modulo $\phi$.

The map $\phi$ on the annulus fixes exactly two points (marked with dotted circles in Figure~\ref{C ann fig}). 
The left picture of Figure~\ref{fold ann fig} also shows a dashed circle that cuts the annulus into two pieces, with the action of $\phi$ mapping each piece to the other.
\begin{figure}
\includegraphics{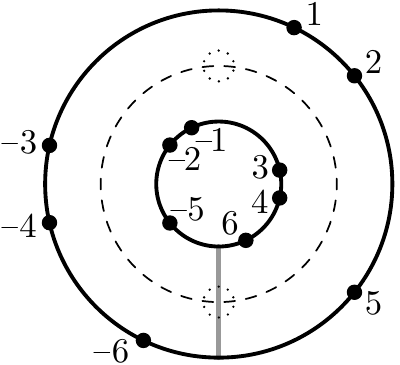}
\qquad
\includegraphics{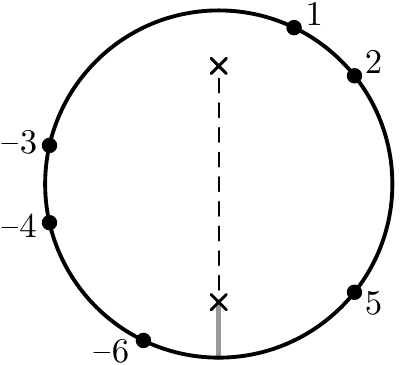}
\caption{The symmetric annulus and the two-orbifold disk}
\label{fold ann fig}
\end{figure}
Thus the quotient of the annulus modulo $\phi$ can be obtained from the annulus by deleting everything inside the dashed circle and identifying each point on the dashed circle with the other point at the same height in the picture.
The result, shown in the right picture of Figure~\ref{fold ann fig}, is a disk with two (order-$2$) orbifold points. 
The image, in the quotient, of the dashed circle, is called the \newword{toggle line}.
The numbered points on the outside of the disk are the points in the signing, in increasing order, exactly as on outside boundary of the annulus. 
We write $C$ for this disk, with these numbered points, and call $C$ the \newword{two-orbifold disk}.

We now define noncrossing partitions of the two-orbifold disk.
For easy comparison, the exposition here parallels Section~\ref{nc ann sec}.
In light of Theorem~\ref{isom annular C}, we want to make the definition such that noncrossing partitions of the two-orbifold disk are precisely the quotients, modulo $\phi$, of the noncrossing partitions of the annulus that are fixed by $\phi$.

Two subsets of $C$ are related by \newword{ambient isotopy} if they are related by a homeomorphism from $C$ to itself, fixing the boundary $\partial C$ pointwise and fixing each orbifold point, and homotopic to the identity by a homotopy that fixes $\partial C$ pointwise and fixes each orbifold point at every step.

A \newword{boundary segment} is the portion of the boundary of $C$ between two adjacent numbered points.
An \newword{arc} in $C$ is a curve in $C$ that does not intersect itself except possibly at its endpoints, does not intersect the boundary or orbifold points of~$C$, except possibly at its endpoints, and is of one of the following two types:
An \newword{ordinary arc} has both endpoints on the boundary of $C$, while an \newword{orbifold arc} has one endpoint on the boundary and the other at an orbifold point.
An arc may not bound a monogon in $C$ unless that monogon contains both orbifold points, and it may not combine with a boundary segment to bound a digon unless that digon contains one or both orbifold points.
Arcs are considered up to ambient isotopy.

An \newword{embedded block} in $C$ is one of the following:
\begin{itemize}
\item
a \newword{trivial block}, meaning a singleton consisting of a numbered point in $C$;
\item
an arc (ordinary or orbifold) or boundary segment in $C$; or
\item
a \newword{disk block}, meaning a closed disk in $C$ whose boundary is a union of ordinary arcs and/or boundary segments of $C$.
\end{itemize}
The first two types of blocks are \newword{degenerate disk blocks}.
An embedded block can contain one or both orbifold points, but a non-degenerate disk block cannot contain an orbifold point on its boundary because of the requirement that its boundary is a union of \emph{ordinary} arcs and/or boundary segments.
Embedded blocks are considered up to ambient isotopy.

A \newword{noncrossing partition} of $C$ is a collection of pairwise disjoint embedded blocks such that every numbered point is contained in one of the embedded blocks.
Noncrossing partitions are considered up to ambient isotopy.
As before, isotopy representatives of a noncrossing partition are called \newword{embeddings} of the noncrossing partition.

\begin{example}\label{some ncs orb}
The noncrossing partitions of the two-orbifold disk on bottom row of Figure~\ref{nc C exs} exemplify the case where $n=7$ and $c=s_6s_4s_3s_0s_1s_2s_5$ (in the Coxeter group $\Stildes[14]$ of type $\afftype{C}_6$).
The top row shows the corresponding symmetric noncrossing partitions on an annulus.
(Compare Figures~\ref{C ann fig} and~\ref{fold ann fig}.) 
Figure~\ref{one more nc C ex} shows one more example.
\end{example}

\begin{figure}
\begin{tabular}{cc}
\scalebox{1.6}{\includegraphics{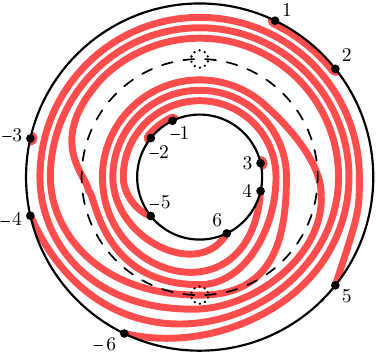}}
&\!\!\!\!\scalebox{1.6}{\includegraphics{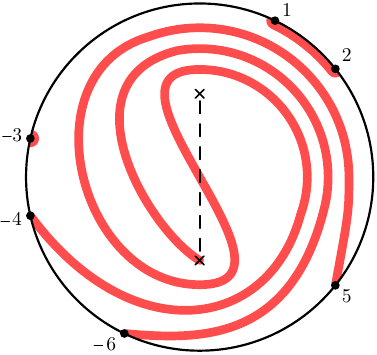}}\\
&$\,\,\,\,\,\P_4$
\end{tabular}
\caption{Another symmetric noncrossing partition of an annulus and its corresponding noncrossing partition of the two-orbifold disk}
\label{one more nc C ex}
\end{figure}

Write $\tNCCc$ for the set of noncrossing partitions of the two-orbifold disk $C$ associated to the Coxeter element $c$, partially ordered with $\P \leq \Q$ if and only if there exist embeddings of $\P$ and $\Q$ such that each block in $\P$ is a subset of a block in~$\Q$.

Let $q:A\to C$ be the quotient map from the annulus to the two-orbifold disk.  
Re-use the symbol $q$ for the map $\tNCAphic\to\tNCCc$ that applies $q$ to every block.
Write~$\imath$ for the map $\tNCCc\to\tNCAphic$ that sends every block of a noncrossing partition to the one or two blocks that constitute its preimage under~$q$.

Some comments are in order on the well-definition of these maps.
First, the maps are defined on isotopy classes.
Well-definition of $\imath$, in this sense, is straightforward because an ambient isotopy on $C$ lifts to an ambient isotopy on $A$ in a straightforward manner.
The map $q$ is also well defined, in this sense, because equivalence classes of noncrossing partitions of $A$ are determined entirely by combinatorial data, and this combinatorial data also determines the image of a noncrossing partition under~$q$.
Second, the maps must in fact map noncrossing partitions to noncrossing partitions.
For both maps, the point is that symmetric annnular blocks in the annulus correspond to disks in $C$ containing both orbifold points, that symmetric disks in the annulus correspond to disks in $C$ containing one orbifold point, and that symmetric pairs of disks in the annulus correspond to disks in $C$ not containing either orbifold point.
For both maps, the ``noncrossing'' part is immediate.

The two maps are inverse to each other, and each preserves containment relationships.
Thus we have the following proposition.

\begin{proposition}\label{C sym same}
The map $\imath:\tNCCc\to\tNCAphic$ is an order isomorphism, with inverse $q$.
\end{proposition}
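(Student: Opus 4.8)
The plan is to reduce the claim to two facts: that $\imath$ and $q$ are mutually inverse bijections between the underlying sets, and that both are order-preserving; together these say exactly that $\imath$ is an order isomorphism with inverse $q$. The bijectivity is forced by the block dictionary already established before the proposition, and I would argue it block by block. For $\Q\in\tNCCc$, the blocks of $\imath(\Q)$ are the preimages $q^{-1}(B)$ of the blocks $B$ of $\Q$, so the blocks of $q(\imath(\Q))$ are the sets $q(q^{-1}(B))=B$; hence $q\circ\imath=\mathrm{id}$. For $\P\in\tNCAphic$ and a block $E$ of $\P$, one has $q^{-1}(q(E))=E\cup\phi(E)$, which equals $E$ when $E$ is $\phi$-invariant and equals the pair $\{E,\phi(E)\}$ otherwise (and in the latter case $\phi(E)$ is already a block of the symmetric partition $\P$); in every case $\imath(q(\P))=\P$, so $\imath\circ q=\mathrm{id}$. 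The only thing used here beyond bookkeeping is the correspondence, already spelled out in the text, between symmetric annular / symmetric disk / symmetric pair of disk blocks upstairs and disk blocks containing both / one / neither orbifold point downstairs, which is what guarantees $q^{-1}(B)$ and $q(E)$ are again embedded blocks.

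Next I would check that both maps preserve $\le$. For $\imath$ this is immediate: given $\P\le\Q$ in $\tNCCc$, fix embeddings of $\P$ and $\Q$ in $C$ with every block of $\P$ inside a block of $\Q$, pull these embeddings back along $q$ (each embedded block becoming its $q$-preimage, an embedded block or symmetric pair in $A$), and observe that $q^{-1}$ preserves containments, so the resulting $\phi$-symmetric embeddings witness $\imath(\P)\le\imath(\Q)$. For $q$ the subtlety is to manufacture \emph{symmetric} witnessing embeddings upstairs. Here I would invoke curve sets: since $\tNCAphic$ is an induced subposet of $\tNCAc$, Proposition~\ref{curve set} gives that $\P\le\Q$ is equivalent to $\curve(\P)\subseteq\curve(\Q)$, and both curve sets are $\phi$-invariant. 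Realizing $\curve(\Q)$ by a $\phi$-symmetric family of pairwise noncrossing curves yields a $\phi$-symmetric embedding of $\Q$, inside which the subfamily realizing $\curve(\P)$ cuts out a $\phi$-symmetric embedding of $\P$ with each block contained in a block of $\Q$; applying $q$ produces embeddings in $C$ witnessing $q(\P)\le q(\Q)$.

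I expect the one genuinely nontrivial point to be the realizability step in the last paragraph: that a $\phi$-invariant, pairwise compatible collection of isotopy classes of arcs on $A$ can be realized simultaneously by a collection of curves that is itself $\phi$-symmetric (equivalently, that an order relation between two symmetric noncrossing partitions can always be witnessed by symmetric embeddings). This should follow by choosing one curve from each $\phi$-orbit, taking its $\phi$-image for the partner orbit, and realizing any $\phi$-invariant arc $\phi$-symmetrically, but it is the place where the symmetry genuinely interacts with the ``there exist embeddings'' quantifier in the definition of $\le$. Granting it, $\imath$ is an order-preserving bijection whose two-sided inverse $q$ is also order-preserving, hence an order isomorphism, which is the proposition.
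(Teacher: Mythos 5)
Your proposal is correct and follows essentially the same route as the paper, which likewise rests on the block dictionary (symmetric annular / symmetric disk / symmetric pair of disks in $A$ corresponding to disks in $C$ containing both, one, or neither orbifold point) and then simply asserts that $\imath$ and $q$ are mutually inverse and containment-preserving. The one place you go beyond the paper is in flagging and resolving, via $\phi$-invariance of curve sets and Proposition~\ref{curve set}, the need for \emph{symmetric} witnessing embeddings when checking that $q$ preserves order---a point the paper leaves implicit---and your resolution of it is sound.
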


Thus Theorem~\ref{C tilde main} implies that $\tNCCc$ is a graded lattice with rank function given by $n-1$ minus the number of blocks containing no orbifold points plus the number of blocks containing $2$ orbifold points.
Rephrasing the rank function, we have the following theorem.

\begin{theorem}\label{C tilde orb main}
$\tNCCc$ is a graded lattice, with rank function given by 
\[(n-1)-(\#\text{ blocks of }\P)+(\#\text{ orbifold points enclosed by blocks of }\P).\]
\end{theorem}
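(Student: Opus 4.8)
The plan is to transport the graded-lattice structure from $\tNCAphic$ to $\tNCCc$ along the order isomorphism $\imath$ of Proposition~\ref{C sym same}, and then to rewrite the rank function of Theorem~\ref{C tilde main}. Since $\imath\colon\tNCCc\to\tNCAphic$ is an order isomorphism and, by Theorem~\ref{C tilde main}, $\tNCAphic$ is a graded lattice, it is immediate that $\tNCCc$ is a graded lattice and that $\operatorname{rank}_{\tNCCc}(\P)=\operatorname{rank}_{\tNCAphic}(\imath(\P))$ for every $\P$. So the content of the theorem is a counting argument: one must show that the formula of Theorem~\ref{C tilde main}, applied to $\imath(\P)$, equals the displayed expression.

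First I would record the dictionary between the blocks of $\P$ and the $\phi$-orbits of blocks of $\imath(\P)$ that is already built into the well-definedness discussion preceding Proposition~\ref{C sym same}. The involution $\phi$ acts on the annulus preserving orientation with exactly two fixed points, which are precisely the preimages of the two orbifold points of $C$; hence a $\phi$-invariant disk block (possibly degenerate) contains exactly one of these fixed points, the (at most one) $\phi$-invariant essential annulus contains both, and no trivial block, boundary segment, or $\phi$-invariant pair of distinct blocks contains either, because $\phi$ fixes no numbered point and interchanges the two boundary components of the annulus. Applying $q$ block by block, this says: the blocks of $\P$ containing no orbifold point are exactly the $q$-images of the symmetric pairs $\{F,\phi(F)\}$ of distinct blocks of $\imath(\P)$, and each such $F$ is necessarily a disk block (possibly degenerate), since a noncrossing partition has at most one annular block; the blocks of $\P$ containing exactly one orbifold point are exactly the $q$-images of the $\phi$-fixed non-annular blocks of $\imath(\P)$; and the blocks of $\P$ containing both orbifold points are exactly the $q$-images of the annular blocks of $\imath(\P)$, of which there is at most one.

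With this dictionary in hand, write $B_j$ for the number of blocks of $\P$ containing exactly $j$ orbifold points, $j=0,1,2$. Then the number of blocks of $\P$ is $B_0+B_1+B_2$ and the number of orbifold points enclosed by blocks of $\P$ is $B_1+2B_2$, while the dictionary identifies $B_0$ with the number of symmetric pairs of distinct disk blocks of $\imath(\P)$ and $B_2$ with the number of annular blocks of $\imath(\P)$. Theorem~\ref{C tilde main} therefore gives
\[\operatorname{rank}_{\tNCCc}(\P)=\operatorname{rank}_{\tNCAphic}(\imath(\P))=(n-1)-B_0+B_2,\]
and since $(n-1)-B_0+B_2=(n-1)-(B_0+B_1+B_2)+(B_1+2B_2)$, this is exactly the claimed formula. (The intermediate formulation ``$(n-1)$ minus the number of blocks containing no orbifold points plus the number of blocks containing two orbifold points'' is also just $(n-1)-B_0+B_2$.)

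The step I expect to require the most care is the block dictionary: one must check that the degenerate blocks behave correctly---that $\phi$-fixed arcs do occur and map to orbifold arcs that count as containing one orbifold point, that $\phi$-fixed trivial blocks and boundary segments cannot occur, and that the uniqueness of the annular block is invoked at the right places (so that a symmetric pair of distinct blocks is automatically a pair of disk blocks). All of this topology, however, was already needed to justify that $q$ and $\imath$ are well defined and mutually inverse, so once it is invoked the remaining argument is purely arithmetical.
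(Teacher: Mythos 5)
Your proposal is correct and takes essentially the same route as the paper: the paper also transports the graded-lattice structure along the order isomorphism of Proposition~\ref{C sym same} and then rewrites the rank function of Theorem~\ref{C tilde main} using exactly the block dictionary you describe (symmetric pairs of distinct disk blocks $\leftrightarrow$ blocks with no orbifold point, symmetric non-annular blocks $\leftrightarrow$ blocks with one, the annular block $\leftrightarrow$ the block with two). Your write-up simply makes the topological justification of that dictionary more explicit than the paper does.
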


We can also characterize covers in $\tNCCc$ by adapting Proposition~\ref{sym covers}.
Given $\P\in\tNCCc$, a \newword{simple connector} for $\P$ is an arc or boundary segment $\alpha$ in $C$ that does not have an isotopy representative contained in a block of $\P$ but starts in some block $E$ of $\P$, leaves $E$, then either ends at an orbifold point without entering another block or enters some block of $E'$ of $\P$, which it does not leave again.
We allow the possibility that $E'=E$, but if so, we impose an additional requirement:
Since $E'=E$, because $\alpha$ has no isotopy representative in $\P$, $\alpha$ combines with $E$ to enclose at least one orbifold point not contained in a block of $\P$.
We disallow the case where neither orbifold point is in a block of $\P$ but $\alpha$ combines with $E$ to enclose both orbifold points.
The \newword{augmentation of $\P$} along a simple connector $\alpha$ is the noncrossing partition $\P\cup\alpha$ obtained from $\P$ by replacing $E$ and $E'$ by the union of $E$ and $E'$ and a thickened $\alpha$, adjoining any digons bounded by this union and boundary segments of $C$ and also, if $E=E'$, adjoining the disk enclosed by $\alpha$ and $E$. 
We restate Proposition~\ref{sym covers} in the two-orbifold disk:

\begin{prop}\label{sym covers C}
Two noncrossing partitions $\P,\Q\in\tNCCc$ have $\P\covered\Q$ if and only if there exists a simple connector $\alpha$ for $\P$ such that $\Q=\P\cup\alpha$.
\end{prop}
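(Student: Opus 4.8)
The plan is to transport Proposition~\ref{sym covers} across the order isomorphism $\imath\colon\tNCCc\to\tNCAphic$ of Proposition~\ref{C sym same}. Since $\imath$ is an order isomorphism, $\P\covered\Q$ in $\tNCCc$ if and only if $\imath(\P)\covered\imath(\Q)$ in $\tNCAphic$, and by Proposition~\ref{sym covers} the latter holds exactly when there is a simple symmetric connector $\beta$ for $\imath(\P)$, or a simple symmetric pair of connectors $\beta,\phi(\beta)$ for $\imath(\P)$, with $\imath(\Q)=\imath(\P)\cup\beta\cup\phi(\beta)$. So it suffices to set up a bijective dictionary between simple connectors $\alpha$ for $\P$ in $C$ and simple symmetric (pairs of) connectors for $\imath(\P)$ in $A$, compatible with augmentation in the sense that $\imath(\P\cup\alpha)=\imath(\P)\cup\beta\cup\phi(\beta)$.

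First I would describe the lift, using that the quotient map $q\colon A\to C$ is a double cover of $A$ branched over the two fixed points of $\phi$: the orbifold points of $C$ are the images of the fixed points, and each numbered point of $C$ is the image of a $\phi$-orbit $\{p,\phi(p)\}$ of numbered points of $A$ (one inner, one outer). Then $q^{-1}$ of an orbifold arc of $C$ is a single $\phi$-invariant arc of $A$ running through the corresponding fixed point, while $q^{-1}$ of an ordinary arc or boundary segment of $C$ is a disjoint symmetric pair $\beta,\phi(\beta)$, except that when the arc of $C$ is a loop at a numbered point wrapping appropriately around an orbifold point its preimage is a single $\phi$-invariant loop of $A$; applying $q$ recovers $\alpha$ from $\beta$, so the correspondence is a bijection. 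Next I would check that the side conditions match: $\alpha$ having no isotopy representative in a block of $\P$ corresponds (via Proposition~\ref{curve set} transported along $\imath$) to $\beta\notin\curve(\imath(\P))$; the clause that $\alpha$ starts in a block $E$, leaves it, and then either ends at an orbifold point or enters a block $E'$ corresponds to the analogous clause for $\beta$, using the block dictionary recorded just before Proposition~\ref{C sym same} (a symmetric pair of disks $\leftrightarrow$ a disk enclosing no orbifold point, a symmetric disk $\leftrightarrow$ a disk enclosing one orbifold point, a symmetric annular block $\leftrightarrow$ a disk enclosing both); and the two ``disallow'' clauses in the definition of a simple connector for $\P$ — the implicit requirement that when $E'=E$ the connector $\alpha$ cuts off at least one orbifold point, and the prohibition on $\alpha$ enclosing both orbifold points against $E=E'$ — are exactly the images of the clause ruling out $\beta,\phi(\beta)$ bounding an annulus with blocks of $\imath(\P)$, together with the monogon/digon exclusions built into the definition of an arc of $A$.

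Finally I would verify that augmentation commutes with $q$: forming $\P\cup\alpha$ by replacing $E$ and $E'$ with their union with a thickened $\alpha$, adjoining the digons cut off by boundary segments, and (when $E=E'$) adjoining the disk enclosed by $\alpha$ and $E$, lifts under $\imath=q^{-1}$ to exactly the construction of $\imath(\P)\cup\beta\cup\phi(\beta)$ from Section~\ref{sym ann sec} — symmetric thickenings, symmetric digons, and the symmetric enclosed disk containing a fixed point in the $E=E'$ case — because the block-by-block recipes coincide. Combining this with Proposition~\ref{C sym same} and Proposition~\ref{sym covers} yields the claim. I expect the main obstacle to be precisely the case-by-case matching in the second step: making sure the ``disallow'' clauses and the $E=E'$/orbifold-point conditions correspond exactly, and that the loop-versus-orbifold-arc subtleties in the lift account for every simple connector of $\P$ and every simple symmetric connector or symmetric pair of $\imath(\P)$ with nothing missed or double-counted. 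Everything else is a formal transfer along the isomorphism $\imath$.
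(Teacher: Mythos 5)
Your approach is exactly the paper's: the paper offers no separate proof of Proposition~\ref{sym covers C}, having introduced the notions of simple connector and augmentation in $C$ precisely as the images under $q$ of the corresponding symmetric notions in $A$, and then ``restates'' Proposition~\ref{sym covers} across the isomorphism of Proposition~\ref{C sym same} --- which is the transfer you carry out, with the dictionary made explicit. One small correction to that dictionary: a loop at a numbered point of $C$ must enclose \emph{both} orbifold points (the monogon rule excludes enclosing just one, since such a curve would lift to a single closed curve through two numbered points rather than to an arc or a disjoint pair), and a loop enclosing both lifts to a disjoint symmetric \emph{pair} of loops bounding an annulus in $A$ --- which is exactly why the two ``disallow'' clauses correspond --- not to a single $\phi$-invariant loop.
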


Write $\perm^C:\tNCCc\to\afftype{C}_{n-1}$ for the composition $p\circ\imath$.
Combining Theorem~\ref{isom annular C} and Proposition~\ref{C sym same}, we have the following theorem.

\begin{theorem}\label{isom 2 orb}
The map $\perm^C$ is an isomorphism from $\tNCCc$ to $[1,c]_T^C$.
\end{theorem}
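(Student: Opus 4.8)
The plan is to deduce the statement with essentially no new work, since the definition of $\perm^C$ was set up precisely so that it factors through two maps already shown to be order isomorphisms. Recall that $\perm^C=p\circ\imath$, where $\imath\colon\tNCCc\to\tNCAphic$ sends each block of a noncrossing partition of the two-orbifold disk to the one or two blocks constituting its preimage under the quotient map $q\colon A\to C$, and $p$ is the restriction of $\perm$ to $\tNCAphic$ (the map appearing in Theorem~\ref{isom annular C}). By Proposition~\ref{C sym same}, $\imath$ is an order isomorphism from $\tNCCc$ onto $\tNCAphic$, with inverse $q$; by Theorem~\ref{isom annular C}, $\perm$ restricts to an order isomorphism from $\tNCAphic$ onto $[1,c]_T^C$. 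So $\perm^C$ is a composite of order isomorphisms.

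First I would check that the composite is well defined with the claimed codomain: $\imath$ has image exactly $\tNCAphic$, which is precisely the subposet on which $\perm$ is known (Theorem~\ref{isom annular C}) to restrict to an isomorphism onto $[1,c]_T^C$; hence $\perm^C\colon\tNCCc\to[1,c]_T^C$ makes sense. Then I would record the elementary fact that a composite of order isomorphisms is again an order isomorphism: it is a bijection because each factor is; it is order-preserving because if $\P\le\Q$ then $\imath(\P)\le\imath(\Q)$ and hence $p(\imath(\P))\le p(\imath(\Q))$; and its inverse $\imath^{-1}\circ p^{-1}=q\circ p^{-1}$ is order-preserving for the same reason. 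This gives the theorem.

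I do not expect a genuine obstacle here; all the content sits in Theorem~\ref{isom annular C} and Proposition~\ref{C sym same}, which in turn rest on Theorem~\ref{isom} and the folding apparatus of Section~\ref{folding sec}. The only things requiring care are bookkeeping points: confirming that the map named $p$ in the definition of $\perm^C$ is indeed the isomorphism $\tNCAphic\to[1,c]_T^C$ of Theorem~\ref{isom annular C} (equivalently, the restriction of the $\perm$ map of Section~\ref{nc ann sec}), and that the target denoted there by the type symbol is the group $\Stildes$, so that $[1,c]_T^C$ is the correct interval. As a strictly unnecessary alternative, one could instead give a self-contained proof mirroring that of Theorem~\ref{isom}: describe $\perm^C$ directly by reading cycles from an embedding in $C$ (lifting the two orbifold points to the two fixed points of $\phi$) and re-run the cover-relation arguments behind Propositions~\ref{fragments} and~\ref{perm inv cov} in their symmetric form. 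The modular argument via composition is cleaner and is the one I would present.
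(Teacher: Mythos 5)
Your proposal is correct and is exactly the paper's argument: the theorem is stated as an immediate consequence of composing the order isomorphism $\imath$ of Proposition~\ref{C sym same} with the restriction of $\perm$ from Theorem~\ref{isom annular C}. Your bookkeeping observation that the map written as $p$ in the definition of $\perm^C$ is the restricted $\perm$ map is also the right reading of the text.
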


We will describe the map $\perm^C$ directly on a noncrossing partition $\P$ of the two-orbifold disk.
We first thicken each degenerate disk block to an actual disk (possibly containing an orbifold point in its interior).
We read a cycle from each block, consisting of the points of the signing that we visit as we trace  along the boundary, but we negate them and/or add a multiple of $2n$ as we now describe.

Recall that in the centrally symmetric annulus, the date line is the vertical line at the bottom of the annulus.
Passing to the quotient, the \newword{date line} in the two-orbifold disk is a vertical segment downward from the bottom orbifold point that is shown in gray in the right picture of Figure~\ref{fold ann fig}.
Recall also that the \newword{toggle line} is the vertical line connecting the two orbifold points.

To read a cycle from the boundary of a block $E$ of $\P$, we start at some numbered point on the boundary of $E$ and follow the boundary, keeping the interior of $E$ on the right.
Up to isotopy, we can assume that the boundary crosses the toggle line $0$, $1$, or $2$ times.
Each time we reach a numbered point, we take that point or its negative, taking the negative if and only if we have crossed the toggle line once but not a second time.
Then we add $2wn$, with $w$ defined as follows:
We begin with $w=0$.
Each time we cross the date line, we add or subtract $1$ to $w$.
Before crossing the toggle line, and after crossing the toggle line twice, clockwise crossings of the date line add $1$ and counterclockwise crossings of the date line subtract~$1$.
When the toggle line has been crossed once by not twice, clockwise crossings of the date line subtract $1$ and counterclockwise crossings of the date line add~$1$.
When we return to the numbered point where we started, if we record the starting point again, then we have defined a finite cycle.  
(This happens if and only if the block contains no orbifold points.)
If we record the negative of the starting point plus a zero or nonzero multiple of $2n$, then we read around the boundary a second time to produce a finite cycle.
(This happens if and only if the block contains exactly one orbifold point.)
If we record the starting point plus a nonzero multiple of $2n$, then we have defined an infinite cycle.
(This happens if and only if the block contains both orbifold points.)
Thus we record a cycle that, together with its negation and all mod-$2n$ translates, is part of the cycle notation for $\perm^C(\P)$. 

\begin{example}\label{perm ex C} 
Labeling the noncrossing partitions shown in Figure~\ref{nc C exs} as $\P_1,\P_2,\P_3$ from left to right and labeling the noncrossing partition in Figure~\ref{one more nc C ex} as $\P_4$, we apply $\perm^C$ to obtain the following permutations in $\afftype{C}_6$:
\begin{align*}
\perm^C(\P_1)&=(\!(\cdots\,1\,\,\,5\,\,\,11\,\,\,15\,\cdots)\!)\,(\!(2)\!)_{14}\,(\!(4\,\,\,6)\!)_{14}\\
\perm^C(\P_2)&=(1\,\,-1)_{14}\,(2\,\,\,8\,\,\,12\,\,\,6)_{14}\,(\!(3\,\,\,4)\!)_{14}\,(\!(5)\!)_{14}\\
\perm^C(\P_3)&=(\!(1\,\,-2)\!)_{14}\,(\!(5\,\,\,8\,\,\,4\,\,\,3)\!)_{14}\\
\perm^C(\P_4)&=(\!(1\,\,\,2)\!)_{14}\,(\!(3)\!)_{14}\,(\!(4\,\,\,33)\!)_{14}\,(\!(6\,\,\,36)\!)_{14}
\end{align*} 
\end{example}

We conclude this section with a brief discussion of Kreweras complements in type $\afftype{C}$.
The Kreweras complement $\Krew$ on the symmetric annulus commutes with the symmetry $\phi$, and thus restricts to an anti-automorphism of $\tNCAphic$.
By Theorem~\ref{Krew thm}, we have $\perm(\Krew(\P))=\perm(\P)^{-1}c$ for $\P\in\tNCAphic$.
We can describe the map $\Krew^C$ on noncrossing partitions of the two-orbifold disk such that $\perm^C(\Krew^C(\P))=\perm^C(\P)^{-1}c$:
For each point $i$ in the signing, choose a point~$i'$ on the boundary of $C$ between $i$ and $c(i)$.
(The point $c(i)$ is the next numbered point on the boundary clockwise from $i$.)
To construct $\Krew^C(\P)$, we first make the noncrossing partition~$\P'$ of $C$ with numbered points $1',\ldots,n'$ whose blocks are the maximal embedded blocks disjoint from the nontrivial blocks of $\P$, together with trivial blocks $\set{i'}$ for every $i'$ contained in a block of $\P$.
Each orbifold point is contained in a block of $\P'$ if and only if it is not contained in a block of~$\P$.
We obtain $\Krew(\P)$ from $\P'$ by applying a homeomorphism that fixes each orbifold point and rotates each~$i'$ back to $i$, moving the interior of $C$ as little as possible.

\begin{example}\label{Krew C ex}
Figure~\ref{Krew C exs} shows the Kreweras complements of the noncrossing partitions of the two-orbifold disk shown in Figure~\ref{nc C exs}.
\end{example}

\begin{figure}
\begin{tabular}{cccc}
\scalebox{0.9}{\includegraphics{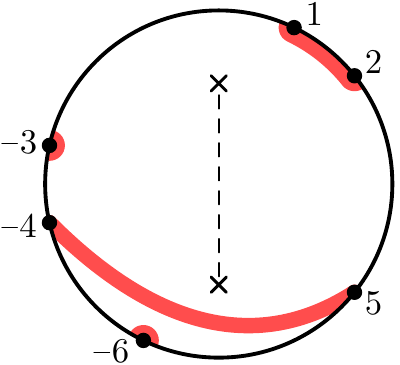}}
&\scalebox{0.9}{\includegraphics{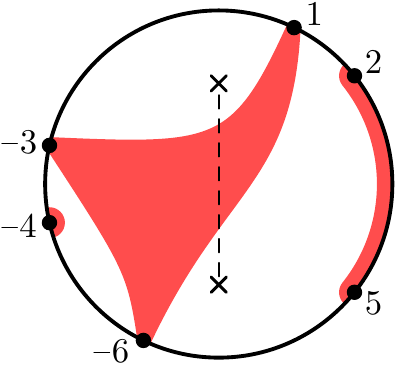}}
&\scalebox{0.9}{\includegraphics{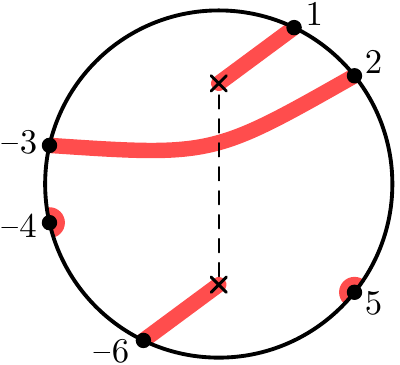}}
\end{tabular}
\caption{Kreweras complements of the noncrossing partitions from Figure~\ref{nc C exs}}
\label{Krew C exs}
\end{figure}

\section{Factoring translations}\label{McSul sec}
In this section, we connect and compare our results on noncrossing partitions of affine types $\afftype A$ and $\afftype{C}$ to the results of McCammond and Sulway~\cite{McSul}.
In particular, we highlight the role that the \emph{factored translations} of \cite{McSul} play in motivating the decision to allow \emph{dangling annular blocks} in our construction.
We also show that the combinatorial models lead to a different way of factoring translations that is in some sense more natural.

\subsection{Factored translations in general affine type}\label{McSul general sec}
We begin with a brief summary of background material related to factored translations.
Unjustified assertions in the background material that follows are proved in \cite{McFailure,McSul}.

McCammond and Sulway work with a fixed affine reflection representation of an affine Coxeter group.  
We now explain the affine representation in our context.

When $W$ is a Coxeter group of affine type, the dual representation restricts to an action on the affine hyperplane $\set{x\in V^*:\br{x,\delta}=1}$, acting by Euclidean motions.
We refer to this affine hyperplane as $E$ and write $E_0$ for the linear hyperplane $\set{x\in V^*:\br{x,\delta}=0}$ parallel to $E$.
The linear hyperplane $E_0$ can be identified with the span of the root system $\Phi_\fin$ by identifying a root $\beta\in\Phi_\fin$ with $K(\,\cdot\,,\beta)$.

In the action of $W$ on $E$, a reflection $t\in W$ associated to a root $\phi=\beta+k\delta$, with $\beta\in\Phi_\fin$, fixes the set $\set{x\in E:\br{x,\beta}=k}$, which is an $(n-2)$-dimensional affine subspace of $E$, and thus an affine hyperplane in $E$.  
The action of $t$ also negates the vector $K(\,\cdot\,,\beta)\in E_0$.

Given a Coxeter system $(W,S)$, the associated \newword{Artin group} is presented as the group generated by $S$, subject to the braid relations that define $W$ but without the relations $s^2=1$ for $s\in S$.
If $W$ is finite or affine, then the associated Artin group is called \newword{spherical} or \newword{Euclidean} respectively.
When $W$ is finite, the interval $[1,c]_T$ is a lattice \cite{BWlattice}, and the lattice property is the crucial point in proving various desirable properties of the Artin group \cite{Bessis,Bra-Wa}.
When $W$ is affine, $[1,c]_T$ need not be a lattice.
McCammond and Sulway \cite{McSul} (building on work of Brady and McCammond \cite{Br-McC,McFailure} and earlier work of Digne \cite{Digne1,Digne2}) 
proved the same desirable properties for Euclidean Artin groups by extending $W$ to a larger ``crystallographic group'' and showing that the analog of $[1,c]_T$ in the crystallographic group is a lattice.

The \newword{translations} in $W$ are the elements that act on $E$ as translations.
In the cases where $[1,c]_T$ is not a lattice, McCammond and Sulway construct the larger group by ``factoring'' some of the translations in $W$ and forming the group generated by $W$ and the factors.
We now explain their factoring scheme.

Given a Coxeter element $c$, the \newword{Coxeter axis} in $E$ is the unique line in $E$ fixed (as a set) by the action of $c$.
This is the intersection of $E$ with the Coxeter plane in~$V^*$, defined in Section~\ref{aff sec}.
Recalling that the Coxeter plane is spanned by $\omega_c(\delta,\,\cdot\,)$ and $\omega_c(\gamma_c,\,\cdot\,)$, and noting that $\omega_c(\delta,\,\cdot\,)$ is parallel to $E$ because $\br{\omega_c(\delta,\,\cdot\,),\delta}=\omega_c(\delta,\delta)=0$, we see that the direction of the Coxeter axis is $\omega_c(\delta,\,\cdot\,)$.

A \newword{horizontal reflection} is a reflection in $W$ whose reflecting hyperplane in $E$ is parallel to the Coxeter axis.
Thus, if $\beta+k\delta$ is a root in $\Phi$ with $\beta\in\Phi_\fin$, then the reflection orthogonal to $\beta+k\delta$ is horizontal if and only if $\set{x\in E:\br{x,\beta}=k}$ is parallel to $\omega_c(\delta,\,\cdot\,)$, which is if and only if $\omega_c(\delta,\beta)=0$.
The set of roots $\beta\in\Phi_\fin$ such that $\omega_c(\delta,\beta)=0$ is a finite root system called the \newword{horizontal root system}.
For every positive root $\beta$ in the horizontal root system, there are exactly two horizontal reflections in $[1,c]_T$, the reflection orthogonal to $\beta$ and the reflection orthogonal to~$\delta-\beta$.
By contrast, every non-horizontal reflection is in $[1,c]_T$.

A root system is \newword{reducible} if and only if it can be partitioned into two nonempty subsets such that each root in one subset is orthogonal to each root in the other subset.
Otherwise, the root system is \newword{irreducible}.
For $W$ affine, $[1,c]_T$ is a lattice if and only if the horizontal root system is irreducible. 
In any case, if the irreducible components of the horizontal root system are $\Psi_1,\ldots,\Psi_k$, then write $U_i$ for the span of $\set{K(\,\cdot\,,\beta):\beta\in\Psi_i}$.
Writing also $U_0$ for the span of $\omega_c(\delta,\,\cdot\,)$, the linear subspace $E_0$ parallel to $E$ is an orthogonal direct sum ${U_0\oplus\cdots\oplus U_k}$.

The subgroup of $W$ consisting of translations is generated by products of two reflections in $W$ with adjacent parallel reflecting hyperplanes.
If $w$ is a translation in $[1,c]_T$, with translation vector $\lambda$, write $\lambda=\lambda_0+\lambda_1+\cdots+\lambda_k$ with $\lambda_i\in U_i$ for $i=0,\ldots,k$.
Fixing real numbers $q_1,\ldots,q_k$ with $\sum_{i=1}^kq_k=1$, we define $k$ distinct \newword{factored translations} associated to~$w$, namely the translations by $\lambda_i+q_i\lambda_0$ for $i=1,\ldots,k$.
The composition of these $k$ factored translations is~$w$.
Write $F$ for the set of all factored translations arising from all translations $w\in[1,c]_T$ and write $F^{\pm1}$ for the set of all factored translations and their inverses.

These translations on $E$ uniquely determine linear transformations on $V^*$, which in turn determine dual linear transformations on $V$.
Identifying elements of $F$ with these linear maps, the set $T\cup F$ generates a supergroup of $W$ (as it acts on~$V^*$ and on~$V$).
Define a length function $\ell_{T\cup F}$ on the supergroup by setting reflections to have length $1$ and factored translations to have length $\frac2k$ and computing lengths of words accordingly.
Using this length function, we define a partial order on the supergroup analogous to the absolute order on $W$ and write $[1,c]_{T\cup F}$ for the interval between $1$ and $c$ in this partial order.
(In type $\afftype{A}$, $k$ is either $1$ or $2$, so if we factor translations, there are no fractional lengths.)
Each edge $u\covered w$ in the interval is labeled with $u^{-1}w$, which is an element of $T\cup F^{\pm1}$.

The \newword{interval group} associated to $[1,c]_{T\cup F}$ is the group presented abstractly by the subset of $T\cup F$ consisting of labels that appear in $[1,c]_{T\cup F}$, subject to relations that equate the label sequences on different unrefinable chains with the same endpoints.

In \cite{McSul}, the constants $q_i$ are always set to $\frac1k$ (so, in fact, constants $q_i$ are not mentioned explicitly), and the resulting interval group is the crystallographic group mentioned above.
Guided by the combinatorics of noncrossing partitions of the annulus, in Section~\ref{McSul A sec}, we make a different choice of the $q_i$ in type~$\afftype{A}$.
Thus we are interested in the following theorem.

\begin{thm}\label{same interval group}
Given two choices $q_1+\cdots+q_k=1$ and $q'_1+\cdots+q'_k=1$, let~$F$ and~$F'$ be the corresponding sets of factored translations.
Then the intervals $[1,c]_{T\cup F}$ and $[1,c]_{T\cup F'}$ are isomorphic as labeled posets.
Thus the interval group constructed from $[1,c]_{T\cup F}$ is isomorphic to the interval group constructed from $[1,c]_{T\cup F'}$.
\end{thm}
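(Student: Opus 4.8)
The plan is to show that the isomorphism type of the labeled poset $[1,c]_{T\cup F}$ does not depend on the choice of constants $q_1,\ldots,q_k$, by reducing to a product decomposition in which the dependence on each $q_i$ becomes an affine rescaling of a single coordinate. Write $\Phi_H=\Psi_1\sqcup\cdots\sqcup\Psi_k$ for the decomposition of the horizontal root system into irreducible components and $E_0=U_0\oplus U_1\oplus\cdots\oplus U_k$ for the corresponding orthogonal decomposition, with $U_0$ the direction of the Coxeter axis. Recall that the non-horizontal reflections all lie in $[1,c]_T$ and are untouched by factoring, while each factored translation attached to a translation $w\in[1,c]_T$ with vector $\lambda=\lambda_0+\lambda_1+\cdots+\lambda_k$ is the translation by $\lambda_i+q_i\lambda_0$ for some $i$. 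The first step is to establish, following \cite{McFailure,McSul}, that $[1,c]_{T\cup F}$ is isomorphic as a labeled poset to a direct product $N_0\times N_1\times\cdots\times N_k$, where $N_0$ is a finite-type interval (a product of intervals in finite Coxeter groups) that involves no translations and is therefore literally independent of the $q_i$, and where for $i\ge1$ the factor $N_i$ is the interval $[1,c_i]_{T_i}$ for a Coxeter element $c_i$ of an affine Coxeter group $W_i$ of type $\afftype{\Psi_i}$; since $\Psi_i$ is irreducible this interval is already a lattice, so no further factoring occurs inside it. The point is that the constant $q_i$ enters the construction of $W_i$ and $c_i$ only as the scale of the rank-one lattice of translations in the $U_0$-direction adjoined to the finite group $W(\Psi_i)$ to build $W_i$.

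The second step is short. Changing $q_i$ to $q'_i$ changes $W_i$ and $c_i$ only through the affine change of coordinates of $E$ that rescales the $U_0$-direction by $q'_i/q_i$ and fixes the $U_j$-directions for $j\ne 0$. This rescaling is an isomorphism of Coxeter systems $W_i^{(q)}\cong W_i^{(q')}$ carrying simple reflections to simple reflections; since $c_i^{(q)}$ and $c_i^{(q')}$ are read off from the same orientation data inherited from $c$, it carries $c_i^{(q)}$ to $c_i^{(q')}$, hence induces an isomorphism $N_i^{(q)}\to N_i^{(q')}$ of labeled posets (length-one labels going to length-one labels). Taking the product of these isomorphisms together with the identity on $N_0$ produces the desired isomorphism of labeled posets $[1,c]_{T\cup F}\to[1,c]_{T\cup F'}$. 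The statement about interval groups is then formal: an isomorphism of labeled posets carries unrefinable chains to unrefinable chains with matching label sequences, so the bijection of label sets it induces respects exactly the defining relations, giving an isomorphism of the abstractly presented interval groups.

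I expect the first step to be the main obstacle. What is needed is not merely the product decomposition of $[1,c]_{T\cup F}$ as a poset (which is implicit in \cite{McSul}) but a version that tracks edge labels and identifies each affine factor precisely as $[1,c_i]_{T_i}$ for $W_i$ of type $\afftype{\Psi_i}$; carrying this out means locating the factored-translation vectors $\lambda_i+q_i\lambda_0$ relative to the subspaces $U_i$ and to the span of the translation vectors occurring in $[1,c]_T$, and matching the covering and Hurwitz combinatorics of $[1,c]_{T\cup F}$ with that of the product. A secondary point worth emphasizing is that the isomorphism genuinely must be assembled factor by factor from these abstract isomorphisms: a single global linear rescaling of the $U_0$-direction of $E$ cannot realize it, since $U_0$ is shared by all the factors and such a rescaling would distort both the other factors and the reflection arrangement of $W$ itself. (Should the labeled product decomposition prove awkward to set up, an alternative is to verify directly that the natural bijection $T\cup F^{\pm1}\to T\cup F'^{\pm1}$ --- the identity on $T$ and $\lambda_i+q_i\lambda_0\mapsto\lambda_i+q'_i\lambda_0$ on factored translations --- is compatible with all products of at most two generators and with $\ell_{T\cup F}$, which together determine the interval; but this route largely re-derives the same structural facts.)
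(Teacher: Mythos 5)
Your reduction to a direct product $N_0\times N_1\times\cdots\times N_k$ is the step that fails, and it fails outright rather than merely being hard to set up. The interval $[1,c]_{T\cup F}$ contains all of $[1,c]_T$, and in particular it contains every non-horizontal reflection of $W$ as an atom; since every non-horizontal reflection lies in $[1,c]_T$, there are infinitely many such atoms. In a direct product of bounded graded posets the atoms are partitioned among the factors. A non-horizontal reflection is not a reflection of any affine group $W_i$ attached to a component $\Psi_i$ of the horizontal root system (those reflections are horizontal) and is not a factored translation, so it would have to be an atom of $N_0$; but $N_0$ is by hypothesis a finite-type interval and hence finite. So no labeled product decomposition of the whole interval of the proposed form exists. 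What \emph{is} true (Proposition~\ref{if a fact}, following~\cite{McSul}) is a product-like structure only on the maximal chains that pass through a factored translation: all reflection labels on such a chain are horizontal, the letters can be reordered so that those attached to each $\Psi_i$ are adjacent, and the blocks multiply to well-defined elements $c_1,\ldots,c_k$. Even restricted to that portion of the interval, your factor-by-factor isomorphisms would still leave unaddressed the order relations between that portion and the rest of $[1,c]_T$. A secondary problem with your second step is that the theorem allows $q_i=0$ or $q_i<0$, for which the proposed rescaling of the $U_0$-direction by $q'_i/q_i$ is undefined or not obviously an isomorphism of the relevant Coxeter data.

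The route you relegate to a final parenthesis is essentially the correct one, and it is the paper's proof. Define $p:F\to F'$ by sending the factored translation with vector $\lambda_i+q_i\lambda_0$ to the one with vector $\lambda_i+q'_i\lambda_0$ (well defined because a factored translation has nonzero projection to exactly one $U_i$), extend by the identity on $T$, and apply this letterwise to the label sequence of a maximal chain of $[1,c]_{T\cup F}$. By Proposition~\ref{if a fact}, if a factored translation occurs in such a sequence then every reflection occurring is horizontal and the factored translations occurring are exactly the $k$ factors of a single translation in $[1,c]_T$. Hence each label is changed by a translation along $U_0$, the span of $\omega_c(\delta,\,\cdot\,)$; these changes commute past the horizontal reflections (whose reflecting hyperplanes are parallel to the Coxeter axis) and sum to zero because $\sum_i(q'_i-q_i)=0$. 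The new sequence is therefore again the label sequence of a maximal chain for $c$, now in $[1,c]_{T\cup F'}$; the analogous map in the other direction inverts it, and this bijection on labeled maximal chains induces the desired isomorphism of labeled posets. The statement about interval groups is then formal, as you say.
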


In preparation for the proof of Theorem~\ref{same interval group}, we quote and rephrase some results of~\cite{McSul}.
We begin with following proposition, which arises by combining facts from~\cite{McSul}.
First, \cite[Proposition~6.3]{McSul} says that every translation in $[1,c]_T$ is part of a ``horizontal factorization'' of $c$.
Then \cite[Definition 6.6]{McSul} defines a ``diagonal translation'' to be a translation that occurs in a horizontal factorization and gives a simple argument that every diagonal translation satisfies the conclusion of the following proposition (which we state without this intermediate terminology).

\begin{prop}\label{trans nontriv}
For every translation in $[1,c]_T$ and every $i=0,\ldots,k$, translation vector has nonzero projection onto the component $U_i$ of the decomposition $E_0=U_0\oplus\cdots\oplus U_k$.
\end{prop}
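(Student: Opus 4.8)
The plan is to obtain this as a translation, into the present notation, of results of \cite{McSul}. Recall that $E_0=U_0\oplus U_1\oplus\cdots\oplus U_k$ is the orthogonal decomposition into the direction $U_0$ of the Coxeter axis and the spans $U_i$ of the irreducible components $\Psi_i$ of the horizontal root system. Let $w$ be a translation with $w\le_Tc$, and write its translation vector as $\lambda=\lambda_0+\lambda_1+\cdots+\lambda_k$ with $\lambda_i\in U_i$. The essential input is \cite[Proposition~6.3]{McSul}: every translation in $[1,c]_T$ occurs in a \emph{horizontal factorization} of $c$, which is exactly the assertion that $w$ is a \emph{diagonal translation} in the sense of \cite[Definition~6.6]{McSul}. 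In particular $w=t_1t_2$ is a product of two reflections with parallel reflecting hyperplanes in $E$, both orthogonal to a single root $\beta\in\Phi_\fin$, so $\lambda$ is a nonzero scalar multiple of $K(\,\cdot\,,\beta)$; consequently $\lambda_i\neq0$ is equivalent to the statement that $\beta$ is not orthogonal to every root of $\Psi_i$ (and, for $i=0$, to $\beta$ being non-horizontal, $\omega_c(\delta,\beta)\neq0$).

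Granting this reduction, what remains is the short argument---already given in \cite{McSul} in connection with \cite[Definition~6.6]{McSul}---that the translation vector of a diagonal translation is not orthogonal to any of the summands. I would reproduce it in the present notation along the following lines. A horizontal factorization presents $c$, up to Hurwitz moves, as the product of an elliptic element supported on $U_1\oplus\cdots\oplus U_k$ with the translation $w$, arranged so that the projection of the translation part of $c$ to each summand $U_i$ is nonzero; if $\lambda$ were orthogonal to some $U_i$, then the action induced by $c$ on the quotient affine space whose linear direction is $U_0\oplus U_i$ would be that of an element fixing a line there, which is incompatible with $c$ being a Coxeter element (equivalently, the horizontal factorization would collapse onto fewer components), a contradiction. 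The case $i=0$ can alternatively be argued directly, since the horizontal elements of $[1,c]_T$ are governed by the finite reflection group generated by the horizontal root system, which contains no nontrivial translations, so $w$ cannot be horizontal.

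As a consistency check in the classical types treated in this paper, I would record the concrete picture. In type~$\afftype{A}_{n-1}$, Theorem~\ref{isom} and the description of translations in $[1,c]_T$ identify $w$ with a noncrossing partition whose only nontrivial block is a non-dangling annulus carrying one outer point $p$ and one inner point $q$, and its translation vector is a scalar multiple of $\e_p-\e_q$; Proposition~\ref{om del e} shows this is non-horizontal because $p$ and $q$ lie on opposite boundaries, while the two horizontal components are the root subsystems spanned by the outer points and by the inner points, so $\e_p-\e_q$---which genuinely involves an outer point and an inner point---has nonzero projection onto each. Type~$\afftype{C}$ then follows by folding. I expect the main obstacle to be, not any single calculation, but the fact that the proposition is essentially a repackaging of \cite[Proposition~6.3 and Definition~6.6]{McSul}: the real work is matching this paper's decomposition $E_0=U_0\oplus\cdots\oplus U_k$ precisely with the horizontal-factorization machinery of \cite{McSul} and transcribing their argument faithfully.
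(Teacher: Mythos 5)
Your proposal matches the paper's treatment: the paper gives no independent proof and simply observes that the statement follows by combining \cite[Proposition~6.3]{McSul} (every translation in $[1,c]_T$ occurs in a horizontal factorization) with the argument accompanying \cite[Definition~6.6]{McSul} (every such ``diagonal translation'' has translation vector with nonzero projection onto each summand), which is exactly the reduction you identify. Your additional sketch of the McSul argument and the type-$\afftype{A}$ consistency check go beyond what the paper records but do not change the approach.
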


As before, write $\Psi_1,\ldots,\Psi_k$ for the irreducible components of the horizontal root system and $U_i$ for the span of $\set{K(\,\cdot\,,\beta):\beta\in\Psi_i}$.
\begin{prop}\label{fact commute}
If $x$ is a reflection associated to $\Psi_i$ or a factored translation associated to $U_i$ and $y$ is a reflection associated to $\Psi_j$ or a factored translation associated to $U_j$ for $j\neq i$, then $x$ and $y$ commute.
\end{prop}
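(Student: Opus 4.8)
The plan is to reduce everything to the action on the affine space $E$. The assignment sending an affine isometry of $E$ to the linear transformation of $V^*$ restricting to it (the correspondence referred to above) is a group homomorphism: an affine map $E\to E$ extends uniquely to a linear endomorphism of $V^*$ preserving $E$, and this extension is compatible with composition; and the action on $V$ is dual to the action on $V^*$. Hence it suffices to check that $x$ and $y$ commute as affine isometries of $E$. I will use three facts recorded above: the decomposition $E_0=U_0\oplus U_1\oplus\cdots\oplus U_k$ is orthogonal; a reflection of $E$ associated to $\Psi_i$ is orthogonal to a horizontal root $\beta\in\Psi_i$ (or to $\delta-\beta$), its linear part negates a nonzero multiple of $K(\,\cdot\,,\beta)\in U_i$ (using $K(\,\cdot\,,\delta)=0$) and fixes the orthogonal complement of $K(\,\cdot\,,\beta)$ in $E_0$ pointwise; and a factored translation associated to $U_i$ is a translation of $E$ whose translation vector lies in $U_0\oplus U_i$.

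There are three cases. If $x$ and $y$ are both factored translations, then they are translations of $E$, which commute. If $x$ is a reflection associated to $\Psi_i$ and $y$ is a factored translation associated to $U_j$ with $j\neq i$, then $xyx^{-1}$ is the translation of $E$ whose vector is the image, under the linear part of $x$, of the translation vector of $y$; that vector lies in $U_0\oplus U_j$, and since $U_0\perp U_i$ and $U_j\perp U_i$ it lies in the orthogonal complement of $K(\,\cdot\,,\beta)$, hence is fixed by the linear part of $x$. Therefore $xyx^{-1}=y$. Finally, if $x$ and $y$ are reflections associated to $\Psi_i$ and $\Psi_j$ with $i\neq j$, then their reflecting hyperplanes in $E$ have normal directions in $U_i$ and $U_j$ respectively, hence orthogonal normal directions; and two affine reflections of a Euclidean space with orthogonal normal directions commute, as one checks by writing them as $r_i(v)=v-2(\langle v,u_i\rangle-c_i)u_i$ with unit normals $u_1\perp u_2$ and observing that $\langle r_1(v),u_2\rangle=\langle v,u_2\rangle$, so that the resulting formula for $r_1r_2(v)$ is unchanged when the indices $1$ and $2$ are interchanged.

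The only step that is not purely formal is the reduction at the beginning: checking that passing from the action on $E$ to the linear action on $V^*$ is a group homomorphism, so that commuting on $E$ forces commuting on $V^*$ (and then on $V$ by duality). This is routine bookkeeping using the splitting $V^*=E_0\oplus\reals v$ for a chosen point $v\in E$, and I do not expect it to present any real difficulty.
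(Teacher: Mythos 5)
Your proof is correct and follows essentially the same route as the paper's: translations commute with each other, a horizontal reflection associated to $\Psi_i$ fixes the translation vector of a factored translation associated to $U_j$ (since that vector lies in $U_0\oplus U_j$, which is orthogonal to $K(\,\cdot\,,\beta)\in U_i$), and reflections from distinct components have orthogonal normal directions and hence commute. The only differences are cosmetic: you spell out the computation for commuting affine reflections and the reduction from the affine action on $E$ to the linear actions on $V^*$ and $V$, both of which the paper takes for granted.
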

\begin{proof}
All translations commute with each other.
Horizontal reflections associated to different irreducible components of the horizontal root system have orthogonal reflecting hyperplanes, so they commute with each other.
If $f$ is a factored translation associated to $U_i$ and $r$ is a horizontal reflection associated to $\Psi_j$ for $j\neq i$, then the translation vector of $f$ is in the span of $\set{K(\,\cdot\,,\beta):\beta\in\Psi_i}\cup\set{\omega_c(\delta,\,\cdot\,)}$.
(Recall that $\omega_c(\delta,\,\cdot\,)$ is the direction of the Coxeter axis and spans $U_0$.)
The reflecting hyperplane for $r$ contains this span:
It contains $\omega_c(\delta,\,\cdot\,)$ because $r$ is horizontal, and it contains $\set{K(\,\cdot\,,\beta):\beta\in\Psi_i}$ because every root in $\Psi_i$ is orthogonal (in the sense of $K$) to every root in $\Psi_j$.
\end{proof}

The following proposition gathers results of~\cite{McSul} on reduced words for $c$ in the alphabet $T\cup F^{\pm}$ (or in other words sequences of labels on maximals chains in $[1,c]_{T\cup F}$), also pointing out that the results hold for arbitrary choices of the~$q_i$.

\begin{prop}\label{if a fact}
For an arbitrary choice of real numbers $q_1,\ldots,q_k$ summing to~$1$, suppose a factored translation appears as one of the letters in a reduced word for $c$ in the alphabet $T\cup F^{\pm}$.
\begin{enumerate}[\quad\rm\bf1.]
\item \label{fact horiz}
Each reflection in the word is horizontal.
\item \label{ref in int}
Each reflection in the word is contained in $[1,c]_T$.
\item \label{n-2 ref}
There are exactly $n-2$ reflections in the word.
\item \label{k trans} 
There are exactly $k$ translations in the word.
\item \label{all factors}
The translations in the word are the $k$ factors of some translation in $[1,c]_T$.
(In particular, they are elements of $F$, whereas \textit{a priori} they are in $F^\pm$.)
\item \label{reorder}
The reduced word can be reordered, by swapping letters that commute in the group, so that, for each~$i$, all reflections associated to $\Psi_i$ and all factored translations associated to $U_i$ are adjacent to each other.
\item \label{fact c}
For each $i$, let $c_i$ be the product of the subword consisting only of reflections associated to $\Psi_i$ and all factored translations associated to $U_i$.
Then $c_i$ depends only on $c$ and the constants $q_i$, not on the choice of reduced word. 
\end{enumerate}
\end{prop}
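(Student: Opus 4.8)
The plan is to establish the proposition in two moves. First, specialize to the symmetric weights $q_1=\cdots=q_k=\tfrac1k$, where all seven assertions are already available: parts~\ref{fact horiz}, \ref{n-2 ref}, and~\ref{k trans} say that a reduced $T\cup F$-word for $c$ that uses a factored translation consists of exactly $n-2$ horizontal reflections and exactly $k$ factored translations, parts~\ref{ref in int} and~\ref{all factors} add that those reflections lie in $[1,c]_T$ and that the factored translations are the $k$ canonical factors of a single translation in $[1,c]_T$, and this is precisely the description of a \emph{horizontal factorization} of $c$ in \cite[Sections~6--7]{McSul} (building on \cite[Proposition~6.3]{McSul}); parts~\ref{reorder} and~\ref{fact c} then come from the commutation relations of Proposition~\ref{fact commute} together with a uniqueness argument. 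Second---and this is the only point that actually needs to be made---observe that the weights $q_i$ influence the construction only through the set $F$: passing from $(q_i)$ to $(q_i')$ changes the factored translation of a translation $w\in[1,c]_T$ associated to $U_i$ only by post-composition with a translation whose vector lies in $U_0$. Therefore every structural feature of $F$ that the proofs invoke is weight-independent---each factored translation has trivial linear part; a factored translation associated to $U_i$ has move-vector in $U_0\oplus U_i$ with nonzero component in $U_i$ (Proposition~\ref{trans nontriv}); the commutations are exactly those of Proposition~\ref{fact commute}; the length function $\ell_{T\cup F}$ and the value $\ell_{T\cup F}(c)=n$ are unchanged; and the $k$ factors of a fixed $w$ still compose to $w$---so the arguments of \cite{McSul} transfer.

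To make ``transfer'' concrete I would re-run the key steps as follows. Write a reduced word $c=x_1\cdots x_m$ containing a factored translation, with $m_T$ reflections and $m_F$ factored-translation letters, so $m_T+\tfrac{2}{k}m_F=\ell_{T\cup F}(c)=n$ and $m_F\ge 1$. Applying the linear-part homomorphism $\langle W,F\rangle\to O(E_0)$ (which sends every translation, factored or not, to the identity) expresses the linear part of $c$ as a product of $m_T$ reflections of $E_0$; since that linear part fixes the Coxeter-axis direction $\omega_c(\delta,\,\cdot\,)$ spanning $U_0$ and has move-set exactly $U_1\oplus\cdots\oplus U_k$ of dimension $n-2$, and reflection length in $O(E_0)$ equals move-set dimension, we get $m_T\ge n-2$, with equality forcing the reflecting roots to be linearly independent and to span $U_1\oplus\cdots\oplus U_k$, hence all horizontal; inspecting the induced action on the line $E_0/(U_1\oplus\cdots\oplus U_k)\cong U_0$, where $c$ translates nontrivially, horizontal reflections act trivially, non-horizontal reflections act by a point reflection, and the factored translation associated to $U_i$ acts by translation, pins down $m_T=n-2$, $m_F=k$, and the absence of non-horizontal reflections (parts~\ref{fact horiz}, \ref{n-2 ref}, \ref{k trans}). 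Replacing the $k$ factored translations by a two-reflection factorization of the translation they realize---whose factors are again horizontal and again lie in $[1,c]_T$, as in \cite{McSul}---exhibits a reduced $T$-word for $c$ containing the $n-2$ reflections, so each of them lies in $[1,c]_T$ (part~\ref{ref in int}), while those $k$ factored translations multiply to a genuine translation of $[1,c]_T$ whose projection to each $U_0\oplus U_i$ is nonzero by Proposition~\ref{trans nontriv}, so it must be realized by its canonical $k$-fold factorization, one factor per $U_i$ (part~\ref{all factors}). For part~\ref{reorder}: every letter of the word is now tagged with an index $i\in\{1,\dots,k\}$, letters with distinct tags commute by Proposition~\ref{fact commute}, and a bubble-sort collects each tag into a consecutive block. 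For part~\ref{fact c}: with $c_i$ the product of the $i$-th block, the pairwise commutation of blocks gives $c=\prod_i c_i$ with the $c_i$ mutually commuting, and $c_i$ acts as the identity on $U_j$ for $j\ne 0,i$ while translating $U_0$ by $q_i$ times the glide amount of $c$, so $c_i$ is exactly the component of $c$ prescribed by the orthogonal splitting $E_0=U_0\oplus\cdots\oplus U_k$ and the weights $q_i$, and hence depends only on $c$ and on $q_1,\dots,q_k$.

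The step I expect to fight with is part~\ref{fact c}: one must show that \emph{two different} reduced words yield the \emph{same} $c_i$, not merely that each one yields some admissible decomposition. The support argument sketched above is the clean route, but it rests on two subsidiary points that need to be proved rather than assumed: that each index-$i$ block genuinely contains one factored translation associated to $U_i$ (so its $U_0$-translation amount is forced to be $q_i$ times the glide amount), which is exactly where Proposition~\ref{trans nontriv} is needed; and that the reflections in a block cannot by themselves already realize the elliptic action of $c_i$ on $U_i$, forcing the block's translation to be present. A minor nuisance to dispose of is the degenerate case $q_i=0$, where the $i$-th factored translation collapses to a horizontal translation with move-vector lying entirely in $U_i$; one checks that it is still distinct from every element of $W$ (factored translations are not in $W$ by construction) and still carries length $\tfrac2k$, so the counts in parts~\ref{n-2 ref} and~\ref{k trans} and the apportionment in part~\ref{fact c} are unchanged.
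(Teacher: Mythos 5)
Your primary route is the paper's route: the proof in the paper disposes of parts~\ref{fact horiz}, \ref{n-2 ref}, and~\ref{k trans} exactly as you do in your first paragraph, by citing the proof of \cite[Lemma~7.2]{McSul} and observing that the argument there never uses the specific value $q_i=\frac1k$ (plus the weight bookkeeping $m_T+\frac2k m_F=n$ for part~\ref{k trans}); it then gets part~\ref{ref in int} by commuting the factored translations to the front to write $c=\tau\cdot(\text{reflections})$, part~\ref{all factors} from Proposition~\ref{trans nontriv} exactly as you do, part~\ref{reorder} from Proposition~\ref{fact commute}, and part~\ref{fact c} by the same observation you make, that $c_i$ must be the translation $q_i\lambda_0$ along the Coxeter axis composed with the action of $c$ on $U_i$. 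So at the level of strategy your proposal and the paper coincide, and the paper does strictly less work than you propose, since it never re-derives the McCammond--Sulway counting facts.

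The one place where your write-up has a genuine gap is the ``concrete re-run'' of parts~\ref{fact horiz}, \ref{n-2 ref}, \ref{k trans}. The linear-part argument correctly gives $m_T\ge n-2$, and combined with $m_T+\frac2k m_F=n$, $m_F\ge1$ this finishes the case $k=1$; but for $k=2$ it leaves open $m_F=1$, $m_T=n-1$, and your quotient-line argument does not close this off. On the line $E/(U_1\oplus\cdots\oplus U_k)$ a product containing an \emph{even} number of non-horizontal reflections is still a translation, so parity alone does not exclude non-horizontal reflections; and when some $q_i\in\{0,1\}$ (which your setup explicitly allows) a single factored translation can already account for the full glide amount $\lambda_0$ of $c$ on the quotient line, so the translation-amount bookkeeping there does not force $m_F=k$ either. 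Ruling this out requires looking at the elliptic action of $c$ on the component $U_j$ missed by the lone factored translation --- essentially the moved-space analysis of \cite{Br-McC,McSul} --- which is precisely the content of \cite[Lemma~7.2]{McSul} that the paper chooses to cite rather than reprove. If you fall back on that citation (as your first paragraph proposes), your argument is complete and matches the paper; as a self-contained derivation it is not yet airtight.
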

In Proposition~\ref{if a fact}.\ref{fact horiz}, there is no assertion that the reflections are in the finite Coxeter group associated to the horizontal root system, but only that they are orthogonal to roots $\beta+k\delta$ for $\beta$ in the horizontal root system.
\begin{proof}
Assertions~\ref{fact horiz} and~\ref{n-2 ref} are proved as part of the proof of \cite[Lemma~7.2]{McSul}.
Formally, the proof there is given in the case where (in our notation) $q_i=\frac1k$ for all $i$, but the argument given there is independent of that choice.
(In the statement here, we correct for different definitions of $n$ between the two papers.)
Assertion~\ref{k trans} is not stated explicitly in that proof but is immediate because each factored translation has fractional length (or ``weight'') $\frac2k$ in the definition of $[1,c]_{T\cup F}$ given in~\cite{McSul}.

We can move the $k$ letters that are factored translations to be next to each other at the beginning of the word by replacing instances of $t,f$ by $f,f^{-1}tf$ for factored translations $f$ and reflections~$t$.
We obtain an expression for $c$ as a translation~$\tau$ followed by a product of reflections.
In particular, $\tau\in[1,c]_T$ and the reflections are all in $[1,c]_T$.  
This proves Assertion~\ref{ref in int}.

The horizontal reflections fix the Coxeter axis, so Assertion~\ref{fact horiz} implies that the component of $\tau$ in the direction of the Coxeter axis equals the translation component of the action of the Coxeter element.
For the same reason, this component is the same as the $U_0$-component of any translation in $[1,c]_T$.

Proposition~\ref{trans nontriv} says that the translation vector $\lambda$ for $\tau$ has nontrivial projections to each of the components $U_1,\ldots,U_k$ of $E_0$.
Since each factored translation has nonzero projection to $U_i$ for exactly one $i$ in $\set{1,\ldots,k}$, we see that the $k$ factored translations in the word are associated to the $k$ different components $U_1,\ldots,U_k$.
Since these factored translation vectors add up to $\lambda$, they must in fact be the factors $\lambda_i+q_i\lambda_0$ of $\lambda$.
We have proved Assertion~\ref{all factors}.

Assertion~\ref{reorder} is immediate by Proposition~\ref{fact commute}.

The Coxeter element decomposes as a translation $\lambda_0$ along the Coxeter axis and an action in the directions $U_1\oplus\cdots\oplus U_k$ orthogonal to the Coxeter axis.
In the factorization $c=c_1\cdots c_k$, each $c_i$ is a translation $q_i\lambda_0$ along the Coxeter axis composed with a motion that only moves points in $U_i$.
Thus the choice of fixed $q_i$ and $c$ determines $c_i$ completely as the translation $q_i\lambda_0$ composed with the action of $c$ on~$U_i$.
This is Assertion~\ref{fact c}.
\end{proof}

Now we can prove that the choice of the $q_i$ does not affect the interval group.

\begin{proof}[Proof of Theorem~\ref{same interval group}]
Let $p:F\to F'$ be the map that sends a factored translation in $F$ with vector $\lambda_i+q_i\lambda_0$ to the factored translation in $F'$ with vector $\lambda_i+q'_i\lambda_0\in F'$ for the same~$i$.
This map is well-defined because the translation vector for a factored translation $f\in F$ has a nonzero projection to $U_i$ for exactly one~$i$.

We extend $p$ to a map on label sequences on maximal chains in $[1,c]_{T\cup F}$.
The map changes each factored translation by a multiple of $\omega_c(\delta,\,\cdot\,)$ (the vector spanning $U_0$).
By Proposition~\ref{if a fact}.\ref{all factors}, the total change to all translation vectors is zero.
By Proposition~\ref{if a fact}.\ref{fact horiz}, the changes to the factored translations commute with the reflections in the sequence, and thus cancel each other out.
We see that the map outputs the label sequence on a maximal chain in $[1,c]_{T\cup F'}$.
The analogous map in the opposite direction is the inverse, so the map is a bijection.
This bijection on maximal chains induces an isomorphism of labeled posets from the interval $[1,c]_{T\cup F}$ to the interval $[1,c]_{T\cup F'}$.
The assertion about isomorphism of interval groups follows.
\end{proof}

We will see in Section~\ref{McSul A sec} that the choice of constants $q_i$ suggested by the combinatorics of noncrossing partitions is the unique choice such that $T\cup F^{\pm1}$ is closed under conjugation by elements of the supergroup.
We now establish some general criteria for determining whether $T\cup F^{\pm1}$ is closed.
The following lemma is an immediate consequence of the fact that $T$ is closed under conjugation by elements of $T$ and $F^\pm$ is closed under conjugation by elements of $F$.

\begin{lemma}\label{amounts to}
$T\cup F^{\pm}$ is closed under conjugation in the group generated by $T\cup F$ if and only if $T$ is closed under conjugation by elements of $F$ and $F^{\pm1}$ is closed under conjugation by elements of $T$.
\end{lemma}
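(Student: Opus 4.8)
The plan is to prove the two implications separately, the key point in both being that conjugation (by any element of the supergroup) preserves the property of being a nontrivial involution, which every reflection in $T$ has, and the property of acting on $E$ as a nontrivial translation, which every element of $F^{\pm1}$ has (the translation vector $\lambda_i+q_i\lambda_0$ is nonzero because its $U_i$-component $\lambda_i$ is nonzero by Proposition~\ref{trans nontriv}). Since a nontrivial translation is never an involution, conjugation can never carry an element of $T$ to an element of $F^{\pm1}$ or vice versa, and it is this dichotomy that lets one pull the two sets apart.

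For the forward implication, suppose $T\cup F^{\pm1}$ is closed under conjugation in $G:=\langle T\cup F\rangle$, the supergroup generated by $T\cup F$. Given $f\in F$ and $t\in T$, the element $ftf^{-1}$ lies in $T\cup F^{\pm1}$; being conjugate to the nontrivial involution $t$, it is itself a nontrivial involution, hence not a nontrivial translation, hence not in $F^{\pm1}$, so $ftf^{-1}\in T$. Thus $T$ is closed under conjugation by $F$. Symmetrically, given $t\in T$ and $f\in F$, the element $tft^{-1}$ lies in $T\cup F^{\pm1}$ and, being conjugate to the nontrivial translation $f$, is a nontrivial translation, hence not an involution, hence not in $T$, so $tft^{-1}\in F^{\pm1}$; taking inverses, $tf^{-1}t^{-1}\in F^{\pm1}$ as well, so $F^{\pm1}$ is closed under conjugation by $T$.

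For the reverse implication, assume $T$ is closed under conjugation by $F$ and $F^{\pm1}$ is closed under conjugation by $T$, and recall the standing facts that $T$ is closed under conjugation by $T$ and $F^{\pm1}$ by $F$. Every element of $G$ is a product of elements of $T\cup F\cup F^{-1}$ (the inverse of a generator in $T$ lies in $T$, the inverse of one in $F$ lies in $F^{-1}$), and conjugation by a product is the composite of the conjugations by the factors, so it suffices to show that conjugation by a single element of $T\cup F\cup F^{-1}$ maps $X:=T\cup F^{\pm1}$ into itself. Conjugation by $t\in T$ sends $T$ into $T$ and $F^{\pm1}$ into $F^{\pm1}$ by the two hypotheses about conjugation by $T$. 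Conjugation by $f\in F$ sends $T$ into $T$ by hypothesis and fixes $F^{\pm1}$ setwise because $f$ is a translation and translations commute. The only remaining case is conjugation by $f^{-1}$ with $f\in F$: it fixes $F^{\pm1}$ setwise for the same reason, and for $t\in T$ one argues $f^{-1}tf\in T$ as follows. Conjugating $t$ by the translation $f$ changes neither the direction $\beta\in\Phi_\fin$ of $t$ nor the fact that it is a reflection, only the height of its reflecting hyperplane in $E$; and $ftf^{-1}\in T$ by hypothesis, so $t$ and $ftf^{-1}$ are reflections of $W$ of the common direction $\beta$, whose heights therefore both lie in the discrete subgroup $\Lambda_\beta$ of $\reals$ consisting of the heights of $W$-reflections of direction $\beta$. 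Hence the two heights differ by some $\eta\in\Lambda_\beta$, so $-\eta\in\Lambda_\beta$, and the reflection obtained from $t$ by the opposite height shift, which is exactly $f^{-1}tf$, again has height in $\Lambda_\beta$ and so lies in $T$. This completes the reduction, and with it the proof.

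I expect the only step that is not pure unwinding of definitions to be this last one: the hypothesis delivers closure of $T$ under conjugation by $F$ but a priori not by $F^{-1}$, and bridging that asymmetry is where one must invoke the discreteness of the reflecting hyperplanes of $W$ of a fixed direction. Everything else is bookkeeping, which is presumably why the paper calls the statement immediate; in fact, if one reads ``$T$ closed under conjugation by $F$'' as closure under conjugation by all of $F^{\pm1}$ (equivalently, that conjugation by each $f\in F$ restricts to a bijection of $T$), even this step evaporates and the reverse direction becomes a one-line check on generators.
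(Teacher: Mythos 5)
Your proof is correct. The paper offers essentially no argument for this lemma---it is asserted to be an immediate consequence of the facts that $T$ is closed under conjugation by $T$ and $F^{\pm1}$ is closed under conjugation by $F$---so your write-up is a faithful elaboration of the intended reduction to generators rather than a genuinely different route. You do supply two ingredients the paper leaves implicit. First, the involution-versus-nontrivial-translation dichotomy (with the nontriviality of each factored translation correctly traced back to Proposition~\ref{trans nontriv}) is exactly what is needed in the forward direction to decide which of $T$ or $F^{\pm1}$ a given conjugate lands in, since a priori closure of the union does not give closure of each piece. Second, and more substantively, you notice that the stated hypothesis only gives $ftf^{-1}\in T$ and not $f^{-1}tf\in T$, and you bridge this by observing that the heights of the $W$-reflections in a fixed direction $\beta$ form a subgroup of $\reals$, so the height shift $\br{\lambda,\beta}$ and its negative are simultaneously admissible (equivalently, the integrality criterion of Lemma~\ref{closure and weights} is symmetric in $\pm\lambda$). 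That is the one genuine subtlety in the lemma, and you are right that it disappears entirely if ``closed under conjugation by elements of $F$'' is read as closure under conjugation by $F^{\pm1}$, which is evidently the reading under which the paper's ``immediate'' is literally true.
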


The \newword{fundamental co-weights} $\rho\ck_1,\ldots,\rho\ck_n$ are the basis of $V^*$ that is dual to the simple roots $\alpha_1,\ldots,\alpha_n$.

\begin{lemma}\label{closure and weights}
Suppose $W$ is an affine Coxeter group and $f$ is a translation in $W$ with translation vector~$\lambda$.
Then $T$ is closed under conjugation by $f$ if and only if $\lambda$ has integer fundamental co-weight coordinates.
\end{lemma}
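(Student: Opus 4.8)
The plan is to run the whole argument inside the affine action of $W$ on $E$. Recall (from the paragraph preceding the statement and from Section~\ref{aff sec}) that the reflecting hyperplanes of $W$ in $E$ are exactly the affine subspaces $H_{\beta,k}=\set{x\in E:\br{x,\beta}=k}$ with $\beta\in\Phi_\fin$ and $k\in\integers$, that $T$ is precisely the set of elements of $W$ acting as affine reflections in these hyperplanes, and that $f$ acts on $E$ by $x\mapsto x+\lambda$. The key elementary observation is that conjugating a reflection by a translation yields the reflection in the translated hyperplane, along the same direction, since $f$ has trivial linear part; so if $t\in T$ is the reflection in $H_{\beta,k}$, then $ftf^{-1}$ is the affine reflection in $f(H_{\beta,k})$. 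Computing $f(H_{\beta,k})=\set{x\in E:\br{x-\lambda,\beta}=k}=\set{x\in E:\br{x,\beta}=k+\br{\lambda,\beta}}$ (using $\br{\lambda,\delta}=0$), we get $f(H_{\beta,k})=H_{\beta,\,k+\br{\lambda,\beta}}$.

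Next I would show $ftf^{-1}\in T$ if and only if $k+\br{\lambda,\beta}\in\integers$. If it is an integer, then $f(H_{\beta,k})$ is a reflecting hyperplane of $W$; being parallel to $H_{\beta,k}$ it carries the same reflection direction, so $ftf^{-1}$ is exactly the element of $T$ reflecting in it. Conversely, if $k+\br{\lambda,\beta}\notin\integers$, then because $\Phi_\fin$ is reduced the hyperplane $f(H_{\beta,k})$ pins down its root $\beta$ up to sign and the constant up to sign, so $f(H_{\beta,k})$ is not any $H_{\beta',k'}$; thus $ftf^{-1}$ is an affine reflection whose fixed hyperplane is not a reflecting hyperplane of $W$, so it is not in $T$ (indeed not in $W$, since it acts as a reflection). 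Letting $t$ range over $T$, i.e.\ $\beta$ over $\Phi_\fin$ and $k$ over $\integers$, and noting that $f^{-1}$ is again a translation so that $fTf^{-1}\subseteq T$ forces equality, we conclude that $T$ is closed under conjugation by $f$ if and only if $\br{\lambda,\beta}\in\integers$ for every $\beta\in\Phi_\fin$.

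It remains to match this with the coordinate condition. Expanding $\lambda$ in the basis $\rho\ck_1,\dots,\rho\ck_n$ of $V^*$ dual to $\alpha_1,\dots,\alpha_n$ gives $\lambda=\sum_{i=1}^n\br{\lambda,\alpha_i}\rho\ck_i$, so $\lambda$ has integer fundamental co-weight coordinates iff $\br{\lambda,\alpha_i}\in\integers$ for all $i$. Since $\set{\alpha_i:i\neq\aff}$ is a $\integers$-basis of the root lattice of $\Phi_\fin$ (which contains $\Phi_\fin$), the condition $\br{\lambda,\beta}\in\integers$ for all $\beta\in\Phi_\fin$ is equivalent to $\br{\lambda,\alpha_i}\in\integers$ for all $i\neq\aff$; and the remaining coordinate is automatic, because $\alpha_\aff=\delta-\theta$ with $\theta\in\Phi_\fin$ the highest root and $\br{\lambda,\delta}=0$, so $\br{\lambda,\alpha_\aff}=-\br{\lambda,\theta}\in\integers$. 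This gives the stated equivalence.

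I do not expect a genuine obstacle; the only point needing care is the converse half of the second step — confirming that when $\br{\lambda,\beta}\notin\integers$ the conjugate genuinely falls outside $T$ — which rests on $\Phi_\fin$ being reduced so that a reflecting hyperplane determines its root up to sign. If one preferred to avoid the affine bookkeeping, the same conclusion drops out of a one-line linear computation on $V^*$: with $t$ acting by $x\mapsto x-\br{x,\phi}\theta$ for $\phi=\beta+k\delta$ and $\theta$ the $(-1)$-eigenvector of $t$, and $f$ by $x\mapsto x+\br{x,\delta}\lambda$, one obtains $ftf^{-1}(x)=x-\br{x,\,\beta+(k-\br{\lambda,\beta})\delta}\theta$, i.e.\ conjugation by $f$ replaces the root $\phi$ by $\beta+(k-\br{\lambda,\beta})\delta$, which is a real root precisely when $\br{\lambda,\beta}\in\integers$.
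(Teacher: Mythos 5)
Your proof is correct and follows essentially the same route as the paper: translate the reflecting hyperplane $\set{x\in E:\br{x,\beta}=k}$ by $\lambda$, observe that $ftf^{-1}$ lies in $T$ exactly when $\br{\lambda,\beta}$ is an integer, and convert the condition ``$\br{\lambda,\beta}\in\integers$ for all $\beta\in\Phi_\fin$'' into integrality of the fundamental co-weight coordinates. You are in fact slightly more careful than the paper on two minor points — justifying that a non-integral level really forces $ftf^{-1}\notin T$, and noting that the $\aff$-th coordinate is automatically an integer once the others are (the paper's converse only invokes reflections $s_i$ in $W_\fin$) — but the substance is the same.
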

\begin{proof}
Each reflection $t\in T$ is orthogonal to some root $\beta+k\delta$ for $\beta\in\Phi_\fin$ and $k\in\integers$.
The fixed hyperplane of $t$ is $\set{x\in E:\br{x,\beta}=k}$.
The fixed hyperplane of the reflection $ftf^{-1}$ is the hyperplane $\set{x\in E:\br{x,\beta}=k+\br{\lambda,\beta}}$.
This is the reflecting hyperplane for the reflection orthogonal to $\beta+(k+\br{\lambda,\beta})\delta$, which is in $T$ if and only if $\beta+(k+\br{\lambda,\beta})\delta$ is a root, equivalently if and only if $\br{\lambda,\delta}$ is an integer.
If $\lambda$ has integer fundamental co-weight coordinates, then $\br{\lambda,\beta}$ is an integer.
On the other hand, if $\lambda=\sum c_k\rho_k\ck$ and some $c_i$ is not an integer, then the reflection~$s_i$ in $W_\fin$ is the reflection orthogonal to $\alpha_i$ and $\br{\lambda,\alpha_i}=c_i$ is not an integer.
\end{proof}

\begin{lemma}\label{closure and reflections}
Let $Q$ be a set of translations of the affine space $E$.
The following are equivalent:
\begin{enumerate}[\quad(i)]
\item The set $Q$ is closed under conjugation by $T$.
\item For every $w\in W$, the action of $w$ on the linear subspace $E_0$ permutes the set of translation vectors for $Q$.
\item For every $s\in S_\fin$, the action of $s$ on $E_0$ permutes the set of translation vectors for $Q$.
\end{enumerate}
\end{lemma}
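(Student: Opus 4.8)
The plan is to reduce all three statements to a single elementary fact: conjugating a translation by an isometry depends only on the isometry's linear part. In the action of $W$ on $E$, write $t_\lambda$ for the translation by a vector $\lambda\in E_0$, and for $w\in W$ write its action on $E$ as $x\mapsto\bar w x+v_w$, where $\bar w$ (the linear part) is precisely the action of $w$ on the linear hyperplane $E_0$. First I would record the one-line computation $w\,t_\lambda\,w^{-1}=t_{\bar w\lambda}$, obtained from $w(w^{-1}(x)+\lambda)=x+\bar w\lambda$. It follows that, for a fixed $w$, ``$Q$ is stable under conjugation by $w$'' means exactly ``$\bar w$ carries the set of translation vectors of $Q$ into itself'', and since $\bar w$ is invertible with $\overline{w^{-1}}=\bar w^{-1}$, this is the same as ``$\bar w$ permutes that set''.

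Next I would prove (i) $\Leftrightarrow$ (ii). Since $T$ generates $W$ and conjugation is a group action (conjugation by a product being the composite of the conjugations), $Q$ is closed under conjugation by $T$ if and only if it is closed under conjugation by every element of $W$: the forward direction is an easy induction on $T$-word length (using $t^{-1}=t$ for $t\in T$), and the converse is trivial as $T\subseteq W$. By the previous paragraph, closure under conjugation by every $w\in W$ is exactly the assertion that $\bar w$ permutes the set of translation vectors of $Q$ for all $w$, which is (ii).

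Then I would prove (ii) $\Leftrightarrow$ (iii). The implication (ii) $\Rightarrow$ (iii) is immediate since $S_\fin\subseteq W$. For (iii) $\Rightarrow$ (ii), I would use that the linear-part map $w\mapsto\bar w$ is a group homomorphism $W\to\mathrm{GL}(E_0)$ whose image is the finite Weyl group $W_\fin=\langle S_\fin\rangle$; this is the standard semidirect decomposition $W=W_\fin\ltimes(\text{translation lattice})$, the translations having trivial linear part and each $s\in S_\fin$ acting on $E_0$ as the corresponding finite reflection. Consequently every $\bar w$, $w\in W$, is a product of the maps $\bar s$ for $s\in S_\fin$; so if each $\bar s$ with $s\in S_\fin$ permutes the set of translation vectors of $Q$, then every $\bar w$ does, which is (ii).

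I expect the only step needing genuine care, as opposed to routine bookkeeping, to be the identification of the image of the linear-part homomorphism with $\langle S_\fin\rangle$ inside (iii) $\Rightarrow$ (ii): one must observe that the linear part of the affine node $s_\aff$ is the reflection orthogonal to the highest root of $\Phi_\fin$, hence already lies in $W_\fin$ and contributes nothing new. Everything else follows from the identity $w\,t_\lambda\,w^{-1}=t_{\bar w\lambda}$ together with the fact that $T$ generates $W$, so I do not anticipate a serious obstacle beyond invoking these standard structural facts about affine Weyl groups.
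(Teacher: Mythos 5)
Your proposal is correct and follows essentially the same route as the paper: the same one-line computation $w\,t_\lambda\,w^{-1}=t_{\bar w\lambda}$ showing that conjugation acts on translation vectors through the linear part, then (i)$\Leftrightarrow$(ii) because $T$ generates $W$. The only cosmetic difference is in reaching (iii): you invoke the semidirect decomposition $W=W_\fin\ltimes(\text{translations})$ so that every linear part lies in $\langle S_\fin\rangle$, whereas the paper observes that each affine reflection's hyperplane is parallel to one for a reflection of $W_\fin$ and that parallel reflections have the same restriction to $E_0$ --- the same structural fact in two guises.
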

\begin{proof}
Consider a translation $q$ on $E$ with translation vector $\lambda\in E_0$ and consider a rigid motion $m$ on $E$.
The motion $m$ extends uniquely to a linear map $\mu$ on $V^*$, and this linear map must fix the plane $E_0$ as a set (or else, by linearity, it cannot fix the plane $E$ as a set).
For any $x\in E$, we have $mqm^{-1}(x)=\mu(\mu^{-1}(x)+\lambda)=x+\mu(\lambda)$.
Thus the conjugation of $q$ by $m$ is a translation with translation vector $\mu(\lambda)\in E_0$.

Each reflection $t\in T$ is a linear map on $V^*$ that restricts to a rigid motion on $E$ and thus conjugating by $t$ changes a translation with vector $\lambda$ to a translation with vector $t(\lambda)$.
We see that $Q$ is closed under conjugation by $T$ if and only if each $t\in T$ permutes the set of translation vectors for $Q$.
Since $T$ generates $W$, this is if and only if each $w\in W$ permutes the set of translation vectors for $Q$.

Every reflection in $W$ acts on $E$ as an affine reflection whose reflecting hyperplane is parallel to the reflecting hyperplane for some reflection in $W_\fin$.
Parallel reflections restrict to the same reflection on $E_0$.
Thus every $t\in T$ permutes the set of translation vectors for $F$ if and only if every reflection in $W_\fin$ permutes the set of translation vectors.
Since $S_\fin$ generates $W_\fin$, this is if and only if every $s\in S_\fin$ permutes the set of translation vectors.
\end{proof}

\subsection{Factored translations in affine type $\afftype{A}$}\label{McSul A sec}
In this section, we relate the construction of $\tNCAc$ to the McCammond-Sulway factored translation construction in type $\afftype{A}_{n-1}$.
In what follows, we will use without comment the fact that in type~$\afftype{A}$, roots and coroots are the same, and weights and co-weights are the same. 

In this case, the orthogonal decomposition $E_0=U_0\oplus\cdots\oplus U_k$ has $k=1$ or~$2$ (or $k=0$ if $n=2$).
Given a choice of $c$, we will define subspaces $U_\out$ and $U_\inn$ such that the decomposition is always $E_0=U_0\oplus U_\out\oplus U_\inn$, but possibly $U_\out$ or $U_\inn$ is the zero subspace.
Similarly, for the purposes of factoring translations, we will write $q_\out$ and $q_\inn$ for the numbers $q_1,\ldots,q_k$.
We write $\#\out$ for the number of outer points on the annulus corresponding to $c$ and $\#\inn$ for the number of inner points.
We will prove the following theorems, which tie the work of this paper to the results of McCammond and Sulway~\cite{McSul}.

\begin{thm}\label{F L}
Let $c$ be a Coxeter element of $\Stilde_n$, represented as a partition of $\set{1,\ldots,n}$ into inner points and outer points.
Fix numbers $q_\out=\frac{\#\inn}{\#\out+\#\inn}$ and ${q_\inn=\frac{\#\out}{\#\out+\#\inn}}$.
\begin{enumerate}[\quad\rm\bf1.]
\item \label{F loop}
The set of factored translations is $F=\set{\ell_i:i\text{ is outer}}\cup\set{\ell_i^{-1}:i\text{ is inner}}$.
\item \label{supergroup}
The supergroup of $\Stilde_n$ generated by $T\cup F$ is $\SZmodn$.
\item \label{affine supergroup}
On the affine space $E$, the supergroup is generated by $W$ and the translations by vectors in $E_0$ with integer fundamental-weight coordinates.
\item  \label{F L intervals}
The interval $[1,c]_{T\cup F}$ coincides with $[1,c]_{T\cup L}$, which is isomorphic to the lattice $\tNCAc$.
\item \label{unique conj}
This choice of $q_\out$ and $q_\inn$ is the unique choice such that $T\cup F^{\pm1}$ is closed under conjugation by elements of the supergroup generated by $T\cup F$.
\end{enumerate}
\end{thm}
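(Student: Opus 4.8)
The plan is to derive part (1) by an explicit computation and then read parts (2)--(5) off from it together with the machinery already assembled. Throughout I would take $\#\out\ge2$ and $\#\inn\ge2$ (so $k=2$); when $\#\out=1$ or $\#\inn=1$ the horizontal root system is irreducible, $[1,c]_T$ is already a lattice, and the statements degenerate.

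For part (1) I would first identify the translations in $[1,c]_T$. Via the isomorphism $\tNCAcircc\cong[1,c]_T$ of Theorem~\ref{isom}, and the fact that $\perm(\P)$ acts on $E$ as a translation exactly when $\pi(m)\equiv m\ \cmod n$ for all $m$, inspecting the cycle description of $\perm(\P)$ forces all finite cycles to be singletons and every infinite cycle to be a single ``pure loop'' $\ell_a^{\pm1}$; since $\P$ has no dangling block and at most one annular block, the nontrivial translations in $[1,c]_T$ are exactly the $\ell_i\ell_j^{-1}$ with $i$ outer and $j$ inner. Next I would compute, from $\br{\ell_i x,\alpha_m}=\br{x,\ell_i^{-1}\alpha_m}$ and the fact that $\ell_i^{-1}$ fixes every $\alpha_m$ except $\ell_i^{-1}\alpha_i=\alpha_i+\delta$ and $\ell_i^{-1}\alpha_{i-1}=\alpha_{i-1}-\delta$, that $\ell_i$ acts on $V^*$ by $x\mapsto x+\br{x,\delta}(\rho_i-\rho_{i-1})$; that is, $\ell_i$ acts on $E$ as translation by $\mu_i:=\rho_i-\rho_{i-1}$ and hence is literally one of the linear maps produced by the factored--translation recipe. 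By Proposition~\ref{om del e} the horizontal root system is of type $A_{\#\out-1}\times A_{\#\inn-1}$, giving $E_0=U_0\oplus U_\out\oplus U_\inn$ with $U_0=\Span(\omega_c(\delta,\,\cdot\,))$, and by Proposition~\ref{om del rho} $U_0$ is spanned by $\zeta:=\tfrac12\omega_c(\delta,\,\cdot\,)=\sum_{i'\text{ outer}}\mu_{i'}$. Transporting to $V_\fin$ by the isometry $\beta\mapsto K(\,\cdot\,,\beta)$ --- where $\mu_m$ corresponds to the coordinate--sum--zero vector $-\e_m+\tfrac1n\sum_{l=1}^n\e_l$ (the point one must get right) --- the inner products $\delta_{mm'}-\tfrac1n$ among these vectors and their pairings against $\zeta$ give $(\mu_i)_{U_\inn}=0$, $(\mu_j)_{U_\out}=0$, $(\mu_i)_{U_0}=\tfrac1{\#\out}\zeta$, $(\mu_j)_{U_0}=-\tfrac1{\#\inn}\zeta$. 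For $w=\ell_i\ell_j^{-1}$, with vector $\lambda=\mu_i-\mu_j$, this yields $\lambda_0=\tfrac{n}{\#\out\#\inn}\zeta$ and then $\mu_i=\lambda_\out+\tfrac{\#\inn}{n}\lambda_0$, $-\mu_j=\lambda_\inn+\tfrac{\#\out}{n}\lambda_0$ --- exactly the two McCammond--Sulway factors of $w$ for the stated $q$'s. Hence those factors are $\ell_i$ and $\ell_j^{-1}$, and ranging over all outer $i$ and inner $j$ gives $F=\set{\ell_i:i\text{ outer}}\cup\set{\ell_j^{-1}:j\text{ inner}}$. I expect this last bookkeeping --- keeping the correct representatives so the $-\tfrac1n$ corrections appear and the ratios come out $\#\inn:\#\out$ rather than $1:1$ --- to be the main obstacle.

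Parts (2) and (3) then follow quickly. From (1), $F^{\pm1}=\set{\ell_m^{\pm1}:m=1,\dots,n}=L$; since $\br{T}=\Stilde_n$ and $\ell_i\ell_1^{-1}\in\Stilde_n$, the group $\br{T\cup F}$ contains $\ell_1$, so $\br{T\cup F}=\SZmodn$ by Proposition~\ref{SZ mod n gen}. For (3), each element of $F$ acts on $E$ by a translation $\mu_m=\rho_m-\rho_{m-1}$ with integer fundamental--weight coordinates, the $\mu_m$ integrally span the coordinate--sum--zero integer fundamental--weight lattice in $E_0$, and $W$ normalizes that lattice (a check on the simple reflections), so on $E$ the supergroup is $W$ together with exactly those translations. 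For (4): with $k=2$ the factored--translation weight is $2/k=1$, so $\ell_{T\cup F}$ is the word length over the subalphabet $T\cup F\subseteq T\cup L$ and hence $\ell_{T\cup F}\ge\ell_{T\cup L}$; conversely the proof of Proposition~\ref{fragments} shows every cover in $\tNCAc\cong[1,c]_{T\cup L}$ is labelled by a reflection or by a loop $\ell_p$ ($p$ outer) or $\ell_p^{-1}$ ($p$ inner), i.e.\ by a letter of $T\cup F$, so every element of $[1,c]_{T\cup L}$ lies on a maximal chain with labels in $T\cup F$; this forces $\ell_{T\cup F}=\ell_{T\cup L}$ on that interval, and with the triangle inequality $[1,c]_{T\cup F}=[1,c]_{T\cup L}$, which by Theorem~\ref{isom} is $\tNCAc$.

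For part (5): with the stated $q$'s, $T\cup F^{\pm1}=T\cup L$, which is closed under conjugation in $\SZmodn$, so such $q$'s exist. For uniqueness, suppose $q'_\out+q'_\inn=1$ also yields a closed $T\cup F'^{\pm1}$; set $\epsilon:=(q'_\out-q_\out)\tfrac{n}{\#\out\#\inn}$. Running the computation of part (1) with $q'$ in place of $q$, the translation vectors of $F'^{\pm1}$ are $\set{\pm(\mu_m+\epsilon\zeta):m=1,\dots,n}$. By Lemma~\ref{amounts to} closure is equivalent to (i) $T$ closed under conjugation by $F'$ and (ii) $F'^{\pm1}$ closed under conjugation by $T$. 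By Lemma~\ref{closure and weights}, (i) forces $\mu_m+\epsilon\zeta$ to have integer fundamental--weight coordinates; since $\zeta$'s coordinates lie in $\set{-1,0,1}$ with gcd $1$, this gives $\epsilon\in\integers$. By Lemma~\ref{closure and reflections}, (ii) forces $W_\fin$ to permute $\set{\pm(\mu_m+\epsilon\zeta)}$. But in the positive--definite form on $E_0$, $\|\mu_m+\epsilon\zeta\|^2$ depends on $m$ only through whether $m$ is outer or inner, the two values differing by $2\epsilon$; so if $\epsilon\ne0$, $W_\fin$ must preserve the partition of $\set{\pm(\mu_m+\epsilon\zeta)}$ into outer and inner vectors, hence preserve their spans $U_0\oplus U_\out$ and $U_0\oplus U_\inn$ (here one uses that $\epsilon$ is a nonzero integer and $\#\out,\#\inn\ge2$, so $\epsilon\notin\set{-1/\#\out,\ 1/\#\inn}$ and the ``extra'' spanning vector really sticks out of $U_\out$, resp.\ $U_\inn$), hence preserve $U_0$ --- impossible, since $W_\fin$ acts irreducibly on $E_0\cong V_\fin$, of dimension $n-1\ge2$. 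Therefore $\epsilon=0$, i.e.\ $q'_\out=q_\out$ and $q'_\inn=q_\inn$.
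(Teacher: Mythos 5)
Your argument is correct in substance and, for parts 1--4, follows essentially the same route as the paper: identify the translations in $[1,c]_T$ as the $\ell_i\ell_j^{-1}$ with $i$ outer and $j$ inner, compute their translation vectors as $(\rho_i-\rho_{i-1})-(\rho_j-\rho_{j-1})$, and check that the orthogonal decomposition with the stated $q$'s splits each such vector into exactly the vectors for $\ell_i$ and $\ell_j^{-1}$ (the paper packages this as Propositions~\ref{A trans}--\ref{A fact}). Your transport of the computation into $V_\fin$ via $\beta\mapsto K(\,\cdot\,,\beta)$, with $\sigma_m=\rho_m-\rho_{m-1}$ corresponding to $-\e_m+\tfrac1n\sum_l\e_l$, is a correct and equivalent way to obtain the $U_0$-components $\tfrac1{\#\out}$ and $-\tfrac1{\#\inn}$ that the paper gets in Proposition~\ref{A trans decomp}, and your treatment of part 4 via the cover labels in the proof of Proposition~\ref{fragments} is, if anything, more explicit than the paper's one-line deduction. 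Where you genuinely diverge is part 5. The paper proves uniqueness by a direct coordinate computation: choosing $i$ outer with $i+1$ inner, it shows $s_i(\rho_i-\rho_{i-1}+ka)$ can have the form $\pm(\rho_j-\rho_{j-1}+ka)$ only when $k=0$. You instead observe that the squared lengths of the candidate vectors $\pm(\sigma_m+\epsilon a)$ separate outer from inner indices when $\epsilon\ne0$, forcing $W_\fin$ to stabilize $U_0\oplus U_\out$ and $U_0\oplus U_\inn$, hence the line $U_0$, contradicting irreducibility. That is a clean conceptual alternative to the paper's bare-hands calculation.

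The one real gap is that your part-5 argument excludes the case $\#\out=1$ or $\#\inn=1$, which the theorem (via Remark~\ref{one point case}) does claim to cover. This is not merely a routine omitted case: if $\#\out=1$ and $\epsilon=-1$, the unique ``outer'' vector $\sigma_i+\epsilon a$ is zero, and the remaining nonzero vectors $\pm(\sigma_j+\epsilon a)=\pm(\sigma_j-\sigma_i)$ all have the same norm and span all of $E_0$, so both the norm-separation and the span argument collapse. One must then rule out $\epsilon=-1$ directly, e.g.\ by noting that conjugating the translation with vector $\sigma_i-\sigma_{j_2}$ by the reflection swapping $i$ and an inner point $j_1$ yields a translation with vector $\sigma_{j_1}-\sigma_{j_2}$, which is not among the $\pm(\sigma_m+\epsilon a)$ once $\#\inn\ge2$. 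The paper's coordinate computation is uniform in $\#\out$ and $\#\inn$ and avoids this case split entirely.
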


\begin{remark}\label{one point case}
When the choice of $c$ corresponds to exactly one inner point or exactly one outer point, the horizontal root system is irreducible, so $[1,c]_T$ is a lattice, and thus we do not need factored translations in the setup of \cite{McSul}.
However, we can factor translations anyway, and Theorem~\ref{F L} is still valid in this special case.  
\end{remark}

\begin{thm}\label{interval group T L}
For any choice of $q_\out+q_\inn=1$, the interval $[1,c]_{T\cup F}$ is isomorphic as a labeled poset to $[1,c]_{T\cup L}$.
Thus the interval group constructed from $[1,c]_{T\cup F}$ is isomorphic to the interval group constructed from $[1,c]_{T\cup L}$.
\end{thm}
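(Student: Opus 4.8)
The plan is to deduce this from Theorem~\ref{same interval group} together with Theorem~\ref{F L}, reducing everything to one convenient choice of the constants. By Theorem~\ref{same interval group}, for any two choices $q_\out+q_\inn=1$ and $q'_\out+q'_\inn=1$ the corresponding intervals $[1,c]_{T\cup F}$ and $[1,c]_{T\cup F'}$ are already isomorphic as labeled posets. So it suffices to exhibit \emph{one} choice of $q_\out,q_\inn$ for which $[1,c]_{T\cup F}$ is isomorphic as a labeled poset to $[1,c]_{T\cup L}$; the statement for an arbitrary choice then follows by composing that isomorphism with the one supplied by Theorem~\ref{same interval group}.

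For the distinguished choice, I would take $q_\out=\frac{\#\inn}{\#\out+\#\inn}$ and $q_\inn=\frac{\#\out}{\#\out+\#\inn}$ as in Theorem~\ref{F L}. Then Theorem~\ref{F L}.\ref{F loop} identifies $F$ with $\set{\ell_i:i\text{ outer}}\cup\set{\ell_i^{-1}:i\text{ inner}}$, so that $F^{\pm1}$ is exactly the loop set $L$ and $T\cup F^{\pm1}=T\cup L$, while Theorem~\ref{F L}.\ref{F L intervals} already gives that $[1,c]_{T\cup F}$ and $[1,c]_{T\cup L}$ coincide as subposets of $\SZmodn$. What remains is to upgrade this coincidence to one of \emph{labeled} posets: the label on a cover $u\covered w$ is computed the same way on both sides, as $u^{-1}w$, so the two labelings agree as soon as the two length functions agree on the letters that actually occur on covers. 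Since every element of $F$ is a loop (Theorem~\ref{F L}.\ref{F loop}), it carries length $1$ under $\ell_{T\cup L}$; and for this choice of constants the factoring is always into two pieces (allowing $U_\out$ or $U_\inn$ to be the zero subspace, as in Remark~\ref{one point case}), so each factored translation has weight $\tfrac2k=1$ under $\ell_{T\cup F}$ as well. Hence $\ell_{T\cup F}=\ell_{T\cup L}$ on $T\cup L$, and $[1,c]_{T\cup F}=[1,c]_{T\cup L}$ as labeled posets.

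The assertion about interval groups is then immediate: the interval group attached to $[1,c]_{T\cup F}$ is presented purely from labeled-poset data — generators the labels appearing on covers, relations the equalities of label sequences along unrefinable chains with common endpoints — so isomorphic labeled posets produce isomorphic interval groups.

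I expect the one genuinely delicate point to be the upgrade in the second paragraph, namely verifying that the coincidence of Theorem~\ref{F L}.\ref{F L intervals} really is a coincidence of labeled posets; concretely, reconciling the weight $\tfrac2k$ assigned to a factored translation with the length $1$ assigned to a loop in every configuration, including the degenerate one-inner-point and one-outer-point cases covered by Remark~\ref{one point case}. Everything else in the argument is formal bookkeeping.
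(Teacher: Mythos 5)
Your proposal is correct and follows essentially the same route as the paper: Theorem~\ref{F L} (parts \ref{F loop} and \ref{F L intervals}) supplies one choice of $q_\out,q_\inn$ for which $F^{\pm1}=L$ and $[1,c]_{T\cup F}$ coincides with $[1,c]_{T\cup L}$, and Theorem~\ref{same interval group} transports this to an arbitrary choice. Your extra care in checking that the weight $\tfrac2k=1$ assigned to factored translations matches the length $1$ of loops (including the degenerate cases of Remark~\ref{one point case}) is a detail the paper leaves implicit, but it is consistent with the paper's conventions and does not change the argument.
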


%

To prove these theorems, we begin by determining the horizontal reflections.
The following proposition is an easy consequence of Proposition~\ref{om del e}, Theorem~\ref{isom}, and the fact that $\Phi_\fin=\set{\pm(\e_j-\e_i):1\le i<j\le n}$.

\begin{prop}\label{A horiz}
Let $c$ be a Coxeter element of $\Stilde_n$, represented as a partition of $\set{1,\ldots,n}$ into inner points and outer points.
Then
\begin{enumerate}[\quad\rm\bf1.]
\item \label{A horiz root}
The horizontal root system is the set of vectors $\pm(\e_j-\e_i)$ such that $1\le i<j\le n$ and $i$ and $j$ are either both inner or both outer.
\item \label{all horiz ref}
The horizontal reflections in $\Stilde_n$ are the reflections of the form $(i\,\,\,j+kn)_n$ for $1\le i<j\le n$ such that $i$ and $j$ are either both inner or both outer and $k\in\integers$.
\item \label{A horiz ref}
The horizontal reflections in $[1,c]_T$ are the reflections of the form $(i\,\,\,j)_n$ or $(i\,\,\,j-n)_n$ for $1\le i<j\le n$ such that $i$ and $j$ are either both inner or both outer.
\item\label{horiz NC}
The horizontal reflections in $[1,c]_T$ are the elements $\perm(\P)$ such that the only nontrivial block of $\P$ is an arc or boundary segment either connecting two distinct inner points or connecting two distinct outer points.
\end{enumerate}
\end{prop}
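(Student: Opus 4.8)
The plan is to prove the four assertions in turn, getting the first three from root-system bookkeeping (Proposition~\ref{om del e} and the structural facts about horizontal reflections recalled in Section~\ref{McSul general sec}) and the fourth by transporting the third through the isomorphism $\perm$ of Theorem~\ref{isom}. Throughout I use that $\Phi_\fin=\set{\pm(\e_j-\e_i):1\le i<j\le n}$ and, regarding $\omega_c(\delta,\,\cdot\,)$ as an element of $V^*$, that $\omega_c(\delta,\e_{j+n})=\omega_c(\delta,\e_j)$ since $\e_{j+n}=\e_j+\delta$ and $\omega_c(\delta,\delta)=0$; thus $\omega_c(\delta,\e_j)$ depends only on $j$ mod $n$, equal by Proposition~\ref{om del e} to one fixed positive value when $j$ mod $n$ is inner and to one fixed (distinct) negative value when $j$ mod $n$ is outer.

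For Assertion~\ref{A horiz root}: the horizontal root system is by definition $\set{\beta\in\Phi_\fin:\omega_c(\delta,\beta)=0}$, and for $\beta=\e_j-\e_i$ with $1\le i<j\le n$ we have $\omega_c(\delta,\beta)=\omega_c(\delta,\e_j)-\omega_c(\delta,\e_i)$, which vanishes exactly when $i$ and $j$ are both inner or both outer. For Assertion~\ref{all horiz ref}: each reflection of $\Stilde_n$ can be written $(i\,\,\,j+kn)_n$ with $1\le i<j\le n$ and $k\in\integers$, and it is orthogonal to the real root $(\e_j-\e_i)+k\delta=\e_{j+kn}-\e_i$; by definition this reflection is horizontal iff $\omega_c(\delta,\e_j-\e_i)=0$, so Assertion~\ref{all horiz ref} follows from Assertion~\ref{A horiz root}. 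For Assertion~\ref{A horiz ref} I would invoke the fact recalled in Section~\ref{McSul general sec} that for each positive root $\beta$ of the horizontal root system there are exactly two horizontal reflections in $[1,c]_T$, namely those orthogonal to $\beta$ and to $\delta-\beta$. Taking $\beta=\e_j-\e_i$ with $1\le i<j\le n$ and $i,j$ of the same type, the first is $(i\,\,\,j)_n$; since $\delta-\beta=\delta+\e_i-\e_j=\e_{i+n}-\e_j$, the second is $(j\,\,\,i+n)_n$, which as a mod-$n$ family equals $(i\,\,\,j-n)_n$. This gives Assertion~\ref{A horiz ref}.

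For Assertion~\ref{horiz NC} I would use that $\perm$ restricts to an isomorphism $\tNCAcircc\to[1,c]_T$, so that the reflections in $[1,c]_T$ are exactly the images under $\perm$ of the rank-$1$ elements of $\tNCAcircc$. A rank-$1$ element of $\tNCAcircc$ cannot contain an annular block: a non-dangling annular block carries at least one numbered point on each of its two boundary circles, hence at least two in all, which by the rank formula of Proposition~\ref{fragments T} forces rank at least $2$. So such an element has exactly one nontrivial block, carrying exactly two numbered points, and up to isotopy this block is an arc or boundary segment joining two distinct numbered points $p$ and $q$. By the explicit description of $\perm$ on a single arc or boundary segment extracted from the proof of Theorem~\ref{isom}, it maps to a transposition $(p\,\,\,q+kn)_n$, orthogonal to $(\e_q-\e_p)+k\delta$, whose $\Phi_\fin$-component $\e_q-\e_p$ has endpoints $p$ and $q$; by Assertion~\ref{A horiz root} this reflection is horizontal iff $p$ and $q$ have the same type. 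Hence the horizontal reflections in $[1,c]_T$ are exactly the $\perm(\P)$ whose unique nontrivial block is an arc or boundary segment joining two distinct inner points or two distinct outer points, which is Assertion~\ref{horiz NC}; comparison with Assertion~\ref{A horiz ref} identifies, for each such pair $i<j$, the two surviving arcs/segments as the ones producing $(i\,\,\,j)_n$ and $(i\,\,\,j-n)_n$.

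I expect the only real subtlety to lie in the last paragraph: the translation between the geometric and permutation pictures — pinning down that $\perm$ of a block carrying exactly two numbered points $p,q$ is a transposition of a lift of $p$ with a lift of $q$ (so that the underlying finite root joins precisely $p$ and $q$), and reducing such a block to an arc or boundary segment up to isotopy. I would handle this by quoting the form of $\perm$ on single arcs already established in the proof of Theorem~\ref{isom} rather than re-deriving it; Assertions 1--3 are then routine unwinding of definitions together with the quoted structural facts about horizontal reflections.
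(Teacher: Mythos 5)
Your proof is correct and uses essentially the ingredients the paper itself points to: the paper gives no written proof, stating only that the proposition ``is an easy consequence of Proposition~\ref{om del e}, Theorem~\ref{isom}, and the fact that $\Phi_\fin=\set{\pm(\e_j-\e_i):1\le i<j\le n}$,'' and your argument fleshes out exactly that route (Proposition~\ref{om del e} for Assertions 1--2, the fact recalled in Section~\ref{McSul general sec} that each positive horizontal root $\beta$ contributes precisely the reflections orthogonal to $\beta$ and to $\delta-\beta$ for Assertion~3, and the rank-preserving isomorphism $\perm$ for Assertion~4). Your handling of the one genuine subtlety --- that a rank-one element of $\tNCAcircc$ must be a single arc or boundary segment between two distinct numbered points, whose $\perm$-image is a transposition of lifts of those two points --- is sound.
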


We next characterize the translations in $[1,c]_T$.

\begin{prop}\label{A trans}
Let $c$ be a Coxeter element of $\Stilde_n$, represented as a partition of $\set{1,\ldots,n}$ into inner points and outer points.
The translations in $[1,c]_T$ are the permutations $(\cdots\,i\,\,\,i+n\,\cdots)(\cdots\,j\,\,\,j-n\,\cdots)$ with $i$ outer and $j$ inner.
The corresponding translation vector on the affine space $E$ is $(\rho_i-\rho_{i-1})-(\rho_j-\rho_{j-1})$, with indices interpreted modulo $n$, and the corresponding noncrossing partition of the annulus has exactly one nontrivial block, an annulus containing $i$ and $j$ and no other numbered point.
\end{prop}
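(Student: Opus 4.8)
The plan is to run everything through the isomorphism $\perm\colon\tNCAcircc\to[1,c]_T$ of Theorem~\ref{isom}, converting ``$w$ acts on $E$ as a translation'' first into a statement about cycle structure and then into a statement about noncrossing partitions of the annulus.

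First I would record the elementary criterion for being a translation. An element $w\in W$ acts on the affine space $E$ as a translation precisely when its linear part on the direction space $E_0$ is trivial. Since the action on $V^*$ is dual to the action on $V$ and $E_0=\set{x\in V^*:\br{x,\delta}=0}$ has annihilator $\reals\delta$ in $V$, this happens if and only if $(w-1)(V)\subseteq\reals\delta$. For $w=\pi\in\Stilde_n$ we have $(\pi-1)\e_k=\e_{\pi(k)}-\e_k$, and $\e_a-\e_b\in\reals\delta$ if and only if $a\equiv b\bmod n$ (using $\e_{b+n}=\e_b+\delta$ one way, and the independence of $\e_1,\dots,\e_n$ modulo $\reals(\e_1+\cdots+\e_n)$ the other way). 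Hence $\pi$ is a translation exactly when $\pi(k)\equiv k\bmod n$ for every $k$; in particular every finite cycle of such a $\pi$ is a singleton, since a finite cycle of length $\ge2$ in $\SZmodn$ contains two entries inequivalent modulo $n$.

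Next I would feed this back through $\perm$. Suppose $\P\in\tNCAcircc$ and $\perm(\P)$ is a nonidentity translation. By the dictionary between cycles and blocks (Section~\ref{isom sec}), the finite cycles of $\perm(\P)$ are exactly the disk blocks of $\P$, so by the previous paragraph every disk block of $\P$ is trivial and $\P$ consists of trivial blocks together with exactly one annular block $E$, which is nondangling because $\P\in\tNCAcircc$. Then $E$ has at least one numbered point on each boundary circle of $A$; and if $E$ had two numbered points on one of these circles, the corresponding infinite cycle of $\perm(\P)$ would visit two distinct classes modulo $n$, again violating $\pi(k)\equiv k\bmod n$. So $E$ contains exactly one outer point $i$ and one inner point $j$, and reading $E$ as in the construction of $\perm$ — the outer boundary carries the increasing cycle, which winds exactly once (as does the increasing cycle of any noncrossing partition, by the remark preceding Lemma~\ref{transp act}), and the inner boundary carries the decreasing cycle, exactly as for $c$ in Lemma~\ref{c annulus} — gives $\perm(\P)=(\cdots\,i\,\,\,i+n\,\cdots)(\cdots\,j\,\,\,j-n\,\cdots)$. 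Conversely, for any outer point $i$ and any inner point $j$, the noncrossing partition whose only nontrivial block is the nondangling annulus containing $i$ on its outer boundary and $j$ on its inner boundary lies in $\tNCAcircc$, and $\perm$ of it is precisely this permutation; so all such permutations arise, and they are all translations by the criterion above.

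Finally I would compute the translation vector. Writing $t=(\cdots\,i\,\,\,i+n\,\cdots)(\cdots\,j\,\,\,j-n\,\cdots)$, one has $(t^{-1}-1)\e_k=-\delta$, $+\delta$, or $0$ according as $k\equiv i$, $k\equiv j$, or neither modulo $n$; hence for $x\in E$ the vector $t\cdot x-x\in E_0$ satisfies $\br{t\cdot x-x,\e_k}=-[k\equiv i]+[k\equiv j]$, independent of $x$, which reconfirms that $t$ is a translation. On the other hand Proposition~\ref{rho e} gives $\br{\rho_a-\rho_{a-1},\e_k}=\tfrac1n-[k\equiv a\bmod n]$, so $\br{(\rho_i-\rho_{i-1})-(\rho_j-\rho_{j-1}),\e_k}=-[k\equiv i]+[k\equiv j]$ as well; since a functional on $V$ is determined by its values on $\e_1,\dots,\e_n$, the translation vector is $(\rho_i-\rho_{i-1})-(\rho_j-\rho_{j-1})$, with indices taken modulo $n$. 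I expect the middle step to be the main obstacle: pinning down that $E$ has exactly one numbered point on each boundary, ruling out ``wound'' annular blocks, and matching the increasing/decreasing cycles to the outer/inner points all require care, whereas the two endpoint computations are routine once Proposition~\ref{rho e} is in hand.
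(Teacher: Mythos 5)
Your proof is correct, but it takes a genuinely different route from the paper's. The paper starts from the fact (quoted in Section~\ref{McSul general sec}) that the translation subgroup is generated by the products $\tau_{ij}=(i\,\,\,j)_n(i\,\,\,j+n)_n$ of reflections in parallel hyperplanes, asserts that ``one easily checks'' that a composition of such elements lies in the image of $\perm$ exactly when it is a single $\tau_{ij}$ with $i$ outer and $j$ inner, and then obtains the translation vector by writing the matrix of $\tau_{ij}$ in simple-root coordinates and transposing to get the dual action on fundamental weights. You instead characterize translations intrinsically by $(w-1)(V)\subseteq\reals\delta$, i.e.\ $\pi(k)\equiv k\pmod n$ for all $k$, and then read the block structure off through the cycle--block dictionary; and you get the vector by evaluating both candidate functionals against the $\e_k$ via Proposition~\ref{rho e}. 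Your route has the merit of making explicit the step the paper waves at (``one easily checks''), at the cost of needing the topological fact you correctly flag as the delicate point: that the outer (resp.\ inner) boundary circle of a nondangling annular block carries exactly the outer (resp.\ inner) numbered points of the block, and that each circle winds once. Your justifications there (disjoint noncontractible circles in $A$, plus the remark before Lemma~\ref{transp act}) are adequate. One small thing to tighten in the last step: the images of $\e_1,\ldots,\e_n$ span only a hyperplane of $V$, so agreement of the two functionals on $\e_1,\ldots,\e_n$ alone does not quite determine them; you should add that both vanish on $\delta$ (the translation vector lies in $E_0$, and $\br{\rho_i-\rho_{i-1},\delta}=1-1=0$), hence they also agree on $\e_{n+1}=\e_1+\delta$ and therefore on all of $V$. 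This is a one-line fix, not a gap in the argument.
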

\begin{proof}
The subgroup of $\Stilde_n$ consisting of translations is generated by elements of the form $\tau_{ij}=(i\,\,\,j)_n(i\,\,\,j+n)_n=(\cdots\,i\,\,\,i+n\,\cdots)(\cdots\,j\,\,\,j-n\,\cdots)$ for $i\neq j\mod n$.
One easily checks that a composition of such elements is $\perm(\P)$ for some $\P\in\tNCAc$ if and only if it is a single element $\tau_{ij}$ with $i$ outer and $j$ inner.
Since we know how $\tau_{ij}$ acts on the basis $\e_1,\ldots,\e_{n+1}$ of $\reals^{n+1}$, we can compute its action on simple roots:
\[\alpha_k\mapsto\alpha_k
+\begin{rcases}\begin{dcases}
\delta&\text{if }k=i-1\\
-\delta&\text{if }k=i,\text{ or}\\
0&\text{otherwise}
\end{dcases}\end{rcases}
+\begin{rcases}\begin{dcases}
-\delta&\text{if }k=j-1\\
\delta&\text{if }k=i,\text{ or}\\
0&\text{otherwise}
\end{dcases}\end{rcases}.
\]
Since $\delta$ is $\sum_{i=1}^n\alpha_i$, the matrix for $\tau_{ij}$ acting on column vectors of simple root coordinates is the identity matrix $\pm$ four matrices that consist of a column of $1$'s, with $0$'s elsewhere.
The same matrix acting on the right on row vectors of fundamental weight coordinates gives the dual action of $\tau_{ij}^{-1}$ on $V^*$.
The action is $\rho_k\mapsto\rho_k+\rho_{i-1}-\rho_i-\rho_{j-1}+\rho_j$, so $\tau_{ij}$ is a translation by $(\rho_i-\rho_{i-1})-(\rho_j-\rho_{j-1})$ on vectors in $E=\set{x\in V^*:\br{x,\delta}=1}$.
\end{proof}

Each translation in $[1,c]_T$ has an obvious factorization into a loop $\ell_i$ for $i$ outer and a loop $\ell_j^{-1}$ for $j$ inner.
The proof of Proposition~\ref{A trans} also shows that this factorization of group elements corresponds to factoring the corresponding translation vector $\lambda$ into $\rho_i-\rho_{i-1}$ and $-(\rho_j-\rho_{j-1})$.
To complete the proof of Theorem~\ref{F L}.\ref{F loop}, we will show that these factors are $\lambda_\out+q_\out\lambda_0$ and $\lambda_\inn+q_\inn\lambda_0$.

For each $i$, define $\sigma_i=\rho_i-\rho_{i-1}$, taking indices modulo $n$.
Define $U_\out$ to be the subspace $\set{\sum c_i\sigma_i:c_i\in\reals,\sum c_i=0}$ of $E_0$, where both sums are over the outer points in $\set{1,\ldots,n}$.
Define $U_\inn$ the same way, but taking both sums over inner points instead.
If there is exactly one outer point, then $U_\out=\set{0}$, and if there is exactly one inner point, then $U_\inn=\set{0}$.

\begin{proposition}\label{A subspace decomp}
The subspaces $U_\out$ and $U_\inn$ are spans of sets $\set{K(\,\cdot\,,\beta):\beta\in\Psi}$ for (possibly trivial) irreducible components $\Psi$ of the horizontal root system.
\end{proposition}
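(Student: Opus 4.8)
The plan is to pin down the irreducible components of the horizontal root system explicitly and then to match the image of each component under $\beta\mapsto K(\,\cdot\,,\beta)$ with $U_\out$ or $U_\inn$. First I would recall from Proposition~\ref{A horiz}.\ref{A horiz root} that the horizontal root system equals $\set{\pm(\e_j-\e_i):1\le i<j\le n,\ i,j\text{ both outer or both inner}}$, which is the disjoint union of $\Psi_\out=\set{\pm(\e_j-\e_i):i,j\text{ outer}}$ and $\Psi_\inn=\set{\pm(\e_j-\e_i):i,j\text{ inner}}$. Since the outer points and the inner points form disjoint subsets of $\set{1,\dots,n}$ and $K$ restricts to the standard inner product on the span of $\e_1,\dots,\e_n$, every root of $\Psi_\out$ is $K$\nobreakdash-orthogonal to every root of $\Psi_\inn$; and relabeling the outer points as $1,\dots,\#\out$ identifies $\Psi_\out$ with the standard root system of type $A_{\#\out-1}$, which is irreducible when $\#\out\ge2$ and empty when $\#\out=1$ (and similarly for $\Psi_\inn$). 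Thus $\Psi_\out$ and $\Psi_\inn$ are exactly the (possibly trivial) irreducible components, and it remains to compute their spans in $V^*$.

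Next I would verify that $K(\,\cdot\,,\e_j-\e_i)=\sigma_i-\sigma_j$ for all $i,j\in\set{1,\dots,n}$, where $\sigma_k=\rho_k-\rho_{k-1}$ with indices modulo $n$; this drops out of expanding $K(\,\cdot\,,\e_j-\e_i)=\sum_{k=1}^n K(\alpha_k,\e_j-\e_i)\rho_k$ in the $\rho$\nobreakdash-basis and evaluating the handful of nonzero coefficients $K(\e_{k+1}-\e_k,\e_j-\e_i)$ (it is also implicit in the computation in the proof of Proposition~\ref{A trans}). Hence $\set{K(\,\cdot\,,\beta):\beta\in\Psi_\out}=\set{\sigma_i-\sigma_j:i\neq j\text{ outer}}$, and likewise for $\Psi_\inn$.

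It then remains to check $\Span\set{\sigma_i-\sigma_j:i\neq j\text{ outer}}=U_\out$ and the inner analogue. Since $\sigma_1,\dots,\sigma_n$ span an $(n-1)$\nobreakdash-dimensional space (already $\set{\sigma_1,\dots,\sigma_{n-1}}$ is independent) with the single relation $\sum_{k=1}^n\sigma_k=0$, every proper subset of $\set{\sigma_1,\dots,\sigma_n}$ is linearly independent; as there is at least one inner point, $\set{\sigma_i:i\text{ outer}}$ is independent, so $U_\out=\set{\sum_{i\text{ outer}}c_i\sigma_i:\sum c_i=0}$ is a well-defined subspace of dimension $\#\out-1$. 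Each $\sigma_i-\sigma_j$ lies in $U_\out$, while the $\#\out-1$ differences $\sigma_{a_1}-\sigma_{a_2},\dots,\sigma_{a_1}-\sigma_{a_{\#\out}}$ (for $a_1<\dots<a_{\#\out}$ the outer points) are linearly independent elements of $U_\out$, so the span of the differences is all of $U_\out$; the identical argument with ``inner'' in place of ``outer'' gives $\Span\set{K(\,\cdot\,,\beta):\beta\in\Psi_\inn}=U_\inn$. When $\#\out=1$ (resp.\ $\#\inn=1$) both sides reduce to $\set{0}$, matching the convention that the span of the empty root system is trivial.

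I do not anticipate a genuine obstacle; the points needing the most care are the bookkeeping in the identification of the components — especially making the single-outer-point and single-inner-point cases line up with the ``possibly trivial'' clause — and the index/sign check behind $K(\,\cdot\,,\e_j-\e_i)=\sigma_i-\sigma_j$, both of which are routine.
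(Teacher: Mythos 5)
Your proposal is correct and follows essentially the same route as the paper: identify the two (possibly empty) irreducible components of the horizontal root system as the outer and inner pieces, compute that $K(\,\cdot\,,\e_j-\e_i)=\sigma_i-\sigma_j$ in fundamental-weight coordinates, and conclude that the spans are $U_\out$ and $U_\inn$. The only differences are cosmetic — you expand $K(\,\cdot\,,\beta)$ directly in the $\rho$-basis rather than via the Cartan matrix, and you spell out the irreducibility and the dimension count for the span, which the paper leaves implicit.
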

\begin{proof}
Proposition~\ref{A horiz} implies that there is an irreducible component of the horizontal root system spanned by vectors $\e_j-\e_i=\alpha_i+\cdots+\alpha_{j-1}$ for $1\le i<j\le n$, both outer.
The vector $K(\,\cdot\,,\alpha_i+\cdots+\alpha_{j-1})$ has fundamental-weight coordinates given by multiplying the Cartan matrix $A$ on the left of a column vector with $1$ in positions $i$ through $j-1$ and $0$ elsewhere.
This is $\sigma_i-\sigma_j$.
It follows that $U_\out$ is the span of $\set{K(\,\cdot\,,\beta):\beta\in\Psi}$ for an irreducible component $\Psi$.
The same argument applies to $U_\inn$.
\end{proof}

As a consequence of Proposition~\ref{A subspace decomp}, the orthogonal direct product decomposition used to factor the translations is $E_0=U_0\oplus U_\out\oplus U_\inn$.

\begin{proposition}\label{A trans decomp}
Suppose $i$ is outer and $j$ is inner, and let $\lambda=\sigma_i-\sigma_j$, so that the translation $x\mapsto x+\lambda$ on $E$ is in $[1,c]_T$.
Write $a$ for $\frac12\omega_c(\delta,\,\cdot\,)$ and define 
\begin{align*}
\lambda_0&=\left(\frac1{\#\out}+\frac1{\#\inn}\right)a\\
\lambda_\out&=\sigma_i-\frac1{\#\out}a\\
\lambda_\inn&=-\sigma_j-\frac1{\#\inn}a
\end{align*}
The decomposition $\lambda=\lambda_0+\lambda_\out+\lambda_\inn$ has
$\lambda_0\in U_0$, $\lambda_\out\in U_\out$, and $\lambda_\inn\in U_\inn$.
\end{proposition}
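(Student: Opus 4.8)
The plan is to verify the claim by direct computation, taking as the only nontrivial input the two expressions for $a=\tfrac12\omega_c(\delta,\,\cdot\,)$ supplied by Proposition~\ref{om del rho}, namely $a=\sum_{k\text{ outer}}\sigma_k=-\sum_{k\text{ inner}}\sigma_k$. First I would dispatch the decomposition identity: adding the three displayed vectors, the coefficients of $a$ sum to $\left(\tfrac1{\#\out}+\tfrac1{\#\inn}\right)-\tfrac1{\#\out}-\tfrac1{\#\inn}=0$, so $\lambda_0+\lambda_\out+\lambda_\inn=\sigma_i-\sigma_j=\lambda$, with no further input needed.

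Next, $\lambda_0\in U_0$ is immediate, since $U_0$ is by definition the span of $\omega_c(\delta,\,\cdot\,)=2a$ and $\lambda_0$ is a scalar multiple of $a$. For $\lambda_\out$, I would substitute $\tfrac1{\#\out}a=\tfrac1{\#\out}\sum_{k\text{ outer}}\sigma_k$ and read off $\lambda_\out=\sum_{k\text{ outer}}c_k\sigma_k$ with $c_i=1-\tfrac1{\#\out}$ and $c_k=-\tfrac1{\#\out}$ for the other outer points $k$; these coefficients sum to $1-\#\out\cdot\tfrac1{\#\out}=0$, which is precisely the defining relation for membership in $U_\out$. The argument for $\lambda_\inn\in U_\inn$ is the mirror image, using $-\tfrac1{\#\inn}a=\tfrac1{\#\inn}\sum_{k\text{ inner}}\sigma_k$. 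I would also record the two degenerate cases separately: when $\#\out=1$ the point $i$ is the unique outer point, so $a=\sigma_i$ and $\lambda_\out=0\in U_\out=\{0\}$, and symmetrically when $\#\inn=1$.

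The computation is entirely routine, so there is no genuine obstacle; the only point worth noting is that the statement tacitly presents $\lambda=\lambda_0+\lambda_\out+\lambda_\inn$ as \emph{the} orthogonal decomposition of $\lambda$ relative to $E_0=U_0\oplus U_\out\oplus U_\inn$. This comes for free once the three memberships and the sum identity are in hand: Proposition~\ref{A subspace decomp} already establishes that $E_0=U_0\oplus U_\out\oplus U_\inn$ is the relevant orthogonal direct sum, so the coordinates of $\lambda$ in it are unique and must coincide with the vectors exhibited here.
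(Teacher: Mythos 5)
Your proof is correct and follows essentially the same route as the paper's: cite Proposition~\ref{om del rho} for the identities $a=\sum_{k\text{ outer}}\sigma_k=-\sum_{k\text{ inner}}\sigma_k$ and check that the resulting coefficient sums vanish, which is exactly the defining condition for $U_\out$ and $U_\inn$. The extra touches (explicitly verifying that the three vectors sum to $\lambda$, noting the degenerate cases $\#\out=1$ or $\#\inn=1$, and remarking on uniqueness of the decomposition) are harmless additions to what the paper does.
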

\begin{proof}
First, $\lambda_0\in U_0$ because $U_0$ is the span of $a$.
Proposition~\ref{om del rho} says that $a=\sum_{k\text{ outer}}\sigma_k$, so $\lambda_\out$ is a linear combination of the $\sigma_k$ for $k$ outer with the sum of coefficients equal to $0$. Thus $\lambda_\out\in U_\out$.
Proposition~\ref{om del rho} also says that $a=-\sum_{k\text{ inner}}\sigma_k$, and we argue similarly that $\lambda_\inn\in U_\inn$.
\end{proof}

\begin{proposition}\label{A fact}
Let $c$ be a Coxeter element of $\Stilde_n$, represented as a partition of $\set{1,\ldots,n}$ into inner points and outer points.
For $q_\out$ and $q_\inn$ as in Theorem~\ref{F L},
the factored translations have translation vectors $\pm(\rho_i-\rho_{i-1})$ for $i=1,\ldots,n$, taking the $+$ if $i$ is outer or the $-$ if $i$ is inner.
\end{proposition}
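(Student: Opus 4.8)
The plan is to read the factored translations directly off the decomposition already recorded in Proposition~\ref{A trans decomp}. By Proposition~\ref{A trans}, every translation in $[1,c]_T$ is one of the permutations $\tau_{ij}$ with $i$ outer and $j$ inner, and its translation vector on $E$ is $\lambda=(\rho_i-\rho_{i-1})-(\rho_j-\rho_{j-1})=\sigma_i-\sigma_j$. Recalling from Proposition~\ref{A subspace decomp} (and the observation immediately following it) that the orthogonal decomposition used to factor translations is $E_0=U_0\oplus U_\out\oplus U_\inn$, the two factored translations associated to $\tau_{ij}$ have translation vectors $\lambda_\out+q_\out\lambda_0$ and $\lambda_\inn+q_\inn\lambda_0$, where $\lambda=\lambda_0+\lambda_\out+\lambda_\inn$ is the decomposition supplied by Proposition~\ref{A trans decomp}.

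The remaining work is a short computation. Writing $a=\frac12\omega_c(\delta,\,\cdot\,)$ and using $\frac1{\#\out}+\frac1{\#\inn}=\frac{\#\out+\#\inn}{\#\out\cdot\#\inn}$, one finds $q_\out\lambda_0=\frac1{\#\out}a$ and $q_\inn\lambda_0=\frac1{\#\inn}a$. Substituting the formulas $\lambda_\out=\sigma_i-\frac1{\#\out}a$ and $\lambda_\inn=-\sigma_j-\frac1{\#\inn}a$ from Proposition~\ref{A trans decomp}, the multiples of $a$ cancel, leaving $\lambda_\out+q_\out\lambda_0=\sigma_i=\rho_i-\rho_{i-1}$ and $\lambda_\inn+q_\inn\lambda_0=-\sigma_j=-(\rho_j-\rho_{j-1})$.

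Finally, as $\tau_{ij}$ ranges over all translations in $[1,c]_T$, the index $i$ ranges over all outer points (each realized by pairing with any inner point) and $j$ over all inner points (each realized by pairing with any outer point); since $c$ is a Coxeter element there is at least one inner and one outer point, so these pairings exist. Hence the set of factored translation vectors is exactly $\set{\rho_i-\rho_{i-1}:i\text{ outer}}\cup\set{-(\rho_i-\rho_{i-1}):i\text{ inner}}$, the claimed list. The only point needing a word of care is the degenerate case $\#\out=1$ or $\#\inn=1$, where $U_\out$ or $U_\inn$ is the zero subspace; but the same identities hold verbatim there (for instance, when $\#\out=1$ with sole outer point $i_0$ we have $a=\sigma_{i_0}$ by Proposition~\ref{om del rho}, so $\lambda_\out=0$ and $\lambda_\out+q_\out\lambda_0=a=\rho_{i_0}-\rho_{i_0-1}$), in agreement with Remark~\ref{one point case}. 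There is no genuine obstacle here: the proposition is simply an unwinding of the definition of factored translation together with the explicit decomposition of Proposition~\ref{A trans decomp}.
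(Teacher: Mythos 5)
Your proposal is correct and matches the paper's proof: both identify the translation vectors via Proposition~\ref{A trans}, invoke the decomposition of Proposition~\ref{A trans decomp}, and compute $\lambda_\out+q_\out\lambda_0=\sigma_i$ and $\lambda_\inn+q_\inn\lambda_0=-\sigma_j$. The extra remarks on the degenerate cases and on which pairs $(i,j)$ occur are fine but not needed beyond what the paper records.
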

\begin{proof}
Proposition~\ref{A trans} says that the translation vectors for translations in $[1,c]_T$ are $\sigma_i-\sigma_j$ for $i$ outer and $j$ inner.
Using Proposition~\ref{A trans decomp}, we compute that $\lambda_\out+q_\out\lambda_0=\sigma_i$ and $\lambda_\inn+q_\inn\lambda_0=-\sigma_j$.
\end{proof}

We have proved Theorem~\ref{F L}.\ref{F loop}.
Now Theorem~\ref{F L}.\ref{supergroup}--\ref{F L intervals} follow because $T\cup L$ generates $\SZmodn$, because the group of translations by vectors in $E_0$ with integer fundamental weight coordinates is generated by the vectors $\sigma_i=\rho_i-\rho_{i-1}$, and because of Theorem~\ref{isom}.

\begin{proof}[Proof of Theorem~\ref{F L}.\ref{unique conj}]
For $q_\out$ and $q_\inn$ as in Theorem~\ref{F L}, Proposition~\ref{A fact} implies that every element of $F$ has integer fundamental weight coordinates.
Furthermore, we calculate that a simple reflection either fixes $\rho_i-\rho_{i-1}$, sends it to $\rho_{i-1}-\rho_{i-2}$, or sends it to $\rho_{i+1}-\rho_i$.
Thus by Lemmas~\ref{amounts to}, \ref{closure and weights}, and \ref{closure and reflections}, $T\cup F^{\pm1}$ is closed under conjugation in the supergroup.  

Now consider any choice of $q'_\out$ and $q'_\inn$ summing to~$1$, let $F'$ be the associated set of factored translations, and suppose $T\cup (F')^{\pm1}$ is closed under conjugation.

Since fundamental weights and co-weights coincide in type~$\afftype{A}$, Lemma~\ref{closure and weights} says that every translation vector for an element of $F'$ has integer fundamental weight coordinates.
Continue the notation $a=\frac12\omega_c(\delta,\,\cdot\,)$.
Suppose~$\lambda$ is the translation vector for some translation in $[1,w]_T$, suppose~$\lambda$ factors as ${\lambda_\out+\lambda_\inn}$ with respect to $q_\out$ and $q_\inn$, and suppose $\lambda$ factors as ${\lambda'_\out+\lambda'_\inn}$ with respect to $q'_\out$ and $q'_\inn$.
Then $\lambda'_\out=\lambda_\out+ka$ and $\lambda'_\inn=\lambda_\inn-ka$, for some $k\in\rationals$ that is independent of which translation in $[1,c]_T$ was chosen.
By Proposition~\ref{A fact}, $(F')^{\pm1}$ consists of translations with vectors $\pm(\rho_i-\rho_{i-1}+ka)$.
Proposition~\ref{om del rho} says that $a$ is a linear combination of fundamental weights with coefficients $0,\pm1$, so $k\in\integers$.
We will prove that $k=0$, so that $q'_\out=q_\out$ and $q'_\inn=q_\inn$.

Lemma~\ref{closure and reflections} says that every reflection $t\in T$ permutes the set of translation vectors of $(F')^{\pm1}$.
Choose $i$ outer such that $i+1$ is inner, so that, by Proposition~\ref{om del rho}, the $\rho_i$-coordinate of $a$ is $1$.
Then
\begin{equation}\label{reflected trans}
s_i(\rho_i-\rho_{i-1}+ka)=\rho_{i+1}-\rho_i+k(a+\rho_{i+1}-2\rho_i+\rho_{i-1})
\end{equation}
equals $\pm(\rho_j-\rho_{j-1}+ka)$ for some $j$.

If \eqref{reflected trans} equals $-(\rho_j-\rho_{j-1}+ka)$ for some $j$, then 
\[2ka=(-1-k)\rho_{i+1}+(2k+1)\rho_i-k\rho_{i-1}-\rho_j+\rho_{j-1}.\]
Since the $\rho_i$-coordinate of $2ka$ is $2k$, we must have $j=i$, and thus
\[2ka=(-1-k)\rho_{i+1}+2k\rho_i+(-k+1)\rho_{i-1}.\]
But every nonzero coordinate of $2ka$ is $\pm2k$, so $-1-k=\pm2k$ and $-k+1=\pm2k$.
There are no integer solutions to these equations, so
%
\eqref{reflected trans} equals $\rho_j-\rho_{j-1}+ka$ for some $j$. 
Thus
\[(k+1)\rho_{i+1}+(-2k-1)\rho_i+k\rho_{i-1}=\rho_j-\rho_{j-1}.\]
Looking at the term $(k+1)\rho_{i+1}$, we see that either $k=0$ or $k=-2$.
But $k=-2$ is impossible because it gives a term $3\rho_i$ on the left.
We see that $k=0$.
\end{proof}

We now combine results to prove our main result, Theorem~\ref{interval group T L}.
Specifically, Theorem~\ref{F L}.\ref{F L intervals} gives a particular choice of $q_\out$ and $q_\inn$ such that the factored translations $F^{\pm1}$ coincide with the loops~$L$, so that $[1,c]_{T\cup F}$ coincides with $[1,c]_{T\cup L}$.
Theorem~\ref{interval group T L} follows by Theorem~\ref{same interval group}.

\subsection{(Non)Factored translations in affine type $\afftype{C}$}\label{McSul C sec}
In affine type $\afftype{C}$, the horizontal root system is irreducible, so $[1,c]_T$ is a lattice.
(The lattice property in this case has been known since~\cite{Digne2}.)
Accordingly, the constructions of McCammond and Sulway do not factor the translations in $[1,c]_T$.
We now use the model of noncrossing partitions of the two-orbifold disk to characterize translations in $[1,c]_T$ and explain how the combinatorial model also suggests that we should \emph{not} to factor these translations.

\begin{prop}\label{C trans}
Let $c$ be a Coxeter element of the Coxeter group $\Stildes$ of type \textup{$\afftype{C}_{n-1}$}, represented as a signing of $\set{1,\ldots,n-1}$.
Then the translations in $[1,c]_T$ are the permutations $(\!(\cdots\,i\,\,\,i+2n\,\cdots)\!)$ such that $i$ is in the signing.
The corresponding noncrossing partition of the two-orbifold disk has exactly one nontrivial block, a disk containing $i$ and no other numbered point, but containing both orbifold points.
\end{prop}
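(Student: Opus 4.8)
The plan is to deduce this from the type-$\afftype{A}$ description of translations (Proposition~\ref{A trans}), using the folding $\Stildes\le W'$ from Section~\ref{sym ann sec}. Recall from there that $W'$ is a Coxeter group of affine type $\afftype{A}_{2n-3}$, acting on $\integers$ modulo $2n$ and fixing the multiples of $n$; that $[1,c]_T^C$ is the subposet of $[1,c]_T^A$ induced by the elements fixed by the automorphism $\phi$ with $\phi(\pi)(i)=-\pi(-i)$ (Proposition~\ref{fold subposet}); and that $\perm$ intertwines $\phi$ with the corresponding symmetry $\phi_g$ of the annulus. First I would observe that a $\phi$-fixed element $w\in W'$ is a translation of $\Stildes$ if and only if it is a translation of $W'$: both conditions are equivalent to $w(i)\equiv i\pmod{2n}$ for every $i\in\integers$ (for $\Stildes$ this is triviality of the linear part of the affine $\afftype{C}$ action; for $W'$ it is triviality of the linear part after the identification with $\Stilde_{2n-2}$). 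Hence it suffices to identify the $\phi$-fixed translations lying in $[1,c]_T^A$.

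By Proposition~\ref{A trans}, applied to $W'$ via its identification with an affine Coxeter group of type $\afftype{A}$, the translations in $[1,c]_T^A$ are exactly the permutations $\tau_{p,q}=(\cdots\,p\,\,\,p+2n\,\cdots)(\cdots\,q\,\,\,q-2n\,\cdots)$ with $p$ an outer point and $q$ an inner point; the increment here is $2n$ rather than $2n-2$ because each length-$2n$ period carries $2n-2$ numbered points. Now $\phi$ sends a cycle $(b_1\,b_2\,\cdots)$ to $(-b_1\,-b_2\,\cdots)$, hence an increasing cycle through $p$ to a decreasing cycle through $-p$ and a decreasing cycle through $q$ to an increasing cycle through $-q$; thus $\phi(\tau_{p,q})=\tau_{-q,-p}$. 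Since an increasing infinite cycle is never a decreasing one, $\phi(\tau_{p,q})=\tau_{p,q}$ if and only if $q\equiv-p\pmod{2n}$. By the dictionary between inner/outer points and the signing (Section~\ref{sym ann sec}), a numbered point $a\in\set{\pm1,\ldots,\pm(n-1)}$ is outer precisely when $a$ lies in the signing, in which case $-a$ is inner; so the $\phi$-fixed translations in $[1,c]_T^A$ are exactly the $\tau_{a,-a}$ with $a$ in the signing, and $\tau_{a,-a}=(\!(\cdots\,a\,\,\,a+2n\,\cdots)\!)$ by the definition of the notation $(\!(\,\cdot\,)\!)$. Both inclusions of the claimed equality now follow, since conversely each such $\tau_{a,-a}$ is a $\phi$-fixed translation of $W'$ lying in $[1,c]_T^A$, hence a translation in $[1,c]_T^C$ by Proposition~\ref{fold subposet}. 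I expect this paragraph to be the main obstacle, not because of depth but because the periods and signs in transporting Proposition~\ref{A trans} across the $W'\cong\afftype{A}_{2n-3}$ identification, and in the cycle-level computation of $\phi$, must be tracked carefully.

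For the noncrossing-partition statement, Proposition~\ref{A trans} also says that $\tau_{a,-a}$ is the image under $\perm$ of the noncrossing partition of the symmetric annulus whose unique nontrivial block is an annular block containing the numbered points $a$ and $-a$ and no others. Since $\phi(\tau_{a,-a})=\tau_{a,-a}$ and $\perm$ is $\phi$-equivariant, this partition is fixed by $\phi_g$, i.e.\ it lies in $\tNCAphic$. Passing to the two-orbifold disk via the isomorphism of Proposition~\ref{C sym same} and using (Section~\ref{nc 2 orb sec}) that a symmetric annular block becomes a disk block enclosing both orbifold points while the pair $a,-a$ is identified to the single numbered point $a$ of $C$ (the member of the pair lying in the signing), we conclude that the associated noncrossing partition of the two-orbifold disk has exactly one nontrivial block, a disk containing $a$ and no other numbered point but enclosing both orbifold points, as claimed.
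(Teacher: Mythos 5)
Your proposal is correct, but it takes a genuinely different route from the paper. The paper argues directly inside $\Stildes$: it lists the generators of the translation subgroup as products of pairs of reflections with adjacent parallel reflecting hyperplanes (the elements $\tau_{ij}=(\!(\cdots\,i\,\,\,i+2n\,\cdots)\!)(\!(\cdots\,j\,\,\,j-2n\,\cdots)\!)$ and $\nu_i=(\!(\cdots\,i\,\,\,i+2n\,\cdots)\!)$) and then checks by hand which products of these lie in the image of $\perm^C$, exactly mirroring the direct computation in the proof of Proposition~\ref{A trans}. You instead route everything through the folding: you transport Proposition~\ref{A trans} to $W'\cong\Stilde_{2n-2}$, compute that $\phi$ sends $\tau_{p,q}$ to $\tau_{-q,-p}$ so that the $\phi$-fixed translations in $[1,c]_T^A$ are exactly the $\tau_{a,-a}$ with $a$ in the signing, and then invoke Proposition~\ref{fold subposet} and the $\phi$-equivariance of $\perm$ together with Proposition~\ref{C sym same} to descend the annular block to a disk enclosing both orbifold points. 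Your reduction buys a reuse of the type-$\afftype{A}$ case analysis instead of repeating it in type $\afftype{C}$, which is very much in the spirit of the paper's own ``folding as a shortcut'' philosophy; the cost is that you must justify transferring the notion of ``translation'' between $\Stildes$ and $W'$, which you do via the (standard, but not proved in the paper) characterization of translations as the elements reducing to the identity modulo $2n$ — this is at roughly the same level of invoked-without-proof as the paper's own claim about generators of the translation subgroup, so it is not a gap. Your sign and period bookkeeping ($2n$ versus $2n-2$, outer points equal the signing, $\phi$ negating cycles) all checks out.
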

\begin{proof}
The translation subgroup of $W$ is is generated by elements of the form $\tau_{ij}=(\!(i\,\,\,j)\!)_{2n}(\!(i\,\,\,j+2n)\!)=(\!(\cdots\,i\,\,\,i+2n\,\cdots)\!)(\!(\cdots\,j\,\,\,j-2n\,\cdots)\!)$ for $i\not\equiv j\mod{2n}$ and $\nu_i=(i\,\,-i)_{2n}(i\,\,-i+2n)_{2n}=(\!(\cdots\,i\,\,\,i+2n\,\cdots)\!)$.
One checks that a composition of such elements is $\perm^C(\P)$ for some $\P\in\tNCCc$ if and only if it is $\nu_i$ with $i$ in the signing, and that $\P$ is the noncrossing partition described in the proposition.
\end{proof}

The cycle notation for translations in $\Stildes$ suggests no reasonable factorization.
One could factor $(\!(\cdots\,i\,\,\,i+2n\,\cdots)\!)$ into two permutations $(\cdots\,i\,\,\,i+2n\,\cdots)$ and $(\cdots\,-i\,\,\,-i-2n\,\cdots)$.
To see why such a factorization is unreasonable, one can construct a root system for $\Stildes$ and the vector space $V$ spanned by the roots.
(A construction of the root system and $V$ is found, for example, in~\cite[Section~2.2]{affncD}.)
One can then show that the factors $(\cdots\,i\,\,\,i+2n\,\cdots)$ and $(\cdots\,-i\,\,\,-i-2n\,\cdots)$ cannot be realized as transformations of $V$.
By contrast, in type $\afftype{A}$, factored translations act on the space~$V$ spanned by the root system.

\renewcommand{\afftype}[1]{{\widetilde{\raisebox{0pt}[5pt][0pt]{#1}}}}


\begin{thebibliography}{27}

\bibitem{ALPU}
P. Alexandersson, S. Linusson, S. Potka, and J. Uhlin,
\textit{Refined Catalan and Narayana cyclic sieving.}
Comb. Theory \textbf{1} (2021), Paper No.~7, 53~pages.

\bibitem{Allcock}
D. Allcock,
\textit{Braid pictures for Artin groups.}
Trans. Amer. Math. Soc. \textbf{354} (2002), no.~9, 3455--3474.

\bibitem{Ath-Rei}
C. A. Athanasiadis and V. Reiner,
\textit{Noncrossing partitions for the group $D_n$.}
SIAM J. Discrete Math \textbf{18} (2004), no. 2, 397--417.

%
%
\bibitem{Bessis}
D. Bessis,
\textit{The dual braid monoid.}
Ann. Sci. \'{E}cole Norm. Sup. (4) \textbf{36} (2003) no. 5, 647--683.

\bibitem{Biane}
P. Biane,
\textit{Some properties of crossings and partitions.}
Discrete Math. \textbf{175} (1997) no. 1-3, 41--53.

%
\bibitem{Bj-Br}
A. Bj\"{o}rner and F. Brenti,
\textit{Combinatorics of Coxeter groups.}
Graduate Texts in Mathematics, \textbf{231},
Springer, New York, 2005.

\bibitem{Br-McC}
 N. Brady and J. McCammond, 
 \textit{Factoring Euclidean isometries.} 
 Internat. J. Algebra Comput. \textbf{25} (2015) no. 1-2, 325–-347.

%
%
\bibitem{Bra-Wa}
T. Brady and C. Watt,
\textit{$K(\pi,1)$'s for Artin groups of finite type.}
Geom. Dedicata \textbf{94} (2002), 225--250.

\bibitem{BWOrth}
T. Brady and C. Watt,
\textit{A partial order on the orthogonal group.}
Comm. Algebra \textbf{30(8)} (2002), 749--3754.

\bibitem{BWlattice}
T. Brady and C. Watt, 
\textit{Non-crossing partition lattices in finite real reflection groups.}
Trans. Amer. Math. Soc. \textbf{360} (2008), no. 4, 1983--2005.

\bibitem{BThesis}
L. Brestensky.
\textit{Planar Models for Noncrossing Partitions in Affine Type.}
Ph.D. Thesis, North Carolina State University, June 2022.

\bibitem{BR-FPSAC}
L. Brestensky and N. Reading,
\textit{Noncrossing partitions of an annulus (Extended abstract).}
Presented at FPSAC 2023, Davis, CA.
S\'{e}m. Lothar. Combin. \textbf{89B} (2023), Art.~18, 12~pp.


\bibitem{RegPoly}
H. Coxeter,
\textit{Regular Polytopes},
Methuen \& Co., Ltd., London,  1948.

\bibitem{Digne1}
F. Digne,
\textit{Pr\'{e}sentations duales des groupes de tresses de type affine} $\afftype{A}$.
Comment. Math. Helv. \textbf{81} (2006), no. 1, 23–-47. 

\bibitem{Digne2}
F. Digne,
\textit{A Garside presentation for Artin-Tits groups of type} $\afftype{C}_n$.
Ann. Inst. Fourier (Grenoble) \textbf{62} (2012), no. 2, 641--666.

\bibitem{cats1}
S. Fomin, M. Shapiro, and D. Thurston,
\textit{Cluster algebras and triangulated surfaces. I. Cluster complexes.}
Acta Math. \textbf{201} (2008), no. 1, 83--146. 

\bibitem{cats2}
S. Fomin and D. Thurston,
\textit{Cluster algebras and triangulated surfaces. Part II: Lambda lengths.}
Mem. Amer. Math. Soc. \textbf{255} (2018), no. 1223. 

%

\bibitem{Humphreys}
J. Humphreys,
\textit{Reflection Groups and Coxeter Groups.} 
Cambridge Studies in Advanced Mathematics, \textbf{29},
Cambridge Univ. Press, 1990.

\bibitem{IgusaSchiffler}
K. Igusa and R. Schiffler,
\textit{Exceptional sequences and clusters.}
J. Algebra \textbf{323} (2010), no.~8, 2183--2202.

\bibitem{Kreweras}
G. Kreweras,
\textit{Sur les partitions non crois\'{e}es d'un cycle.}
Discrete Math. \textbf{1} (1972) no. 4, 333--350.

\bibitem{Maresca}
R. Maresca,
\textit{Combinatorics of Exceptional Sequences of type} $\afftype{A}_n$.
Preprint, 2022 (\href{http://arxiv.org/abs/2204.00959}{\texttt{arXiv:2204.00959}}).

\bibitem{McFailure}
J. McCammond,
\textit{Dual euclidean Artin groups and the failure of the lattice property.}
J. Algebra \textbf{437} (2015), 308--343. 


\bibitem{McSul}
J. McCammond and R. Sulway,
\textit{Artin groups of Euclidean type.}
Invent. Math. \textbf{210} (2017) no.~1, 231--282.

\bibitem{MingoNica}
J. A. Mingo and A. Nica,
\textit{Annular noncrossing permutations and partitions, and second-order asymptotics for random matrices.}
Int. Math. Res. Not. \textbf{2004}, no. 28, 1413--1460. 


\bibitem{plane}
N. Reading.
\textit{Noncrossing partitions, clusters and the Coxeter plane.}
S\'em. Lothar. Combin. \textbf{63} (2010) Art. B63b, 32 pages.

\bibitem{surfnc}
N. Reading.
\textit{Noncrossing partitions of a marked surface.}
Preprint, 2022 (\href{http://arxiv.org/abs/2212.13799}{\texttt{arXiv:2212.13799}}),
to appear in SIAM J. Discrete Math.

\bibitem{affncD}
N. Reading,
\textit{Symmetric noncrossing partitions of an annulus with double points.}
Preprint, 2023 (\href{http://arxiv.org/abs/2312.17331}{\texttt{arXiv:2312.17331}})

\bibitem{typefree}
N. Reading and D. Speyer,
\textit{Sortable elements in infinite Coxeter groups.}
Trans. Amer. Math. Soc. \textbf{363} (2011) no.~2, 699--761. 

\bibitem{afforb}
N. Reading and S. Stella,
\textit{The action of a Coxeter element on an affine root system.}
Proc. Amer. Math. Soc. \textbf{148} no.~7 (2020), 2783--2798.

\bibitem{Reiner}
V. Reiner,
\textit{Non-crossing partitions for classical reflection groups.}
Discrete Math. \textbf{177} (1997) no. 1-3, 195--222.

%

\bibitem{Steinberg}
R. Steinberg,
\textit{Finite reflection groups.}
Trans. Amer. Math. Soc. \textbf{91} (1959) 493--504. 






 


\end{thebibliography}
\end{document}